\newtheorem{thm}{Theorem}[section]
\newtheorem*{thm*}{Theorem}
\newtheorem{lem}[thm]{Lemma}
\newtheorem*{lem*}{Lemma}
\newtheorem{cor}[thm]{Corollary}
\newtheorem{claim}[thm]{Claim}
\newtheorem{prop}[thm]{Proposition}
\theoremstyle{definition}
\newtheorem{assump}[thm]{Assumption}
\newtheorem*{case*}{Case}
\newtheorem{defn}[thm]{Definition}
\newtheorem*{defn*}{Definition}
\newtheorem*{exmp*}{Example}
\newtheorem{hyp}[thm]{Hypothesis}
\newtheorem{step}{Step}\renewcommand{\thestep}{}
\theoremstyle{remark}
\newtheorem{case}{Case}\renewcommand{\thecase}{}
\newtheorem{rmk}[thm]{Remark}
\newtheorem*{rmk*}{Remark}
\def\alphenumi{
  \def\theenumi{\alph{enumi}}
  \def\p@enumi{\theenumi}
  \def\labelenumi{(\@alph\c@enumi)}}
\def\thecase{\@arabic\c@case}
\def\thestep{\@arabic\c@step}
\def\hhmm{\number\hh:\ifnum\mm<10{}0\fi\number\mm}
\let\oldmarginpar\marginpar
\renewcommand\marginpar[1]{\-\oldmarginpar[\raggedleft\footnotesize #1]%
{\raggedright\footnotesize #1}}
\renewcommand\emptyset{\varnothing}
\newcommand\EE{\mathbb{E}}
\newcommand\FF{\mathbb{F}}
\newcommand\HH{\mathbb{H}}
\newcommand\NN{\mathbb{N}}
\newcommand\PP{\mathbb{P}}
\newcommand\QQ{\mathbb{Q}}
\newcommand\RR{\mathbb{R}}
\newcommand\sC{{\mathscr{C}}}
\newcommand\sD{{\mathscr{D}}}
\newcommand\sF{{\mathscr{F}}}
\newcommand\sO{{\mathscr{O}}}
\newcommand\sS{{\mathscr{S}}}
\newcommand\sT{{\mathscr{T}}}
\newcommand\sU{{\mathscr{U}}}
\newcommand\eps{\varepsilon}
\DeclareMathOperator{\erfc}{erfc}
\newcommand\loc{\operatorname{loc}}
\numberwithin{equation}{section}
\def\hhmm{\number\hh:\ifnum\mm<10{}0\fi\number\mm}
\begin{document}

\title[Stochastic representation of solutions to Dirichlet variational inequalities]{Stochastic representation of 
solutions to degenerate elliptic and parabolic boundary value and obstacle problems with Dirichlet boundary conditions}

\author[P. Feehan]{Paul M. N. Feehan}
\address[PF]{Department of Mathematics, Rutgers, The State University of New Jersey, 110 Frelinghuysen Road, Piscataway, 
NJ 08854-8019}
\email[PF]{feehan@math.rutgers.edu}

\author[C. Pop]{Camelia Pop}
\address[CP]{Department of Mathematics, University of Pennsylvania, 209 South 33rd Street, Philadelphia, PA 19104-6395}
\email[CP]{cpop@math.upenn.edu}

\date{January 20, 2013}

\begin{abstract}
We prove existence and uniqueness of stochastic representations for solutions to elliptic and parabolic boundary value 
and obstacle problems associated with a degenerate Markov
diffusion process. In particular, our article focuses on the Heston stochastic volatility process, which is widely used 
as an asset price model in mathematical finance and a
paradigm for a degenerate diffusion process where the degeneracy in the diffusion coefficient is proportional to the 
square root of the distance to the boundary of the
half-plane. The generator of this process with killing, called the elliptic Heston operator, is a second-order, 
degenerate, elliptic partial differential operator whose
coefficients have linear growth in the spatial variables and where the degeneracy in the operator symbol is proportional 
to the distance to the boundary of the half-plane. In
mathematical finance, solutions to terminal/boundary value or obstacle problems for the parabolic Heston operator 
correspond to value functions for American-
style options on the underlying asset.
\end{abstract}

%

\subjclass[2000]{Primary 60J60; secondary 35J70, 35R45}

\keywords{Degenerate elliptic and parabolic differential operators, degenerate diffusion process, Feller square root 
process, Feynman-Kac formula, Heston stochastic volatility
process, mathematical finance, degenerate stochastic differential equation, stochastic representation formula}

\thanks{PF was partially supported by NSF grant DMS-1059206. CP was partially supported by a Rutgers University 
fellowship. }

\maketitle
\tableofcontents

\section{Introduction}
\label{sec:stochastic_representation_Introduction}
Since its discovery by Mark Kac \cite{Kac_1949}, inspired in turn by the doctoral dissertation of Richard Feynman 
\cite{Feynman_thesis}, the \emph{Feynman-Kac} (or
\emph{stochastic representation}) \emph{formula} has provided a link between probability theory and partial differential 
equations which has steadily deepened and developed
during the intervening years. Moreover, judging by continuing interest in its applications to mathematical finance 
\cite{KaratzasShreve1998} and mathematical physics
\cite{Lorinczi_Hiroshima_Betz_2011, Simon_functionalintegration}, including non-linear parabolic equations 
\cite{Constantin_Iyer_2008}, this trend shows no sign of abating.
However, while stochastic representation formulae for solutions to linear, second-order elliptic and parabolic boundary 
and obstacle problems are well established when the
generator, $-A$, of the Markov stochastic process is \emph{strictly elliptic} \cite{Bensoussan_Lions, FriedmanSDE, 
KaratzasShreve1991, Oksendal_2003} in the sense of \cite[p.
31]{GilbargTrudinger}, the literature is far less complete when $A$ is \emph{degenerate elliptic}, that is, only has a 
\emph{non-negative definite characteristic form} in the
sense of \cite{Oleinik_Radkevic}, and its coefficients are unbounded.

In this article, we prove stochastic representation formulae for solutions to an \emph{elliptic boundary value problem},
\begin{equation}
\label{eq:stochastic_representation_HestonEllipticEqBVP}
Au = f \quad \hbox{on }\sO,
\end{equation}
and an \emph{elliptic obstacle problem},
\begin{equation}
\label{eq:stochastic_representation_Elliptic_obstacle_problem}
\min\{Au-f,u-\psi\} = 0 \quad \hbox{on }\sO,
\end{equation}
respectively, subject to a \emph{partial} Dirichlet boundary condition,
\begin{equation}
\label{eq:stochastic_representation_HestonEllipticBoundaryCondition_Gamma1}
u = g \quad\hbox{on } \Gamma_1.
\end{equation}
Here, the subset $\sO\subseteqq\HH$ is a (possibly unbounded) domain (connected, open subset) in the open upper half-space 
$\HH := \RR^{d-1}\times(0,\infty)$ (where $d\geq 2$),
$\Gamma_1 = \partial\sO\cap\HH$ is the portion of the boundary, $\partial\sO$, of $\sO$ which lies in $\HH$, $f:\sO\to\RR$ 
is a source function, the function $g:\Gamma_1\to\RR$
prescribes a Dirichlet boundary condition along $\Gamma_1$ and $\psi:\sO\cup\Gamma_1\to\RR$ is an obstacle function which 
is compatible with $g$ in the sense that
\begin{equation}
\label{eq:stochastic_representation_Elliptic_compatibility_g_psi_beta_geq_1}
\psi\leq g \quad\hbox{on }\Gamma_1,
\end{equation}
while $A$ is an elliptic differential operator on $\sO$ which is degenerate along the interior, $\Gamma_0$, of 
$\partial\HH\cap\partial\sO$ and may have unbounded coefficients.
We require $\Gamma_0$ to be non-empty throughout this article as, otherwise, if $\sO$ is bounded (and the coefficients of 
$A$ are, say, continuous on $\bar\sO$), then standard
results apply \cite{Bensoussan_Lions, FriedmanSDE, KaratzasShreve1991, Oksendal_2003}. However, an additional boundary 
condition is \emph{not} necessarily prescribed along
$\Gamma_0$. Rather, we shall see that our stochastic representation formulae will provide the unique solutions to 
\eqref{eq:stochastic_representation_HestonEllipticEqBVP} or
\eqref{eq:stochastic_representation_Elliptic_obstacle_problem}, together with 
\eqref{eq:stochastic_representation_HestonEllipticBoundaryCondition_Gamma1}, when we seek solutions
which are suitably smooth up to the boundary portion $\Gamma_0$, a property which is guaranteed when the solutions lie in 
certain weighted H\"
older spaces (by analogy with \cite{DaskalHamilton1998}), \emph{or} replace the boundary condition 
\eqref{eq:stochastic_representation_HestonEllipticBoundaryCondition_Gamma1}
with the full Dirichlet condition,
\begin{equation}
\label{eq:stochastic_representation_HestonEllipticBoundaryCondition_whole_boundary}
u = g \quad\hbox{on } \partial\sO,
\end{equation}
in which case the solutions are not guaranteed to be any more than continuous up to $\Gamma_0$ and $\psi:\bar\sO\to\RR$ is 
now required to be compatible with $g$ in the sense
that,
\begin{equation}
\label{eq:stochastic_representation_Elliptic_compatibility_g_psi_beta_leq_1}
\psi\leq g \quad\hbox{on }\partial\sO.
\end{equation}
We also prove stochastic representation formulae for solutions to a \emph{parabolic terminal/boundary value problem},
\begin{equation}
\label{eq:stochastic_representation_Parabolic_BVP}
-u_t + Au = f \quad \hbox{on }Q,
\end{equation}
and a \emph{parabolic obstacle problem},
\begin{equation}
\label{eq:stochastic_representation_Parabolic_obstacle_problem}
\min\{-u_t + Au-f,u-\psi\} = 0 \quad \hbox{on }Q,
\end{equation}
respectively, subject to the \emph{partial} terminal/boundary condition,
\begin{equation}
\label{eq:stochastic_representation_Parabolic_BVP_boundary_condition_Gamma_1}
u = g \quad\hbox{on } \eth^1 Q.
\end{equation}
Here, we define $Q:=(0,T)\times\sO$, where $0<T<\infty$, and define
\begin{equation}
\label{eq:stochastic_representation_Parabolic_boundary_without_degenerate_part}
\eth^1{Q}:= (0, T) \times \Gamma_1\cup \{T\} \times \left( \sO \cup \Gamma_1\right),
\end{equation}
to be a subset of the parabolic boundary of $Q$, and now assume given a source function $f:Q\to\RR$, a Dirichlet boundary 
data function $g:\eth^1 Q\to\RR$, and an obstacle
function $\psi:Q\cup \eth^1 Q\to\RR$ which is compatible with $g$ in the sense that,
\begin{equation}
\label{eq:stochastic_representation_Parabolic_compatibility_g_psi_beta_geq_1}
\psi\leq g \quad\hbox{on }\eth^1 Q.
\end{equation}
Just as in the elliptic case, we shall either consider solutions which are suitably smooth up to $(0, T)\times\Gamma_0$, 
but impose no explicit Dirichlet boundary condition along
$(0, T)\times\Gamma_0$, \emph{or} replace the boundary condition in 
\eqref{eq:stochastic_representation_Parabolic_BVP_boundary_condition_Gamma_1} with the full Dirichlet
condition
\begin{equation}
\label{eq:stochastic_representation_Parabolic_BVP_boundary_condition_whole_boundary}
u = g \quad\hbox{on } \eth{Q},
\end{equation}
where
\begin{equation}
\label{eq:stochastic_representation_Parabolic_boundary_classical}
\eth{Q} := (0, T) \times \partial \sO \cup \{T\} \times \bar\sO,
\end{equation}
is the full parabolic boundary of $Q$, in which case the solutions are not guaranteed to be any more than continuous up to 
$(0, T)\times\Gamma_0$ and $\psi:Q\cup \eth Q\to\RR$ is
now compatible with $g$ in the sense that
\begin{equation}
\label{eq:stochastic_representation_Parabolic_compatibility_g_psi_beta_leq_1}
\psi\leq g \quad\hbox{on }\eth Q.
\end{equation}
Before giving a detailed account of our main results, we summarize a few applications.

\subsection{Applications}
In mathematical finance, a solution, $u$, to the elliptic obstacle problem 
\eqref{eq:stochastic_representation_Elliptic_obstacle_problem},
\eqref{eq:stochastic_representation_HestonEllipticBoundaryCondition_Gamma1}, when $f=0$, can be interpreted as the value 
function for a \emph{perpetual American-style option}
with \emph{payoff} function given by the obstacle function, $\psi$, while a solution, $u$, to the corresponding 
\emph{parabolic} obstacle problem
\eqref{eq:stochastic_representation_Parabolic_obstacle_problem}, 
\eqref{eq:stochastic_representation_Parabolic_BVP_boundary_condition_Gamma_1}, when $f=0$, can be interpreted as
the value function for a \emph{finite-maturity} American-style option with payoff function given by a terminal condition 
function, $h = g(T,\cdot):\sO\to\RR$, which typically
coincides on $\{T\}\times\sO$ with the obstacle function, $\psi$. For example, in the case of an American-style put 
option, one chooses $\psi(x,y) = (E-e^x)^+$, $\forall
(x,y)\in\sO$, where $E>0$ is a positive
constant. While solutions to \eqref{eq:stochastic_representation_HestonEllipticEqBVP}, 
\eqref{eq:stochastic_representation_HestonEllipticBoundaryCondition_Gamma1} do not have an
immediate interpretation in mathematical finance, a solution, $u$, to the corresponding \emph{parabolic} terminal/boundary 
value problem
\eqref{eq:stochastic_representation_Parabolic_BVP}, 
\eqref{eq:stochastic_representation_Parabolic_BVP_boundary_condition_Gamma_1}, when $f=0$, can be interpreted as the 
value
function for a \emph{European-style option} with payoff function given by the terminal condition function, $h$. For 
example, in the case of a European-style put option, one
chooses $h(x,y) = (E-e^x)^+$, $\forall (x,y)\in\sO$.

Stochastic representation formulae underly Monte Carlo methods of numerical computation of value functions for option 
pricing in mathematical finance \cite{Glasserman}. As is
well-known to practitioners, the question of Monte Carlo simulation of solutions to the Heston stochastic differential 
equation is especially delicate \cite{Andersen_2007,
Lord_Koekkoek_vanDijk_2010}. We hope that our article sheds further light on these issues.

\subsection{Summary of main results}
\label{subsec:stochastic_representation_Summary}
In this article, we set $d=2$ and choose $-A$ to be the generator of the two-dimensional Heston stochastic volatility 
process with killing rate $r$ \cite{Heston1993}, a
degenerate diffusion process well known in mathematical finance,
\begin{equation}
\label{eq:stochastic_representation_HestonGenerator}
-Av := \frac{y}{2}\left(v_{xx} + 2\rho\sigma v_{xy} + \sigma^2 v_{yy}\right) + (r-q-y/2)v_x + \kappa(\theta-y)v_y - rv, 
\quad v\in C^\infty(\HH).
\end{equation}
Nonetheless, we expect that many of our results would extend to a much broader class of degenerate Markov processes and we 
shall address such questions elsewhere. Throughout this
article, the coefficients of $A$ are required to obey

\begin{assump}
[Ellipticity condition for the Heston operator coefficients]
\label{assump:HestonCoefficients}
The coefficients defining $A$ in \eqref{eq:stochastic_representation_HestonGenerator} are constants obeying
\begin{equation}
\label{eq:EllipticHeston}
\sigma \neq 0, -1< \rho < 1,
\end{equation}
and $\kappa>0$, $\theta>0$, and\footnote{We impose additional conditions, such as $q\geq 0$, $r\geq 0$, or $r>0$, 
depending on the problem under consideration; we only require
that $q\geq 0$ when deriving the supermartingale property in Lemma \ref{lem:stochastic_representation_PropertiesHeston} 
\eqref{item:SupermartingaleX}, a property used only in the
elliptic case.} $q, r \in \RR$.
\end{assump}

Let $(\Omega,\sF,\FF,\QQ)$ be a filtered probability space satisfying the usual conditions, where $\FF = \{\sF(s)\}_{s\geq 
0}$ is the $\QQ$-completion of the natural filtration
of $(W(s))_{s\geq 0}$, and $(W(s))_{s\geq 0}$ is a standard Brownian motion with values in $\RR^2$. For $0\leq 
t<T<\infty$, let $\sT_{t,T}$ denote the set of $\FF$-stopping times
with values in $[t,T]$. Let $(X^{t,x,y}(s),Y^{t,y}(s))_{s\geq t}$ denote a continuous version of the
strong solution to the Heston stochastic differential equation
\begin{equation}
\label{eq:stochastic_representation_HestonSDE}
\begin{aligned}
d X(s) &= \left(r-q-\frac{Y(s)}{2}\right) \,ds + \sqrt{Y(s)} \,dW_1(s), \quad s > t,\\
d Y(s) &= \kappa\left(\vartheta-Y(s)\right) \,ds + \sigma \sqrt{Y(s)}\left(\rho \,dW_1(s)+\sqrt{1-\rho^2}\,dW_2(s)\right), 
\quad s > t,\\
&\quad (X(t), Y(t)) = (x,y),
\end{aligned}
\end{equation}
which exists by Corollary \ref{cor:HestonSDEStrongSolution}, where the coefficients are as in Assumption 
\ref{assump:HestonCoefficients}. For brevity, we sometimes denote
$z=(x,y)$ and $(Z^{t,z}(s))_{s \geq t} = (X^{t,x,y}(s),Y^{t,y}(s))_{s\geq t}$. We omit the superscripts $(t,z)$ and 
$(t,x,y)$ when the initial condition is clear from the
context, or we omit the superscript $t$ when $t=0$. We let
\begin{align}
\label{defn:stochastic_representation_beta}
\beta &:= \frac{2\kappa\vartheta}{\sigma^2},\\
\label{defn:stochastic_representation_mu}
\mu &:= \frac{2\kappa}{\sigma^2},
\end{align}
denote the \emph{Feller parameters} associated with the Heston process.

\subsubsection{Existence and uniqueness of solutions to elliptic boundary value problems}
For an integer $k\geq 0$, we let $C^{k}(\sO)$ denote the vector space of functions whose derivatives up to order $k$ are 
continuous on $\sO$ and let $C^{k}(\bar\sO)$ denote the
Banach space of functions whose derivatives up to order $k$ are \emph{uniformly continuous} and \emph{bounded} on $\sO$ 
\cite[\S 1.25 \& \S 1.26]{Adams}. If $T \subsetneqq
\partial\sO$ is a relatively open set, we let $C^k_{\loc}(\sO\cup T)$ denote the vector space of functions, $u$, such 
that, for any precompact open subset $U\Subset \sO \cup T$,
we have $u \in C^k(\bar U)$.

We shall often appeal to the following

\begin{hyp}[Growth condition]
\label{hyp:stochastic_representation_Growth_elliptic}
If $v$ is a function then, for all $(x,y)$ in its domain of definition,
\begin{equation}
\label{eq:stochastic_representation_Growth_elliptic}
|v(x,y)| \leq C(1 + e^{M_1 y}+e^{M_2x}),
\end{equation}
where $C>0$, $0\leq M_1 < \min\left\{r/\left(\kappa\vartheta\right), \mu\right\}$, and $M_2 \in[0,1)$.
\end{hyp}

Let $U \subseteqq\HH$ be an open set. We denote
\begin{equation}
\label{eq:stochastic_representation_Stopping_time_open_set}
\tau^{t,z}_{U} := \inf \left\{s \geq t: Z^{t,z}(s) \notin U \right\},
\end{equation}
and we let
\begin{equation}
\label{eq:stochastic_representation_Stopping_time_domain_with_degenerate_boundary}
\nu^{t,z}_{U} := \inf \left\{s \geq t: Z^{t,z}(s) \notin U \cup \left(\bar U \cap \partial \HH\right)\right\}.
\end{equation}
Notice that if $\bar U \cap \partial \HH = \emptyset$, then $\tau^{t,z}_{U} = \nu^{t,z}_{U}$. We also have that 
$\tau^{t,z}_{U}=\nu^{t,z}_{U}$ when $\beta \geq 1$, because in
this case the process $Z^{t,z}$ does not reach the boundary $\partial\HH$, by Lemma 
\ref{lem:stochastic_representation_CIR_properties} \eqref{item:CIRbetaGreaterEqualOne}. By
\cite[p. $117$]{Oksendal_2003}, both $\tau^{t,z}_{U}$ and $\nu^{t,z}_{U}$ are stopping times with respect to the 
filtration $\FF$, since $\FF$ is assumed to satisfy the usual
conditions. When the initial condition, $(t,z)$, is clear from the context, we omit the superscripts in the preceding 
definitions
\eqref{eq:stochastic_representation_Stopping_time_open_set} and 
\eqref{eq:stochastic_representation_Stopping_time_domain_with_degenerate_boundary} of the stopping times. Also,
when $t=0$, we omit the superscript $t$ in the preceding definitions.

\begin{thm} [Uniqueness of solutions to the elliptic boundary value problem]
\label{thm:stochastic_representation_Uniqueness_BVP_elliptic}
Let $r>0$, $q \geq 0$, and $f$ belong to $C(\sO)$ and obey the growth condition \eqref{eq:stochastic_representation_Growth_elliptic} on $\sO$. Then
\begin{enumerate}
\item
If $\beta\geq 1$, assume $g\in C_{\loc}(\Gamma_1)$ obeys \eqref{eq:stochastic_representation_Growth_elliptic}. Let
$$
u \in C_{\loc}(\sO\cup\Gamma_1) \cap C^2(\sO)
$$
be a solution to the elliptic boundary value problem \eqref{eq:stochastic_representation_HestonEllipticEqBVP},
\eqref{eq:stochastic_representation_HestonEllipticBoundaryCondition_Gamma1} and which obeys 
\eqref{eq:stochastic_representation_Growth_elliptic} on $\sO$. Then, $u=u^*$ on
$\sO\cup\Gamma_1$, where
\begin{equation}
\label{eq:stochastic_representation_StochasticRep_BVP_all_beta}
\begin{aligned}
u^*(z) := \EE^z_\QQ\left[e^{-r\tau_{\sO}} g(Z(\tau_{\sO})) 1_{\{\tau_{\sO} < \infty\}}\right]
          + \EE^z_\QQ\left[\int_{0}^{\tau_{\sO}} e^{-rs} f(Z(s))\,ds \right],
\end{aligned}
\end{equation}
where $\tau_{\sO}$ is defined by \eqref{eq:stochastic_representation_Stopping_time_open_set}, for all $z\in 
\sO\cup\Gamma_1$.
\item
If $0<\beta<1$, assume $g\in C_{\loc}(\partial\sO)$ obeys \eqref{eq:stochastic_representation_Growth_elliptic} on 
$\partial\sO$, and let $u \in C_{\loc}(\bar\sO) \cap C^2(\sO)$
be a solution to the elliptic boundary value problem \eqref{eq:stochastic_representation_HestonEllipticEqBVP},
\eqref{eq:stochastic_representation_HestonEllipticBoundaryCondition_whole_boundary} and which obeys
\eqref{eq:stochastic_representation_Growth_elliptic} on $\sO$. Then, $u=u^*$ on $\bar\sO$, where $u^*$ is given by
\eqref{eq:stochastic_representation_StochasticRep_BVP_all_beta}.
\end{enumerate}
\end{thm}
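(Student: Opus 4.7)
The approach is a standard It\^o's formula argument combined with careful localization to handle both the low regularity at the degenerate boundary $\Gamma_0$ and the unbounded data. The key observation is that the generator of $Z$ without killing is $L = -A + rI$ viewed as a differential operator, so if $Au = f$ on $\sO$ then $Lu = ru - f$, and It\^o's formula applied to $s \mapsto e^{-rs}u(Z(s))$ yields
\begin{equation*}
d\bigl(e^{-rs}u(Z(s))\bigr) = -e^{-rs}f(Z(s))\,ds + dM_s,
\end{equation*}
where $M$ is a continuous local martingale with integrand $e^{-rs}\nabla u$ contracted with the Heston diffusion matrix at $Z(s)$.

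To make this rigorous, I would introduce an exhausting sequence of precompact open sets $U_n$ with $\bar U_n \Subset \sO$ and $U_n \uparrow \sO$, and set $\tau_n := \tau_{U_n} \wedge n$. Since $u \in C^2(\sO)$, the coefficients appearing in $M$ are bounded on $[0, \tau_n]$, so $M_{\cdot \wedge \tau_n}$ is a true martingale and taking expectations yields
\begin{equation*}
\EE^z_\QQ\bigl[e^{-r\tau_n}u(Z(\tau_n))\bigr] = u(z) - \EE^z_\QQ\left[\int_0^{\tau_n} e^{-rs}f(Z(s))\,ds\right].
\end{equation*}
The plan is then to let $n \to \infty$ and identify the limit with the right-hand side of \eqref{eq:stochastic_representation_StochasticRep_BVP_all_beta}.

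For the pathwise passage to the limit, $\tau_n \uparrow \tau_\sO$ by path continuity and the exhaustion $U_n \uparrow \sO$, so $Z(\tau_n) \to Z(\tau_\sO) \in \partial\sO$ on $\{\tau_\sO < \infty\}$. In case (1) with $\beta \geq 1$, Lemma \ref{lem:stochastic_representation_CIR_properties}\eqref{item:CIRbetaGreaterEqualOne} ensures $Z$ never touches $\partial\HH$, so $Z(\tau_\sO) \in \Gamma_1$ and the continuity of $u$ on $\sO \cup \Gamma_1$ together with the boundary condition \eqref{eq:stochastic_representation_HestonEllipticBoundaryCondition_Gamma1} give $u(Z(\tau_n)) \to g(Z(\tau_\sO))$. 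In case (2), $Z(\tau_\sO)$ may lie in either $\Gamma_0$ or $\Gamma_1$, but since $u \in C_{\loc}(\bar\sO)$ and the full Dirichlet condition \eqref{eq:stochastic_representation_HestonEllipticBoundaryCondition_whole_boundary} holds, continuity again yields $u(Z(\tau_n)) \to g(Z(\tau_\sO))$. On $\{\tau_\sO = \infty\}$, the discount $e^{-r\tau_n}$ with $r > 0$ should drive the boundary term to zero, provided its size is dominated by the decay from $r$, which will be arranged via Hypothesis \ref{hyp:stochastic_representation_Growth_elliptic}.

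The main technical obstacle will be the uniform integrability needed to interchange limit and expectation. Here I would combine the growth condition \eqref{eq:stochastic_representation_Growth_elliptic} on $u$, $f$, and $g$ with the supermartingale property in Lemma \ref{lem:stochastic_representation_PropertiesHeston}\eqref{item:SupermartingaleX} (which requires $q \geq 0$); under the constraints $M_1 < \min\{r/(\kappa\vartheta), \mu\}$ and $M_2 \in [0,1)$ this yields uniform bounds on $\EE^z_\QQ[e^{-rs}(1 + e^{M_1 Y(s)} + e^{M_2 X(s)})]$ in $s$. Such bounds control both $\EE^z_\QQ\!\left[\int_0^\infty e^{-rs}|f(Z(s))|\,ds\right]$ and the family $\{e^{-r\tau_n}u(Z(\tau_n))\}_n$, permitting dominated convergence. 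The subtle point is case (2): $u$ is only continuous (not $C^2$) at $\Gamma_0$, but precisely because localization uses $\bar U_n \Subset \sO$, It\^o's formula is never invoked at the degenerate boundary, and passage to the limit needs only the continuity afforded by $u \in C_{\loc}(\bar\sO)$.
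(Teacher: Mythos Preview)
Your proposal is correct and follows essentially the same route as the paper: localize via an exhaustion $\bar U_n\Subset\sO$, apply It\^o's formula to $e^{-rs}u(Z(s))$ on each $U_n$, and pass to the limit using the growth bound \eqref{eq:stochastic_representation_Growth_elliptic} together with the supermartingale properties of Lemma~\ref{lem:stochastic_representation_PropertiesHeston}. The only cosmetic differences are that the paper takes the spatial and temporal limits separately (first $k\to\infty$ with $t$ fixed, then $t\to\infty$) and makes the uniform integrability explicit by bounding $p$-th moments for some $p>1$ with $pM_1<\mu$, $pM_2<1$, rather than appealing to dominated convergence; you should be a bit more careful there, since a first-moment bound alone does not give uniform integrability of the boundary term.
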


Following \cite[Definition 2.2]{Daskalopoulos_Feehan_optimalregstatheston},
we let $C^{1,1}_{s,\loc}(\sO\cup\Gamma_0)$ denote the subspace of $C^{1,1}(\sO)\cap C^{1}_{\loc}(\sO\cup\Gamma_0)$ 
consisting of functions, $u$, such that, for any precompact open subset $U\Subset \sO \cup \Gamma_0$,
\begin{equation}
\label{eq:stochastic_representation_C_1_1_regularity}
\sup_{(x,y)\in U} |u(x,y)| + |Du(x,y)| + |yD^2u(x,y)| <\infty,
\end{equation}
where $Du$ denotes the gradient and $D^2 u$ the Hessian matrix of $u$, defined Lebesgue-a.e. on $\sO$.

\begin{rmk}[On the definition of $C^{1,1}_{s,\loc}(\sO\cup\Gamma_0)$]
The definition of the function space $C^{1,1}_{s,\loc}(\sO\cup\Gamma_0)$ can be relaxed in our article from that of \cite[Definition 2.2]{Daskalopoulos_Feehan_optimalregstatheston}. It is enough to say that a function $u$ belongs to $C^{1,1}_{s,\loc}(\sO\cup\Gamma_0)$ if $u \in C^{1,1}(\sO) \cap C_{\loc}(\sO\cup\Gamma_0)$ and \eqref{eq:stochastic_representation_C_1_1_regularity} holds on any precompact open subset $U\Subset \sO \cup \Gamma_0$.
\end{rmk}

\begin{thm} [Uniqueness of solutions to the elliptic boundary value problem 
\eqref{eq:stochastic_representation_HestonEllipticEqBVP},
\eqref{eq:stochastic_representation_HestonEllipticBoundaryCondition_Gamma1}, when $0<\beta<1$]
\label{thm:stochastic_representation_Uniqueness_BVP_elliptic_beta_less_than_1}
Let $r>0$, $q \geq 0$, $0<\beta<1$, and let $f$ be as in Theorem 
\ref{thm:stochastic_representation_Uniqueness_BVP_elliptic}. Let $g\in C_{\loc}(\Gamma_1)$ obey
\eqref{eq:stochastic_representation_Growth_elliptic} on $\Gamma_1$ and suppose that
$$
u \in C_{\loc}(\sO\cup\Gamma_1) \cap C^2(\sO)\cap C^{1,1}_{s,\loc}(\sO\cup\Gamma_0)
$$
is a solution to the elliptic boundary value problem \eqref{eq:stochastic_representation_HestonEllipticEqBVP},
\eqref{eq:stochastic_representation_HestonEllipticBoundaryCondition_Gamma1} which obeys 
\eqref{eq:stochastic_representation_Growth_elliptic} on $\sO$.
Then, $u=u^*$ on $\sO\cup\Gamma_1$, where $u^*$ is given by
\begin{equation}
\label{eq:stochastic_representation_StochasticRep_BVP_2}
\begin{aligned}
u^*(z) := \EE^z_\QQ\left[e^{-r\nu_{\sO}} g(Z(\nu_{\sO})) 1_{\{\nu_{\sO} < \infty\}}\right]
          + \EE^z_\QQ\left[\int_{0}^{\nu_{\sO}} e^{-rs} f(Z(s))\,ds \right],
\end{aligned}
\end{equation}
and $\nu_{\sO}$ is defined by \eqref{eq:stochastic_representation_Stopping_time_domain_with_degenerate_boundary}, for all 
$z\in \sO\cup\Gamma_1$.
\end{thm}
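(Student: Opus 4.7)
The plan is to mimic the Feynman--Kac argument of Theorem \ref{thm:stochastic_representation_Uniqueness_BVP_elliptic} but with a localizing sequence adapted to the larger set $\sO\cup\Gamma_0$, so that the exit time converges to $\nu_{\sO}$ rather than to $\tau_{\sO}$. Concretely, I would choose an exhausting sequence of precompact open sets $U_n\Subset U_{n+1}\Subset \sO\cup\Gamma_0$ with $\bigcup_n U_n = \sO\cup\Gamma_0$ and set $\nu_n := \nu^z_{U_n}\wedge n$, so that $\nu_n\nearrow \nu_{\sO}$ almost surely. On each $U_n$ the hypothesis $u\in C^{1,1}_{s,\loc}(\sO\cup\Gamma_0)$ yields uniform bounds on $u$, $Du$, and $yD^2 u$.

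The central step is to apply Itô's formula to $e^{-rs}u(Z^z(s\wedge \nu_n))$. Since $u\in C^2(\sO)$ but is only $C^{1,1}_{s,\loc}$ across $\Gamma_0$, the classical Itô formula does not apply verbatim once we allow the process to touch $\Gamma_0$. I would handle this by a mollification argument: approximate $u$ on $U_n$ by $u_\varepsilon\in C^2$ with $u_\varepsilon\to u$ in $C^1_{\loc}$ and with $y D^2 u_\varepsilon$ uniformly bounded, apply Itô to $u_\varepsilon$, and pass $\varepsilon\to 0$. The degeneracy is favorable here because the second-order part of Itô involves $d\langle X\rangle_s=Y(s)ds$, $d\langle Y\rangle_s=\sigma^2 Y(s)ds$, $d\langle X,Y\rangle_s=\rho\sigma Y(s)ds$, which exactly matches the factor of $y$ in the operator symbol, so the bound on $yD^2 u$ suffices to control the limit. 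Moreover, since $\beta>0$ the process $Y$ spends Lebesgue measure zero at $y=0$ (by Lemma~\ref{lem:stochastic_representation_CIR_properties}), so the possible loss of $C^2$-regularity on $\Gamma_0$ is irrelevant for the $ds$-integral. Using the PDE $Au=f$ on $\sO$, this yields
\begin{equation*}
u(z) = \EE^z_\QQ\!\left[e^{-r\nu_n} u(Z(\nu_n))\right] + \EE^z_\QQ\!\left[\int_0^{\nu_n} e^{-rs} f(Z(s))\,ds\right],
\end{equation*}
the stochastic integral vanishing in expectation since its integrand is a bounded multiple of $\sqrt{Y}$ on $[0,\nu_n]$.

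The final step is to send $n\to\infty$. On $\{\nu_{\sO}<\infty\}$, continuity of $Z$ and the fact that $Z(\nu_{\sO})\in\Gamma_1$ (since $\nu_{\sO}$ is exit from $\sO\cup\Gamma_0$) together with $u\in C_{\loc}(\sO\cup\Gamma_1)$ and the boundary condition $u=g$ on $\Gamma_1$ give $u(Z(\nu_n))\to g(Z(\nu_{\sO}))$. The integral term is handled by monotone convergence after splitting $f=f^+-f^-$, using the growth bound \eqref{eq:stochastic_representation_Growth_elliptic} and the moment estimates for $Z$ provided by Lemma~\ref{lem:stochastic_representation_PropertiesHeston}. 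On $\{\nu_{\sO}=\infty\}$ the boundary term $\EE^z_\QQ[e^{-r\nu_n}u(Z(\nu_n))]$ tends to $0$ because $r>0$ and the growth hypothesis on $u$ is compatible with the exponential moment bounds on $Z$ (exactly as in the proof of Theorem~\ref{thm:stochastic_representation_Uniqueness_BVP_elliptic}).

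The hard part will be justifying Itô's formula on intervals that permit the trajectory to graze $\Gamma_0$, since only partial $C^2$-regularity is available there; everything else reduces to controlled passage to the limit through the growth condition and the moment estimates already established in the paper. A secondary subtlety is checking that $\nu_n\nearrow\nu_{\sO}$, which requires verifying that $Z$ does not prematurely exit $U_n$ through a portion of $\bar U_n\cap\partial\HH$ that is not part of the closure defining $\nu_{\sO}$; this is a matter of choosing the $U_n$ so that $U_n\cap\partial\HH$ forms an exhaustion of $\Gamma_0$ inside $\bar\sO\cap\partial\HH$.
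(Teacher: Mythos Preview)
Your overall strategy matches the paper's: localize with domains $\sU_k$ that reach down to $\Gamma_0$, justify It\^o's formula along paths that may touch $\Gamma_0$, and then pass to the limit using the growth estimates exactly as in Theorem~\ref{thm:stochastic_representation_Uniqueness_BVP_elliptic}. The difference is in how the ``It\^o across $\Gamma_0$'' step is implemented.

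You propose to mollify $u$ to $u_\varepsilon\in C^2$ while keeping $yD^2u_\varepsilon$ uniformly bounded. Be careful here: a standard isotropic convolution mollifier does \emph{not} obviously do this, since $|D^2u|$ may blow up like $1/y$ and mollifying at scale $\varepsilon$ produces $y|D^2u_\varepsilon|\lesssim y/(y-\varepsilon)$, which is not uniformly bounded near $y=\varepsilon$. The paper avoids this by shifting the \emph{process} rather than the function: it sets $Z^\varepsilon=(X,Y+\varepsilon)$, so that $Z^\varepsilon$ stays in $\{y\ge\varepsilon\}$ where $u\in C^2$ and classical It\^o applies. This yields an explicit error operator $A^\varepsilon-A$ (see \eqref{eq:stochastic_representation_Definition_A_eps}), and the $C^{1,1}_{s,\loc}$ hypothesis gives the pointwise bound $|(A^\varepsilon-A)u(Z^\varepsilon)|\le C\varepsilon\|Du\|_{C(\bar\sU_{2k})}+C\bigl(\mathbf{1}_{\{Y^\varepsilon\le\sqrt\varepsilon\}}+\sqrt\varepsilon\bigr)\|yD^2u\|_{C(\bar\sU_{2k})}$, which goes to zero under the expectation as $\varepsilon\downarrow 0$. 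Note that this shift is precisely the vertical translate $u_\varepsilon(x,y)=u(x,y+\varepsilon)$, which \emph{does} satisfy your uniform $yD^2u_\varepsilon$ bound; so your idea is correct once you choose this particular approximation, and the paper's process-shift formulation simply makes the error term explicit and easy to estimate. The remaining limits in $k$ and $t$ are exactly as you outline.
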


\begin{rmk}[Existence and uniqueness of strong solutions in weighted Sobolev spaces to the elliptic boundary value 
problem]
Existence and uniqueness of strong solutions in weighted Sobolev spaces to problem 
\eqref{eq:stochastic_representation_HestonEllipticEqBVP} with boundary condition
\eqref{eq:stochastic_representation_HestonEllipticBoundaryCondition_Gamma1} along $\Gamma_1$, for all $\beta>0$, is proved 
in \cite[Theorem
1.18]{Daskalopoulos_Feehan_statvarineqheston}, and H\"older continuity of such solutions up to $\Gamma_0$ is proved in 
\cite[Theorem 1.10]{Feehan_Pop_regularityweaksoln}.
\end{rmk}

\begin{rmk}[Comparison of uniqueness results]
To obtain uniqueness of solutions to the elliptic boundary value problem 
\eqref{eq:stochastic_representation_HestonEllipticEqBVP} with boundary condition
\eqref{eq:stochastic_representation_HestonEllipticBoundaryCondition_Gamma1} only specified along $\Gamma_1$, we need to 
assume the stronger regularity hypothesis
$$
u \in C_{\loc}(\sO\cup\Gamma_1)\cap C^2(\sO) \cap C^{1,1}_{s,\loc}(\sO\cup\Gamma_0)
$$
when $0<\beta<1$, while the regularity assumption
$$
u \in C_{\loc}(\sO\cup\Gamma_1)\cap C^2(\sO)
$$
suffices when $\beta \geq 1$. The analogous comments apply to the elliptic obstacle problems described in Theorems 
\ref{thm:stochastic_representation_Uniqueness_elliptic_OP} and
\ref{thm:stochastic_representation_Uniqueness_elliptic_OP_beta_less_than_1}, the parabolic terminal/boundary value 
problems described in Theorems
\ref{thm:stochastic_representation_Uniqueness_BVP_parabolic} and 
\ref{thm:stochastic_representation_Uniqueness_BVP_parabolic_beta_less_than_1}, and the parabolic obstacle value
problems described in Theorems \ref{thm:stochastic_representation_Uniqueness_parabolic_OP} and 
\ref{thm:stochastic_representation_Uniqueness_parabolic_OP_beta_less_than_1}.
\end{rmk}

For $\alpha \in (0,1)$, we let $C^{k+\alpha}(\sO)$ denote the subspace of $C^{k}(\sO)$ consisting of functions whose 
derivatives up to order $k$ are \emph{locally}
$\alpha$-H\"older continuous on $\sO$ (in the sense of \cite[p. 52]{GilbargTrudinger}) and let $C^{k+\alpha}(\bar \sO)$ 
denote the subspace of $C^{k}(\bar\sO)$ consisting of
functions whose derivatives up to order $k$ are \emph{uniformly} $\alpha$-H\"older continuous on $\sO$ \cite[p. 
52]{GilbargTrudinger}, \cite[\S 1.27]{Adams}. If $T \subsetneqq
\partial\sO$ is a relatively open set, we let $C^{k+\alpha}_{\loc}(\sO\cup T)$ denote the vector space of functions, $u$, 
such that, for any precompact open subset $U\Subset \sO
\cup T$, we have $u \in C^{k+\alpha}(\bar U)$.

We have the following result concerning \emph{existence} of solutions to the elliptic boundary value problem with traditional 
regularity on $\sO$.

\begin{thm} [Existence of solutions to the elliptic boundary value problem 
with continuous Dirichlet boundary condition]
\label{thm:stochastic_representation_Existence_elliptic_BVP_beta}
In addition to the hypotheses of Theorem \ref{thm:stochastic_representation_Uniqueness_BVP_elliptic}, assume that the 
domain $\sO\subset \HH$ has boundary portion $\Gamma_1$
which satisfies the exterior sphere condition, and that $f \in C^{\alpha}(\sO)$.
\begin{enumerate}
\item
If $\beta\geq 1$ and also $g\in C_{\loc}(\bar\Gamma_1)$, then the function $u^*$ in 
\eqref{eq:stochastic_representation_StochasticRep_BVP_all_beta} is a solution to problem
\eqref{eq:stochastic_representation_HestonEllipticEqBVP} with boundary condition 
\eqref{eq:stochastic_representation_HestonEllipticBoundaryCondition_Gamma1} along $\Gamma_1$.
In particular, $u^* \in C_{\loc}(\sO \cup \Gamma_1) \cap C^{2+\alpha}(\sO)$ and $u^*$ satisfies the growth assumption 
\eqref{eq:stochastic_representation_Growth_elliptic}.
\item
If $0<\beta< 1$ and also $g\in C_{\loc}(\partial\sO)$, then the function $u^*$ in 
\eqref{eq:stochastic_representation_StochasticRep_BVP_all_beta} is a solution to problem
\eqref{eq:stochastic_representation_HestonEllipticEqBVP} with boundary condition 
\eqref{eq:stochastic_representation_HestonEllipticBoundaryCondition_whole_boundary} along
$\partial\sO$. In particular,
$u^* \in C_{\loc}(\bar\sO) \cap C^{2+\alpha}(\sO)$ and $u^*$ satisfies the growth assumption 
\eqref{eq:stochastic_representation_Growth_elliptic}.
\end{enumerate}
\end{thm}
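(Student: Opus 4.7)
The plan is to verify three properties for the candidate $u^*$: (i) $u^* \in C^{2+\alpha}(\sO)$ with $Au^* = f$ pointwise on $\sO$; (ii) $u^*$ extends continuously to the prescribed boundary portion and matches $g$ there; and (iii) $u^*$ satisfies the growth estimate \eqref{eq:stochastic_representation_Growth_elliptic}. Uniqueness in both cases is already furnished by Theorem \ref{thm:stochastic_representation_Uniqueness_BVP_elliptic}, so it suffices to prove that $u^*$ defined by \eqref{eq:stochastic_representation_StochasticRep_BVP_all_beta} is in fact a solution. For interior regularity, fix $z_0 \in \sO$ and an open ball $B \Subset \sO$ centered at $z_0$. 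Since $\bar B \subset \HH$, the variable $y$ is bounded away from $0$ on $\bar B$, so $A$ is uniformly elliptic with smooth coefficients on $\bar B$. First, I would establish continuity of $u^*$ on $\sO$ from continuity of $(X^{t,z}, Y^{t,z})$ in the initial condition (Lemma \ref{lem:stochastic_representation_PropertiesHeston}) combined with dominated convergence, justified by the growth bounds from Step 3 below. The strong Markov property applied at the exit time $\tau_B$ yields
\begin{equation}
u^*(z) = \EE^z_\QQ\left[e^{-r\tau_B} u^*(Z(\tau_B))\right] + \EE^z_\QQ\left[\int_0^{\tau_B} e^{-rs} f(Z(s))\,ds\right], \quad z \in \bar B.
\end{equation}
The right-hand side coincides with the unique classical solution $w \in C^{2+\alpha}(B) \cap C(\bar B)$ of the strictly elliptic Dirichlet problem $Aw = f$ in $B$, $w = u^*$ on $\partial B$, produced by Schauder theory given $f \in C^\alpha(\sO)$ and continuous boundary data. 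Identifying $u^* = w$ on $B$ gives $u^* \in C^{2+\alpha}(B)$ and $Au^* = f$ there; since $B$ was arbitrary, this extends to all of $\sO$.

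For the boundary match in case (a), $\beta \geq 1$, the Feller $Y$-process stays strictly positive by Lemma \ref{lem:stochastic_representation_CIR_properties}, so $\tau_\sO = \nu_\sO$ and the exit occurs in $\Gamma_1$. At any $z_1 \in \bar\Gamma_1$, the exterior sphere condition provides an exterior ball, which I would use to build a classical barrier of the form $\phi(z) = 1 - e^{-\lambda |z - z^*|^2}$ with $z^*$ the exterior ball's center and $\lambda$ large enough that $A\phi \leq -1$ on a neighborhood intersected with $\sO$. This yields $\tau_\sO^{z_n} \to 0$ in $\QQ$-probability whenever $z_n \to z_1$ through $\sO$, and then continuity of paths, continuity of $g$ at $z_1$, and dominated convergence give $u^*(z_n) \to g(z_1)$. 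For case (b), $0 < \beta < 1$, the same argument works at points of $\Gamma_1$; at a point $z_0 \in \Gamma_0$ the $Y$-process can reach $y=0$, so the PDE barrier approach is unavailable. Instead I would use that $z_0 \in \partial\sO$ already, so pathwise continuity of $Z^{z_n}$ in the initial condition forces $\tau_\sO^{z_n} \to 0$ and $Z(\tau_\sO^{z_n}) \to z_0$ a.s., whence $u^*(z_n) \to g(z_0)$ by continuity of $g$ on $\partial\sO$.

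For the growth estimate, I would combine the exponential moment bounds for $(X^{t,z}, Y^{t,z})$ furnished by Lemma \ref{lem:stochastic_representation_PropertiesHeston} with the growth hypotheses on $g$ and $f$ under Hypothesis \ref{hyp:stochastic_representation_Growth_elliptic}. The strict inequalities $M_1 < \min\{r/(\kappa\vartheta), \mu\}$ and $M_2 < 1$ are exactly what is required to produce finite exponential moments of $Y$ and $X$; the killing factor $e^{-r\tau_\sO}$ and the positivity $r > 0$ control the boundary term, while the supermartingale property of $e^{-qt}e^{X(t)}$ (Lemma \ref{lem:stochastic_representation_PropertiesHeston}, using $q \geq 0$) controls the $X$-dependence. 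Integrating and taking suprema over $z$ in compact pieces of $\sO$ produces the desired bound on $|u^*(z)|$.

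The principal obstacle is boundary continuity at $\Gamma_0$ in case (b). PDE barriers based on distance functions degenerate because the diffusion coefficient vanishes as $y \to 0$, and the exit distribution $Z(\tau_\sO^{z})$ is genuinely influenced by the Feller boundary behavior rather than by smooth exterior geometry. Even though the pathwise continuity argument is short, it rests on the delicate facts that (i) strong solutions to \eqref{eq:stochastic_representation_HestonSDE} are jointly continuous in the initial condition up to $\partial\HH$, (ii) $\tau_\sO^{z_n}$ genuinely tends to $0$ rather than to some positive random limit when $z_n \to z_0 \in \Gamma_0$, and (iii) the exit location is a continuous functional of the path at such starting points, all of which is exactly why the statement of case (b) requires the full Dirichlet condition \eqref{eq:stochastic_representation_HestonEllipticBoundaryCondition_whole_boundary} rather than the partial one on $\Gamma_1$ alone.
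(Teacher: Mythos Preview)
Your interior-regularity argument via the strong Markov property at $\tau_B$ is a legitimate alternative to the paper's approach, which instead approximates $u^*$ by classical solutions $u_k$ on an exhausting sequence of $C^{2+\alpha}$ subdomains $\sO_k \Subset \sO$ and passes to the limit using interior Schauder estimates. However, your route has a circularity: to invoke Schauder theory on $B$ with boundary data $u^*|_{\partial B}$, you need $u^*$ to be continuous on $\partial B$ \emph{first}. Your justification---``continuity of $(X^{t,z}, Y^{t,z})$ in the initial condition combined with dominated convergence''---does not suffice, because $u^*(z)$ depends on the exit time $\tau_\sO^z$ and exit location $Z(\tau_\sO^z)$, whose joint continuity in $z$ is not a consequence of flow continuity alone (and Lemma \ref{lem:stochastic_representation_PropertiesHeston}, which concerns supermartingale properties, does not address this at all). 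The paper's approximation scheme sidesteps this by producing $C^{2+\alpha}$ regularity directly from uniform Schauder bounds on the $u_k$, without needing a priori continuity of $u^*$.

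The more serious gap is your treatment of continuity at $\bar\Gamma_0$ when $0<\beta<1$. The claim that ``pathwise continuity of $Z^{z_n}$ in the initial condition forces $\tau_\sO^{z_n} \to 0$'' is not justified: starting near a degenerate boundary does not by itself force fast exit, and indeed when $\beta \geq 1$ the process never reaches $y=0$ at all, so $\tau_\sO^z$ need not be small for $z$ near $\Gamma_0$. The paper's argument uses the specific Feller-process fact from Lemma \ref{lem:stochastic_representation_CIR_properties} \eqref{item:CIRbetaLessOne} that $\lim_{y\downarrow 0} T_0^y = 0$ a.s.\ precisely when $0<\beta<1$, combined with the domination $\tau_\sO^z \leq T_0^y$ (since $\sO \subset \HH$). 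This is what gives $\tau_\sO^{z_n} \to 0$; the paper then uses a careful splitting into the events $\{\tau_\sO \leq t,\ \sup_{s\leq t}|Z(s)-z|<\delta_1/2\}$, $\{\tau_\sO \leq t,\ \sup_{s\leq t}|Z(s)-z|\geq\delta_1/2\}$, and $\{\tau_\sO > t\}$, together with H\"older's inequality and the uniform $p$-th moment bounds from the supermartingale properties, to conclude $u^*(z_n) \to g(z_0)$. You correctly flag this as the principal obstacle in your final paragraph, but the mechanism you propose for resolving it is not the right one.
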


We next have existence of solutions to the elliptic boundary value problem when the boundary data $g$ is $C^{2+\alpha}$ up to $\Gamma_1\subset\partial\sO$.

\begin{thm} [Existence of solutions to the elliptic boundary value problem with $C^{2+\alpha}$ Dirichlet boundary condition]
\label{thm:stochastic_representation_Existence_elliptic_BVP_beta_Holder_continuous_boundary_data}
In addition to the hypotheses of Theorem \ref{thm:stochastic_representation_Uniqueness_BVP_elliptic}, let $\sO\subset \HH$ 
be a domain such that the boundary portion $\Gamma_1$
is of class $C^{2+\alpha}$, that $f \in C^{\alpha}_{\loc}(\sO\cup\Gamma_1)$ and $g\in 
C^{2+\alpha}_{\loc}(\sO\cup\Gamma_1)$.
\begin{enumerate}
\item
If $\beta\geq 1$, then $u^*$, given by \eqref{eq:stochastic_representation_StochasticRep_BVP_all_beta}, is a solution to 
problem
\eqref{eq:stochastic_representation_HestonEllipticEqBVP} with boundary condition 
\eqref{eq:stochastic_representation_HestonEllipticBoundaryCondition_Gamma1} along $\Gamma_1$.
In particular,
$$
u^* \in C^{2+\alpha}_{\loc}(\sO \cup \Gamma_1)
$$
and satisfies the growth assumption \eqref{eq:stochastic_representation_Growth_elliptic}.
\item
If $0<\beta< 1$ and $g\in C_{\loc}(\partial\sO)$, then $u^*$, given by 
\eqref{eq:stochastic_representation_StochasticRep_BVP_all_beta}, is a solution to problem
\eqref{eq:stochastic_representation_HestonEllipticEqBVP} with boundary condition 
\eqref{eq:stochastic_representation_HestonEllipticBoundaryCondition_whole_boundary} along
$\partial\sO$. In particular,
$$
u^* \in C_{\loc}(\bar\sO) \cap C^{2+\alpha}(\sO\cup\Gamma_1)
$$
and satisfies the growth assumption \eqref{eq:stochastic_representation_Growth_elliptic}.
\end{enumerate}
\end{thm}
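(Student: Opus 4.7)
The plan is to bootstrap from Theorem \ref{thm:stochastic_representation_Existence_elliptic_BVP_beta}, which already exhibits $u^*$ defined by \eqref{eq:stochastic_representation_StochasticRep_BVP_all_beta} as a solution of the boundary value problem belonging to $C_{\loc}(\sO\cup\Gamma_1)\cap C^{2+\alpha}(\sO)$ (respectively to $C_{\loc}(\bar\sO)\cap C^{2+\alpha}(\sO)$ when $0<\beta<1$) and obeying the growth bound \eqref{eq:stochastic_representation_Growth_elliptic}. It then suffices to \emph{upgrade} the regularity of this same $u^*$ locally to $C^{2+\alpha}$ up to $\Gamma_1$. The guiding observation is that, although $A$ degenerates along $\Gamma_0\subset\partial\HH$, it is \emph{uniformly} strictly elliptic with $C^\infty$ coefficients on any precompact $U\Subset\sO\cup\Gamma_1$: the inclusion $\bar U\subset\HH$ forces the $y$-coordinate to be bounded below by a positive constant on $\bar U$, so near $\Gamma_1$ we are locally in the classical non-degenerate elliptic regime and may apply standard Schauder boundary regularity.

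Concretely, I would fix an arbitrary precompact open $U\Subset\sO\cup\Gamma_1$ and, using that $\Gamma_1$ is of class $C^{2+\alpha}$, view $U\cap\Gamma_1$ as a $C^{2+\alpha}$ portion of $\partial(U\cap\sO)$. Reduce to zero boundary data by setting $v:=u^*-g$ on $U\cap(\sO\cup\Gamma_1)$, which is well-defined since both $u^*$ and $g$ are given there. Using $Au^*=f$ on $\sO$ and $u^*=g$ on $\Gamma_1$, one finds
\begin{equation*}
Av = \tilde f := f - Ag \quad\text{on } U\cap\sO, \qquad v = 0 \quad\text{on } U\cap\Gamma_1.
\end{equation*}
Since the coefficients of $A$ in \eqref{eq:stochastic_representation_HestonGenerator} are smooth on $\bar U$, $f\in C^{\alpha}_{\loc}(\sO\cup\Gamma_1)$, and $g\in C^{2+\alpha}_{\loc}(\sO\cup\Gamma_1)$, the source $\tilde f$ lies in $C^{\alpha}_{\loc}(\sO\cup\Gamma_1)$.

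Now I would apply the classical interior-plus-boundary Schauder estimate for strictly elliptic equations with zero Dirichlet data on a $C^{2+\alpha}$ boundary (e.g., Gilbarg--Trudinger, Corollary~6.7) to $v$ on $U\cap\sO$: for every $U'\Subset U$ this yields $v\in C^{2+\alpha}(\overline{U'\cap\sO})$, and hence $u^* = v + g \in C^{2+\alpha}(\overline{U'}\cap(\sO\cup\Gamma_1))$. Since $U\Subset\sO\cup\Gamma_1$ and $U'\Subset U$ were arbitrary, $u^*\in C^{2+\alpha}_{\loc}(\sO\cup\Gamma_1)$; this is the $C^{2+\alpha}$ conclusion in both parts (a) and (b), while the continuity of $u^*$ up to $\bar\sO$ in part (b) is inherited directly from Theorem \ref{thm:stochastic_representation_Existence_elliptic_BVP_beta}.

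The main obstacle is not conceptual but technical: one must verify that the hypotheses of Theorem \ref{thm:stochastic_representation_Existence_elliptic_BVP_beta} are in fact implied by those of the present theorem — in particular, that $g|_{\Gamma_1}$ (respectively $g|_{\partial\sO}$) satisfies the local-continuity requirements of that result, which is immediate from $g\in C^{2+\alpha}_{\loc}(\sO\cup\Gamma_1)$ together with the standing growth hypothesis on $g$ — and that the Schauder reduction $v=u^*-g$ be localized with enough care that $\tilde f=f-Ag$ makes sense on $U\cap\sO$ and inherits $C^{\alpha}$-regularity up to $U\cap\Gamma_1$. Once these checks are in place, the bootstrap from continuous boundary data to $C^{2+\alpha}_{\loc}$ boundary regularity is entirely classical, precisely because $\Gamma_1$ is strictly separated from the degenerate locus $\Gamma_0$ on any precompact neighbourhood.
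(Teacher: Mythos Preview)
Your bootstrap strategy is sound and leads to a correct proof, but it differs in execution from the paper's. The paper does not invoke Theorem~\ref{thm:stochastic_representation_Existence_elliptic_BVP_beta} as a black box; instead it re-runs the approximation argument of that theorem with a different exhausting sequence $\{\sD_k\}$ of $C^{2+\alpha}$ subdomains chosen so that each $\partial\sD_k$ already contains a portion of $\Gamma_1$ (rather than lying compactly inside $\sO$). Since $g\in C^{2+\alpha}_{\loc}(\sO\cup\Gamma_1)$ is then automatically defined on all of $\partial\sD_k$, no Tietze extension is needed, and \cite[Corollary~6.7]{GilbargTrudinger} is applied \emph{as an a~priori estimate to the approximating solutions} $u_k$ (which are genuinely in $C^{2+\alpha}(\bar\sD_k)$), yielding uniform $C^{2+\alpha}$ bounds on half-balls centred on $\Gamma_1$; compactness of the embedding $C^{2+\alpha}\hookrightarrow C^{2+\gamma}$ then passes these to $u^*$. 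Your route, applying boundary Schauder theory directly to the limit $u^*$, is more concise but needs two small corrections. First, the hypothesis $g\in C_{\loc}(\bar\Gamma_1)$ of Theorem~\ref{thm:stochastic_representation_Existence_elliptic_BVP_beta} is \emph{not} immediate from $g\in C^{2+\alpha}_{\loc}(\sO\cup\Gamma_1)$, since $\bar\Gamma_1\setminus\Gamma_1$ may contain corner points on $\partial\HH$ at which $g$ is not defined; you should observe instead that the proof of that theorem uses this hypothesis only to manufacture a continuous extension of $g$ to $\sO$, which you already possess. Second, \cite[Corollary~6.7]{GilbargTrudinger} is an a~priori estimate for solutions assumed in advance to lie in $C^{2,\alpha}(\bar\Omega)$, not a regularity theorem; for your direct argument on $v=u^*-g$, which is only known to be in $C^0(\bar U)\cap C^{2+\alpha}(U)$, the correct reference is the local boundary \emph{regularity} result (e.g., \cite[Lemma~6.18]{GilbargTrudinger}), which does upgrade such a solution to $C^{2,\alpha}$ up to a $C^{2,\alpha}$ boundary portion.
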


\begin{rmk}[Existence of solutions with Daskalopoulos-Hamilton-K\"och H\"older regularity]
\label{rmk:EllipticExistenceDHKHolder}
When $f\in C^{\alpha}_s(\sO\cup\Gamma_0)$ and $g=0$, we establish in \cite[Theorem 1.11 \& Corollary 
1.13]{Feehan_Pop_elliptichestonschauder} that
the solutions to the elliptic boundary value problem \eqref{eq:stochastic_representation_HestonEllipticEqBVP},
\eqref{eq:stochastic_representation_HestonEllipticBoundaryCondition_Gamma1} lie in
$C^{2+\alpha}_s(\sO\cup\Gamma_0)\cap C_{\loc}(\bar\sO)$ for all $\beta>0$, where $C^{2+\alpha}_s(\sO\cup\Gamma_0)$ is an 
elliptic analogue of the parabolic
Daskalopoulos-Hamilton-K\"och H\"older spaces described in \cite{DaskalHamilton1998, Koch}. A function $u\in 
C^{2+\alpha}_s(\sO\cup\Gamma_0)$ has the property that $u, Du, yD^2u$
are $C^{\alpha}_s$ continuous up to $\Gamma_0$ and $yD^2u=0$ on $\Gamma_0$, where $C^{\alpha}_s(\sO\cup\Gamma_0)$ is 
defined by analogy with the traditional definition of
$C^{\alpha}(\sO)$ in \cite{GilbargTrudinger}, except that Euclidean distance between points in $\sO$ is replaced by the 
cycloidal distance function.
\end{rmk}

\begin{rmk}[Existence and uniqueness of solutions to elliptic boundary value problems]
\label{rmk:ClassicalEllipticPDEResults}
Existence and uniqueness of solutions to the elliptic boundary value problem 
\eqref{eq:stochastic_representation_HestonEllipticEqBVP} and
\eqref{eq:stochastic_representation_HestonEllipticBoundaryCondition_Gamma1}, \emph{provided} $\Gamma_1=\partial\sO$, 
follow from Schauder methods when the coefficient matrix,
$(a^{ij})$, of the second-order derivatives in $A$ is uniformly elliptic. For example, see \cite[Theorem 
6.13]{GilbargTrudinger} for the case where $\sO$ is bounded and $f$ and
the coefficients of $A$ are bounded and in $C^\alpha(\sO)$, $\alpha\in(0,1)$, giving a unique solution $u\in 
C^{2+\alpha}(\sO)\cap C(\bar\sO)$, while \cite[Theorem
6.14]{GilbargTrudinger} gives $u\in C^{2+\alpha}(\bar\sO)$ when $f$ and the coefficients of $A$ are in 
$C^\alpha(\bar\sO)$. See \cite[Corollary 7.4.4]{Krylov_LecturesHolder},
together with \cite[Corollary 7.4.9]{Krylov_LecturesHolder} or \cite[Theorem 7.6.4]{Krylov_LecturesHolder} or 
\cite[Theorem 7.6.5 \& Remark 7.6.6]{Krylov_LecturesHolder}, for
similar statements.
\end{rmk}

\subsubsection{Uniqueness of solutions to elliptic obstacle problems}
For $\theta_1,\theta_2 \in \sT$, we set
\begin{equation}
\label{eq:stochastic_representation_Functional_J}
\begin{aligned}
J_e^{\theta_1, \theta_2}(z)
& := \EE^z_\QQ\left[\int_{0}^{\theta_1\wedge\theta_2} e^{-rs} f(Z(s))\,ds \right] \\
&\quad
  + \EE^z_\QQ\left[e^{-r\theta_1} g(Z(\theta_1)) 1_{\{ \theta_1 \leq \theta_2 \}}\right]
  + \EE^z_\QQ\left[ e^{-r\theta_2} \psi(Z(\theta_2)) 1_{\{ \theta_2 < \theta_1 \}} \right].
\end{aligned}
\end{equation}
We then have the

\begin{thm}[Uniqueness of solutions to the elliptic obstacle problem]
\label{thm:stochastic_representation_Uniqueness_elliptic_OP}
Let $r>0$, $q \geq 0$, and $f$ be as in Theorem \ref{thm:stochastic_representation_Uniqueness_BVP_elliptic}, and $\psi$ belong to $C(\sO)$ and satisfy
\eqref{eq:stochastic_representation_Growth_elliptic} on $\sO$.
\begin{enumerate}
\item
If $\beta\geq 1$, let $\psi \in C_{\loc}(\sO\cup\Gamma_1)$ and $g\in C_{\loc}(\Gamma_1)$ obey 
\eqref{eq:stochastic_representation_Growth_elliptic} and
\eqref{eq:stochastic_representation_Elliptic_compatibility_g_psi_beta_geq_1} on $\Gamma_1$. Let
$$
u \in C_{\loc}(\sO\cup\Gamma_1) \cap C^2(\sO)
$$
be a solution to the elliptic obstacle problem \eqref{eq:stochastic_representation_Elliptic_obstacle_problem},
\eqref{eq:stochastic_representation_HestonEllipticBoundaryCondition_Gamma1}
such that $u$ and $Au$ obey \eqref{eq:stochastic_representation_Growth_elliptic} on $\sO$. Then, $u=u^*$ on 
$\sO\cup\Gamma_1$, where $u^*$ is given by
\begin{equation}
\label{eq:stochastic_representation_Stochastic_representation_EOP_1}
u^*(z) := \sup_{\theta \in \mathscr{T}} J_e^{\tau_{\sO},\theta}(z),
\end{equation}
and $\tau_{\sO}$ is defined by \eqref{eq:stochastic_representation_Stopping_time_open_set}, for all $z\in 
\sO\cup\Gamma_1$.
\item
If $0<\beta<1$, let $\psi \in C_{\loc}(\bar \sO)$ and $g\in C_{\loc}(\partial\sO)$ obey 
\eqref{eq:stochastic_representation_Growth_elliptic} and
\eqref{eq:stochastic_representation_Elliptic_compatibility_g_psi_beta_leq_1} on $\partial\sO$. Let
$$
u \in C_{\loc}(\bar\sO) \cap C^2(\sO)
$$
be a solution to the elliptic obstacle problem \eqref{eq:stochastic_representation_Elliptic_obstacle_problem},
\eqref{eq:stochastic_representation_HestonEllipticBoundaryCondition_whole_boundary},
such that $u$ and $Au$ obey \eqref{eq:stochastic_representation_Growth_elliptic} on $\sO$. Then, $u=u^*$ on $\bar\sO$, 
where $u^*$ is given by
\eqref{eq:stochastic_representation_Stochastic_representation_EOP_1}.
\end{enumerate}
\end{thm}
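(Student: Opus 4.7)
The plan is to adapt the standard optimal stopping / variational-inequality argument to this degenerate setting, using the stochastic representation for the boundary value problem (Theorem 1.7) as a template. The proof splits into the two usual inequalities: $u \geq u^*$ and $u \leq u^*$, corresponding respectively to the supermartingale property of $u$ along the diffusion and optimality of the first hitting time of the coincidence set $\{u = \psi\}$.

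First I would establish $u(z) \geq J_e^{\tau_\sO,\theta}(z)$ for every $\theta \in \sT$, which gives $u \geq u^*$. To this end, fix an arbitrary $\theta \in \sT$ and apply It\^o's formula to $e^{-rs} u(Z^z(s))$ on $[0, \theta \wedge \tau_\sO \wedge \tau_n]$, where $\tau_n$ is a localization stopping time chosen so that $Z^z(s \wedge \tau_n)$ stays in a precompact open subset $U_n \Subset \sO$ (when $\beta \geq 1$) or $U_n \Subset \sO \cup \Gamma_0$ with $U_n \uparrow \sO$ (when $0 < \beta < 1$, using $u \in C_{\loc}(\bar\sO)$). The obstacle problem gives $Au \geq f$ pointwise on $\sO$, so the drift term produces the estimate
\begin{equation*}
u(z) \geq \EE^z_\QQ\!\left[\int_0^{\theta \wedge \tau_\sO \wedge \tau_n} e^{-rs} f(Z(s))\,ds\right]
+ \EE^z_\QQ\!\left[e^{-r(\theta \wedge \tau_\sO \wedge \tau_n)} u(Z(\theta \wedge \tau_\sO \wedge \tau_n))\right],
\end{equation*}
provided the stochastic integral is a true martingale up to $\tau_n$, which it is by the localization. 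I would then use the growth hypothesis \eqref{eq:stochastic_representation_Growth_elliptic} on $u$, $Au$, and the integrability estimates for the Heston process (analogous to those used in the proof of Theorem 1.7) to pass to the limit $n \to \infty$. On $\{\theta \leq \tau_\sO\}$ use $u(Z(\theta)) \geq \psi(Z(\theta))$, and on $\{\tau_\sO < \theta\}$ use the boundary condition $u(Z(\tau_\sO)) = g(Z(\tau_\sO))$ (valid on $\Gamma_1$ for $\beta \geq 1$, and on the entire $\partial\sO$ for $0 < \beta < 1$, which is precisely why the full Dirichlet condition is imposed in the latter case). This yields $u(z) \geq J_e^{\tau_\sO,\theta}(z)$, and hence $u \geq u^*$.

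Next I would prove the reverse inequality by exhibiting an explicit optimal stopping time. Define
\begin{equation*}
\theta^* := \inf\{ s \geq 0 : u(Z^z(s)) = \psi(Z^z(s)) \},
\end{equation*}
which is an $\FF$-stopping time by right-continuity of the filtration and continuity of $u-\psi$ on $\sO$. On the random interval $[0, \theta^* \wedge \tau_\sO)$ the process lies in the open set $\{u > \psi\} \cap \sO$, on which the obstacle problem forces the equation $Au = f$ to hold. Applying It\^o's formula to $e^{-rs} u(Z(s))$ stopped at $\theta^* \wedge \tau_\sO \wedge \tau_n$ and using $Au = f$ up to that time, the drift and source terms cancel and one obtains equality in the localized identity. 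Passing to the limit $n \to \infty$ using the same growth and integrability estimates, and invoking $u(Z(\theta^*)) = \psi(Z(\theta^*))$ on $\{\theta^* < \tau_\sO\}$ together with $u = g$ on the appropriate boundary piece on $\{\tau_\sO \leq \theta^*\}$, gives $u(z) = J_e^{\tau_\sO,\theta^*}(z) \leq u^*(z)$.

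The main obstacle will be the justification of the localization and limit procedure under the degeneracy of $A$ on $\Gamma_0$ together with the exponential growth condition. For $\beta \geq 1$ this is essentially the same issue as in Theorem 1.7, since the process does not reach $\Gamma_0$ and $\tau_\sO = \nu_\sO$. For $0 < \beta < 1$ the process can hit $\Gamma_0$, but the full Dirichlet condition \eqref{eq:stochastic_representation_HestonEllipticBoundaryCondition_whole_boundary} combined with $u \in C_{\loc}(\bar\sO)$ allows the boundary terms in It\^o's formula to be controlled continuously up to $\partial\sO$. The integrability needed to apply dominated convergence to both the running-cost term and the boundary terms should follow from the moment bounds on $Z^z$ built into Hypothesis \ref{hyp:stochastic_representation_Growth_elliptic} (specifically $M_1 < \min\{r/(\kappa\vartheta), \mu\}$ and $M_2 < 1$), exactly as in the boundary value problem.
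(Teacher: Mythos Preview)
Your proposal follows essentially the same two-step optimal stopping argument as the paper: establish $u \geq u^*$ from the supersolution properties $Au \geq f$ and $u \geq \psi$ via a localized It\^o formula, and then $u \leq u^*$ by choosing the first hitting time $\theta^*$ of the coincidence set $\{u = \psi\}$ as an optimal stopping time, on which $Au = f$ turns the inequality into an equality. This is exactly the paper's strategy.

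One technical point needs adjustment. In the case $0 < \beta < 1$ you propose localizing with domains $U_n \Subset \sO \cup \Gamma_0$ that may touch the degenerate boundary, invoking $u \in C_{\loc}(\bar\sO)$. But the hypothesis here is only $u \in C^2(\sO)$, with no control on $D^2u$ near $\Gamma_0$; the additional regularity $u \in C^{1,1}_{s,\loc}(\sO \cup \Gamma_0)$ that would let you push It\^o's formula up to $\Gamma_0$ is precisely the extra assumption in the companion Theorem \ref{thm:stochastic_representation_Uniqueness_elliptic_OP_beta_less_than_1}, not this one. The paper instead uses subdomains $\sO_k \Subset \sO$ compactly contained in the interior for \emph{both} ranges of $\beta$. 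Since $\tau_{\sO_k} \uparrow \tau_\sO$ and $u$ is continuous up to $\partial_\beta \sO$ (that is, up to $\Gamma_1$ when $\beta \geq 1$, and up to all of $\partial\sO$ when $0 < \beta < 1$), the uniform integrability argument from the proof of Theorem \ref{thm:stochastic_representation_Uniqueness_BVP_elliptic} handles the limit in both cases with no need to approach $\Gamma_0$ during the localization. With that correction your argument matches the paper's.
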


\begin{thm}[Uniqueness of solutions to the elliptic obstacle problem 
\eqref{eq:stochastic_representation_Elliptic_obstacle_problem},
\eqref{eq:stochastic_representation_HestonEllipticBoundaryCondition_Gamma1}, when $0<\beta<1$]
\label{thm:stochastic_representation_Uniqueness_elliptic_OP_beta_less_than_1}
Let $r>0$, $q \geq 0$, $0<\beta<1$, and $f$ be as in Theorem \ref{thm:stochastic_representation_Uniqueness_elliptic_OP}. 
Let $\psi \in C_{\loc}(\sO\cup\Gamma_1)$ obey
\eqref{eq:stochastic_representation_Growth_elliptic} on $\sO$ and let $g\in C_{\loc}(\Gamma_1)$ obey 
\eqref{eq:stochastic_representation_Growth_elliptic} and
\eqref{eq:stochastic_representation_Elliptic_compatibility_g_psi_beta_geq_1} on $\Gamma_1$. If
$$
u \in C_{\loc}(\sO\cup\Gamma_1) \cap C^2(\sO)\cap C^{1,1}_{s,\loc}(\sO\cup\Gamma_0)
$$
is a solution to the elliptic obstacle problem \eqref{eq:stochastic_representation_Elliptic_obstacle_problem},
\eqref{eq:stochastic_representation_HestonEllipticBoundaryCondition_Gamma1} such that $u$ and $Au$ obey 
\eqref{eq:stochastic_representation_Growth_elliptic},
then $u=u^*$ on $\sO\cup\Gamma_1$, where $u^*$ is given by
\begin{equation}
\label{eq:stochastic_representation_Stochastic_representation_EOP_2}
u^*(z) := \sup_{\theta \in \mathscr{T}} J_e^{\nu_{\sO},\theta}(z),
\end{equation}
and $\nu_{\sO}$ is defined by \eqref{eq:stochastic_representation_Stopping_time_domain_with_degenerate_boundary}, for all 
$z\in \sO\cup\Gamma_1$.
\end{thm}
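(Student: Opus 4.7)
The plan is to follow the classical optimal-stopping framework (cf.\ Bensoussan--Lions \cite{Bensoussan_Lions}) adapted to our degenerate Heston setting, in close parallel with the proof of Theorem \ref{thm:stochastic_representation_Uniqueness_BVP_elliptic_beta_less_than_1}. The novel feature when $0<\beta<1$ is that $Z$ may visit $\Gamma_0$, which forces us to use $\nu_\sO$ rather than $\tau_\sO$ and to require the extra regularity $C^{1,1}_{s,\loc}(\sO\cup\Gamma_0)$ so that Itô's formula can be applied up to the degenerate boundary. As usual, I would prove both $u\geq u^*$ and $u\leq u^*$ by Itô expansion of $e^{-rs}u(Z(s))$, where the obstacle constraint $u\geq\psi$ determines the candidate optimal stopping time.

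First I would establish $u(z)\geq J_e^{\nu_\sO,\theta}(z)$ for every $\theta\in\sT$. Let $\{\sO_n\}$ be an exhausting sequence of precompact subdomains with $\sO_n\Subset\sO\cup\Gamma_0$, and set $\nu_n:=\nu^z_{\sO_n}$, so that $\nu_n\uparrow\nu_\sO$. Itô's formula gives
\begin{equation*}
e^{-r(\theta\wedge\nu_n)}u(Z(\theta\wedge\nu_n))
= u(z)-\int_0^{\theta\wedge\nu_n}e^{-rs}Au(Z(s))\,ds+M_{\theta\wedge\nu_n},
\end{equation*}
where $M$ is a local martingale. Using $Au\geq f$ on $\sO$, localizing $M$ to a true martingale, taking $\EE^z_\QQ$, splitting on $\{\nu_n\leq\theta\}$ versus $\{\theta<\nu_n\}$, applying $u\geq\psi$ on the interior and $u=g$ on $\Gamma_1$, and finally letting $n\to\infty$ via dominated convergence (with an integrable envelope supplied by \eqref{eq:stochastic_representation_Growth_elliptic} and the moment bounds of Lemma \ref{lem:stochastic_representation_PropertiesHeston}) gives the desired inequality. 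For the reverse direction, I would introduce
\begin{equation*}
\theta^* := \inf\{s\geq 0 : u(Z(s)) \leq \psi(Z(s))\},
\end{equation*}
which is an $\FF$-stopping time by continuity of $u-\psi$ on $\sO\cup\Gamma_1$ and right-continuity of $\FF$. On $[0,\theta^*\wedge\nu_\sO)$ we have $u>\psi$ on $Z$, whence $Au=f$ by \eqref{eq:stochastic_representation_Elliptic_obstacle_problem}. The same localized Itô expansion with $\theta=\theta^*$ then becomes an equality; combined with $u(Z(\theta^*))=\psi(Z(\theta^*))$ on $\{\theta^*<\nu_\sO\}$ (by continuity) and $u=g$ on $\Gamma_1$ on $\{\nu_\sO\leq\theta^*\}$, this closes to $u(z)=J_e^{\nu_\sO,\theta^*}(z)\leq u^*(z)$.

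The main obstacle is justifying Itô's formula and the limit $n\to\infty$ when the process can approach or touch $\Gamma_0$. The critical points are: (i) the $C^{1,1}_{s,\loc}(\sO\cup\Gamma_0)$ regularity, which controls the Itô correction $\tfrac{1}{2}\operatorname{tr}(a(Z)D^2u(Z))$ up to $\Gamma_0$ since the diffusion matrix $a$ vanishes linearly in $y$ while $yD^2u$ remains uniformly bounded on each $\bar\sO_n$; (ii) the fact from Lemma \ref{lem:stochastic_representation_CIR_properties} that the Lebesgue measure of $\{s:Y(s)=0\}$ is zero when $0<\beta<1$, so excursions of $Z$ to $\Gamma_0$ do not disturb the Itô integrals; and (iii) the growth condition \eqref{eq:stochastic_representation_Growth_elliptic} together with the exponential moment bounds on $Z$, in particular $M_1<r/(\kappa\vartheta)$, which produce a uniformly integrable envelope of the form $e^{-rs}(1+e^{M_1Y(s)}+e^{M_2X(s)})$ dominating all boundary and payoff terms. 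These are the same ingredients that power Theorem \ref{thm:stochastic_representation_Uniqueness_BVP_elliptic_beta_less_than_1}; the genuinely new point here is verifying that the continuation region $\{u>\psi\}$ interacts properly with the limit procedure, which follows once one notes that $\{u>\psi\}\cap(\sO\cup\Gamma_1)$ is open and that the continuous function $u-\psi$ controls the terminal payoff at $\theta^*$.
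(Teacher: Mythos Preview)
Your overall strategy is correct and parallels the paper's: prove $u\geq u^*$ by applying It\^o's formula with an arbitrary $\theta\in\sT$, then prove $u\leq u^*$ by choosing $\theta$ to be the first exit time from the continuation region $\{u>\psi\}$. The uniform integrability and passage $n\to\infty$ that you describe are also handled essentially as you indicate.

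However, there is a genuine gap in your justification of It\^o's formula on domains $\sO_n\Subset\sO\cup\Gamma_0$ that meet $\Gamma_0$. The classical It\^o formula requires $u\in C^2$ along the path of $Z$, but $u$ is $C^2$ only on $\sO$ (where $y>0$); on $\Gamma_0$ the hypothesis $u\in C^{1,1}_{s,\loc}(\sO\cup\Gamma_0)$ gives merely $u\in C^1$ with $yD^2u$ bounded, so $D^2u$ itself may blow up like $1/y$. When $0<\beta<1$, the process $Z$ \emph{does} hit $\Gamma_0$ (Lemma \ref{lem:stochastic_representation_CIR_properties}), so the It\^o identity you write down is not immediately valid there. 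Your observations (i) and (ii) show that the integrands in the putative It\^o expansion remain controlled, but controlling the integrands does not by itself establish the identity; you would need a generalized It\^o formula for this class of functions, which you do not invoke.

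The paper resolves this by the $\eps$-shift device already used in the proof of Theorem \ref{thm:stochastic_representation_Uniqueness_BVP_elliptic_beta_less_than_1}: set $Z^\eps:=(X,Y+\eps)$, so $Y^\eps\geq\eps>0$ and $Z^\eps$ stays in the region where $u$ is $C^2$; apply classical It\^o to $u(Z^\eps)$ on $[0,\,t\wedge\nu_{\sU_k}\wedge\theta]$ with the subdomains $\sU_k$ of \eqref{eq:stochastic_representation_Sequence_subdomains_with_degenerate_boundary}, obtaining \eqref{eq:stochastic_representation_First_representation_EOP_2} with the operator $A^\eps$ of \eqref{eq:stochastic_representation_Definition_A_eps}; then bound the residual $(A^\eps-A)u(Z^\eps)$ via the $C^{1,1}_{s,\loc}$ estimate on $yD^2u$ (this is exactly \eqref{eq:stochastic_representation_Estimate_A_eps_minus_A}) and let $\eps\downarrow 0$ \emph{before} sending $k,t\to\infty$. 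Your points (i) and (ii) are precisely what make this $\eps\to 0$ limit work---they just enter one step later than where you placed them.
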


\begin{rmk}[Existence and uniqueness of strong solutions in weighted Sobolev spaces to the elliptic obstacle problem]
Existence and uniqueness of strong solutions in weighted Sobolev spaces to problem 
\eqref{eq:stochastic_representation_Elliptic_obstacle_problem} with Dirichlet boundary
condition \eqref{eq:stochastic_representation_HestonEllipticBoundaryCondition_Gamma1} along $\Gamma_1$, for all $\beta>0$, 
is proved in \cite[Theorem
1.6]{Daskalopoulos_Feehan_statvarineqheston}, and H\"older continuity of such solutions up to boundary portion $\Gamma_0$ 
is proved in \cite[Theorem
1.13]{Feehan_Pop_regularityweaksoln}.
\end{rmk}

\subsubsection{Existence and uniqueness of solutions to parabolic terminal/boundary value problems}
We shall need to appeal to the following analogue of Hypothesis \ref{hyp:stochastic_representation_Growth_elliptic}:

\begin{hyp}[Growth condition]
If $v$ is a function then, for all $(t,x,y)$ in its domain of definition,
\begin{equation}
\label{eq:stochastic_representation_Growth_parabolic}
|v(t,x,y)| \leq C(1 + e^{M_1 y}+e^{M_2x}),
\end{equation}
where $C>0$, $0 \leq M_1 <  \mu$, and $M_2 \in[0,1]$.
\end{hyp}

We let $C(Q)$ denote the vector space of continuous functions on $Q$, while $C(\bar Q)$ denotes the Banach space of 
functions which are \emph{uniformly continuous} and
\emph{bounded} on $Q$. We let $Du$ denote the gradient and let $D^2 u$ denote the Hessian matrix of a function $u$ on $Q$ 
with respect to spatial variables. We let $C^1(Q)$
denote the vector space of functions, $u$, such that $u$, $u_t$, and $Du$ are continuous on $Q$, while $C^1(\bar Q)$ 
denotes the Banach space of functions, $u$, such that $u$,
$u_t$, and $Du$ are uniformly continuous and bounded on $Q$; finally, $C^{2}(Q)$ denotes the vector space of functions, 
$u$, such that $u_t$, $Du$, and $D^2u$ are continuous $Q$,
while $C^2(\bar Q)$ denotes the Banach space of functions, $u$, such that $u$, $u_t$, $Du$, and $D^2u$ are uniformly 
continuous and bounded on $Q$. If $T \subsetneqq \partial Q$
is a relatively open set, we let $C_{\loc}(Q \cup T)$ denote the vector space of functions, $u$, such that, for any 
precompact open subset $V\Subset
Q \cup T$, we have $u \in C(\bar V)$.

\begin{thm} [Uniqueness of solutions to the parabolic terminal/boundary value problem]
\label{thm:stochastic_representation_Uniqueness_BVP_parabolic}
Let $f$ belong to $C(Q)$ and obey \eqref{eq:stochastic_representation_Growth_parabolic}. Then
\begin{enumerate}
\item
If $\beta\geq 1$, assume $g\in C_{\loc}(\eth^1 Q)$ obeys \eqref{eq:stochastic_representation_Growth_parabolic} on 
$\eth^1 Q$. Let
$$
u \in C_{\loc}(Q\cup\eth^1 Q) \cap C^2(Q)
$$
be a solution to the parabolic terminal/boundary value problem \eqref{eq:stochastic_representation_Parabolic_BVP},
\eqref{eq:stochastic_representation_Parabolic_BVP_boundary_condition_Gamma_1} which obeys 
\eqref{eq:stochastic_representation_Growth_parabolic} on $Q$. Then, $u=u^*$ on
$Q\cup\eth^1 Q$, where $u^*$ is given by
\begin{equation}
\label{eq:stochastic_representation_Stochastic_representation_parabolic_BVP_1}
\begin{aligned}
u^*(t, z)
&:= \EE^{t,z}_\QQ\left[\int_{t}^{\tau_{\sO}\wedge T} e^{-r(s-t)} f(s,Z(s))\,ds \right]\\
&\quad + \EE^{t,z}_\QQ\left[e^{-r(\tau_{\sO}\wedge T-t)} g(\tau_{\sO}\wedge T, Z(\tau_{\sO}\wedge T))\right],
\end{aligned}
\end{equation}
and $\tau_{\sO}$ is defined by \eqref{eq:stochastic_representation_Stopping_time_open_set}, for all $(t,z)\in 
Q\cup\eth^1 Q$.
\item
If $0<\beta<1$, assume $g\in C_{\loc}(\eth Q)$ obeys \eqref{eq:stochastic_representation_Growth_parabolic} on $\eth Q$, 
and let
$$
u \in C_{\loc}(Q\cup\eth Q) \cap C^2(Q)
$$
be a solution to the parabolic terminal/boundary value problem \eqref{eq:stochastic_representation_Parabolic_BVP},
\eqref{eq:stochastic_representation_Parabolic_BVP_boundary_condition_whole_boundary} which obeys 
\eqref{eq:stochastic_representation_Growth_parabolic} on $Q$. Then, $u=u^*$ on
$Q\cup\eth Q$, where $u^*$ is given by \eqref{eq:stochastic_representation_Stochastic_representation_parabolic_BVP_1}.
\end{enumerate}
\end{thm}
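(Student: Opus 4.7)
The plan is to establish both parts by applying Itô's formula to the discounted process $s \mapsto e^{-r(s-t)} u(s, Z^{t,z}(s))$, in direct analogy with the proofs of the elliptic uniqueness Theorems \ref{thm:stochastic_representation_Uniqueness_BVP_elliptic} and \ref{thm:stochastic_representation_Uniqueness_BVP_elliptic_beta_less_than_1}, but with the finite-horizon stopping time $\tau_\sO \wedge T$ in place of $\tau_\sO$. Writing the Heston generator without killing as $L := -A + r$, Itô's formula gives
\begin{equation*}
d\bigl(e^{-r(s-t)}u(s,Z(s))\bigr) = e^{-r(s-t)}\bigl(u_t + Lu - ru\bigr)(s,Z(s))\,ds + dN(s),
\end{equation*}
where $N$ is a continuous local martingale. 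Since $-A = L - r$ and $u$ solves $-u_t + Au = f$ on $Q$, the bounded-variation term collapses to $-e^{-r(s-t)}f(s,Z(s))\,ds$.

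Next I would localize. Let $\{U_n\}$ be a precompact exhaustion of $\sO$ and set $\tau_n := \tau_{U_n} \wedge T$. On $[t,\tau_n]$ the derivatives $Du$, $D^2u$ are bounded (since $\bar U_n\subset\sO$ and $u\in C^2(Q)$) and the Heston diffusion coefficients are bounded as well, so $N|_{[t,\tau_n]}$ is a true martingale; integrating and taking expectations produces
\begin{equation*}
u(t,z) = \EE^{t,z}_\QQ\!\left[\int_t^{\tau_n} e^{-r(s-t)}f(s,Z(s))\,ds + e^{-r(\tau_n - t)}u(\tau_n, Z(\tau_n))\right].
\end{equation*}
I would then pass to the limit $n\to\infty$. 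Because $\tau_n\nearrow\tau_\sO\wedge T$ almost surely and $Z$ has continuous paths, $u(\tau_n,Z(\tau_n))\to u(\tau_\sO\wedge T, Z(\tau_\sO\wedge T))$ pointwise, and this limit coincides with $g(\tau_\sO\wedge T, Z(\tau_\sO\wedge T))$ by the boundary condition. For $\beta\geq 1$, Lemma \ref{lem:stochastic_representation_CIR_properties}\eqref{item:CIRbetaGreaterEqualOne} keeps $Y$ strictly positive, so the limiting point lies in $\eth^1 Q$ where $u = g$ by hypothesis; for $0<\beta<1$ the limiting point may lie in $(0,T)\times\Gamma_0$, which is precisely why case (2) requires the full boundary condition on $\eth Q$ and the hypothesis $u\in C_{\loc}(Q\cup\eth Q)$. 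In both cases, one arrives at \eqref{eq:stochastic_representation_Stochastic_representation_parabolic_BVP_1}.

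The main technical obstacle is the uniform integrability step for the dominated convergence argument, since both sides of the identity must admit dominating functions integrable uniformly in $n$. Using \eqref{eq:stochastic_representation_Growth_parabolic}, this reduces to integrability of $\sup_{s\in[0,T]}\bigl(e^{M_1 Y(s)} + e^{M_2 X(s)}\bigr)$, which is controlled by the sharp exponential moment bound for the CIR process $Y$ (valid because $M_1<\mu$), as recorded in Lemma \ref{lem:stochastic_representation_PropertiesHeston}, together with an exponential martingale estimate for $X$ using $M_2\in[0,1]$. The parabolic setting tolerates the borderline value $M_2=1$ (unlike the elliptic case, which requires $M_2<1$) precisely because the finite horizon $T$ removes the need for the discount factor to absorb linear-exponential growth in $X$ over an infinite interval; beyond this quantitative distinction, the two cases of the theorem differ only in which portion of the parabolic boundary the process can visit and which continuity and boundary hypothesis is available to identify $u$ with $g$ at the random hitting time.
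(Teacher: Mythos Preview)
Your overall architecture matches the paper's proof: apply It\^o's formula to $e^{-r(s-t)}u(s,Z(s))$, localize via an exhaustion $\{\sO_k\}$ of $\sO$, obtain the identity at level $k$, and pass to the limit using uniform integrability. The distinction between the two cases (which portion of the parabolic boundary the process can reach) is also correctly identified.

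The gap is in the uniform integrability step for the $X$-component, which you treat too casually. Lemma~\ref{lem:stochastic_representation_PropertiesHeston} gives that $(e^{-rcs}e^{cX(s)})_{s\ge 0}$ is a supermartingale only for $c\in[0,1]$ (and only when $q\ge 0$, which the parabolic theorem does \emph{not} assume). To get uniform integrability one needs a $p_0$-th moment bound with $p_0>1$, i.e.\ control of $\EE[e^{p_0 M_2 X(\tau_{\sO_k}\wedge T)}]$; when $M_2=1$ no choice of $p_0>1$ keeps $p_0M_2$ in the supermartingale range, so the ``exponential martingale estimate'' you invoke does not deliver the needed bound. Your alternative suggestion of dominating by $\sup_{s\le T}e^{M_2 X(s)}$ is likewise not justified for arbitrary $T$.

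The paper handles this by two ingredients you omit. First, it invokes the strong Markov property to reduce to the case where $T$ is as small as one likes (Hypothesis~\ref{hyp:stochastic_representation_T_small_parabolic}): if the representation holds on each short interval $[T_{k-1},T_k]$, it propagates to $[0,T]$. Second, for small $T$ it appeals to Lemma~\ref{lem:stochastic_representation_Power_X}, a nontrivial estimate (proved by time-change and a decomposition over $\{n\le M_Y(T)\le n+1\}$) giving $\sup_{\theta\in\sT_{0,T}}\EE[e^{pX(\theta)}]<\infty$ whenever $p<c/(2\sigma T)$. Choosing $T$ small enough, one can take $p=p_0M_2$ with $p_0>1$, and this yields the required uniform $p_0$-th moment bound. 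Your intuition that ``the finite horizon removes the need for the discount factor'' is on the right track, but the mechanism is this reduction-to-small-$T$ combined with Lemma~\ref{lem:stochastic_representation_Power_X}, not a direct martingale argument for arbitrary $T$.
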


By analogy with \cite[Definition 2.2]{Daskalopoulos_Feehan_optimalregstatheston},
we let $C^{1,1}_{s,\loc}((0,T)\times(\sO\cup\Gamma_0))$ denote the subspace of $C^{1,1}((0,T)\times\sO)\cap C^{1}_{\loc}((0,T)\times(\sO\cup\Gamma_0))$ consisting of functions, $u$, such that, for any precompact open subset $V\Subset (0,T)\times(\sO\cup\Gamma_0)$,
\begin{equation}
\label{eq:stochastic_representation_C_1_1_regularity_parabolic}
\sup_{(t,z)\in V} |u(t,z)| + |Du(t,z)| + |yD^2u(t,z)| <\infty.
\end{equation}

\begin{rmk}[On the definition of $C^{1,1}_{s,\loc}((0,T)\times(\sO\cup\Gamma_0)$]
The definition of the function space $C^{1,1}_{s,\loc}((0,T)\times \sO\cup\Gamma_0)$ can be relaxed in our paper from that implied by \cite[Definition 2.2]{Daskalopoulos_Feehan_optimalregstatheston}. It is enough to say that a function $u$ belongs to $C^{1,1}_{s,\loc}((0,T)\times\sO\cup\Gamma_0) \cap C_{\loc}((0,T)\times\sO\cup\Gamma_0)$ if $u \in C^{1,1}((0,T)\times \sO)$ and \eqref{eq:stochastic_representation_C_1_1_regularity_parabolic} holds on any precompact open subset $U\in (0,T)\times\Subset \sO \cup \Gamma_0$.
\end{rmk}

We have the following alternative uniqueness result.

\begin{thm} [Uniqueness of solutions to the parabolic terminal/boundary value problem when $0<\beta<1$]
\label{thm:stochastic_representation_Uniqueness_BVP_parabolic_beta_less_than_1}
Let $0<\beta<1$ and $f$ be as in Theorem \ref{thm:stochastic_representation_Uniqueness_BVP_parabolic}. Let $g\in 
C_{\loc}(\eth^1 Q)$ obey
\eqref{eq:stochastic_representation_Growth_parabolic} on $\eth^1 Q$, and
$$
u \in C_{\loc}(Q\cup\eth^1 Q) \cap C^2(Q)\cap C^{1,1}_{s,\loc}((0,T)\times(\sO\cup\Gamma_0))
$$
be a solution to the parabolic terminal/boundary value problem \eqref{eq:stochastic_representation_Parabolic_BVP},
\eqref{eq:stochastic_representation_Parabolic_BVP_boundary_condition_Gamma_1} which obeys 
\eqref{eq:stochastic_representation_Growth_parabolic} on $Q$.
Then, $u=u^*$ on $Q\cup\eth^1 Q$, where $u^*$ is given by
\begin{equation}
\label{eq:stochastic_representation_Stochastic_representation_parabolic_BVP_2}
\begin{aligned}
u^*(t, z)
&:= \EE^{t,z}_\QQ\left[\int_{t}^{\nu_{\sO}\wedge T} e^{-r(s-t)} f(s,Z(s))\,ds \right]\\
&\quad  + \EE^{t,z}_\QQ\left[e^{-r(\nu_{\sO}\wedge T-t)} g(\nu_{\sO}\wedge T, Z(\nu_{\sO}\wedge T)) \right],
\end{aligned}
\end{equation}
and $\nu_{\sO}$ is defined by \eqref{eq:stochastic_representation_Stopping_time_domain_with_degenerate_boundary}, for all 
$(t,z)\in Q\cup\eth^1 Q$.
\end{thm}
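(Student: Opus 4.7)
The plan is to obtain the stochastic representation by applying an appropriate version of It\^o's formula to the semimartingale $e^{-r(s-t)} u(s, Z^{t,z}(s))$ on the interval $[t, \nu_\sO\wedge T]$ and then taking expectations. Since $u$ satisfies $-u_t + Au = f$ on $Q$, the bounded variation part of the resulting decomposition collapses to the discounted source $e^{-r(s-t)} f(s, Z(s))$, while the boundary/terminal data $g$ is picked up when the process first exits $\sO$ through $\Gamma_1$ (at time $\nu_\sO$) or at the terminal time $T$, producing \eqref{eq:stochastic_representation_Stochastic_representation_parabolic_BVP_2}. The distinctive feature of the $0<\beta<1$ regime is that the process can reach $\Gamma_0$ with positive probability, hence the natural stopping time is $\nu_\sO$ rather than $\tau_\sO$, and the enhanced regularity $C^{1,1}_{s,\loc}(Q\cup(0,T)\times\Gamma_0)$ is needed precisely to ensure that It\^o's formula remains meaningful along paths that touch $\partial\HH$.

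The first step is localization. I choose an exhausting sequence of precompact open sets $V_n \Subset Q \cup (0,T)\times(\sO\cup\Gamma_0)$ with $V_n \uparrow Q \cup (0,T)\times(\sO\cup\Gamma_0)$ and define stopping times $\theta_n := \inf\{s\geq t : (s,Z^{t,z}(s))\notin V_n\}\wedge T$. The hypothesis $u\in C^{1,1}_{s,\loc}((0,T)\times(\sO\cup\Gamma_0))\cap C^2(Q)$ together with \eqref{eq:stochastic_representation_C_1_1_regularity_parabolic} gives uniform bounds on $u$, $u_t$, $Du$, and $yD^2u$ on $\bar V_n$. Because the second-order coefficients of $A$ vanish linearly in $y$, this is enough to conclude that $Au$ is bounded on $\bar V_n$; moreover, the stochastic-integral integrands $\sqrt{Y(s)}\,u_x$ and $\sigma\sqrt{Y(s)}(\rho u_x + \sqrt{1-\rho^2} u_y)$ are also bounded on $[t,\theta_n]$.

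The main obstacle is justifying It\^o's formula on $[t,\theta_n]$ for a function that is only $C^{1,1}_s$ up to $\Gamma_0$ rather than classically $C^{1,2}$. I handle this by a standard mollification argument in the spatial variables: I produce a sequence $u_\varepsilon\in C^\infty$ obtained by convolution, apply classical It\^o to $e^{-r(s-t)} u_\varepsilon(s,Z(s))$ on $[t,\theta_n]$, and pass to the limit $\varepsilon\downarrow 0$. The crucial point is that the limiting terms involving $u_{xx}$, $u_{xy}$, $u_{yy}$ appear only multiplied by $y$, which exactly matches the $C^{1,1}_{s,\loc}$ control; bounded convergence in the drift integral and $L^2$-convergence of the stochastic integrand (thanks to the bounds above) legitimize the passage. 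The PDE $-u_t + Au = f$, valid on $Q$, then identifies the bounded-variation part as $-e^{-r(s-t)}f(s,Z(s))\,ds$, and taking expectations kills the local-martingale term, giving the identity
\[
u(t,z) = \EE^{t,z}_\QQ\!\left[e^{-r(\theta_n-t)} u(\theta_n, Z(\theta_n))\right] + \EE^{t,z}_\QQ\!\left[\int_t^{\theta_n} e^{-r(s-t)} f(s,Z(s))\,ds\right].
\]

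Finally I pass to the limit $n\to\infty$. By path continuity and the defining property of $V_n$, $\theta_n\uparrow \nu_\sO\wedge T$ almost surely, with $Z(\theta_n)\to Z(\nu_\sO\wedge T)$; on $\{\nu_\sO<T\}$ the exit point lies on $\Gamma_1$ (since the process does not stop at $\Gamma_0$), and on $\{\nu_\sO\geq T\}$ the terminal value $u(T,\cdot)=g(T,\cdot)$ appears. The growth assumption \eqref{eq:stochastic_representation_Growth_parabolic} on $u$, $f$, and $g$, combined with the moment estimates for the Heston process used in Theorem \ref{thm:stochastic_representation_Uniqueness_BVP_parabolic} (via the exponential moment controls governed by the parameters $M_1 < \mu$ and $M_2 \le 1$), supplies the uniform integrability needed to apply dominated convergence to both expectations. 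This yields $u(t,z) = u^*(t,z)$ on $Q\cup\eth^1 Q$, completing the proof.
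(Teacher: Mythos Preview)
Your overall architecture is correct and matches the paper's: localize with an increasing family of domains touching $\Gamma_0$, establish the It\^o identity on each, then let the localization parameter go to infinity using the growth bound \eqref{eq:stochastic_representation_Growth_parabolic} and the moment estimates exactly as in Theorem \ref{thm:stochastic_representation_Uniqueness_BVP_parabolic}.  Where you diverge from the paper is in how you justify It\^o's formula up to $\Gamma_0$.  You mollify the \emph{function} $u$ and keep the process untouched; the paper instead keeps $u$ untouched and shifts the \emph{process}, setting $Z^\eps=(X,Y+\eps)$, so that on $[t,T\wedge\nu_{\sU_k}]$ the point $Z^\eps(s)$ never leaves the region $\{y\ge\eps\}$ where $u$ is genuinely $C^2$.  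Classical It\^o then applies directly to $u(s,Z^\eps(s))$, with drift $A^\eps u(Z^\eps)$ for the perturbed operator $A^\eps$ of \eqref{eq:stochastic_representation_Definition_A_eps}, and the $C^{1,1}_{s,\loc}$ hypothesis enters only through the very explicit bound \eqref{eq:stochastic_representation_Estimate_A_eps_minus_A} on $(A^\eps-A)u$, which makes the $\eps\downarrow 0$ limit immediate.

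Your mollification route can be made to work, but the step you pass over---``bounded convergence in the drift integral''---is not automatic.  Since $D^2u$ can blow up like $1/y$, the convolution $D^2u_\varepsilon=(D^2u)*\phi_\varepsilon$ is not obviously well-defined at $y=0$ without first extending $u$ across $\Gamma_0$, and even after a reasonable extension one has to check that $y\,D^2u_\varepsilon$ stays uniformly bounded as $\varepsilon\downarrow 0$ on a neighborhood of $\{y=0\}$; a naive estimate gives only $|y\,D^2u_\varepsilon(x,y)|\lesssim y\int \phi_\varepsilon(y-y')\,|y'|^{-1}\,dy'$, which does not stay bounded for $y$ of order $\varepsilon$.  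This is fixable (for example by exploiting that the process spends zero Lebesgue time at $y=0$ together with a more careful choice of approximation), but it is exactly the delicacy the paper's process-shift trick is designed to sidestep.  If you want to keep your approach, you should spell out the extension of $u$ to $\{y<0\}$ and supply the uniform bound on $y\,D^2u_\varepsilon$ explicitly; otherwise, the $Z^\eps$ device from the proof of Theorem \ref{thm:stochastic_representation_Uniqueness_BVP_elliptic_beta_less_than_1} gives a cleaner argument with no extension and no mollification.
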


\begin{rmk}[Existence and uniqueness of strong solutions in weighted Sobolev spaces to the parabolic terminal/boundary 
value problem]
Existence and uniqueness of strong solutions in weighted Sobolev spaces to problem 
\eqref{eq:stochastic_representation_Parabolic_BVP} with Dirichlet boundary condition
\eqref{eq:stochastic_representation_Parabolic_BVP_boundary_condition_Gamma_1} along $\eth^1 Q$, for all $\beta>0$, is 
proved in \cite{Daskalopoulos_Feehan_evolvarineqheston}.
\end{rmk}

\begin{rmk}[Growth of solutions to parabolic terminal/boundary value problems]
Karatzas and Shreve allow faster growth of solutions when the growth on the coefficients of the differential operator is 
constrained \cite[Theorem $4.4.2$ \& Problem
$5.7.7$]{KaratzasShreve1991}, and polynomial growth of solutions is allowed for linear growth coefficients and source 
function $f$ with at most polynomial growth \cite[Theorem
$5.7.6$]{KaratzasShreve1991}.
\end{rmk}

\begin{rmk}[Barrier option pricing and discontinuous terminal/boundary conditions]
\label{rmk:stochastic_representation_European_option_prices}
In applications to finance, $\sO$ will often be a rectangle, $(x_0,x_1)\times(0,\infty)$, where $-\infty\leq 
x_0<x_1\leq\infty$; the growth exponents will be $M_1=0$ and $M_2=1$
--- indeed, the source function $f$ will always be zero and the spatial boundary condition function $g:(0,T)\times\Gamma_1 
\rightarrow\RR$ will often be zero. However, the
spatial boundary condition, $g:(0,T)\times\Gamma_1 \rightarrow\RR$, and terminal condition, $g:\{T\}\times\bar\sO 
\rightarrow\RR$, may be \emph{discontinuous} where they meet
along $\{T\}\times\partial\sO$, as in the case of the \emph{down-and-out put}, with
$$
g(t,x,y) = \begin{cases} 0, & 0<t<T, x=x_0, y>0,\\ (K-e^x)^+ & t=T, x_0<x<\infty, y>0,\end{cases}
$$
where $g$ is discontinuous at $(T,x_0,y)$ if $K-e^{x_0}>0$, that is, $x_0<\log K$. We shall consider the question of 
establishing stochastic representations for solutions to
parabolic terminal/value problems (European-style option prices) or parabolic obstacle problems (American-style option 
prices) with discontinuous data elsewhere.
\end{rmk}

For $\alpha \in (0,1)$, we let $C^{\alpha}(Q)$ denote the subspace of $C(Q)$ consisting of \emph{locally} 
$\alpha$-H\"older continuous functions, $u$, on $Q$, that is, for any
precompact open set $V\Subset Q$,
\begin{equation}
\label{eq:stochastic_representation_Holder_seminorm}
\left[u\right]_{C^{\alpha}(V)}:=\sup_{\substack{(t^i,z^i) \in V \\ i=1,2}} 
\frac{|u(t^1,z^1)-u(t^2,z^2)|}{\left(|z^1-z^2|+\sqrt{|t^1-t^2|}\right)^{\alpha}} < \infty,
\end{equation}
and we let $C^{\alpha}(\bar Q) \subset C(\bar Q)$ denote the Banach space of functions, $u$, which are \emph{uniformly} 
$\alpha$-H\"older continuous on $Q$, that is
$$
\left[u\right]_{C^{\alpha}(Q)} < \infty.
$$
When $Q$ is unbounded, we let $C^{\alpha}_{\loc}(\bar Q)$ denote the subspace of $C^{\alpha}(Q)$ consisting of functions, 
$u$, such that, for any precompact open set $V\Subset
\bar Q$, we have
$$
\left[u\right]_{C^{\alpha}(V)} < \infty.
$$
We let $C^{2+\alpha}(Q)$ denote the subspace of $C^{2}(Q)$ consisting of functions, $u$, such that $u$, $u_t$, and the 
components of $Du$ and $D^2u$ belong to $C^{\alpha}(Q)$,
and let $C^{2+\alpha}(\bar Q) \subset C^{2}(\bar Q)$ denote the Banach space of functions, $u$, such that $u$, $u_t$, and 
the components of $Du$ and $D^2u$ belong to
$C^{\alpha}(\bar Q)$.

\begin{thm}[Existence of solutions to the parabolic terminal/boundary value problem with continuous Dirichlet boundary condition]
\label{thm:stochastic_representation_Existence_parabolic_BVP_beta}
In addition to the hypotheses of Theorem \ref{thm:stochastic_representation_Uniqueness_BVP_parabolic}, let $\sO\subset 
\HH$ be a domain such that the boundary $\Gamma_1$ obeys an
exterior sphere condition, and $f \in C^{\alpha}_{\loc}(\bar Q)$.
\begin{enumerate}
\item
If $\beta\geq 1$ and $g \in C_{\loc}(\overline{\eth^1 Q})$, then $u^*$ in 
\eqref{eq:stochastic_representation_Stochastic_representation_parabolic_BVP_1} is a solution to problem \eqref{eq:stochastic_representation_Parabolic_BVP} with boundary condition \eqref{eq:stochastic_representation_Parabolic_BVP_boundary_condition_Gamma_1}. 
In particular, $u^* \in C_{\loc}( Q \cup \eth^1 Q) \cap 
C^{2+\alpha}(Q)$ and obeys the growth assumption
\eqref{eq:stochastic_representation_Growth_parabolic}.
\item
If $0<\beta< 1$ and $g \in C_{\loc}(\overline{\eth Q})$, then $u^*$ in 
\eqref{eq:stochastic_representation_Stochastic_representation_parabolic_BVP_1} is a solution to problem \eqref{eq:stochastic_representation_Parabolic_BVP} with boundary condition \eqref{eq:stochastic_representation_Parabolic_BVP_boundary_condition_whole_boundary}. 
In particular, $u^* \in C_{\loc}(Q \cup \eth Q) \cap 
C^{2+\alpha}(Q)$ and satisfies the growth assumption
\eqref{eq:stochastic_representation_Growth_parabolic}.
\end{enumerate}
\end{thm}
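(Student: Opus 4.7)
The plan is to combine the strong Markov property with standard interior Schauder theory for strictly elliptic operators (since $A$ is uniformly elliptic on compact subsets of $\HH$, the degeneracy occurring only along $\Gamma_0\subset\partial\HH$), together with barrier arguments at $\Gamma_1$ based on the exterior sphere condition. The bulk of the argument is patterned after the classical treatment of the heat equation (e.g.\ Friedman, Karatzas--Shreve), with all new difficulties concentrated along the degenerate boundary $(0,T)\times\Gamma_0$.

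\textbf{Step 1: growth bound and continuity of $u^*$ on $Q$.} First I would verify that $u^*$ in \eqref{eq:stochastic_representation_Stochastic_representation_parabolic_BVP_1} satisfies \eqref{eq:stochastic_representation_Growth_parabolic} on $Q$. This follows from the moment estimates for $(X,Y)$ in Lemma \ref{lem:stochastic_representation_PropertiesHeston} applied to $e^{M_1 Y(s)}$ and $e^{M_2 X(s)}$, combined with the growth hypothesis on $f$ and $g$; the range $M_1<\mu$ is exactly what makes $\EE e^{M_1 Y(s)}$ finite for all $s\leq T$. Continuity of $u^*$ on $Q$ will then follow from the Feller continuity of the Heston flow $(t,z)\mapsto Z^{t,z}(\cdot)$ together with the dominated convergence theorem; one must first show that, along a sequence $(t_n,z_n)\to(t_0,z_0)\in Q$, the stopping times $\tau^{t_n,z_n}_\sO\wedge T$ converge a.s.\ to $\tau^{t_0,z_0}_\sO\wedge T$, which uses the fact that $\partial\sO\cap\HH$ is regular for $Z$ (this is where $\sO$ being a domain whose points on $\Gamma_1$ satisfy the exterior sphere condition enters).

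\textbf{Step 2: interior regularity and the PDE.} On any space--time ball $V\Subset Q$, the operator $-\partial_t+A$ is uniformly parabolic because $y$ is bounded below on $V$. I would solve the Dirichlet problem in $V$ with continuous boundary data $u^*|_{\partial_p V}$ using classical Schauder theory (e.g.\ \cite[Ch.~3]{FriedmanSDE}) to obtain a unique $v\in C^{2+\alpha}(V)\cap C(\bar V)$ satisfying $-v_t+Av=f$ in $V$. By the strong Markov property of $Z$ at the exit time from $V$ and Dynkin's formula applied to $v$, we get $v(t,z)=u^*(t,z)$ on $\bar V$. Since $V$ is arbitrary, $u^*\in C^{2+\alpha}(Q)$ with $C^{\alpha}$ seminorms coming from $f\in C^\alpha_{\loc}(\bar Q)$, and $-u^*_t+Au^*=f$ on $Q$.

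\textbf{Step 3: boundary regularity along $\Gamma_1$ and at the terminal time.} To verify the boundary condition, consider $(t_0,z_0)\in\eth^1 Q$ (or $\eth Q$ in the second case). When $z_0\in\Gamma_1$, the exterior sphere condition provides a local barrier for $A$ near $z_0$ (the coefficient matrix of $A$ is strictly positive definite there since $y_0>0$), so standard barrier estimates give $\QQ$--a.s.\ that $\tau^{t_n,z_n}_\sO\wedge T\to t_0$ and $Z^{t_n,z_n}(\tau^{t_n,z_n}_\sO\wedge T)\to z_0$ as $(t_n,z_n)\to(t_0,z_0)$; continuity of $g$ on $\overline{\eth^1 Q}$ (or $\overline{\eth Q}$) and dominated convergence then yield $u^*(t_n,z_n)\to g(t_0,z_0)$. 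At terminal points $(T,z_0)$ with $z_0\in\sO$, continuity of paths and $\tau^{T,z_0}_\sO\wedge T=T$ handle the limit. For the second case $0<\beta<1$, I additionally need to handle points $(t_0,z_0)$ with $z_0\in\Gamma_0$: by Lemma \ref{lem:stochastic_representation_CIR_properties} with $\beta<1$, $Y$ hits $0$ in finite time a.s., so for $z_n\to z_0\in\Gamma_0$ one shows $\tau^{t_n,z_n}_\sO\to t_0$ and the exit point converges to $z_0$, using the continuous trajectory of $Z$ and continuity of $g$ on $\overline{\eth Q}$.

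\textbf{Main obstacle.} The delicate step is the boundary continuity at $\Gamma_0$ in case $(b)$: unlike $\Gamma_1$, the operator $A$ degenerates there and one cannot appeal to classical barrier constructions for strictly elliptic operators. The key tool will be sharp probabilistic estimates for the Feller square--root process $Y$ near zero (hitting time distribution, moments), which allow one to control the modulus of continuity of $(t,z)\mapsto\EE^{t,z}[\cdots]$ uniformly as $y_n\downarrow 0$. A secondary subtlety is the behaviour near the ``edge'' $\{T\}\times\partial\sO$, where $g$ on $(0,T)\times\Gamma_1$ meets $g(T,\cdot)$; here the assumed continuity of $g$ on $\overline{\eth^1 Q}$ (respectively $\overline{\eth Q}$) is precisely what rules out the discontinuous cases discussed in Remark \ref{rmk:stochastic_representation_European_option_prices}.
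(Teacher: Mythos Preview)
Your outline is broadly correct and touches on all the right ingredients, but it differs from the paper's approach in how interior $C^{2+\alpha}$ regularity is obtained, and that difference exposes a weak point in your Step~1.

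The paper does \emph{not} first establish continuity of $u^*$ on $Q$ and then solve a local Dirichlet problem. Instead it exhausts $\sO$ by an increasing sequence of $C^{2+\alpha}$ subdomains $\sO_k\Subset\sO$, extends $g$ continuously to $[0,T]\times\RR^2$, and on each $Q_k=(0,T)\times\sO_k$ solves the classical (uniformly parabolic) terminal/boundary value problem via \cite[Theorem 3.4.9]{FriedmanPDE}, obtaining $u_k\in C(\bar Q_k)\cap C^{2+\alpha}(Q_k)$ with the known stochastic representation \eqref{eq:stochastic_representation_Approximate_stochastic_representation_parabolic_BVP}. One then shows $u_k\to u^*$ pointwise on $Q$ using uniform integrability (this is where Hypothesis \ref{hyp:stochastic_representation_T_small_parabolic} and Lemma \ref{lem:stochastic_representation_Power_X} enter), and interior parabolic Schauder estimates \cite[Theorem 3.3.5]{FriedmanSDE} plus Arzel\`a--Ascoli upgrade this to local $C^{2+\alpha}$ convergence. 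Continuity and $C^{2+\alpha}$ regularity of $u^*$ are thus obtained \emph{simultaneously}, as outputs of the limit, not as inputs.

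Your route instead needs $u^*$ continuous on $\partial_p V$ before you can invoke classical Schauder theory in Step~2, so everything hinges on your Step~1 claim that $(t_n,z_n)\mapsto\tau^{t_n,z_n}_\sO\wedge T$ converges a.s.\ under $(t_n,z_n)\to(t_0,z_0)\in Q$. This is not immediate for the degenerate Heston process: exit may occur through $\Gamma_0$ (when $0<\beta<1$), where the diffusion matrix vanishes and standard regular-point arguments for strictly elliptic diffusions do not apply directly. It can be made to work, but it is more delicate than your sketch suggests, and the paper's subdomain approximation sidesteps the issue entirely. In short, both strategies are valid; the paper's buys you interior regularity without ever analysing continuity of $\tau_\sO$ in the initial condition.

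Your Step~3 is essentially what the paper does: for points on $\Gamma_1$ it uses a local strong-Markov representation together with regularity of boundary points (via the exterior sphere condition and Corollary \ref{cor:stochastic_representation_Continuity_stochastic_representations_with_killing_term}); for $\Gamma_0$ in the case $0<\beta<1$ it uses exactly the estimate $T^y_0\to 0$ from Lemma \ref{lem:stochastic_representation_CIR_properties}\eqref{item:CIRbetaLessOne} that you cite, combined with a uniform-integrability argument to control $e^{-r\tau_\sO}g(Z(\tau_\sO))$.
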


For $T \subsetneqq \eth Q$ a relatively open subset, we let $C^{2+\alpha}_{\loc}(Q \cup T)$ denote the subspace of 
$C^{2+\alpha}(Q)$ such that, for any precompact open set $U
\Subset Q \cup T$, we have $u \in C^{2+\alpha}(\bar U)$.

\begin{thm}[Existence of solutions to the parabolic terminal/boundary value problem with $C^{2+\alpha}$
Dirichlet boundary condition]
\label{thm:stochastic_representation_Existence_parabolic_BVP_beta_with_Holder_continuous_boundary_data}
In addition to the hypotheses of Theorem \ref{thm:stochastic_representation_Uniqueness_BVP_parabolic}, let $\sO\subset 
\HH$ be a domain such that the boundary portion $\Gamma_1$ is of class $C^{2+\alpha}$.
\begin{enumerate}
\item
If $\beta\geq 1$ and  $g \in C^{2+\alpha}_{\loc}(Q\cup\eth^1 Q)$ obeys
\begin{equation}
\label{eq:Parabolic_compatibility_f_g_Gamma1}
-g_t+Ag = f \hbox{  on  } \{T\}\times\Gamma_1.
\end{equation}
Then $u^*$ in \eqref{eq:stochastic_representation_Stochastic_representation_parabolic_BVP_1} is a solution to 
problem \eqref{eq:stochastic_representation_Parabolic_BVP} with boundary condition \eqref{eq:stochastic_representation_Parabolic_BVP_boundary_condition_Gamma_1}. 
In particular,
$$
u^* \in C^{2+\alpha}_{\loc}(Q \cup \eth^1 Q)
$$
and obeys the growth estimate \eqref{eq:stochastic_representation_Growth_parabolic}.
\item
If $0<\beta< 1$ and $g\in C^{2+\alpha}_{\loc}(Q \cup\eth^1 Q) \cap C_{\loc}(\bar Q)$ obeys
\begin{equation}
\label{eq:Parabolic_compatibility_f_g}
-g_t+Ag = f \hbox{  on  } \{T\}\times\partial\sO.
\end{equation}
Then $u^*$ in \eqref{eq:stochastic_representation_Stochastic_representation_parabolic_BVP_1} is a solution to 
problem \eqref{eq:stochastic_representation_Parabolic_BVP} with boundary condition \eqref{eq:stochastic_representation_Parabolic_BVP_boundary_condition_whole_boundary}. 
In particular,
$$
u^* \in C^{2+\alpha}_{\loc}(Q \cup\eth^1 Q) \cap C_{\loc}(\bar Q).
$$
and obeys the growth estimate \eqref{eq:stochastic_representation_Growth_parabolic}.
\end{enumerate}
\end{thm}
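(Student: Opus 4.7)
The plan is to apply the continuous-boundary-data existence result Theorem~\ref{thm:stochastic_representation_Existence_parabolic_BVP_beta} to obtain $u^*$ as a classical solution with continuous boundary values, and then upgrade the boundary regularity up to $\eth^1 Q$ from merely continuous to $C^{2+\alpha}$ by classical parabolic Schauder theory. The key observation is that any precompact $U \Subset Q \cup \eth^1 Q$ stays at positive distance from the degenerate portion $\Gamma_0 = \partial\sO \cap \partial\HH$, so $y \geq \varepsilon > 0$ on $\bar U$ and the Heston operator $A$ is uniformly parabolic there with smooth (polynomial in $y$) coefficients.

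First I would invoke Theorem~\ref{thm:stochastic_representation_Existence_parabolic_BVP_beta}: the boundary data $g \in C^{2+\alpha}_{\loc}(Q \cup \eth^1 Q)$ is in particular continuous on $\overline{\eth^1 Q}$ (for case~(2), continuity on $\overline{\eth Q}$ is supplied by the additional hypothesis $g \in C_{\loc}(\bar Q)$), and the inherited H\"older assumption on $f$ is in force, so the theorem produces $u^* \in C_{\loc}(Q \cup \eth^1 Q) \cap C^{2+\alpha}(Q)$ (respectively $u^* \in C_{\loc}(Q \cup \eth Q) \cap C^{2+\alpha}(Q)$) satisfying~\eqref{eq:stochastic_representation_Parabolic_BVP} together with the prescribed boundary condition and the growth estimate~\eqref{eq:stochastic_representation_Growth_parabolic}.

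Second, I would upgrade $u^*$ to $C^{2+\alpha}_{\loc}(Q \cup \eth^1 Q)$ by a convenient reduction to homogeneous boundary data: set $w := u^* - g$, so that $w$ solves $-w_t + Aw = \tilde f$ on $Q$ with $\tilde f := f + g_t - Ag \in C^\alpha_{\loc}(Q \cup \eth^1 Q)$ and $w = 0$ on $\eth^1 Q$. The compatibility hypothesis~\eqref{eq:Parabolic_compatibility_f_g_Gamma1} (respectively~\eqref{eq:Parabolic_compatibility_f_g}) forces $\tilde f = 0$ at the terminal-lateral corner $\{T\} \times \Gamma_1$. On any precompact $U \Subset Q \cup \eth^1 Q$ as above, $A$ is uniformly parabolic on $\bar U$ with smooth coefficients, $\Gamma_1 \cap \bar U$ is $C^{2+\alpha}$, $\tilde f|_{\bar U} \in C^\alpha(\bar U)$, the zero Dirichlet data is trivially $C^{2+\alpha}$, and the zeroth-order compatibility $\tilde f = 0$ at the corner holds; classical boundary parabolic Schauder estimates then give $w \in C^{2+\alpha}(\bar U)$. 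Adding back $g \in C^{2+\alpha}_{\loc}(Q \cup \eth^1 Q)$ yields $u^* = w + g \in C^{2+\alpha}_{\loc}(Q \cup \eth^1 Q)$, and in case~(2) also $u^* \in C_{\loc}(\bar Q)$. The growth estimate on $u^*$ is inherited from that on $g$ and $w$ (the latter from the stochastic representation for the reduced problem).

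The main obstacle is closing the Schauder estimate up to the terminal-lateral corner $\{T\} \times \Gamma_1$, where the lateral Dirichlet condition meets the terminal condition. The compatibility condition $-g_t + Ag = f$ at this corner — equivalently $\tilde f = 0$ after the reduction — is precisely the zeroth-order compatibility needed to secure $C^{2+\alpha}$ regularity at such a corner; without it, $u^*$ would in general be no smoother than $C^{1,\alpha'}$ there, and this is the reason the stronger hypothesis on $g$ is imposed.
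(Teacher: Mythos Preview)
Your route differs from the paper's and has a genuine gap in the first step. You claim that $g \in C^{2+\alpha}_{\loc}(Q\cup\eth^1 Q)$ is ``in particular continuous on $\overline{\eth^1 Q}$'', but this fails: $Q\cup\eth^1 Q = (0,T]\times(\sO\cup\Gamma_1)$, whereas $\overline{\eth^1 Q}$ contains $\{T\}\times\bar\Gamma_0$, the set $[0,T]\times(\bar\Gamma_1\cap\bar\Gamma_0)$, and $\{0\}\times\bar\Gamma_1$, none of which lie in $Q\cup\eth^1 Q$. Nothing in the hypotheses forces $g$ to extend continuously across those pieces, so Theorem~\ref{thm:stochastic_representation_Existence_parabolic_BVP_beta} is not directly applicable in case~(1). (In case~(2) your observation that $g\in C_{\loc}(\bar Q)$ supplies the needed continuity is correct.) A secondary point: the boundary Schauder statement available in the paper, Proposition~\ref{prop:stochastic_representation_Local_apriori_boundary_estimates}, is an \emph{a priori} estimate for functions already in $C^{2+\alpha}(\bar Q_{2R,T})$, not a regularity theorem; invoking it to bootstrap a merely continuous $w$ up to $C^{2+\alpha}$ at the boundary requires an additional approximation layer that you have not supplied.

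The paper avoids both issues by not invoking Theorem~\ref{thm:stochastic_representation_Existence_parabolic_BVP_beta} at all. Instead it transplants the approximation argument from the proof of the elliptic H\"older analogue (Theorem~\ref{thm:stochastic_representation_Existence_elliptic_BVP_beta_Holder_continuous_boundary_data}): one chooses an increasing sequence of $C^{2+\alpha}$ subdomains $\sD_k$ that reach up to $\Gamma_1$ but stay a positive distance from $\Gamma_0$, solves the uniformly parabolic problem on each $(0,T)\times\sD_k$ with boundary data $g$ (which is already $C^{2+\alpha}$ there, since $\bar\sD_k\subset\sO\cup\Gamma_1$), applies the local boundary a priori estimate of Proposition~\ref{prop:stochastic_representation_Local_apriori_boundary_estimates} to the approximants $u_k$ --- which \emph{are} known to be in $C^{2+\alpha}$ up to the boundary by classical theory on each $\sD_k$ --- and then passes to the limit. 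This uses $g$ only on $\sO\cup\Gamma_1$, exactly where the hypothesis provides regularity, and the a priori estimate is applied to objects that legitimately satisfy its hypotheses. Your subtraction trick $w=u^*-g$ is natural, but to make it rigorous here you would end up reproducing essentially this same approximation.
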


\begin{rmk}[Zero and first-order compatibility conditions for parabolic equations]
The conditions \eqref{eq:Parabolic_compatibility_f_g_Gamma1} and \eqref{eq:Parabolic_compatibility_f_g} are the analogues 
of the first-order compatibility condition
\cite[Equation (10.4.3)]{Krylov_LecturesHolder}. The analogue of the zero-order compatibility condition in \cite[Equation 
(10.4.2)]{Krylov_LecturesHolder} automatically holds at
$\{T\}\times\Gamma_1$ or $\{T\}\times\partial\sO$, since we always choose $h=g(T,\cdot)$ on $\Gamma_1$ or $\partial\sO$, 
respectively, in this article.
\end{rmk}

\begin{rmk}[Existence of solutions with Daskalopoulos-Hamilton-K\"och H\"older regularity]
\label{rmk:ParabolicExistenceDHKHolder}
As in the elliptic case, the solutions to the parabolic terminal/boundary value problem 
\eqref{eq:stochastic_representation_Parabolic_BVP},
\eqref{eq:stochastic_representation_Parabolic_BVP_boundary_condition_Gamma_1} should lie in $C^{2+\alpha}_s(\underline 
Q)\cap C_{\loc}(\bar Q)$ for all $\beta>0$, where
$C^{2+\alpha}_s(\underline Q)$ is the parabolic Daskalopoulos-Hamilton-K\"och H\"older space described in 
\cite{DaskalHamilton1998, Koch}. A function
$u\in C^{2+\alpha}_s(\underline Q)$ has the property that $u, Du, yD^2u$ are $C^{\alpha}_s$ continuous up to $\Gamma_0$ 
and $yD^2u=0$ on $(0,T)\times\Gamma_0$, where
$C^{\alpha}_s(\underline Q)$ is defined by analogy with the traditional definition \cite{Krylov_LecturesHolder} of 
$C^{\alpha}(Q)$, except that Euclidean distance between points
in $Q$ is replaced by the cycloidal distance function. When $Q=\HH\times(0,T)$, we establish this existence result in 
\cite[Theorem 1.1]{Feehan_Pop_mimickingdegen_pde}.
\end{rmk}

\begin{rmk}[Existence and uniqueness of solutions to parabolic terminal/boundary value problems]
\label{rmk:ClassicalParabolicPDEResults}
Existence and uniqueness of solutions to the parabolic terminal/boundary value problem 
\eqref{eq:stochastic_representation_Parabolic_BVP} and
\eqref{eq:stochastic_representation_Parabolic_BVP_boundary_condition_Gamma_1}, again \emph{provided} 
$\Gamma_1=\partial\sO$, follow from Schauder methods when the coefficient
matrix, $(a^{ij})$, of $A$ is strictly elliptic on $\bar\sO$. For example, see \cite[Theorems 5.9 \& 5.10]{Lieberman} for 
the case where $f$ and the coefficients of $A$ are
bounded and in $C^{\alpha}(Q)$, giving a unique solution $u\in C^{2+\alpha}(Q)\cap C(\bar Q)$.
\end{rmk}

\subsubsection{Uniqueness of solutions to parabolic obstacle problems}
For $\theta_1,\theta_2 \in \sT_{t,T}$, $0 \leq t \leq T$, we set
\begin{equation}
\label{eq:stochastic_representation_Functional_J_parabolic}
\begin{aligned}
J_p^{\theta_1,\theta_2}(t,z)
  &:= \EE^{t,z}_\QQ\left[\int_{t}^{\theta_1\wedge\theta_2} e^{-r(s-t)} f(s,Z(s))\,ds \right]
    + \EE^{t,z}_\QQ\left[e^{-r(\theta_2-t)} \psi(\theta_2,Z(\theta_2)) \mathbf{1}_{\{\theta_2 < \theta_1\}}\right]\\
  &\quad + \EE^{t,z}_\QQ\left[e^{-r(\theta_1-t)} g(\theta_1,Z(\theta_1)) \mathbf{1}_{\{\theta_1\leq\theta_2\}}\right].
\end{aligned}
\end{equation}
We have the following \emph{uniqueness} result of solutions to the parabolic obstacle problem with different possible 
boundary conditions, depending on the value of the parameter
$\beta>0$.

\begin{thm}[Uniqueness of solutions to the parabolic obstacle problem]
\label{thm:stochastic_representation_Uniqueness_parabolic_OP}
Let $f$ be as in Theorem \ref{thm:stochastic_representation_Uniqueness_BVP_parabolic} and $\psi$ belong to $C(Q)$ and satisfy
\eqref{eq:stochastic_representation_Growth_parabolic}.
\begin{enumerate}
\item
If $\beta\geq 1$, assume $\psi \in C_{\loc}(Q\cup\eth^1 Q)$ and $g\in C_{\loc}(\eth^1 Q)$ obeys 
\eqref{eq:stochastic_representation_Growth_parabolic} on $\eth^1 Q$ and
\eqref{eq:stochastic_representation_Parabolic_compatibility_g_psi_beta_geq_1}. Let
$$
u \in C_{\loc}(Q\cup\eth^1 Q) \cap C^2(Q)
$$
be a solution to the parabolic obstacle problem \eqref{eq:stochastic_representation_Parabolic_obstacle_problem},
\eqref{eq:stochastic_representation_Parabolic_BVP_boundary_condition_Gamma_1} such that $u$ and $Au$ obey 
\eqref{eq:stochastic_representation_Growth_parabolic} on $Q$. Then,
$u=u^*$ on $Q\cup\eth^1 Q$, where $u^*$ is given by
\begin{equation}
\label{eq:stochastic_representation_Stochastic_representation_POP_1}
u^*(t,z) := \sup_{\theta \in \sT_{t,T}} J_p^{\tau_{\sO}\wedge T,\theta}(t,z),
\end{equation}
and $\tau_{\sO}$ is defined by \eqref{eq:stochastic_representation_Stopping_time_open_set}, for all $(t,z)\in 
Q\cup\eth^1 Q$.
\item
If $0<\beta<1$, assume $\psi \in C_{\loc}(\bar Q)$ and $g\in C_{\loc}(\eth Q)$ obeys 
\eqref{eq:stochastic_representation_Growth_parabolic} on $\eth Q$ and
\eqref{eq:stochastic_representation_Parabolic_compatibility_g_psi_beta_leq_1}. Let
$$
u \in C_{\loc}( Q\cup \eth Q) \cap C^2(Q)
$$
be a solution to the parabolic obstacle problem \eqref{eq:stochastic_representation_Parabolic_obstacle_problem},
\eqref{eq:stochastic_representation_Parabolic_BVP_boundary_condition_whole_boundary} such that $u$ and $Au$ obey 
\eqref{eq:stochastic_representation_Growth_parabolic} on $Q$.
Then, $u=u^*$ on $ Q\cup \eth Q$, where $u^*$ is given by 
\eqref{eq:stochastic_representation_Stochastic_representation_POP_1}.
\end{enumerate}
\end{thm}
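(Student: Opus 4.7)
The plan is to establish the two inequalities $u \geq u^*$ and $u \leq u^*$ separately on $Q \cup \eth^1 Q$ (case $\beta\geq 1$) or on $Q \cup \eth Q$ (case $0<\beta<1$). The method is the classical verification argument for optimal stopping: apply a localized It\^o formula to the process $e^{-r(s-t)} u(s, Z(s))$ and exploit the obstacle inequalities $-u_t + Au \geq f$ and $u \geq \psi$ encoded in \eqref{eq:stochastic_representation_Parabolic_obstacle_problem}, together with the Dirichlet data along $\eth^1 Q$ or $\eth Q$. Because $u \in C^2(Q)$ only, and because the coefficients of $A$ and the data are unbounded, every application of It\^o's formula must be combined with a localization and a subsequent dominated-convergence argument driven by the growth bound \eqref{eq:stochastic_representation_Growth_parabolic}.

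For the inequality $u(t,z) \geq J_p^{\tau_\sO \wedge T, \theta}(t,z)$ for arbitrary $\theta \in \sT_{t,T}$, I fix an exhaustion $\{\sO_n\}$ of $\sO$ by precompact subdomains and set $\tau_n := \tau_{\sO_n}^{t,z}$, so that $\tau_n \uparrow \tau_\sO$. Applying It\^o's formula to $e^{-r(s-t)} u(s, Z(s))$ on $[t, \tau_n \wedge \theta \wedge T]$ is legitimate because $\bar\sO_n \Subset \sO$ and $u \in C^2(Q)$; the drift integrand $(u_s - Au)(s, Z(s)) \leq -f(s, Z(s))$ by \eqref{eq:stochastic_representation_Parabolic_obstacle_problem}, and the stochastic integral is a true martingale on the localized interval. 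Taking expectation and letting $n \to \infty$, splitting the terminal term on the events $\{\theta < \tau_\sO \wedge T\}$ and $\{\tau_\sO \wedge T \leq \theta\}$, and using $u \geq \psi$ in $Q$ together with the boundary condition $u=g$ yields the desired pointwise inequality; taking supremum over $\theta$ gives $u \geq u^*$.

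For the reverse inequality, I would introduce the candidate stopping time
\[
\theta^* := \inf\{s \in [t,T] : u(s, Z(s)) \leq \psi(s, Z(s))\} \wedge T,
\]
which is an $\FF$-stopping time since $u-\psi$ is continuous on $Q \cup \eth^1 Q$ (resp.\ $Q \cup \eth Q$) and $\FF$ satisfies the usual conditions. On the random interval $[t, \theta^* \wedge \tau_\sO \wedge T)$ one has $u > \psi$, so the obstacle relation forces $-u_t + Au = f$ identically. The same localized It\^o argument applied on $[t, \theta^* \wedge \tau_n \wedge T]$ now yields \emph{equality} in place of inequality; passing to the limit and invoking the identifications $u(\theta^*, Z(\theta^*)) = \psi(\theta^*, Z(\theta^*))$ on $\{\theta^* < \tau_\sO \wedge T\}$ and $u=g$ along the relevant portion of the boundary produces $u(t,z) = J_p^{\tau_\sO \wedge T, \theta^*}(t,z) \leq u^*(t,z)$.

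The main obstacle is the uniform-integrability justification for passing to the limit as $n \to \infty$. The growth hypothesis \eqref{eq:stochastic_representation_Growth_parabolic}, with the explicit constraints $M_1 < \mu$ and $M_2 \in [0,1]$, is what allows exponential-moment estimates for $(X^{t,x,y}, Y^{t,y})$ (cf.\ Lemma \ref{lem:stochastic_representation_PropertiesHeston}) to dominate the integrands $f(s, Z(s))$, $\psi(s, Z(s))$, $g(s, Z(s))$, and the terminal values $u(\tau_n \wedge \theta \wedge T, Z(\cdot))$ uniformly in $n$. In case (2), the additional subtlety is that $Z$ can reach $\Gamma_0$ in finite time (Lemma \ref{lem:stochastic_representation_CIR_properties}), so $\tau_\sO$ may lie on $\Gamma_0$; here the stronger hypothesis that $g$ is continuous on all of $\eth Q$, together with the compatibility \eqref{eq:stochastic_representation_Parabolic_compatibility_g_psi_beta_leq_1}, is precisely what is needed to identify the limiting boundary value $u(\tau_\sO \wedge T, Z(\tau_\sO \wedge T)) = g(\tau_\sO \wedge T, Z(\tau_\sO \wedge T))$ on that event and to reconcile with the obstacle branch at $\theta^*$.
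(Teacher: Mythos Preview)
Your verification argument is sound and matches the paper's treatment of the \emph{small-time} case, but it has a real gap at the uniform-integrability step when $T$ is arbitrary. The parabolic growth condition \eqref{eq:stochastic_representation_Growth_parabolic} allows $M_2=1$ (unlike the elliptic condition, which requires $M_2<1$), and in that borderline case Lemma \ref{lem:stochastic_representation_PropertiesHeston} only yields a first-moment bound on $e^{M_2 X(\tau_{\sO_n}\wedge\theta\wedge T)}$, which is not enough to pass to the limit $n\to\infty$. The paper instead invokes Lemma \ref{lem:stochastic_representation_Power_X}, which gives $\sup_{\theta\in\sT_{0,T}}\EE\bigl[e^{pX(\theta)}\bigr]<\infty$ for $p$ slightly larger than $1$, but only provided $T$ is small enough that $p<c/(2\sigma T)$; this is exactly the content of Hypothesis \ref{hyp:stochastic_representation_T_small_parabolic}. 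So your argument, as written, proves the theorem only for $T\le\widetilde T$ with $\widetilde T$ chosen so that Hypothesis \ref{hyp:stochastic_representation_T_small_parabolic} holds.

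For the terminal/boundary value problem (Theorem \ref{thm:stochastic_representation_Uniqueness_BVP_parabolic}) the extension to large $T$ is immediate from the strong Markov property, since $u^*$ is given by a single expectation. For the obstacle problem this reduction is not automatic: $u^*$ is a supremum over $\sT_{t,T}$, and one must show that the optimal-stopping value on $[t,T]$ can be computed by first solving on $[T_k,T]$ and then on $[T_{k-1},T_k]$ with the previously computed value as terminal data. The bulk of the paper's proof is precisely this dynamic-programming step: partition $[0,T]$ into subintervals of length at most $\widetilde T$, decompose any $\theta\in\sT_{t,T}$ as $\theta=\eta\mathbf{1}_{\{\eta<T_k\}}+\xi\mathbf{1}_{\{\eta=T_k\}}$ with $\eta\in\sT_{t,T_k}$ and $\xi\in\sT_{T_k,T}$, and use the Markov property at time $T_k$ to identify the inner supremum with $u^*(T_k,Z(T_k))$. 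This is the missing ingredient in your proposal; without it, the case $M_2=1$ (which is exactly the case of interest for call-type payoffs in finance) is not covered.
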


\begin{thm}[Uniqueness of solutions to the parabolic obstacle problem 
\eqref{eq:stochastic_representation_Parabolic_obstacle_problem},
\eqref{eq:stochastic_representation_Parabolic_BVP_boundary_condition_Gamma_1}, when $0<\beta<1$]
\label{thm:stochastic_representation_Uniqueness_parabolic_OP_beta_less_than_1}
Let $0<\beta<1$ and $f$ be as in Theorem \ref{thm:stochastic_representation_Uniqueness_BVP_parabolic}. Assume $\psi \in 
C_{\loc}(Q \cup \eth^1 Q)$, and $g\in C_{\loc}(\eth^1 Q)$
obey \eqref{eq:stochastic_representation_Growth_parabolic} on $\eth^1 Q$ and 
\eqref{eq:stochastic_representation_Parabolic_compatibility_g_psi_beta_geq_1}. Let
$$
u \in C_{\loc}(Q \cup \eth^1 Q) \cap C^2(Q)\cap C^{1,1}_{s,\loc}(Q\cup(0,T)\times(\sO\cup\Gamma_0))
$$
be a solution to the parabolic obstacle problem \eqref{eq:stochastic_representation_Parabolic_obstacle_problem},
\eqref{eq:stochastic_representation_Parabolic_BVP_boundary_condition_Gamma_1} such that $u$ and $Au$ obey 
\eqref{eq:stochastic_representation_Growth_parabolic}. Then, $u=u^*$ on
$Q \cup \eth^1 Q$, where $u^*$ is given by
\begin{equation}
\label{eq:stochastic_representation_Stochastic_representation_POP_2}
u^*(t,z) := \sup_{\theta \in \mathscr{T}_{t,T}} J_p^{\nu_{\sO}\wedge T,\theta}(t,z),
\end{equation}
and $\nu_{\sO}$ is defined by \eqref{eq:stochastic_representation_Stopping_time_domain_with_degenerate_boundary}, for all 
$(t,z)\in Q\cup\eth^1 Q$.
\end{thm}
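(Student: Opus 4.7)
The plan is to follow the standard pattern for uniqueness of obstacle-problem solutions via an It\^o--Doob comparison, adapted to the degenerate parabolic setting and to the weaker boundary specification along $\Gamma_0$. Fix $(t,z)\in Q\cup\eth^1 Q$. The goal is to show $u(t,z)\geq J_p^{\nu_{\sO}\wedge T,\theta}(t,z)$ for every $\theta\in\sT_{t,T}$, and then to exhibit a particular $\theta^*$ achieving equality.

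First, I would select an exhaustion $\{U_n\}$ of $\sO\cup\Gamma_0$ by precompact open sets $U_n\Subset \sO\cup\Gamma_0$ with $U_n\nearrow \sO\cup\Gamma_0$, and set $\tau_n:=\nu^{t,z}_{U_n}\wedge T$. On each $U_n$, the hypothesis $u\in C^{1,1}_{s,\loc}(Q\cup(0,T)\times(\sO\cup\Gamma_0))$ combined with $u\in C^2(Q)$ guarantees that $|u|+|Du|+|yD^2u|$ is bounded up to the portion of $\partial U_n$ touching $\Gamma_0$. Because the second-order coefficients in $A$ carry an explicit factor of $y$ and the first-order coefficients grow only linearly, this forces $Au$ to extend as a bounded continuous function on $U_n\cap(\sO\cup\Gamma_0)$, and the quadratic variation of the prospective martingale correction, whose integrand contains the factor $\sqrt{Y(s)}\,Du$, is integrable along the stopped path.

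Next I would apply It\^o's formula to $e^{-r(s-t)}u(s,Z(s))$ on $[t,\tau_n\wedge\theta]$ for arbitrary $\theta\in\sT_{t,T}$. Since $u$ need not be $C^{1,2}$ up to $(0,T)\times\Gamma_0$, I would justify this either by a vertical mollification $u_\varepsilon(t,x,y):=u(t,x,y+\varepsilon)$ on a translated domain and passing to the limit using dominated convergence governed by the uniform bounds above, or by invoking a generalized It\^o formula for the regularity class $C^{1,1}_{s,\loc}$; the $y$-weighted control of $D^2u$ matches exactly the degeneracy of the generator, so the It\^o correction term is well defined up to the boundary. Combined with the obstacle inequality $-u_t+Au-f\geq 0$ on $Q$, this yields
\begin{equation*}
u(t,z)\;\geq\;\EE^{t,z}_\QQ\!\left[\int_t^{\tau_n\wedge\theta} e^{-r(s-t)}f(s,Z(s))\,ds\right]+\EE^{t,z}_\QQ\!\left[e^{-r(\tau_n\wedge\theta-t)}u(\tau_n\wedge\theta,Z(\tau_n\wedge\theta))\right].
\end{equation*}
Using $u\geq\psi$ on $Q$ and $u=g$ on $\eth^1 Q$, and letting $n\to\infty$ via dominated convergence (controlled by the growth hypothesis \eqref{eq:stochastic_representation_Growth_parabolic} together with the Heston exponential moment bounds recorded in Lemma \ref{lem:stochastic_representation_PropertiesHeston}, exactly as in the proof of Theorem \ref{thm:stochastic_representation_Uniqueness_BVP_parabolic_beta_less_than_1}), I deduce $u(t,z)\geq J_p^{\nu_\sO\wedge T,\theta}(t,z)$. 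Taking the supremum over $\theta\in\sT_{t,T}$ gives $u\geq u^*$. For the matching reverse inequality, I would take the candidate optimal stopping time
\begin{equation*}
\theta^*\;:=\;\inf\{s\in[t,T]: u(s,Z(s))=\psi(s,Z(s))\}\wedge T,
\end{equation*}
which is an $\FF$-stopping time via a standard d\'ebut argument, since $u-\psi$ is continuous on $Q\cup\eth^1 Q$. On the random interval $[t,\theta^*\wedge\nu_\sO\wedge T]$ we have $u>\psi$, so the obstacle problem degenerates to the equation $-u_t+Au=f$ along the path. Re-running the It\^o argument with $\theta=\theta^*$ converts the inequality into an equality, giving $u(t,z)=J_p^{\nu_\sO\wedge T,\theta^*}(t,z)\leq u^*(t,z)$.

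The main obstacle is justifying It\^o's formula up to $\Gamma_0$ under only the $C^{1,1}_{s,\loc}$ hypothesis: the match between the weighted bound $|yD^2u|\leq C$ and the explicit factor $y$ in the diffusion and drift coefficients of \eqref{eq:stochastic_representation_HestonSDE} is precisely what makes the approximation step go through, since it ensures that $Au$ has a continuous extension to $\Gamma_0$ and that the mollified formulae have dominating integrands independent of $\varepsilon$. A secondary but routine technical point is the uniform-in-$n$ integrability required to take the limit along the exhaustion $U_n\nearrow\sO\cup\Gamma_0$, which follows from \eqref{eq:stochastic_representation_Growth_parabolic} and the moment estimates for $(X,Y)$ already exploited in the proof of Theorem \ref{thm:stochastic_representation_Uniqueness_BVP_parabolic_beta_less_than_1}.
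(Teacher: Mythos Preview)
Your approach is essentially the paper's: the vertical mollification $u_\varepsilon(t,x,y)=u(t,x,y+\varepsilon)$ is exactly the paper's $Z^\varepsilon$-shift written on the function side, and the two-step structure ($u\geq u^*$ via It\^o plus the obstacle inequality, then $u\leq u^*$ via the first hitting time of the coincidence set) is identical to the combination of Theorems \ref{thm:stochastic_representation_Uniqueness_elliptic_OP_beta_less_than_1} and \ref{thm:stochastic_representation_Uniqueness_parabolic_OP} that the paper invokes.

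There is, however, one genuine gap. You pass to the limit $n\to\infty$ by citing Lemma \ref{lem:stochastic_representation_PropertiesHeston} and the proof of Theorem \ref{thm:stochastic_representation_Uniqueness_BVP_parabolic_beta_less_than_1}. But in the parabolic growth hypothesis \eqref{eq:stochastic_representation_Growth_parabolic} one allows $M_2=1$, and Lemma \ref{lem:stochastic_representation_PropertiesHeston} alone only controls $e^{cX}$ for $c\in[0,1]$; to get uniform integrability you need $p_0>1$ with $\EE[e^{p_0 M_2 X(\theta)}]<\infty$, which comes from Lemma \ref{lem:stochastic_representation_Power_X} and is only available when $T$ is small enough that Hypothesis \ref{hyp:stochastic_representation_T_small_parabolic} holds. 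In the boundary-value problem this is harmless: by the strong Markov property one may assume $T$ small at the outset. For the obstacle problem that reduction is not automatic, because $u^*$ is a supremum over $\sT_{t,T}$ and the optimal stopping problem on $[t,T]$ does not trivially decompose into problems on short subintervals. The paper handles this in the proof of Theorem \ref{thm:stochastic_representation_Uniqueness_parabolic_OP} by a careful time-partition and dynamic-programming argument (Claim \ref{claim:stochastic_representation_split_terms_F} and the surrounding computation), showing that the small-$T$ representation can be iterated. Your proposal should either include that step or explain why it can be bypassed.
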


\begin{rmk}[Existence and uniqueness of strong solutions in weighted Sobolev spaces to the parabolic obstacle problem]
Existence and uniqueness of strong solutions in weighted Sobolev spaces to problem 
\eqref{eq:stochastic_representation_Parabolic_obstacle_problem} with Dirichlet boundary
condition \eqref{eq:stochastic_representation_Parabolic_BVP_boundary_condition_Gamma_1} along $\eth^1 Q$, for all 
$\beta>0$, is proved in
\cite{Daskalopoulos_Feehan_evolvarineqheston}.
\end{rmk}

\subsection{Survey of previous results on stochastic representations of solutions to boundary value or obstacle problems}
Stochastic representations of solutions to elliptic and parabolic boundary value and obstacle problems discussed by 
Bensoussan and Lions \cite{Bensoussan_Lions} and Friedman
\cite{FriedmanSDE} are established under the hypotheses that the matrix of coefficients, $(a^{ij})$, of the second-order 
spatial derivatives in an elliptic linear, second-order
differential operator, $A$, is \emph{strictly elliptic} and that all coefficients of $A$ are \emph{bounded}. Relaxations 
of these hypotheses, as in \cite[Chapter $13$ \&
$15$]{FriedmanSDE}, and more recently \cite{Zhou_quasiderivdegenpde}, fail to include the Heston generator mainly because 
the matrix $(a^{ij})$ does \emph{not} satisfy

\begin{hyp}[Extension property for positive definite, $C^2$ matrix-valued functions]
\label{hyp:stochastic_representation_Extension_matrix}
Given a subdomain $V\subsetneqq (0,\infty)\times\RR^d$, for $d\geq 1$, we say that a matrix-valued function,
$$
a: V \to \RR^{d\times d},
$$
which is $C^2$ on $V$ and $a(t,z)$ is positive definite for each $(t,z) \in V$ has the \emph{extension property} if there 
is a matrix-valued function,
$$
\tilde a: [0,\infty)\times\RR^d \to \RR^{d\times d},
$$
which coincides with $a$ on $V$ but is $C^2$ on $[0,\infty)\times\RR^d$ and $\tilde a(t,z)$ is positive definite for each 
$(t,z) \in [0,\infty)\times\RR^d$.
\end{hyp}

Naturally, Hypothesis \ref{hyp:stochastic_representation_Extension_matrix} is also applicable when the matrix $a$ is 
constant with respect to time, that is, in elliptic problems.
Note that in the case of the Heston process, $d=2$, $V=(0,\infty)\times\HH$, and
\begin{align*}
a(t,x,y) &:=
\left(\begin{array}{ll}
y & \sigma \rho y\\
\sigma\rho y & \sigma^2 y
\end{array}\right), \quad \forall (x,y)\in \HH,
\end{align*}
and so the matrix $a$ does \emph{not} satisfy Hypothesis \ref{hyp:stochastic_representation_Extension_matrix}. We now give 
more detailed comparisons for each of the four main
problems which we consider in this article.

\subsubsection{Elliptic boundary value problems}
Uniqueness of stochastic representations of solutions to \emph{non-degenerate} elliptic partial differential equations is 
established in \cite[Theorem $6.5.1$]{FriedmanSDE},
\cite[Proposition $5.7.2$]{KaratzasShreve1991}, \cite[Theorem $9.1.1$ \& Corollary $9.1.2$, Theorem 
$9.3.2$]{Oksendal_2003}, and \cite[Theorem 2.7.1 \& Remarks 2.7.1,
2.7.2]{Bensoussan_Lions} (for a bounded domain $\sO$), and \cite[Theorem 2.7.2 \& Remarks 2.7.3--5]{Bensoussan_Lions} 
(when the domain is the whole space, $\sO=\RR^n$).

Existence (and uniqueness) of stochastic representations of solutions to \emph{non-degenerate} elliptic partial 
differential equations is established in \cite[Theorem
$6.5.1$]{FriedmanSDE}, \cite[Theorem $9.2.14$]{Oksendal_2003}, and \cite[Theorem 9.3.3 \& Remark, p. 196]{Oksendal_2003}.

Existence and uniqueness of solutions to a certain class of \emph{degenerate} elliptic partial differential equations are 
described by Friedman in \cite[Theorems 13.1.1 \&
13.3.1]{FriedmanSDE}, but those results do not apply to the Heston operator because a square root, $(\sigma^{ij})$, of the 
matrix $(a^{ij})$ cannot be extended as a uniformly
Lipschitz continuous function on $\RR^2$, that is, \cite[Condition (A), p. $308$]{FriedmanSDE} is not satisfied. Stroock 
and Varadhan \cite[\S $5$-$8$]{StroockVaradhan1972} also
discuss existence and uniqueness of solutions to degenerate elliptic partial differential equations, but their assumption 
that the matrix $(a^{ij})$ satisfies Hypothesis
\ref{hyp:stochastic_representation_Extension_matrix} does not hold for the Heston operator (see \cite[Theorem 
$2.1$]{StroockVaradhan1972}).

More recently, Zhou \cite{Zhou_quasiderivdegenpde} employs the method of quasiderivatives to establish the stochastic 
representation of solutions to a certain class of degenerate
elliptic partial differential equations, and obtains estimates for the derivatives of their solutions. However, his 
results do not apply to the Heston operator because
\cite[Assumptions $3.1$ \& Condition $(3.2)$]{Zhou_quasiderivdegenpde} are not satisfied in this case. Moreover, the 
Dirichlet condition is imposed on the whole boundary of the
domain (see \cite[Equation $(1.1)$]{Zhou_quasiderivdegenpde}), while we take into consideration the portion of the 
boundary, $\Gamma_0$, where the differential operator $A$
becomes degenerate.

\subsubsection{Elliptic obstacle problems}
We may compare Theorems \ref{thm:stochastic_representation_Uniqueness_elliptic_OP} and 
\ref{thm:stochastic_representation_Uniqueness_elliptic_OP_beta_less_than_1} with the
\emph{uniqueness} assertions (in increasing degrees of generality) for \emph{non-degenerate} elliptic operators in 
\cite[Theorems 3.3.1, 3.3.2, 3.3.4, 3.3.5, 3.3.8, 3.3.19,
3.3.20, \& 3.3.23]{Bensoussan_Lions}. See also \cite[Theorem 10.4.1]{Oksendal_2003} and \cite[Theorems 16.4.1, 16.4.2, 
16.7.1, \& 16.8.1]{FriedmanSDE} for uniqueness assertions
\emph{non-degenerate} elliptic operators, though with more limited applicability.

\subsubsection{Parabolic boundary value problems}
Uniqueness of solutions to \emph{non-degenerate} parabolic partial differential equations and their stochastic 
representations are described in \cite[Theorems 6.5.2,
6.5.3]{FriedmanSDE}, \cite[Theorem $5.7.6$]{KaratzasShreve1991} and \cite[Theorems $2.7.3$ \& $2.7.4$]{Bensoussan_Lions}.

Friedman obtains fundamental solutions and stochastic representations of solutions to certain degenerate parabolic partial 
differential equations in \cite{Friedman_1974}, while
he obtains uniqueness and stochastic representations of solutions to the Cauchy problem in \cite{Friedman_1973}; those 
results are summarized in \cite[Chapter $15$]{FriedmanSDE}.
Nevertheless, the results in \cite[Chapter $15$]{FriedmanSDE} and \cite{Friedman_1974} do not apply to the Heston operator 
because Hypothesis
\ref{hyp:stochastic_representation_Extension_matrix} does not hold, that is \cite[Condition (A), p. 389]{FriedmanSDE} is 
not satisfied. Therefore, the method of construction in
\cite[Theorem $1.2$]{Friedman_1974} of a candidate for a fundamental solution does not apply to the Heston operator. A 
stochastic representation for a solution to the Cauchy
problem for a degenerate operator is obtained in \cite[\S 15.10]{FriedmanSDE}, but the hypotheses of \cite[Theorem 
$15.10.1$]{FriedmanSDE} are again too restrictive and
exclude the Heston operator.

Ekstr\"om and Tysk \cite{Ekstrom_Tysk_bcsftse} consider the problem of pricing European-style options on an underlying 
process which is the solution to a degenerate,
one-dimensional stochastic differential equation which satisfies \cite[Hypothesis $2.1$]{Ekstrom_Tysk_bcsftse}, and so 
includes the \emph{Feller square root} (or
\emph{Cox-Ingersoll-Ross}) \emph{process}, \eqref{eq:stochastic_representation_CIR}. The option price is the classical 
solution in the sense of \cite[Definition
$2.2$]{Ekstrom_Tysk_bcsftse} to the corresponding parabolic partial differential equation \cite[Theorem 
$2.3$]{Ekstrom_Tysk_bcsftse}. Under their assumption that the payoff
function $g(T,\cdot)$ is in $C^1([0,\infty))$, they show that their classical solution has the regularity property,
$$
u \in C([0,T]\times [0,\infty)) \cap C^1([0,T)\times[0,\infty)) \cap C^2([0,T)\times(0,\infty)),
$$
and obeys the second-order boundary condition,
\[
\lim_{(t,y) \rightarrow (0,t_0)} y u_{yy}(t,y) =0,\quad\forall t_0\in(0,T)
\quad\hbox{(by \cite[Proposition $4.1$]{Ekstrom_Tysk_bcsftse}).}
\]
As a consequence, in the framework of our article, their solution obeys
$$
u \in C^{1,1}_{s,\loc}((0,t_0)\times[0,\infty)), \quad\forall t_0\in(0,T),
$$
where the vector space of functions $C^{1,1}_{s,\loc}((0,t_0)\times[0,\infty))$ is defined by analogy with 
\eqref{eq:stochastic_representation_C_1_1_regularity_parabolic}.

In \cite{Ekstrom_Tysk_bssvm}, Ekstr\"om and Tysk extend their results in \cite{Ekstrom_Tysk_bcsftse} to the case of 
two-dimensional stochastic volatility models for option
prices, where the variance process satisfies the assumptions of \cite[Hypothesis $2.1$]{Ekstrom_Tysk_bcsftse}.

Bayraktar, Kardaras, and Xing \cite{Bayraktar_Kardaras_Xing_2012} address the problem of \emph{uniqueness} of 
classical solutions, in the sense of \cite[Definitions $2.4$ \&
$2.5$]{Bayraktar_Kardaras_Xing_2012}, to a class of two-dimensional, degenerate parabolic partial differential 
equations. Their differential operator has a degeneracy which
is similar to that of the Heston generator, $-A$, and to the differential operator considered in 
\cite{Ekstrom_Tysk_bcsftse}, but the matrix of coefficients, $(a^{ij})$, of their
operator may have \emph{more than quadratic growth} with respect to the spatial variables (see \cite[Standing Assumption 
$2.1$]{Bayraktar_Kardaras_Xing_2012}). Therefore,
weak maximum principles for parabolic partial differential operators on unbounded domains such as \cite[Exercise 
$8.1.22$]{Krylov_LecturesHolder} do not guarantee uniqueness of
solutions in such situations. The main result of their article -- \cite[Theorem $2.9$]{Bayraktar_Kardaras_Xing_2012} 
--
establishes by probabilistic methods that uniqueness of classical solutions, obeying a natural growth condition, holds if 
and only if the asset price process is a martingale.

In our article, we consider the two-dimensional Heston stochastic process, \eqref{eq:stochastic_representation_HestonSDE}, 
where the component $Y$ of the process satisfies
\cite[Hypothesis $2.1$]{Ekstrom_Tysk_bcsftse} and \cite[Standing Assumption $2.1$]{Bayraktar_Kardaras_Xing_2012}. We 
only require the payoff function, $g(T,\cdot)$, to be
continuous with respect to the spatial variables and have \emph{exponential growth}, as in 
\eqref{eq:stochastic_representation_Growth_parabolic}. Notice that the conditions on
the payoff function are more restrictive in \cite[Hypothesis $2.1$]{Ekstrom_Tysk_bcsftse} and \cite[Standing Assumption 
$2.3$]{Bayraktar_Kardaras_Xing_2012} than in our
article. We consider the parabolic equation associated to the Heston generator, $-A$, on bounded or unbounded subdomains, 
$\sO$, of the upper half plane, $\HH$, with Dirichlet
boundary condition along the portion, $\Gamma_1$, of the boundary $\partial\sO$ contained in $\HH$. Along the portion, 
$\Gamma_0$, of the boundary
contained in $\partial \HH$, we impose a suitable Dirichlet boundary condition, depending on the value of the parameter 
$\beta$ in \eqref{defn:stochastic_representation_beta},
which governs the behavior of the Feller square-root process when it approaches the boundary point $y=0$. In each case, we 
establish \emph{uniqueness} of solutions by proving
that suitably regular solutions must have the stochastic representations in Theorems 
\ref{thm:stochastic_representation_Uniqueness_BVP_parabolic} and
\ref{thm:stochastic_representation_Uniqueness_BVP_parabolic_beta_less_than_1}, and we prove \emph{existence} and 
\emph{regularity} of solutions, in a special case, in Theorems
\ref{thm:stochastic_representation_Existence_parabolic_BVP_beta} and 
\ref{thm:stochastic_representation_Existence_parabolic_BVP_beta_with_Holder_continuous_boundary_data},
complementing the results of \cite{Ekstrom_Tysk_bcsftse}. In addition, we consider the parabolic \emph{obstacle} problem 
and establish uniqueness and the stochastic
representations of suitably regular solutions in Theorems \ref{thm:stochastic_representation_Uniqueness_parabolic_OP} and
\ref{thm:stochastic_representation_Uniqueness_parabolic_OP_beta_less_than_1}.

\subsubsection{Parabolic obstacle problems}
We may compare Theorems \ref{thm:stochastic_representation_Uniqueness_parabolic_OP} and 
\ref{thm:stochastic_representation_Uniqueness_parabolic_OP_beta_less_than_1} with the
\emph{uniqueness} assertions and stochastic representations of solutions (in increasing degrees of generality) for 
\emph{non-degenerate} operators in \cite[Theorems 3.4.1, 3.4.2,
3.4.3, 3.4.5, 3.4.6, 3.4.7, 3.4.8]{Bensoussan_Lions}.

\subsection{Further work}
The authors are developing an extension of the main results of this article to a broader class of degenerate Markov 
processes in higher dimensions and more general boundary
conditions (including Neumann and oblique boundary conditions).

\subsection{Outline of the article}
\label{subsec:Guide}
For the convenience of the reader, we provide a brief outline of the article. We begin in \S 
\ref{sec:HestonProcessProperties} by reviewing or proving some of the key properties
of the Feller square root and Heston processes which we shall need in this article. In \S 
\ref{sec:stochastic_representation_Elliptic_boundary_value_problem}, we prove existence
and uniqueness (in various settings) of solutions to the elliptic boundary value problem for the Heston operator, while in 
\S
\ref{sec:stochastic_representation_Elliptic_obstacle_value_problem}, we prove uniqueness (again in various settings) of 
solutions to the corresponding obstacle problem. We
proceed in \S \ref{sec:stochastic_representation_Parabolic_BVP}, to prove existence and uniqueness of solutions to the 
parabolic terminal/boundary value problem for the Heston
operator and in \S \ref{sec:stochastic_representation_Parabolic_Obstacle}, we prove uniqueness of solutions to the 
corresponding parabolic obstacle problem. Appendices
\ref{app:LocalBoundaryEstimates} and \ref{sec:RegularPoints} contain additional technical results which we shall need 
throughout our article.

\subsection{Notation and conventions}
\label{subsec:stochastic_representation_Notation}
When we label a condition an \emph{Assumption}, then it is considered to be universal and in effect throughout this 
article and so not referenced explicitly in theorem and
similar statements; when we label a condition a \emph{Hypothesis}, then it is only considered to be in effect when 
explicitly referenced. We let
$\NN:=\left\{1,2,3,\ldots\right\}$ denote the set of positive integers. For $x,y\in\RR$, we denote $x\wedge y : 
=\min\{x,y\}$, $x \vee y := \max\{x,y\}$ and $x^+:=x\vee 0$.

\subsection{Acknowledgments} We are very grateful to everyone who has provided us with comments on previous versions of 
this article or related conference or seminar
presentations. Camelia Pop thanks Daniel Ocone for many helpful discussions on probability theory. Finally, we thank the 
anonymous referee for many helpful suggestions and kind comments.

\section{Properties of the Heston stochastic volatility process}
\label{sec:HestonProcessProperties}
In this section, we review or develop some important properties of the Feller square root process and the Heston 
stochastic volatility process.

By \cite[Theorem $1.9$]{Feehan_Pop_mimickingdegen}, it follows that for any initial point $(t,y)\in 
[0,\infty)\times[0,\infty)$, the Feller stochastic differential equation,
\begin{equation}
\label{eq:stochastic_representation_CIR}
\begin{aligned}
d Y(s) &= \kappa\left(\vartheta-Y(s)\right)ds+\sigma\sqrt{\left|Y(s)\right|} d W(s), \quad s>t,\\
Y(t) &= y,
\end{aligned}
\end{equation}
admits a unique weak solution $(Y^{t,y}(s), W(s))_{s \geq t}$, called the Feller square root process, where $(W(s))_{s 
\geq t}$ is a one-dimensional Brownian motion on a filtered
probability space $\left(\Omega,\sF,\PP^{t,y},\FF\right)$ such that the filtration $\FF = \{\sF(s)\}_{s\geq 0}$ satisfies 
the usual conditions \cite[Definition
$1.2.25$]{KaratzasShreve1991}. Theorem $1.9$ in \cite{Feehan_Pop_mimickingdegen} also implies that the Heston stochastic 
differential equation
\eqref{eq:stochastic_representation_HestonSDE} admits a unique weak solution, $\left(Z^{t,z}(s),W(s)\right)_{s \geq t}$, 
for any initial point $(t,z) \in
[0,\infty)\times\bar\HH$, where $(W(s))_{s \geq t}$ is now an $\RR^2$-valued Brownian motion on a filtered probability 
space $\left(\Omega,\sF,\QQ^{t,z},\FF\right)$ such that the
filtration $\FF = \{\sF(s)\}_{s\geq 0}$ satisfies the usual conditions. When the initial condition $(t,y)$ or $(t,z)$ is 
clear from the context, we omit the superscripts in the
definition of the probability
measures $\PP^{t,y}$ and $\QQ^{t,z}$, respectively.

Moreover, the weak solutions to the Feller and Heston stochastic differential equations are \emph{strong}. To prove this, 
we begin by reviewing a result of Yamada
\cite{Yamada_1978}.

\begin{defn}[Coefficients for a non-Lipschitz stochastic differential equation]
\label{defn:NonLipschitzSDECoefficients}
\cite[p. 115]{Yamada_1978}
In this article we shall consider one-dimensional stochastic differential equations whose diffusion and drift 
coefficients, $\alpha,b$, obey the following properties:
\begin{enumerate}
\item The functions $\alpha, b:[0,\infty)\times\RR\to \RR$ are continuous.
\item (\emph{Yamada condition}) There is an increasing function $\varrho:[0,\infty)\to[0,\infty)$ such that 
    $\varrho(0)=0$, for some $\varepsilon>0$ one has $\int_0^\varepsilon
    \varrho^{-2}(y)\,dy =\infty$, and
\begin{equation}
\label{eq:PseudoLipschitzDiffusionCoefficient}
|\alpha(t,y_1) - \alpha(t,y_2)| \leq \varrho(|y_1-y_2|), \quad y_1,y_2 \in \RR, t\geq 0.
\end{equation}
\item There is a constant $C_1>0$ such that
\begin{equation}
\label{eq:LipschitzDrift}
|b(t,y_2) -  b(t,y_1)| \leq C_1|y_2-y_1|, \quad y_1,y_2 \in \RR, t\geq 0.
\end{equation}
\item There is a constant $C_2>0$ such that
\begin{equation}
\label{eq:LinearGrowth}
|\alpha(t,y)| + |b(t,y)| \leq C_2(1+|y|), \quad t\geq 0, y\in\RR.
\end{equation}
\end{enumerate}
\end{defn}

Clearly, the coefficients of the Feller stochastic differential equation obey the hypotheses in Definition 
\ref{defn:NonLipschitzSDE}, where $\alpha(t,y) = \sigma\sqrt{y}$ and
$b(t,y) = \kappa(\theta-y)$. Indeed, one can choose $C_1=\kappa$, $C_2=\max\{\kappa, \kappa\theta, \sigma\}$, and 
$\varrho(y)= \sigma\sqrt{y}$, as the mean value theorem yields
$$
\sqrt{y_2} - \sqrt{y_1} = c(y_1,y_2)(y_2-y_1),
$$
where
$$
c(y_1,y_2) = \frac{1}{2}\int_0^1\frac{1}{\sqrt{y_1 + t(y_2-y_1)}} \leq \frac{1}{\sqrt{y_2-y_1}},
$$
for $0<y_1<y_2$. See \cite[Remark 1]{Yamada_1978} for other examples of suitable functions $\varrho$.

\begin{rmk}
When $\varrho(u)=u^\gamma$, $\gamma\in [\frac{1}{2},1]$ \cite[Remark 1]{Yamada_1978}, then Definition 
\ref{defn:NonLipschitzSDECoefficients} implies that $\alpha(t,\cdot)$ is
H\"older continuous with exponent $\gamma$, uniformly with respect to $t\in [0,\infty)$.
\end{rmk}

\begin{defn}[Solution to a non-Lipschitz stochastic differential equation]
\label{defn:NonLipschitzSDE}
\cite[p. 115]{Yamada_1978}, \cite[Definitions IX.1.2 \& IX.1.5]{Revuz_Yor}
Let $(\Omega,\sF,\PP,\FF)$ be a filtered probability space satisfying the usual conditions. We call a pair 
$(Y(s),W(s))_{s\geq 0}$ a \emph{weak solution} to the non-Lipschitz
one-dimensional stochastic differential equation,
\begin{equation}
\label{eq:1DDegenSDE}
dY(s) = b(s,Y(s))\,ds + \alpha(s,Y(s))\,dW(s), \quad s\geq 0, Y(0)=y,
\end{equation}
where $y\in\RR$, if the following hold:
\begin{enumerate}
\item The processes $Y(s)$ and $W(s)$ are defined on $(\Omega,\sF,\PP,\FF)$;
\item The process $Y(s)$ is continuous with respect to $s\in[0,\infty)$ and is $\FF$-adapted;
\item The process $W(s)$ is a standard $\FF$-Brownian motion.
\end{enumerate}
We call $(Y(s),W(s))_{s\geq 0}$ a \emph{strong solution} to \eqref{eq:1DDegenSDE} if $Y$ is $\FF^W$-adapted, where $\FF^W$ 
is the $\PP$-completion of the filtration of $\sF$
generated by $(W(s))_{s\geq 0}$. (Compare \cite[Definition IV.1.2]{Ikeda_Watanabe}, \cite[Definition 
5.2.1]{KaratzasShreve1991}, and \cite[\S 5.3]{Oksendal_2003}.)
\end{defn}

\begin{thm}
\label{thm:1DDegenSDEWeakSolutionExists}
\cite[p. 117]{Yamada_1978}
There exists a weak solution $(Y,W)$ to \eqref{eq:1DDegenSDE}.
\end{thm}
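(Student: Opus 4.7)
The plan is to prove existence of a weak solution by approximating the coefficients by Lipschitz ones, solving the approximating equations by classical It\^o theory, and extracting a limit via a tightness/Skorokhod argument. This is the standard route for such non-Lipschitz SDEs; continuity of $\alpha$ and $b$ plus linear growth will do most of the work, and the Yamada condition \eqref{eq:PseudoLipschitzDiffusionCoefficient} is not actually needed for existence (it is used for pathwise uniqueness, hence for strong solutions).

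First, I would fix a smoothing parameter and define Lipschitz approximations $\alpha_n(t,y)$ and $b_n(t,y)$, obtained for instance by mollifying $\alpha(t,\cdot)$ and $b(t,\cdot)$ in the spatial variable (or by replacing $\alpha$ and $b$ by their compositions with a truncation $y \mapsto (-n)\vee y \wedge n$ and then mollifying). The mollifications can be arranged so that: (i) each $\alpha_n, b_n$ is Lipschitz in $y$ uniformly in $t$; (ii) $\alpha_n \to \alpha$ and $b_n \to b$ uniformly on compact subsets of $[0,\infty)\times \RR$ (by continuity of $\alpha, b$); and (iii) the linear growth bound \eqref{eq:LinearGrowth} is preserved with a constant $C_2' = 2C_2$ independent of $n$. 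Fix a probability space $(\Omega, \sF, \PP, \FF)$ carrying a standard Brownian motion $W$. Classical It\^o theory then yields a unique strong solution $(Y_n(s))_{s \geq 0}$ to
\begin{equation*}
dY_n(s) = b_n(s, Y_n(s))\,ds + \alpha_n(s, Y_n(s))\,dW(s), \quad Y_n(0)=y.
\end{equation*}

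Next, using \eqref{eq:LinearGrowth} with the uniform constant $C_2'$, the Burkholder-Davis-Gundy inequality, and Gronwall's lemma, I would obtain, for every $T>0$ and $p \geq 2$, the uniform estimate
\begin{equation*}
\sup_{n \in \NN} \EE\Big[\sup_{0 \leq s \leq T} |Y_n(s)|^p\Big] \leq K(p,T,y) < \infty.
\end{equation*}
From these moment bounds together with the BDG inequality applied to the stochastic integral term (and H\"older on the drift term), I would derive the uniform H\"older-type estimate
\begin{equation*}
\EE\bigl[|Y_n(s) - Y_n(t)|^{p}\bigr] \leq K'(p,T,y)\,|s-t|^{p/2}, \quad 0 \leq t \leq s \leq T.
\end{equation*}
By Kolmogorov's tightness criterion, the family of laws of $\{(Y_n, W)\}_{n \in \NN}$ on $C([0,\infty);\RR^2)$ is tight. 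By Prokhorov's theorem, a subsequence (still indexed by $n$) converges weakly to some probability measure. Applying the Skorokhod representation theorem on a new probability space $(\tOm, \tilde \sF, \tilde \PP)$, I produce processes $(\tY_n, \tW_n)$ and $(\tY, \tW)$ with the same laws as the original sequences and the weak limit, respectively, such that $(\tY_n, \tW_n) \to (\tY, \tW)$ uniformly on compact time intervals, $\tilde\PP$-a.s. The process $\tW$ is still a Brownian motion with respect to the natural filtration $\tilde\FF$ generated by $(\tY, \tW)$ (after completing and right-continuous-ifying), and each $\tW_n$ is a Brownian motion with respect to the analogous filtration.

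Finally, I would pass to the limit in the integrated equation
\begin{equation*}
\tY_n(s) = y + \int_0^s b_n(r,\tY_n(r))\,dr + \int_0^s \alpha_n(r,\tY_n(r))\,d\tW_n(r).
\end{equation*}
The drift term converges $\tilde\PP$-a.s. to $\int_0^s b(r,\tY(r))\,dr$ by uniform-on-compacts convergence of $b_n\to b$, a.s. convergence of $\tY_n \to \tY$, and the uniform moment bounds combined with dominated convergence (using linear growth to dominate). The hard part, which is the main obstacle, is passing to the limit in the stochastic integral: since $\tW_n$ varies with $n$, one cannot invoke It\^o isometry directly in a single stochastic basis. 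I would handle this by the standard technique of showing that
\begin{equation*}
\int_0^s \alpha_n(r,\tY_n(r))\,d\tW_n(r) \longrightarrow \int_0^s \alpha(r,\tY(r))\,d\tW(r)
\end{equation*}
in probability, either via the characterization of the stochastic integral as a limit of Riemann-Stieltjes-type sums along a partition (which is continuous in the uniform topology on compact sets once one has moment bounds), or equivalently by showing that $M_n(s):=\tY_n(s) - y - \int_0^s b_n(r,\tY_n(r))\,dr$ is a martingale with quadratic variation $\int_0^s \alpha_n(r,\tY_n(r))^2 \,dr$ and passing to the limit in the martingale problem (verifying that the limit $M(s):=\tY(s)-y-\int_0^s b(r,\tY(r))\,dr$ is a continuous martingale with quadratic variation $\int_0^s \alpha(r,\tY(r))^2\,dr$, hence representable as $\int_0^s \alpha(r,\tY(r))\,d\tW(r)$ for a suitable Brownian motion). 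The linear growth condition \eqref{eq:LinearGrowth} together with the uniform moment bounds provides the domination needed to pass the limit into both the drift and diffusion integrals. This yields the required weak solution $(\tY,\tW)$ on $(\tOm,\tilde\sF, \tilde\PP, \tilde\FF)$, completing the proof.
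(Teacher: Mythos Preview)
Your argument is correct and is precisely the Skorokhod-type approximation scheme one would expect: Lipschitz regularization, uniform moment bounds from linear growth, tightness via Kolmogorov, Skorokhod representation, and identification of the limit via the martingale problem. You are also right that the Yamada condition \eqref{eq:PseudoLipschitzDiffusionCoefficient} plays no role in existence.

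However, the paper does not supply its own proof of this theorem: it is stated as a citation to Yamada's result, and the subsequent remark explains that Yamada in fact proves more (constructing the solution via a finite-difference scheme), while noting that for mere existence of a weak solution a simpler argument---attributed to Skorokhod---already suffices. Your proposal is exactly that simpler route. So you have not reproduced the paper's proof (there is none) nor Yamada's finite-difference construction, but you have written out the alternative the paper's remark points to, and done so soundly.
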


\begin{rmk}
Yamada's main theorem \cite[p. 117]{Yamada_1978} asserts considerably more than Theorem 
\ref{thm:1DDegenSDEWeakSolutionExists}. In particular, his article shows that
\eqref{eq:1DDegenSDE} may be solved using the method of finite differences. Simpler results may suffice to merely 
guarantee the existence of a weak solution, as we need here; see
Skorokhod \cite{Skorokhod}.
\end{rmk}

\begin{prop}
\label{prop:1DDegenSDEStrongSolutionExistsUnique}
There exists a unique strong solution to \eqref{eq:1DDegenSDE}.
\end{prop}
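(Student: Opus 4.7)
The plan is to invoke the Yamada--Watanabe principle, which upgrades weak existence together with pathwise uniqueness into strong existence and strong uniqueness. Theorem \ref{thm:1DDegenSDEWeakSolutionExists} already provides a weak solution to \eqref{eq:1DDegenSDE}, so the task reduces to establishing pathwise uniqueness: if $(Y_1,W)$ and $(Y_2,W)$ are two solutions defined on the same filtered probability space with the same driving Brownian motion $W$ and common initial value $y$, then $Y_1(\cdot)=Y_2(\cdot)$ almost surely.

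For pathwise uniqueness I would run the classical Yamada--Watanabe approximation argument. The hypothesis $\int_0^\varepsilon \varrho^{-2}(u)\,du = \infty$ in Definition \ref{defn:NonLipschitzSDECoefficients} allows the choice of a strictly decreasing sequence $a_0 = 1 > a_1 > a_2 > \cdots \downarrow 0$ with $\int_{a_n}^{a_{n-1}} \varrho^{-2}(u)\,du = n$. One then constructs non-negative continuous functions $\psi_n$ supported in $(a_n,a_{n-1})$ with $\psi_n(u) \leq 2/(n\varrho^2(u))$ and $\int \psi_n(u)\,du = 1$, and sets $\phi_n(x) := \int_0^{|x|}\int_0^z \psi_n(u)\,du\,dz$, producing $C^2$ even functions with $|\phi_n'|\leq 1$ and $\phi_n(x)\uparrow|x|$ as $n\to\infty$. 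Introducing the localizing stopping times $\tau_N := \inf\{s \geq 0 : |Y_1(s)| \vee |Y_2(s)| \geq N\}$ and applying It\^o's formula to $\phi_n(Y_1(\cdot\wedge\tau_N) - Y_2(\cdot\wedge\tau_N))$, the martingale piece vanishes in expectation. The Yamada condition \eqref{eq:PseudoLipschitzDiffusionCoefficient} combined with the bound on $\psi_n$ controls the diffusion quadratic-variation term by $t/n\to 0$, while the drift term is dominated by $C_1\int_0^t \EE|Y_1(s\wedge\tau_N) - Y_2(s\wedge\tau_N)|\,ds$ via the Lipschitz hypothesis \eqref{eq:LipschitzDrift}. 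Passing $n\to\infty$ by monotone convergence and then invoking Gr\"onwall's inequality yields $Y_1(t\wedge\tau_N) = Y_2(t\wedge\tau_N)$ almost surely. The linear-growth condition \eqref{eq:LinearGrowth} supplies the standard moment estimate forcing $\tau_N\uparrow\infty$ almost surely, completing the proof of pathwise uniqueness.

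With weak existence and pathwise uniqueness in hand, the Yamada--Watanabe theorem (see, for example, \cite[Proposition 5.3.20 and Corollary 5.3.23]{KaratzasShreve1991} or \cite[Theorem IX.1.7]{Revuz_Yor}) immediately produces a strong solution to \eqref{eq:1DDegenSDE}, and strong uniqueness is a direct consequence of pathwise uniqueness. The main technical obstacle is the careful passage to the limit in It\^o's formula: the localization by $\tau_N$ is essential so that the diffusion-coefficient and drift-coefficient integrals have finite expectation, and the Yamada condition must be exploited precisely through the bound $\psi_n(u)\varrho^2(u) \leq 2/n$ to push the diffusion error to zero without losing control of the drift. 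Once pathwise uniqueness is secured, the remainder of the argument is a direct citation of a now-classical result.
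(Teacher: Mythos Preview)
Your proposal is correct and follows essentially the same route as the paper: weak existence from Theorem~\ref{thm:1DDegenSDEWeakSolutionExists}, pathwise uniqueness from the Yamada condition plus Lipschitz drift, and then the Yamada--Watanabe principle via \cite[Corollary 5.3.23]{KaratzasShreve1991}. The only difference is that the paper simply cites \cite[Theorem IX.3.5(ii)]{Revuz_Yor} for pathwise uniqueness, whereas you spell out the classical $\phi_n$-approximation argument that proves that theorem; either presentation is fine.
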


\begin{proof}
Theorem \ref{thm:1DDegenSDEWeakSolutionExists} ensures that \eqref{eq:1DDegenSDE} admits a weak solution. Conditions 
\eqref{eq:PseudoLipschitzDiffusionCoefficient} and
\eqref{eq:LipschitzDrift} ensure that pathwise uniqueness holds for (weak) solutions to \eqref{eq:1DDegenSDE} by Revuz and 
Yor \cite[Theorem IX.3.5 (ii)]{Revuz_Yor}, while
Karatzas and Shreve \cite[Corollary 5.3.23]{KaratzasShreve1991} imply that \eqref{eq:1DDegenSDE} admits a strong solution; 
see \cite[p. 310]{KaratzasShreve1991}. Conditions
\eqref{eq:PseudoLipschitzDiffusionCoefficient} and \eqref{eq:LipschitzDrift} guarantee the uniqueness of strong solutions 
to \eqref{eq:1DDegenSDE} by Karatzas and Shreve
\cite[Proposition 5.2.13]{KaratzasShreve1991}; compare Yamada and Watanabe \cite{Yamada_Watanabe_1971a, 
Yamada_Watanabe_1971b}. (Pathwise uniqueness is also asserted for
\eqref{eq:1DDegenSDE} by \cite[Theorem IV.3.2]{Ikeda_Watanabe} when \eqref{eq:1DDegenSDE} is time-homogeneous, noting that 
the coefficients $\alpha, b$ are not required
to be bounded by Ikeda and Watanabe \cite[p. 168]{Ikeda_Watanabe}). We conclude that a strong solution to 
\eqref{eq:1DDegenSDE} exists and is unique.
\end{proof}

\begin{cor}
\label{cor:CIRSDEStrongSolution}
Given any initial point $(t,y)\in[0,\infty)\times[0,\infty)$, there exists a unique strong solution, 
$(Y^{t,y}(s),W(s))_{s\geq t}$, to the Feller stochastic differential
equation.
\end{cor}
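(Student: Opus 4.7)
The plan is to deduce the corollary directly from Proposition \ref{prop:1DDegenSDEStrongSolutionExistsUnique} by checking that the Feller stochastic differential equation fits into the framework of Definition \ref{defn:NonLipschitzSDECoefficients}. To do so, I would set $\alpha(s,y):=\sigma\sqrt{|y|}$ and $b(s,y):=\kappa(\vartheta-y)$, both extended to all of $[0,\infty)\times\RR$, so that \eqref{eq:stochastic_representation_CIR} becomes an instance of \eqref{eq:1DDegenSDE} after the trivial time shift $s\mapsto s-t$.

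The main task is a verification of the four conditions in Definition \ref{defn:NonLipschitzSDECoefficients}. Continuity of $\alpha$ and $b$ is immediate. The Lipschitz bound \eqref{eq:LipschitzDrift} for $b$ holds with constant $C_1=\kappa$, and the linear growth bound \eqref{eq:LinearGrowth} holds with $C_2=\max\{\kappa,\kappa\vartheta,\sigma\}$ since $\sqrt{|y|}\leq 1+|y|$. For the Yamada condition \eqref{eq:PseudoLipschitzDiffusionCoefficient} I take $\varrho(u):=\sigma\sqrt{u}$, which is increasing, vanishes at $0$, and satisfies $\int_0^\varepsilon \varrho^{-2}(u)\,du=\sigma^{-2}\int_0^\varepsilon u^{-1}\,du=\infty$; the bound $|\sigma\sqrt{|y_1|}-\sigma\sqrt{|y_2|}|\leq \sigma\sqrt{|y_1-y_2|}$ follows from the elementary inequality $|\sqrt{a}-\sqrt{b}|\leq \sqrt{|a-b|}$ for $a,b\geq 0$, exactly as illustrated by the estimate on $c(y_1,y_2)$ preceding Definition \ref{defn:NonLipschitzSDE}.

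With these hypotheses verified, Proposition \ref{prop:1DDegenSDEStrongSolutionExistsUnique} produces a unique strong solution $(\widetilde Y(s),W(s))_{s\geq 0}$ to \eqref{eq:1DDegenSDE} with initial condition $\widetilde Y(0)=y$ on a filtered probability space satisfying the usual conditions. Undoing the time shift yields the strong solution $(Y^{t,y}(s),W(s))_{s\geq t}$ to \eqref{eq:stochastic_representation_CIR}. A small but worthwhile closing remark is that, although $\alpha$ was extended via $|y|$, the solution started at $y\geq 0$ remains nonnegative (a property used elsewhere via Lemma \ref{lem:stochastic_representation_CIR_properties}), so $\sqrt{|Y(s)|}=\sqrt{Y(s)}$ almost surely and the extension plays no role beyond giving us access to the general theorem.

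The only part that might be called the hard step is the Yamada condition check, since that is the one hypothesis that is not obvious for square-root diffusions; however, the paper has essentially recorded this calculation already, so the entire proof amounts to a brief citation chain and would be only a few lines long.
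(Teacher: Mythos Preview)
Your proposal is correct and follows essentially the same approach as the paper: the paper's proof reads simply ``Immediate from Proposition \ref{prop:1DDegenSDEStrongSolutionExistsUnique},'' relying on the verification of the hypotheses of Definition \ref{defn:NonLipschitzSDECoefficients} already recorded in the paragraph following that definition. Your write-up just makes explicit the checks (and the harmless time shift) that the paper treats as already done.
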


\begin{proof}
Immediate from Proposition \ref{prop:1DDegenSDEStrongSolutionExistsUnique}.
\end{proof}

\begin{cor}
\label{cor:HestonSDEStrongSolution}
Given $(t,z)\in[0,\infty)\times\bar\HH$, there exists a unique strong solution, $(Z^{t,z}(s),W(s))_{s\geq t}$, to the 
Heston stochastic differential equation, where
$(W(s))_{s\geq 0}$ is a standard two-dimensional $\FF$-Brownian motion on $(\Omega,\sF,\PP,\FF)$.
\end{cor}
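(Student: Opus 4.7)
The plan is to reduce the two-dimensional Heston SDE to the one-dimensional Feller SDE by a change of Brownian motion, then recover $X$ by direct integration.

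First, given the two-dimensional standard $\FF$-Brownian motion $W=(W_1,W_2)$ on $(\Omega,\sF,\PP,\FF)$, I would introduce the process
\[
\tilde W(s) := \rho W_1(s) + \sqrt{1-\rho^2}\,W_2(s),\quad s\geq 0,
\]
and verify via L\'evy's characterization (continuity, $\FF$-martingale, quadratic variation $s$, using $-1<\rho<1$ from Assumption~\ref{assump:HestonCoefficients}) that $\tilde W$ is a one-dimensional standard $\FF$-Brownian motion. In these coordinates the second component of the Heston SDE becomes exactly the Feller SDE \eqref{eq:stochastic_representation_CIR} driven by $\tilde W$, namely $dY(s)=\kappa(\vartheta-Y(s))\,ds+\sigma\sqrt{|Y(s)|}\,d\tilde W(s)$ (using that the unique strong solution stays non-negative by Lemma~\ref{lem:stochastic_representation_CIR_properties}, so $\sqrt{|Y|}=\sqrt{Y}$ almost surely).

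Next, I would apply Corollary~\ref{cor:CIRSDEStrongSolution} to obtain a unique strong solution $Y^{t,y}$ to this Feller SDE with initial condition $Y(t)=y$. By definition of ``strong'', $Y^{t,y}$ is adapted to the $\PP$-completion $\FF^{\tilde W}$ of the filtration generated by $\tilde W$; since $\tilde W$ is a linear functional of $(W_1,W_2)$, we have $\FF^{\tilde W}\subseteq\FF^W$, so $Y^{t,y}$ is $\FF^W$-adapted. Having $Y^{t,y}$ in hand, I would then \emph{define}
\[
X^{t,x,y}(s) := x + \int_t^s \left(r-q-\tfrac{1}{2}Y^{t,y}(u)\right)du + \int_t^s \sqrt{Y^{t,y}(u)}\,dW_1(u),\quad s\geq t.
\]
Both integrals are well-defined: the first as a Lebesgue integral because $Y^{t,y}$ has continuous paths, and the second as an It\^o integral because $Y^{t,y}$ is $\FF^W$-adapted, continuous, and non-negative, so $\int_t^s Y^{t,y}(u)\,du<\infty$ almost surely. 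The pair $(X^{t,x,y},Y^{t,y})$ therefore satisfies the Heston SDE \eqref{eq:stochastic_representation_HestonSDE} and is $\FF^W$-adapted, establishing existence of a strong solution.

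For uniqueness, given any other strong solution $(X',Y')$ on $(\Omega,\sF,\PP,\FF)$ to \eqref{eq:stochastic_representation_HestonSDE} with the same Brownian motion $W$ and initial condition, $Y'$ solves the Feller SDE driven by $\tilde W$; by the uniqueness half of Corollary~\ref{cor:CIRSDEStrongSolution}, $Y'=Y^{t,y}$ almost surely. Substituting this into the first component of \eqref{eq:stochastic_representation_HestonSDE} expresses both $X'$ and $X^{t,x,y}$ as the same explicit integral in $Y^{t,y}$ and $W_1$, forcing $X'=X^{t,x,y}$ almost surely. The only subtle step is the verification that $\tilde W$ is a standard Brownian motion and that $Y$ is genuinely $\FF^W$-adapted; the rest is bookkeeping, since the $X$-equation is linear in the It\^o sense once $Y$ is known.
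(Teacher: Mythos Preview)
Your proposal is correct and follows essentially the same strategy as the paper: solve the one-dimensional Feller SDE for $Y$ first (via Corollary~\ref{cor:CIRSDEStrongSolution}), then recover $X$ by explicit integration. The paper's proof is terser and, in fact, uses a relabeled pair of Brownian motions (so that its ``$W_2$'' drives $Y$ directly), which leads to a slightly different-looking formula for $X$; your version, which introduces $\tilde W=\rho W_1+\sqrt{1-\rho^2}\,W_2$ and explicitly checks the filtration inclusion $\FF^{\tilde W}\subseteq\FF^W$, is cleaner and matches the Heston SDE \eqref{eq:stochastic_representation_HestonSDE} as written. One small correction: the non-negativity of $Y$ that you invoke is Lemma~\ref{lem:PropertiesCIR}, not Lemma~\ref{lem:stochastic_representation_CIR_properties} (the latter is the boundary classification); this is a forward reference in the paper's ordering but not a logical circularity, since the proof of Lemma~\ref{lem:PropertiesCIR} uses only the one-dimensional equation.
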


\begin{proof}
By Proposition \ref{prop:1DDegenSDEStrongSolutionExistsUnique}, the Cox-Ingersoll-Ross stochastic differential equation 
has a unique strong solution, $(Y^{t,y}(s),W_2(s))_{s\geq
t}$, where $(W_2(s))_{s\geq t}$ is a standard one-dimensional $\FF$-Brownian motion on $(\Omega,\sF,\PP,\FF)$ and 
$(Y^{t,y}(s))_{s\geq t}$ is $\FF^{W_2}$-adapted. But given
$(Y^{t,y}(s))_{s\geq t}$ and a standard two-dimensional $\FF$-Brownian motion, $(W(s))_{s\geq t} = (W_1(s),W_2(s))_{s\geq 
t}$ on $(\Omega,\sF,\PP,\FF)$, the process
$(X^{t,x,y}(s))_{s\geq t}$, and thus $(Z^{t,z}(s))_{s\geq t}= (X^{t,x,y}(s),Y^{t,y}(s))_{s\geq t}$, is uniquely determined 
by
\begin{align*}
X^{t,x,y}(s) &= x + \int_t^s\left(r-q-\frac{1}{2}Y^{t,y}(u)\right)\,du
\\
&\quad + \int_t^s \sqrt{Y^{t,y}(u)}\left(\sqrt{1-\rho^2}dW_1(u)+\rho dW_2(u)\right).
\end{align*}
This completes the proof.
\end{proof}

\begin{lem}[Properties of the Feller square-root process]
\label{lem:PropertiesCIR}
The unique strong solution of the Feller stochastic differential equation started at any 
$(t,y)\in[0,\infty)\times[0,\infty)$ satisfies
\begin{equation}
\label{eq:stochastic_representation_CIR_nonnegative}
Y(s) \geq 0\quad \PP^{t,y}\hbox{-a.s.},\quad \forall s\geq t,
\end{equation}
and also
\begin{align}
\label{eq:stochastic_representation_time_spend_at_zero}
\int_t^s \mathbf{1}_{\{Y(u) =0\}} \,du =0, &\quad \forall s\geq t,\\
\label{eq:stochastic_representation_CIR_local_time}
L(s,x) =0, &\quad \forall x \leq 0, \forall s\geq t,
\end{align}
where $L(\cdot, \cdot)$ is the local time of the Feller square-root process.
\end{lem}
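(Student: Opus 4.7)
The plan is to prove the three assertions sequentially: (1) first, then use it to reduce (3) at $x<0$ trivially, prove (2) via an elementary distributional fact, and finally derive (3) at $x=0$ from Tanaka's formula together with (2). For (1), I would run the standard Yamada--Watanabe smoothing argument. Choose a sequence $(a_n)_{n\geq 1}$ decreasing to $0$ with $a_0=1$, together with continuous $\psi_n\geq 0$ supported in $[a_n, a_{n-1}]$ satisfying $\int \psi_n = 1$ and $\psi_n(v)\leq 2/(nv)$. Set
\[
\phi_n(y) := \int_0^{y^-}\!\int_0^u \psi_n(v)\,dv\,du,
\]
so that $\phi_n\in C^2(\RR)$ is convex, $\phi_n\uparrow y^-$ as $n\to\infty$, $\phi_n'\in[-1,0]$ is supported in $\{y<0\}$, and $\phi_n''(y)|y|\leq 2/n$ pointwise. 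Applying It\^o's formula to $\phi_n(Y(s))$ (the stochastic integral is a genuine martingale after the customary localization, thanks to the linear growth of the coefficients), using $\phi_n(y)=0$ at the initial point $y\geq 0$, and taking expectations gives
\[
\EE[\phi_n(Y(s))] = \EE\!\int_t^s \phi_n'(Y)\,\kappa(\vartheta - Y)\,du + \tfrac{\sigma^2}{2}\EE\!\int_t^s \phi_n''(Y)\,|Y|\,du \leq \frac{\sigma^2(s-t)}{n},
\]
since on $\supp\phi_n'$ one has $Y<0$, so $\kappa(\vartheta-Y)>0$ and $\phi_n'\leq 0$ together force the drift term to be nonpositive. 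Monotone convergence as $n\to\infty$ yields $\EE[Y^-(s)] = 0$, i.e.\ $Y(s)\geq 0$ a.s., and pathwise continuity of $Y$ promotes this to a statement for all $s\geq t$ simultaneously.

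For (2), I would invoke the standard fact that for each $u>t$ the marginal law of $Y(u)$ under $\PP^{t,y}$ is absolutely continuous on $[0,\infty)$ (a scaled noncentral chi-squared distribution when $y>0$, a gamma distribution when $y=0$), so $\PP(Y(u)=0)=0$. Fubini then gives
\[
\EE\!\int_t^s \mathbf{1}_{\{Y(u)=0\}}\,du = \int_t^s \PP(Y(u)=0)\,du = 0,
\]
from which \eqref{eq:stochastic_representation_time_spend_at_zero} follows a.s., and then for all $s\geq t$ simultaneously by monotonicity and continuity of the integral in $s$. For (3), the case $x<0$ follows immediately from (1): since $Y(u)>x$ for every $u$, the Tanaka definition of local time gives $L(s,x)=0$. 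For $x=0$, I would apply Tanaka's formula to the nonnegative semimartingale $Y$:
\[
Y(s) = y + \int_t^s \mathbf{1}_{\{Y(u)>0\}}\,dY(u) + \tfrac{1}{2}\,L(s,0).
\]
Expanding $dY$ via \eqref{eq:stochastic_representation_CIR} and observing that $\sqrt{|Y|}\,\mathbf{1}_{\{Y>0\}} = \sqrt{Y}$ identically on $\{Y\geq 0\}$, while (2) permits the replacement of $\mathbf{1}_{\{Y(u)>0\}}$ by $1$ in the Lebesgue-time drift integral, collapses the right-hand side to $Y(s)+\tfrac{1}{2}L(s,0)$, forcing $L(s,0)=0$.

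The principal technical delicacy lies in Step 1: the Yamada--Watanabe approximations $\phi_n$ must be calibrated so that the It\^o correction $\phi_n''(Y)|Y|$ remains bounded uniformly in $n$ even as $\phi_n''$ concentrates precisely at the origin where the diffusion coefficient $\sqrt{|Y|}$ also degenerates, and the pointwise bound $\phi_n''(y)|y|\leq 2/n$ is exactly the reconciliation needed---the same inequality that underpins the non-Lipschitz pathwise uniqueness theorem referenced as Theorem~\ref{thm:1DDegenSDEWeakSolutionExists}. Step 2 requires one input beyond pure SDE manipulation, namely the absence of an atom at $0$ in the marginal law of $Y(u)$; this could alternatively be derived from the occupation time formula applied to shrinking neighborhoods of $0$ combined with the c\`adl\`ag regularity of $x\mapsto L(s,x)$, should one wish to bypass explicit distributional calculations.
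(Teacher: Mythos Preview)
Your proof is correct but runs in the opposite order from the paper's. The paper introduces an auxiliary process $\widetilde Y$ with diffusion coefficient truncated to $\sigma\sqrt{y}\,\mathbf{1}_{\{y\geq 0\}}$, proves $\widetilde L(s,0)=0$ first by a contradiction from the occupation-time formula with $k(x)=x^{-1}\mathbf{1}_{[\eps,x_0]}$, gets $\widetilde L(s,x)=0$ for $x<0$ similarly, then reads off \eqref{eq:stochastic_representation_time_spend_at_zero} from the local-time jump identity $\widetilde L(s,0)-\widetilde L(s,0-)=\kappa\vartheta\int\mathbf{1}_{\{\widetilde Y=0\}}\,du$, and only at the end establishes nonnegativity via It\^o applied to a smooth cutoff supported on $(-\infty,0]$, concluding $\widetilde Y=Y$ by pathwise uniqueness. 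Your route---nonnegativity first via Yamada--Watanabe mollifiers, then zero occupation time, then local time via Tanaka---is more direct and sidesteps the auxiliary process, at the cost of importing the noncentral-chi-squared transition density as an external fact in Step~2; the paper's argument stays entirely within semimartingale calculus. One caveat: your suggested alternative for Step~2 via the occupation-time formula on shrinking neighborhoods of $0$ does not work as stated, since that formula carries the weight $\sigma^2 Y$, which vanishes exactly on the set $\{Y=0\}$ you are trying to measure. If you wanted to avoid the distributional input, you could instead prove $L(s,0)=0$ \emph{before} Step~2 by the same $k(x)=1/x$ contradiction the paper uses (it needs only $Y\geq 0$, which you already have from Step~1), and then recover \eqref{eq:stochastic_representation_time_spend_at_zero} from the jump identity---but that is essentially the paper's path.
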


\begin{proof}
Without loss of generality, we may assume that $t=0$. In \cite[Lemma $2.4$]{Bayraktar_Kardaras_Xing_2012}, it is 
proved that $L(s,0)=0$, for all $s\geq 0$, but it is not
clear to us why it also follows that
\[
L(s,0-):=\lim_{x \uparrow 0} L(s,x) = 0,\quad \forall s \geq 0,
\]
a property we shall need in our proof of \eqref{eq:stochastic_representation_CIR_nonnegative}. To complete the argument, 
we consider the following stochastic differential
equation,
\begin{align*}
\,d \widetilde{Y}(s) &= b(\widetilde{Y}(s)) \,d s + \alpha(\widetilde{Y}(s)) \,d W(s),\quad s > 0,\\
\widetilde Y(0) &= y,
\end{align*}
where we let
\begin{equation}
\label{eq:stochastic_representation_tilde_CIR_coeff}
b(y):=\kappa(\vartheta-y) \hbox{ and } \alpha(y):= \mathbf{1}_{\{y\geq 0\}}\sigma \sqrt{y},\quad\forall y\in\RR.
\end{equation}
This stochastic differential equation admits a unique strong solution by Proposition 
\ref{prop:1DDegenSDEStrongSolutionExistsUnique}. We will show that $\widetilde{Y}(s) \geq 0$
a.s., for all $s\geq 0$, so that uniqueness of solutions to the Feller stochastic differential equation 
\eqref{eq:stochastic_representation_CIR} implies that $\widetilde{Y}=Y$
a.s. and $Y$ will satisfy the same properties as $\widetilde{Y}$. Thus, it is enough to prove 
\eqref{eq:stochastic_representation_time_spend_at_zero} and
\eqref{eq:stochastic_representation_CIR_local_time} for $\widetilde{Y}$. Property 
\eqref{eq:stochastic_representation_CIR_nonnegative} is a consequence of the preceding two
properties of $\widetilde{Y}$.

Let $\widetilde{L}$ be the local time process for the continuous semimartingale $\widetilde{Y}$ (see \cite[Theorem 
$3.7.1$]{KaratzasShreve1991}). From \cite[Theorem $3.7.1$
(iii)]{KaratzasShreve1991}, we know that, for any Borel measurable function $k:\mathbb{R} \rightarrow [0,\infty)$, we 
have
\begin{equation}
\label{eq:stochastic_representation_application_local_time}
\int_{0}^{s} k(\widetilde{Y}(u)) \sigma^2 \widetilde{Y}^+(u) \,d u
= 2\int_{\mathbb{R}} k(x) \widetilde{L}(s,x) dx,\quad\forall s \geq 0 .
\end{equation}
Assume, to obtain a contradiction, that $\widetilde{L}(s,0)>0$. From the right-continuity in the spatial variable of 
$\widetilde{L}(s, \cdot)$ \cite[Theorem $3.7.1$
(iv)]{KaratzasShreve1991}, there are positive constants $c$ and $x_0$ such that $\widetilde{L}(s,x)\geq c$, for all $x \in 
[0, x_0]$. For $\eps>0$, we define $k(x)=x^{-1}$, for
$x \in [\eps, x_0]$, and $0$ otherwise. With this choice of $k$, the left-hand-side in identity 
\eqref{eq:stochastic_representation_application_local_time} is bounded in absolute
value by $\sigma^2s$, for any $\eps>0$, while the right-hand-side of 
\eqref{eq:stochastic_representation_application_local_time} is greater or equal than $2c \log
\left(x_0/\eps\right)$, which diverges as $\eps$ tends to $0$. Therefore, our assumption that $\widetilde{L}(s,0) >0$ is 
false, and so $\widetilde{L}(s,0)=0$. Moreover, we notice
that for any bounded, Borel-measurable function $k$ with support in $(-\infty, 0)$ the left-hand-side in identity 
\eqref{eq:stochastic_representation_application_local_time} is
identically zero. Thus, we conclude that $\widetilde{L}(s,x)=0$, for all $x<0$, and also $\widetilde{L}(s,0-)=0$.

We use this result to show that $\mathbb{P} (\widetilde{Y}(s) \leq 0,\forall s \geq 0) = 0$. From \cite[p. $223$, third 
formula]{KaratzasShreve1991} and the fact that $\kappa,
\vartheta>0$, we see that
\[
0=\widetilde{L}(s,0)-\widetilde{L}(s,0-)=\kappa\vartheta \int_{0}^{s} \mathbf{1}_{\{\widetilde{Y}(u)=0\}}  \,du,
\]
which implies that $\mathbb{P} (\widetilde{Y}(s) =0,\forall s \geq 0 ) = 0$. It remains to show that $\mathbb{P} 
(\widetilde{Y}(s) \in (-\infty,0)) = 0$, for all $s\geq 0$, which
is equivalent to proving that for any $\eps>0$ and $s \geq 0$, we have $\mathbb{P} (\widetilde{Y}(s) \in (-\infty,-\eps)) 
= 0$. Let $\varphi:\mathbb{R}\rightarrow[0,1]$ be a
smooth cut-off function such that $\varphi|_{(-\infty, -\eps)} \equiv 1$ and $\varphi|_{(0, \infty)} \equiv 0$. We can 
choose $\varphi$ such that $\varphi' \leq 0$. Then, it
follows by It\^o's formula that
\begin{align*}
\varphi(\widetilde{Y}(s))
&= \varphi(\widetilde{Y}(0)) +
\int_{0}^{s} \left(\kappa(\vartheta- \widetilde{Y}(u)) 
\varphi'(\widetilde{Y}(u))+\frac{1}{2}\alpha^2(Y(u))\varphi''(Y(u))\right)\,d u\\
&\quad+ \int_{0}^{s} \alpha(\widetilde{Y}(u)) \varphi'(\widetilde{Y}(u))\,d W(u)\\
&= \varphi(\widetilde{Y}(0)) +
\int_{0}^{s} \kappa(\vartheta- \widetilde{Y}(u)) \varphi'(\widetilde{Y}(u))\,d u\quad \hbox{(as $\alpha(y)=0$ when 
$\varphi' \neq 0$)}.
\end{align*}
We notice that the right-hand-side is non-negative, while the left-hand-side is non-positive, as $\varphi' \leq 0$ on 
$\RR$, and $\varphi'=0$ on $(0,\infty)$. Therefore, we must
have $\varphi(\widetilde{Y}(s)) = 0$ a.s. which implies that $\PP (\widetilde{Y}(s) \in (-\infty,-\eps)) = 0$. This 
concludes the proof of the lemma.
\end{proof}

For $a,y,t \geq 0$, we let
\begin{equation}
\label{defn:stochastic_representation_CIR_First_hitting_time_a}
T^{t,y}_a := \inf\left\{s \geq t: Y^{t,y}(s)=a\right\}
\end{equation}
denote the first time the process $Y$ started at $y$ at time $t$ hits $a$. When the initial condition, $(t,y)$, is clear 
from the context, we omit the superscripts in the
preceding definition \eqref{defn:stochastic_representation_CIR_First_hitting_time_a}. Also, when $t=0$, we omit the 
superscript $t$.

\begin{lem}[Boundary classification at $y=0$ of the Feller square root process]
\label{lem:stochastic_representation_CIR_properties}
Let $Y^y$ be the unique strong solution to the Feller stochastic differential equation 
\eqref{eq:stochastic_representation_CIR} with initial condition $Y^y(0)=y$. Then
\begin{enumerate}
\item\label{item:CIRbetaGreaterEqualOne} For $\beta \geq 1$, $y=0$ is an entrance boundary point in the sense of 
    \cite[\S $15.6$(c)]{KarlinTaylor2}.
\item\label{item:CIRbetaLessOne} For $0<\beta<1$, $y=0$ is a regular, instantaneously reflecting boundary point in the 
    sense of \cite[\S $15.6$(a)]{KarlinTaylor2}, and
\begin{equation}
\label{eq:stochastic_representation_CIR_limit_hitting_time_zero}
\lim_{y \downarrow 0} T^y_0 = 0 \quad \hbox{a.s.},
\end{equation}
where $T^y_0$ is given by \eqref{defn:stochastic_representation_CIR_First_hitting_time_a}.
\end{enumerate}
\end{lem}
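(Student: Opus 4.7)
The plan is to apply Feller's classical boundary classification for one-dimensional diffusions to the Feller square root process on $(0,\infty)$, whose generator is
\[
Lf(y) = \frac{\sigma^2 y}{2}f''(y) + \kappa(\vartheta - y)f'(y).
\]
First I would compute the scale and speed densities. Fixing a reference point $c\in(0,\infty)$, the scale density is
\[
\mathfrak{s}(y) = \exp\!\left(-\int_c^y \frac{2\kappa(\vartheta - u)}{\sigma^2 u}\,du\right),
\]
which, up to a positive multiplicative constant, equals $y^{-\beta}e^{\mu y}$, with $\beta = 2\kappa\vartheta/\sigma^2$ and $\mu = 2\kappa/\sigma^2$ as in \eqref{defn:stochastic_representation_beta}--\eqref{defn:stochastic_representation_mu}. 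The corresponding speed density is
\[
\mathfrak{m}(y) = \frac{2}{\sigma^2 y\,\mathfrak{s}(y)} = \text{const}\cdot y^{\beta-1}e^{-\mu y}.
\]

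Next I would examine the behavior near $y=0$. Since $e^{\mu y}\to 1$ and $e^{-\mu y}\to 1$ as $y\downarrow 0$, the integrals $\int_0^c \mathfrak{s}(y)\,dy$ and $\int_0^c \mathfrak{m}(y)\,dy$ reduce to $\int_0^c y^{-\beta}\,dy$ and $\int_0^c y^{\beta-1}\,dy$, respectively. The first converges if and only if $\beta<1$ and the second if and only if $\beta>0$. Applying the Feller test as stated in \cite[\S 15.6]{KarlinTaylor2}: for $\beta\geq 1$ the scale is non-integrable at $0$ while the speed measure is finite near $0$, which is exactly the characterization of an entrance boundary, yielding \eqref{item:CIRbetaGreaterEqualOne}. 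For $0<\beta<1$ both the scale and speed densities are integrable near $0$, so $0$ is a regular boundary point; since the generator carries no killing or sticky term, the boundary is instantaneously reflecting, proving the first claim of \eqref{item:CIRbetaLessOne}.

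Finally, for \eqref{eq:stochastic_representation_CIR_limit_hitting_time_zero}, I would argue as follows. By pathwise uniqueness (Proposition \ref{prop:1DDegenSDEStrongSolutionExistsUnique}) and the comparison principle for one-dimensional stochastic differential equations with the same Brownian driver and drift and with diffusion coefficient obeying the Yamada condition \eqref{eq:PseudoLipschitzDiffusionCoefficient}, the map $y\mapsto Y^y(s)$ is monotone increasing a.s., hence $y\mapsto T^y_0$ is monotone in $y$ near $0$ and the limit $T_0 := \lim_{y\downarrow 0}T^y_0$ exists a.s. To identify it with $0$, I would compute the Laplace transform: for any $\lambda>0$, the function $y\mapsto \mathbb{E}[e^{-\lambda T^y_0}]$ equals $\phi_\lambda(y)/\phi_\lambda(0)$, where $\phi_\lambda$ is the unique, up to a positive scalar, decreasing positive solution of $L\phi_\lambda = \lambda \phi_\lambda$ on $(0,\infty)$. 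Since $0$ is a regular boundary, $\phi_\lambda$ extends continuously to $[0,\infty)$ with $\phi_\lambda(0)\in(0,\infty)$, so $\mathbb{E}[e^{-\lambda T^y_0}]\to 1$ as $y\downarrow 0$, which forces $T_0=0$ a.s.

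The main obstacle is the last step: verifying the continuity up to $y=0$ of the decreasing positive eigenfunction $\phi_\lambda$ (or equivalently justifying $T_0=0$). This relies on the regularity of $0$, which was established by the Feller test in the previous paragraph, and on standard Sturm--Liouville analysis of the ODE $L\phi_\lambda = \lambda \phi_\lambda$ near the boundary point $y=0$ using the integrability of $\mathfrak{s}$ and $\mathfrak{m}$ obtained in Step 2. The other steps are direct applications of the Feller classification.
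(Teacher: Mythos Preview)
Your proof of the boundary classification in parts \eqref{item:CIRbetaGreaterEqualOne} and \eqref{item:CIRbetaLessOne} is essentially the same as the paper's: both compute the scale density $\mathfrak{s}(y)=y^{-\beta}e^{\mu y}$ and speed density $\mathfrak{m}(y)=\frac{2}{\sigma^2}y^{\beta-1}e^{-\mu y}$, then invoke the Feller test from \cite[\S 15.6]{KarlinTaylor2}. The paper is slightly more explicit in that for $\beta\geq 1$ it also verifies $N(0):=\int_0^x S[y,x]\mathfrak{m}(y)\,dy<\infty$, which is part of the entrance-boundary criterion in Karlin--Taylor; you should include this check rather than only noting that the speed measure is finite near $0$.

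For \eqref{eq:stochastic_representation_CIR_limit_hitting_time_zero} the two arguments diverge. The paper stays entirely within the Karlin--Taylor framework: it uses the explicit formulas $u_{a,b}(y)=\PP^y(T_b<T_a)=S[a,y]/S[a,b]$ and $v_{a,b}(y)=\EE^y[T_a\wedge T_b]$ (in terms of the already-computed $S$ and $M$), lets $a\downarrow 0$ and then $y\downarrow 0$, and reads off $\PP^y(T_b<T_0)\to 0$ and $\EE^y[T_0\wedge T_b]\to 0$. Your route via the comparison theorem (to get monotonicity of $y\mapsto T^y_0$ and hence existence of the a.s.\ limit) together with the Laplace transform $\EE[e^{-\lambda T^y_0}]=\phi_\lambda(y)/\phi_\lambda(0)$ is correct and in one respect cleaner, since the monotonicity immediately upgrades any convergence in law to a.s.\ convergence. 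The cost is that you must justify the continuous extension of the decreasing eigenfunction $\phi_\lambda$ to $y=0$ with $\phi_\lambda(0)\in(0,\infty)$; this is true for a regular boundary but requires a reference or a short Frobenius-type analysis of the ODE $\tfrac{\sigma^2 y}{2}\phi''+\kappa(\vartheta-y)\phi'=\lambda\phi$ near $y=0$. The paper's approach avoids this extra input by recycling exactly the scale and speed integrals already computed for the boundary classification.
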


\begin{proof}
A direct calculation give us that the scale function, $\mathfrak{s}$, and the speed measure, $\mathfrak{m}$, of the Feller 
square root process are given by
\[
\mathfrak{s}(y)=y^{-\beta} e^{\mu y} \hbox{  and  } \mathfrak{m}(y)=\frac{2}{\sigma^2} y^{\beta-1} e^{-\mu y},\quad\forall 
y>0
\]
where $\beta=2\kappa\vartheta/\sigma^2$ and $\mu=2\kappa/\sigma^2$. We consider the following quantities, for 
$0<a<b<\infty$ and $x>0$,
\begin{align*}
S[a,b] &:= \int_a^b \mathfrak{s}(y) dy, \quad S(a,b] := \lim_{c \downarrow a} S[c,b],\\
M[a,b] &:= \int_a^b \mathfrak{m}(y) dy, \quad M(a,b] := \lim_{c \downarrow a} M[c,b],\\
N(0) &:= \int_0^x S[y,x] \mathfrak{m}(y) dy.
\end{align*}
Then, for $\beta \geq 1$, we have $S(0,x]=\infty$ and $N(0)<\infty$, which implies that $y=0$ is an entrance boundary 
point (\cite[p. $235$]{KarlinTaylor2}), while for
$0<\beta<1$, we have $S(0,x]<\infty$ and $M(0,x]<\infty$, and so $y=0$ is a regular boundary point (\cite[p. 
$232$]{KarlinTaylor2}).

Next, we consider the case $0<\beta<1$. To establish \eqref{eq:stochastic_representation_CIR_limit_hitting_time_zero}, we 
consider the following quantities
\begin{align*}
u_{a,b}(y) &:= \PP^y\left(T_b<T_a\right) = \frac{S[a,y]}{S[a,b]},\\
v_{a,b}(y) &:= \EE^y_{\PP}\left[T_a\wedge T_b\right]
= 2 u_{a,b}(y) \int_y^b S[z,b] m(z) dz + 2 \left(1-u_{a,b}(y)\right)\int_a^y S(a,z] m(z) dz,
\end{align*}
as in \cite[Equations ($15.6.1$) \& ($15.6.5$)]{KarlinTaylor2} and \cite[Equations ($15.6.2$) \& 
($15.6.6$)]{KarlinTaylor2}, respectively. Notice that $T^y_a \rightarrow T^y_0$,
when $y \downarrow 0$, by the continuity of the paths of $Y$. Then, for fixed $b>0$, we obtain
\begin{align*}
\lim_{y \downarrow 0} \PP^y (T_b < T_0) &=\lim_{y\downarrow 0} \lim_{a \downarrow 0} \PP^y (T_b<T_a)=0,\\
\lim_{y \downarrow 0} \EE^y_{\PP} \left[T_0 \wedge T_b\right] &=\lim_{y\downarrow 0} \lim_{a \downarrow 0} \EE^y_{\PP} 
\left[T_a \wedge T_b\right]=0,
\end{align*}
from where \eqref{eq:stochastic_representation_CIR_limit_hitting_time_zero} follows.
\end{proof}

Next, we have the following

\begin{lem}[Properties of the Heston process]
\label{lem:stochastic_representation_PropertiesHeston}
Let $\left(Z(s)\right)_{s \geq 0}$ be the unique strong solution to the Heston stochastic differential equation 
\eqref{eq:stochastic_representation_HestonSDE}.
\begin{enumerate}
\item\label{item:SupermartingaleX} Assume $q \geq 0$ and $r \in \RR$. Then, for any constant $c\in [0,1]$,
\begin{equation}
\label{eq:stochastic_representation_Supermartingale_property_X}
\left(e^{-rcs} e^{cX(s)}\right)_{s \geq 0} \hbox{ is a positive supermartingale.}
\end{equation}
\item\label{item:SupermartingaleY} For any positive constant $c\leq  \mu$,
\begin{equation}
\label{eq:stochastic_representation_Supermartingale_property_Y}
\left(e^{-c\kappa\vartheta s} e^{cY(s)}\right)_{s \geq 0} \hbox{ is a positive supermartingale.}
\end{equation}
\end{enumerate}
\end{lem}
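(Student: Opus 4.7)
The plan is to apply It\^o's formula to each process, extract the drift, show it is non-positive under the stated bounds on $c$, and then conclude via the standard fact that a non-negative local supermartingale is a true supermartingale.

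For part \eqref{item:SupermartingaleX}, set $M(s) := e^{-rcs} e^{cX(s)}$, which is clearly positive. From the first line of the Heston SDE \eqref{eq:stochastic_representation_HestonSDE}, $d\langle X\rangle(s) = Y(s)\,ds$, so It\^o's formula yields
\begin{equation*}
dM(s) = M(s)\left[-cq + \tfrac{c(c-1)}{2}Y(s)\right]ds + M(s)\,c\sqrt{Y(s)}\,dW_1(s).
\end{equation*}
Because $q\geq 0$, $c\in[0,1]$ (so $c(c-1)\leq 0$), and $Y(s)\geq 0$ a.s.\ by Lemma~\ref{lem:PropertiesCIR}, the drift is non-positive. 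Hence $M$ is a non-negative local supermartingale. A standard Fatou argument (applied to $M$ stopped at a localizing sequence) shows any non-negative local supermartingale is a supermartingale, giving \eqref{eq:stochastic_representation_Supermartingale_property_X}.

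For part \eqref{item:SupermartingaleY}, set $N(s) := e^{-c\kappa\vartheta s} e^{cY(s)}$. From the second line of \eqref{eq:stochastic_representation_HestonSDE}, $d\langle Y\rangle(s) = \sigma^2 Y(s)\,ds$, so It\^o's formula gives
\begin{equation*}
dN(s) = N(s)\,cY(s)\left[-\kappa + \tfrac{c\sigma^2}{2}\right]ds + N(s)\,c\sigma\sqrt{Y(s)}\,d\widetilde W(s),
\end{equation*}
where $\widetilde W := \rho W_1 + \sqrt{1-\rho^2}\,W_2$ is a standard Brownian motion. The bound $c \leq \mu = 2\kappa/\sigma^2$ is precisely what makes $-\kappa + c\sigma^2/2 \leq 0$; combined with $c>0$ and $Y\geq 0$ a.s., the drift is non-positive, and the same non-negative local supermartingale argument finishes the proof.

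There is no real obstacle: the crux is simply the algebraic observation that the conditions $c\in[0,1]$ (respectively $c\leq\mu$) are exactly tuned so that the It\^o correction absorbs the discount factor with a non-positive remainder. The only care needed is in passing from local to true supermartingale, which requires only non-negativity (not a BDG-type moment bound on the stochastic integral), so it is unnecessary to check any $L^p$ bound on $M$ or $N$.
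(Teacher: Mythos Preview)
Your proof is correct and follows essentially the same route as the paper: apply It\^o's formula, observe that the stated bounds on $c$ make the drift non-positive, and then pass from local to true supermartingale via a localization-plus-Fatou argument. The paper spells out the last step explicitly with the exit times $\theta_n$ from the rectangles $(-n,n)\times(-n,n)$ and applies Fatou to the stopped processes, which is precisely the ``standard fact'' you invoke.
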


\begin{proof}
To establish \eqref{eq:stochastic_representation_Supermartingale_property_X}, we use It\^o's formula to give
\begin{equation}
\label{eq:Sochastic_representation_SDE_X_supermartingale}
\begin{aligned}
d\left(e^{-rcs}e^{cX(s)}\right) &=  - e^{-rcs}e^{cX(s)}\left(cq+\frac{1}{2}c(1-c)Y(s)\right)\,ds\\
&\quad           + c e^{-rcs}e^{cX(s)}\sqrt{Y(s)}\,dW_1(s).
\end{aligned}
\end{equation}
Notice that the drift coefficient is non-positive, since $Y(s) \geq 0$ a.s. for all $s \geq 0$ by Lemma 
\ref{lem:PropertiesCIR}, and $q \geq 0$, and $c\in[0,1]$.

Similarly, to establish \eqref{eq:stochastic_representation_Supermartingale_property_Y} for the Feller square root 
process, we have
\begin{equation}
\label{eq:Sochastic_representation_SDE_Y_supermartingale}
\begin{aligned}
d\left( e^{- c \kappa\vartheta s} e^{c Y(s)}\right) &=
e^{- c \kappa\vartheta s} e^{c Y(s)} c\left(c\sigma^2/2-\kappa \right)Y(s) ds\\
&\quad + c \sigma  e^{- c \kappa\vartheta s} e^{c Y(s)} \sqrt{Y(s)} \left(\rho dW_1(s)+\sqrt{1-\rho^2}dW_2(s)\right).
\end{aligned}
\end{equation}
When $c\leq \mu$, we see that the drift coefficient in the preceding stochastic differential equation is non-negative.

The supermartingale properties \eqref{eq:stochastic_representation_Supermartingale_property_X} and 
\eqref{eq:stochastic_representation_Supermartingale_property_Y} follow if we
show in addition that the processes are integrable random variables for each time $s \geq 0$. For simplicity, we let 
$Q(s)$ denote either one of the processes we consider, and we
let $\theta_n$ be the first exit time of the Heston process $\left(X(s), Y(s)\right)_{s \geq 0}$ from the rectangle 
$(-n,n)\times(-n,n)$, where $n \in \NN$. We set
$Q_n(s):=Q(s\wedge\theta_n)$, for all $s\geq 0$. We then have
\[
d Q_n(s) = \mathbf{1}_{\{s \leq \theta_n\}} d Q_n(s),\quad \forall s >0, \quad \forall n \in \NN.
\]
Using equations \eqref{eq:Sochastic_representation_SDE_X_supermartingale} and 
\eqref{eq:Sochastic_representation_SDE_Y_supermartingale}, it is clear that $(Q_n(s))_{s \geq 0}$
are supermartingales, because the coefficients of the stochastic differential equations are bounded and the drift terms 
are non-positive. Therefore, we know that
\begin{equation}
\label{eq:stochastic_representation_Approximation_sipermatingale}
\EE^{x,y}_{\QQ}\left[Q_n(t) | \sF(s)\right] \leq Q_n(s), \quad \forall t\geq s,\quad\forall s\geq 0, \quad \forall n \in 
\NN.
\end{equation}
Clearly, we also have $Q_n(t)\rightarrow Q(t)$ a.s., as $n \rightarrow \infty$, for all $t\geq s$ and $s \geq 0$. Taking 
the limit as $n\rightarrow\infty$ in
\eqref{eq:stochastic_representation_Approximation_sipermatingale} and using the positivity of the processes, Fatou's lemma 
yields
\begin{align*}
\EE^{x,y}_{\QQ}\left[Q(t) | \sF(s)\right] &\leq \liminf_{n \rightarrow \infty}  \EE^{x,y}_{\QQ}\left[Q_n(t) | 
\sF(s)\right]\\
&\leq \liminf_{n \rightarrow \infty}  Q_n(s) \quad \hbox{ (by 
\eqref{eq:stochastic_representation_Approximation_sipermatingale})}\\
&= Q(s), \quad \forall t \geq s ,\quad\forall s\geq 0,
\end{align*}
and so \eqref{eq:stochastic_representation_Supermartingale_property_X} and 
\eqref{eq:stochastic_representation_Supermartingale_property_Y} follow.
\end{proof}

The next lemma is used to show that the functions $u^*$ given by 
\eqref{eq:stochastic_representation_StochasticRep_BVP_all_beta} and
\eqref{eq:stochastic_representation_StochasticRep_BVP_2} are well-defined and satisfy the growth assumption 
\eqref{eq:stochastic_representation_Growth_elliptic}.

\begin{lem}
\label{lem:stochastic_representation_WellDefinedStochRepVarEq}
Suppose $r>0$, and $f$, $g$, $\psi$ are Borel measurable functions on $\sO$ and satisfy assumption 
\eqref{eq:stochastic_representation_Growth_elliptic}. Then there is a positive
constant $\bar{C}$, depending on $r$, $\kappa$, $\vartheta$, $M_1$, $M_2$ and $C$ in 
\eqref{eq:stochastic_representation_Growth_elliptic}, such that for any $\theta_1, \theta_2
\in \sT$, the function $J_e^{\theta_1,\theta_2}$ in \eqref{eq:stochastic_representation_Functional_J} satisfies the growth 
assumption,
\[
|J_e^{\theta_1,\theta_2} (x,y)| \leq \bar C\left(1+e^{M_1 y} + e^{M_2 x}\right), \quad \forall (x,y) \in \sO,
\]
where $0<M_1<\min\left\{r/\left(\kappa\vartheta\right), \mu\right\}$ and $M_2 \in[0,1)$  are as in 
\eqref{eq:stochastic_representation_Growth_elliptic}.
\end{lem}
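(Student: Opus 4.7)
The plan is to bound each of the three terms appearing in the definition \eqref{eq:stochastic_representation_Functional_J} of $J_e^{\theta_1,\theta_2}$ separately, substituting the pointwise growth estimate \eqref{eq:stochastic_representation_Growth_elliptic} for $f$, $g$, and $\psi$, and then controlling each of the resulting expectations
$$
\EE^z_\QQ\!\left[e^{-rs}\right],\quad \EE^z_\QQ\!\left[e^{-rs}e^{M_1 Y(s)}\right],\quad \EE^z_\QQ\!\left[e^{-rs}e^{M_2 X(s)}\right],
$$
(at deterministic times $s$ and at the stopping times $\theta_1, \theta_2$) by exploiting the two supermartingale properties of Lemma \ref{lem:stochastic_representation_PropertiesHeston}.

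First I will treat the running integral $\EE^z_\QQ[\int_0^{\theta_1\wedge\theta_2} e^{-rs} f(Z(s))\,ds]$. Since $\theta_1\wedge\theta_2 \leq \infty$, Tonelli's theorem lets me pass to $\int_0^\infty e^{-rs}\EE^z_\QQ[|f(Z(s))|]\,ds$. The constant part gives $C/r$. For the $e^{M_1 Y(s)}$ part, Lemma \ref{lem:stochastic_representation_PropertiesHeston}\eqref{item:SupermartingaleY} applied with $c = M_1 \leq \mu$ yields $\EE^z_\QQ[e^{M_1 Y(s)}] \leq e^{M_1 y} e^{M_1\kappa\vartheta s}$, so the integrand is bounded by $e^{M_1 y} e^{-(r - M_1\kappa\vartheta)s}$, which is integrable on $[0,\infty)$ precisely because $M_1 < r/(\kappa\vartheta)$. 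For the $e^{M_2 X(s)}$ part, Lemma \ref{lem:stochastic_representation_PropertiesHeston}\eqref{item:SupermartingaleX} applied with $c = M_2 \in [0,1)$ gives $\EE^z_\QQ[e^{M_2 X(s)}] \leq e^{M_2 x} e^{rM_2 s}$, so the integrand is bounded by $e^{M_2 x} e^{-r(1-M_2)s}$, integrable since $M_2 < 1$.

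For the two boundary terms $\EE^z_\QQ[e^{-r\theta_i} h(Z(\theta_i))\mathbf{1}_{\{\cdot\}}]$ with $h = g$ or $h = \psi$, I again insert the growth bound, drop the indicator (which only decreases a positive integrand), and handle the three resulting pieces. The constant piece gives at most $C$. For the $e^{M_1 Y}$ piece I write $e^{-r\theta_i} = e^{-(r - M_1\kappa\vartheta)\theta_i} e^{-M_1\kappa\vartheta \theta_i}$, drop the first (nonpositive exponent) factor, and apply the optional sampling theorem to the positive supermartingale $(e^{-M_1\kappa\vartheta s} e^{M_1 Y(s)})$, which, for unbounded stopping times, is standard via localization at $\theta_i \wedge n$ together with Fatou's lemma, yielding the bound $e^{M_1 y}$. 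The $e^{M_2 X}$ piece is treated identically using the supermartingale $(e^{-rM_2 s} e^{M_2 X(s)})$ and the inequality $e^{-r\theta_i} \leq e^{-rM_2 \theta_i}$, giving the bound $e^{M_2 x}$.

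Summing the three contributions across all three terms yields the desired estimate with an explicit constant $\bar C$ depending only on $r$, $\kappa\vartheta$, $M_1$, $M_2$, and $C$. The only mildly delicate point is the passage from bounded to unbounded stopping times in the optional sampling step, but this is routine for \emph{positive} supermartingales via the Fatou version of optional sampling; the key structural insight--that the decay rate $r$ beats both $M_1\kappa\vartheta$ (coming from the Y-drift) and $rM_2$ (coming from the X-drift)--is built directly into the hypotheses $M_1 < r/(\kappa\vartheta)$ and $M_2 < 1$.
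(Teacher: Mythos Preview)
Your proposal is correct and follows essentially the same approach as the paper: bound the integral term via Tonelli and the supermartingale inequalities of Lemma \ref{lem:stochastic_representation_PropertiesHeston} at deterministic times, then bound the two stopped terms by inserting the growth estimate, splitting $e^{-r\theta_i}$ to match the supermartingale rate, and invoking optional sampling for positive supermartingales via localization $\theta_i\wedge n$ and Fatou. The paper additionally spells out that on $\{\theta_1=\infty\}$ the term $e^{-r\theta_1}g(Z(\theta_1))$ is to be interpreted as zero (identity \eqref{eq:stochastic_representation_InterpretationStochRep1}), but this is a well-definedness remark rather than a gap in your bound, and your localization-plus-Fatou argument handles it implicitly.
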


\begin{rmk}
The obstacle function $\psi$ in \eqref{eq:stochastic_representation_Functional_J} is only relevant for solutions to 
problem
\eqref{eq:stochastic_representation_Elliptic_obstacle_problem}.
\end{rmk}

\begin{proof}
The conclusion is a consequence of the properties of the Heston process given in Lemma 
\ref{lem:stochastic_representation_PropertiesHeston}. We first estimate the integral term
in \eqref{eq:stochastic_representation_Functional_J}. For $z \in \sO$, then
\begin{align*}
&\EE^z_\QQ\left[\int_0^{\theta_1\wedge\theta_2} e^{-rs} |f(Z(s))| ds\right] \\
&\quad
\leq C \EE^z_\QQ\left[\int_0^{\infty} e^{-rs} \left(1+e^{-rs} e^{M_1Y(s)} + e^{-rs} e^{M_2X(s)}\right) \,ds\right]
 \quad (\hbox{by \eqref{eq:stochastic_representation_Growth_elliptic}}) \\
&\quad\leq C \left(1+ \int_0^{\infty}e^{-(r-M_1\kappa\vartheta)s}\EE^z_\QQ\left[e^{-M_1\kappa\vartheta s} e^{M_1 
Y(s)}\right] \,ds\right.\\
&\quad\quad\left.   + \int_0^{\infty} e^{-(1-M_2)rs}\EE^z_\QQ\left[e^{-rM_2s} e^{M_2X(s)} ds\right]\,ds\right).
\end{align*}
Using the condition $M_1 < \min\left\{r/(\kappa\vartheta),\mu\right\}$ and 
\eqref{eq:stochastic_representation_Supermartingale_property_Y}, together with $M_2<1$ and
\eqref{eq:stochastic_representation_Supermartingale_property_X}, we see that
\begin{align}
\label{eq:stochastic_representation_Bound_integral_term_elliptic}
\EE^z_\QQ\left[\int_0^{\theta_1\wedge\theta_2} e^{-rs} |f(Z(s))| ds\right]
\leq \bar C \left(1 + e^{M_1 y} + e^{M_2 x }\right),
\end{align}
for a positive constant $\bar C$ depending on $r$, $M_1\kappa\vartheta$, $M_2$ and the constant $C$ in the growth 
assumption \eqref{eq:stochastic_representation_Growth_elliptic}
on $f$, $g$ and $\psi$.

Next, we show that the first non-integral term in \eqref{eq:stochastic_representation_Functional_J} can be written as
\begin{equation}
\label{eq:stochastic_representation_InterpretationStochRep1}
\EE^z_\QQ \left[ e^{-r\theta_1} g(Z(\theta_1)) \mathbf{1}_{\{\theta_1\leq\theta_2\}}\right] =
\EE^z_\QQ \left[ e^{-r\theta_1} g(Z(\theta_1)) \mathbf{1}_{\{\theta_1\leq\theta_2, \theta_1<\infty\}} \right],
\end{equation}
for any $\theta_1 \in \sT$ which is not necessarily finite. This is reasonable because by rewriting
\begin{align*}
 \EE^z_\QQ \left[ e^{-r\theta_1} g(Z(\theta_1)) \mathbf{1}_{\{\theta_1\leq\theta_2\}}\right] &=
 \EE^z_\QQ \left[ e^{-r\theta_1} g(Z(\theta_1)) \mathbf{1}_{\{\theta_1\leq\theta_2\wedge T\}}\right]\\
 &\quad +
 \EE^z_\QQ \left[ e^{-r\theta_1} g(Z(\theta_1)) \mathbf{1}_{\{T<\theta_1\leq\theta_2\}}\right],
\end{align*}
we shall see that the second term converges to zero, as $T\rightarrow\infty$. Using the growth assumption on $g$ in 
\eqref{eq:stochastic_representation_Growth_elliptic}, we have
\[
\EE^z_\QQ \left[ e^{-r\theta_1} \left|g(Z(\theta_1)) \right|\mathbf{1}_{\{T<\theta_1\leq\theta_2\}}\right]
\leq C \EE^z_\QQ \left[ e^{-r\theta_1} \left(1+ e^{M_1Y(\theta_1)}+e^{M_2X(\theta_1)}\right) 
\mathbf{1}_{\{T<\theta_1\}}\right],
\]
and so by Lemma \ref{lem:stochastic_representation_PropertiesHeston}, we obtain
\[
\EE^z_\QQ \left[ e^{-r\theta_1} g(Z(\theta_1)) \mathbf{1}_{\{T<\theta_1\leq\theta_2\}}\right]
\leq C\left(e^{-rT} + e^{-(r-M_1\kappa\vartheta)T}e^{M_1 y} + e^{-r(1-M_2)T} e^{M_2x}\right).
\]
Since $M_1 < r/(\kappa\vartheta)$ and $M_2<1$, we see that the right hand side converges to $0$, as $T\rightarrow\infty$. 
This justifies the identity
\eqref{eq:stochastic_representation_InterpretationStochRep1}.

Now, we use Fatou's lemma to obtain the bound \eqref{eq:stochastic_representation_Growth_elliptic} on the first 
non-integral term in
\eqref{eq:stochastic_representation_Functional_J}. For $z \in \sO$,
\begin{align*}
&\EE^z_\QQ\left[e^{-r\theta_1} \left|g(Z(\theta_1))\right|\mathbf{1}_{\{\theta_1\leq\theta_2\}}\right]\\
&\quad\leq \liminf_{n\to\infty} \EE^z_\QQ\left[e^{-r(\theta_1\wedge n)} |g(Z(\theta_1\wedge n))|\right]\\
&\quad\leq \liminf_{n\to \infty} C \left(1+ \EE^z_\QQ\left[e^{-r(\theta_1\wedge n)} e^{M_1 Y(\theta_1\wedge n)}\right]
                                           + \EE^z_\QQ\left[e^{-r(\theta_1\wedge n)} e^{M_2X(\theta_1\wedge 
                                           n)}\right]\right)
\quad(\hbox{by \eqref{eq:stochastic_representation_Growth_elliptic}}).
\end{align*}
Because $M_1<\mu$, we may apply the supermartingale property 
\eqref{eq:stochastic_representation_Supermartingale_property_Y} with $c:=M_2$. We use also that
$M_1<r/\left(\kappa\vartheta\right)$ to obtain $M_1\kappa\vartheta<r$, and so it follows by the Optional Sampling Theorem 
\cite[Theorem $1.3.22$]{KaratzasShreve1991} that
\begin{align*}
\EE^z_\QQ\left[e^{-r(\theta_1\wedge n)} e^{M_1 Y(\theta_1\wedge n)}\right]
&\leq \EE^z_\QQ\left[e^{-M_1\kappa\vartheta(\theta_1\wedge n)} e^{M_1 Y(\theta_1\wedge n)}\right] \\
&\leq e^{M_1y},\quad\forall n \in \NN.
\end{align*}
Using the fact that $M_2<1$, we see by the supermartingale property 
\eqref{eq:stochastic_representation_Supermartingale_property_X} applies with $c:=M_1$. By the Optional
Sampling Theorem \cite[Theorem $1.3.22$]{KaratzasShreve1991} we have
\begin{align*}
\EE^z_\QQ\left[e^{-r(\theta_1\wedge n)} e^{M_2X(\theta_1\wedge n)}\right]
&\leq \EE^z_\QQ\left[e^{-rM_2(\theta_1\wedge n)} e^{M_2X(\theta_1\wedge n)}\right]\\
&\leq e^{M_2 x}, \quad \forall n \in\NN.
\end{align*}
Therefore, we obtain
\[
\EE^z_\QQ\left[e^{-r\theta_1} \left|g(Z(\theta_1))\right|\mathbf{1}_{\{\theta_1\leq\theta_2\}}\right]
\leq
C\left(1+e^{M_1y}+e^{M_2x}\right).
\]
We obtain the same bound on the second non-integral term in \eqref{eq:stochastic_representation_Functional_J} because the 
obstacle function $\psi$ satisfies the same growth
condition \eqref{eq:stochastic_representation_Growth_elliptic} as the boundary data $g$.
\end{proof}

To prove Theorems \ref{thm:stochastic_representation_Uniqueness_BVP_parabolic} and 
\ref{thm:stochastic_representation_Uniqueness_BVP_parabolic_beta_less_than_1}, we make use of
the following auxiliary result

\begin{lem}
\label{lem:stochastic_representation_Power_X}
Let $z\in\overline\HH$ and $T\in(0,T_0]$, where $T_0$ is a positive constant. Let $(Z^z(s))_{s \geq 0}$ be the unique 
strong solution to the Heston stochastic differential
equation \eqref{eq:stochastic_representation_HestonSDE} with initial condition $Z^z(0)=z$. Then there is a positive 
constant $c$, depending on $y$, $\kappa$, $\vartheta$,
$\sigma$ and $T_0$, such that for any constant $p$ satisfying
\begin{equation}
\label{eq:stochastic_representation_Power_p_X}
0 \leq p< \frac{c}{2\sigma T},
\end{equation}
we have
\begin{equation}
\label{eq:stochastic_representation_Power_X}
\sup_{\theta \in \sT_{0,T}} \EE^z_{\QQ}\left[e^{pX^z(\theta)}\right] <\infty,
\end{equation}
where $\sT_{0,T}$ denotes the set of $(\Omega,\sF,\QQ^z,\FF)$-stopping times with values in $[0,T]$.
\end{lem}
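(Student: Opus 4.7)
The plan is to apply It\^o's formula to $V(s):=e^{pX(s)}$, rewrite the result in terms of a Dol\'eans-Dade exponential, and then reduce the problem, via Cauchy-Schwarz, to bounding an exponential moment of the integrated Feller process. Specifically, It\^o's formula together with \eqref{eq:stochastic_representation_HestonSDE} yields
\begin{equation*}
dV(s) = V(s)\left[p(r-q) + \tfrac{p(p-1)}{2}Y(s)\right]ds + pV(s)\sqrt{Y(s)}\,dW_1(s),
\end{equation*}
so that, setting $M(s):=\int_0^s\sqrt{Y(u)}\,dW_1(u)$,
\begin{equation*}
e^{pX(\theta)} = e^{px+p(r-q)\theta}\,\mathcal{E}(pM)_{\theta}\,\exp\!\left(\tfrac{p(p-1)}{2}\int_0^{\theta}Y(u)\,du\right),
\end{equation*}
where $\mathcal{E}(pM)_s:=\exp\bigl(pM(s)-\tfrac{p^2}{2}\int_0^s Y(u)\,du\bigr)$ is a nonnegative local martingale. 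For $p\in[0,1]$ the factor $\exp(\tfrac{p(p-1)}{2}\int_0^{\theta}Y\,du)$ is bounded by $1$, and the localization argument used in Lemma~\ref{lem:stochastic_representation_PropertiesHeston} together with Fatou gives the bound $\EE^z_{\QQ}[e^{pX(\theta)}]\leq e^{px+p|r-q|T}$ with no constraint on $T$.

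For the genuine regime $p>1$ allowed by \eqref{eq:stochastic_representation_Power_p_X}, I would apply Cauchy-Schwarz to obtain
\begin{equation*}
\EE^z_{\QQ}\left[e^{pX(\theta)}\right]
\leq e^{px+p|r-q|T}\,\EE^z_{\QQ}\!\left[\mathcal{E}(pM)_{\theta}^{\,2}\right]^{1/2}
\EE^z_{\QQ}\!\left[e^{p(p-1)\int_0^{\theta}Y(u)\,du}\right]^{1/2}.
\end{equation*}
Writing $\mathcal{E}(pM)_{\theta}^{\,2}=\mathcal{E}(2pM)_{\theta}\,\exp\bigl(p^2\int_0^{\theta}Y\,du\bigr)$, and checking Novikov's condition for $\mathcal{E}(2pM)$ via the bound $\EE^z_{\QQ}[e^{2p^2\int_0^T Y\,du}]<\infty$, the optional sampling theorem reduces everything to estimates of the form $\EE^z_{\QQ}[e^{\lambda\int_0^T Y(u)\,du}]<\infty$ for $\lambda$ up to a constant multiple of $p^2$.

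The key step, and main obstacle, is controlling $\EE^z_{\QQ}[e^{\lambda\int_0^T Y(u)\,du}]$ uniformly in $\theta\in\sT_{0,T}$ for appropriate $\lambda$. Since $Y$ is an affine CIR process, this Laplace functional has an explicit form $\exp(A(T)+B(T)y)$, with $B$ solving the Riccati equation
\begin{equation*}
\dot B(t) = \tfrac{\sigma^2}{2}B(t)^2 - \kappa B(t) + \lambda,\qquad B(0)=0.
\end{equation*}
The solution exists on $[0,T]$ provided $\lambda$ lies strictly below a critical value $\lambda^*(T)$ at which $B$ blows up; asymptotically as $T\downarrow 0$ one has $\lambda^*(T)\sim c_0/(\sigma T)^2$ for a constant $c_0=c_0(y,\kappa,\vartheta)$. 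Hence choosing the constant $c=c(y,\kappa,\vartheta,\sigma,T_0)$ so small that $p<c/(2\sigma T)$ implies $4p^2<\lambda^*(T)$ for all $T\in(0,T_0]$ produces the required uniform bound and closes the argument. The most delicate part is verifying this critical $\lambda^*(T)$ bound with the correct $T$-dependence; I would do this either by directly solving the Riccati ODE and tracking the blow-up time, or by a Gr\"onwall bootstrap applied to $\phi(s):=\sup_{\theta\in\sT_{0,s}}\EE^z_{\QQ}[e^{pX(\theta)}]$ using the SDE for $V$ and the identity $Y\leq\varepsilon^{-1}e^{\varepsilon Y}$ to trade the $Y$-factor in the drift for a small exponential contribution.
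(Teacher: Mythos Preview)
Your approach is correct and takes a genuinely different route from the paper. The paper proceeds by a time-change argument: writing $X(t)$ and $Y(t)$ in terms of Brownian motions evaluated at the clock $\int_0^t Y(s)\,ds$, it derives a tail estimate $\QQ^z(n\le M_Y(T)\le n+1)\le Ce^{-cn/(2\sigma^2 T)}$ for the running maximum $M_Y(T)=\max_{0\le s\le T}Y(s)$, and then decomposes $e^{pX(\theta)}$ over the level sets $\{n\le M_Y(T)\le n+1\}$ via H\"older's inequality, bounding each piece through the density of the running maximum of Brownian motion. Your method instead exploits the affine structure of the Feller process: after the Dol\'eans--Dade decomposition and a Cauchy--Schwarz split, the problem reduces to the finiteness of $\EE^z_\QQ[e^{\lambda\int_0^T Y(u)\,du}]$, governed by a Riccati equation whose blow-up time scales like $(\sigma\sqrt\lambda)^{-1}$. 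Both routes yield the same threshold $p\lesssim 1/(\sigma T)$. Your argument is more systematic and in fact gives a threshold independent of $y$ (only the value of the bound depends on $y$); the paper's argument is more self-contained, avoiding the Riccati machinery entirely.

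One technical point to repair: your Cauchy--Schwarz step, as written, does not close. After writing $\mathcal{E}(pM)_\theta^2=\mathcal{E}(2pM)_\theta\exp\bigl(p^2\int_0^\theta Y\,du\bigr)$, Novikov for $\mathcal{E}(2pM)$ gives $\EE[\mathcal{E}(2pM)_\theta]=1$, but you still have a product to control, and iterating Cauchy--Schwarz on it produces an infinite regress. The cleanest fix is to bypass the first Cauchy--Schwarz: since $Y\ge 0$ one has $e^{pX(\theta)}\le e^{px+p|r-q|T}e^{pM(\theta)}$, and
\[
e^{pM(\theta)}=\mathcal{E}(2pM)_\theta^{1/2}\,\exp\Bigl(p^2\!\int_0^\theta Y\,du\Bigr),
\]
so Cauchy--Schwarz yields $\EE[e^{pM(\theta)}]\le\EE[\mathcal{E}(2pM)_\theta]^{1/2}\,\EE[e^{2p^2\int_0^T Y\,du}]^{1/2}\le\EE[e^{2p^2\int_0^T Y\,du}]^{1/2}$, using only that $\mathcal{E}(2pM)$ is a nonnegative local martingale, hence a supermartingale (no Novikov needed), together with optional sampling at the bounded stopping time $\theta$. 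Your Riccati analysis then finishes the argument exactly as you describe.
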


\begin{proof}
We use the method of time-change. Denote
\[
M_i(t) := \int_0^t \sqrt{Y(s)} dW_i(s), \quad i=1,2,
\]
and observe that there is a two-dimensional Brownian motion $(B_1,B_2)$ \cite[Theorem 3.4.13]{KaratzasShreve1991} such 
that
\[
M_i(t) = B_i\left(\int_0^t Y(s) ds\right), \quad i=1,2.
\]
Thus, we may rewrite the solution of the Heston stochastic differential equation 
\eqref{eq:stochastic_representation_HestonSDE} in the form
\begin{align}
\label{eq:stochastic_representation_Heston_X}
X(t) = x + (r-q)s -\frac{1}{2} \int_0^t Y(s) ds + B_1\left(\int_0^t Y(s) ds\right),\\
\label{eq:stochastic_representation_Heston_Y}
Y(t) = y + \kappa\vartheta s - \kappa \int_0^t Y(s) ds + \sigma B_3 \left(\int_0^t Y(s) ds\right),
\end{align}
where $B_3 := \rho B_1 + \sqrt{1-\rho^2} B_2$ is a one-dimensional Brownian motion.

For any continuous stochastic process $(P(t))_{t \geq 0}$, we let
\begin{align*}
M_{P}(t) &:= \max_{0 \leq s \leq t} P(s),\quad\forall t\geq 0.
\end{align*}
We first prove the following estimate.
\begin{claim}
There are positive constants $n_0$ and $c$, depending on $y$, $\kappa$, $\vartheta$, $\sigma$ and $T_0$, such that
\begin{equation}
\label{eq:stochastic_representation_Estimate_max_Y}
\QQ^z\left( n \leq M_Y(T) \leq n+1 \right) \leq \frac{2}{\sqrt{\pi}} e^{-c n/(2\sigma^2 T)} \mathbf{1}_{\{n \geq n_0\}} + 
\mathbf{1}_{\{n < n_0\}},
\quad\forall n \in\NN.
\end{equation}
\end{claim}
\begin{proof}
Notice that if $M_Y(T) \leq n+1$, where $n\in\NN$, then
\[
\int_{0}^T Y(s) ds \leq (n+1)T,
\]
and so, for any positive constant $m$,
\begin{align}
\label{eq:stochastic_representation_maximum_Bm}
\left\{\max_{0\leq t \leq T} B_3\left(\int_{0}^t Y(s) ds\right) \geq m, M_Y(T) \leq n+1\right\}
&\subseteq
\left\{ M_{B_3}((n+1)T) \geq m\right\}.
\end{align}
Using the inclusion
\[
\left\{n \leq M_Y(T)\right\}
\subseteq
\left\{\max_{0\leq t \leq T} B_3\left(\int_{0}^t Y(s) ds\right) \geq \frac{n-y-\kappa\vartheta T}{\sigma}\right\}
\quad\hbox{(by \eqref{eq:stochastic_representation_Heston_Y})},
\]
we obtain by \eqref{eq:stochastic_representation_maximum_Bm},
\[
\QQ^z\left(n \leq M_Y(T) \leq n+1 \right)
\leq \QQ^z\left(M_{B_3}((n+1)T) \geq \frac{n-y-\kappa\vartheta T}{\sigma} \right).
\]
The expression for the density of the running maximum of Brownian motion \cite[Equation ($2.8.4$)]{KaratzasShreve1991} 
yields
\begin{align*}
\QQ^z\left(M_{B_3}((n+1)T) \geq \frac{n-y-\kappa\vartheta T}{\sigma} \right)
&\leq \int_{(n-y-\kappa\vartheta T)/(\sigma\sqrt{(n+1)T})}^{\infty} \frac{2}{\sqrt{2\pi}} e^{-x^2/2} dx.
\end{align*}
As in \cite[\S $7.1.2$]{AbramStegun}, we let
\[
\erfc (a) : = \frac{2}{\sqrt{\pi}} \int_a^{\infty} e^{-x^2/2} dx,\quad \forall a\in\RR,
\]
and so,
\begin{align*}
\QQ^z\left(n \leq M_Y(T) \leq n+1 \right)
&\leq \frac{1}{\sqrt{2}} \erfc\left( \frac{n-y-\kappa\vartheta T}{\sigma\sqrt{(n+1)T}}\right).
\end{align*}
Because for any $a \geq 1$,
\begin{align*}
\int_a^{\infty} e^{-x^2/2}dx &\leq \int_a^{\infty} x e^{-x^2/2}dx\\
&= e^{-a^2/2},
\end{align*}
we see that
\begin{align*}
\erfc(a) \leq  \frac{2}{\sqrt{\pi}} e^{-a^2/2}, \quad \forall a \geq 1.
\end{align*}
By hypothesis, $T\in(0,T_0]$, which implies that
\[
\frac{n-y-\kappa\vartheta T}{\sigma\sqrt{(n+1)T}} \geq \frac{n-y-\kappa\vartheta T_0}{\sigma\sqrt{(n+1) T_0}},\quad 
\forall n \in\NN.
\]
Hence, provided we have
\[
\frac{n-y-\kappa\vartheta T_0}{\sigma\sqrt{(n+1) T_0}} \geq 1,
\]
which is true for all $n\geq n_0(y, \kappa, \vartheta, \sigma, T_0)$, the smallest integer such that the preceding 
inequality holds, we see that
\begin{align}
\label{eq:sochastic_representation_estimate_M_Y_n_large}
\QQ^z\left(n \leq M_Y(T) \leq n+1 \right) \leq \frac{2}{\sqrt{\pi}}  e^{-(n-y-\kappa\vartheta T)^2/(2\sigma^2 (n+1)T)}, 
\quad \forall n \geq n_0.
\end{align}
Similarly, for a possibly larger $n_0(y, \kappa, \vartheta, \sigma, T_0)$, using again the fact that $T\in (0,T_0]$, we 
may choose a positive constant $c$, depending also on $y$,
$\kappa$, $\vartheta$, $\sigma$ and $T_0$, such that for all $n \geq n_0$, we have
\[
\frac{(n-y-\kappa\vartheta T)^2}{2\sigma^2 (n+1)T} \geq c \frac{n}{2\sigma^2 T}.
\]
Then, using the preceding inequality, we obtain the estimate \eqref{eq:stochastic_representation_Estimate_max_Y} from 
\eqref{eq:sochastic_representation_estimate_M_Y_n_large}.
This completes the proof of the claim.
\end{proof}
Next, we employ \eqref{eq:stochastic_representation_Estimate_max_Y} to obtain 
\eqref{eq:stochastic_representation_Power_X}. For any stopping time $\theta \in \sT_{0,T}$, we may
write
\[
e^{pX(\theta)} = \sum_{n=0}^{\infty} e^{pX(\theta)} \mathbf{1}_{\{M_Y(T) \leq n+1\}}
 \mathbf{1}_{\{n \leq M_Y(T) \leq n+1\}},
\]
and, by H\"older's inequality, it follows
\begin{equation}
\label{eq:stochastic_representation_Power_X_sum}
\EE^z_{\QQ} \left[e^{pX(\theta)}\right]
\leq \sum_{n=0}^{\infty} \EE^z_{\QQ} \left[e^{pX(\theta)} \mathbf{1}_{\{M_Y(T) \leq n+1\}}\right]^{1/2}
\QQ^z \left(n \leq M_Y(T) \leq n+1\right)^{1/2}.
\end{equation}
Using \eqref{eq:stochastic_representation_Heston_X} and the condition $p \geq 0$ in 
\eqref{eq:stochastic_representation_Power_p_X}, we have
\begin{align*}
&\EE^z_{\QQ}\left[e^{pX(\theta)} \mathbf{1}_{\{M_Y(T) \leq n+1\}}\right]\\
&\quad\leq e^{p(x+|r-q|T)}\EE^z_{\QQ}\left[\exp\left(2p B_1\left(\int_0^{\theta} Y(s)ds\right)\right)\mathbf{1}_{\{M_Y(T) 
\leq n+1\}}\right] \\
&\quad\leq e^{p(x+|r-q|T)}\EE^z_{\QQ}\left[\exp\left(2p \max_{0\leq t \leq T}B_1\left(\int_0^{t} 
Y(s)ds\right)\right)\mathbf{1}_{\{M_Y(T) \leq n+1\}}\right]\\
&\quad\leq e^{p(x+|r-q|T)}\EE^z_{\QQ}\left[e^{2p M_{B_1}((n+1)T)}\right],
\quad\forall n \in \NN
\quad\hbox{(by \eqref{eq:stochastic_representation_maximum_Bm})}.
\end{align*}
We see from the expression for the density of the running maximum of Brownian motion \cite[Exercise 
($2.8.4$)]{KaratzasShreve1991} that
\begin{align*}
\EE^z_{\QQ}\left[e^{2p M_{B_1}((n+1)T)}\right]
&=\int_{0}^{\infty} e^{2px} \frac{2}{\sqrt{2\pi(n+1)T}} e^{-x^2/(2(n+1)T)} dx\\
&\leq 2e^{2 p^2 (n+1)T},\quad\forall n \in \NN\quad\hbox{(by Mathematica)},
\end{align*}
and so,
\begin{equation}
\label{eq:stochastic_representation_Power_X_Y_less_n}
\EE^z_{\QQ}\left[e^{pX(\theta)} \mathbf{1}_{\{ M_Y(T) \leq n+1\}}\right]
\leq 2 e^{p(x+|r-q|T)} e^{2 p^2 (n+1)T}, \quad \forall n \in\NN.
\end{equation}
Inequalities \eqref{eq:stochastic_representation_Estimate_max_Y}, \eqref{eq:stochastic_representation_Power_X_sum} and 
\eqref{eq:stochastic_representation_Power_X_Y_less_n} give
us
\begin{align*}
\EE^z_{\QQ} \left[e^{pX(\theta)}\right]
&\leq
\sqrt{2} e^{p(x+|r-q|T)/2} \sum_{n=0}^{n_0-1} e^{p^2(n+1) T}\\
&\quad+\frac{2}{\pi^{1/4}} e^{p(x+|r-q|T)/2} \sum_{n=n_0}^{\infty}  e^{p^2(n+1) T} e^{-c n/(4\sigma^2T)}\\
& = \sqrt{2} e^{p(x+|r-q|T)/2} \sum_{n=0}^{n_0-1} e^{p^2(n+1) T}\\
&\quad + \frac{2}{\pi^{1/4}} e^{p(x+|r-q|T)/2+p^2 T} \sum_{n=n_0}^{\infty}  e^{(p^2 T-c/(4\sigma^2T))n}.
\end{align*}
We choose $p$ such that
\[
0\leq p<\frac{\sqrt{c}}{2\sigma T},
\]
that is, condition \eqref{eq:stochastic_representation_Power_p_X} is obeyed, and we obtain a bound on $\EE^z_{\QQ} 
\left[e^{pX(\theta)}\right]$ which is independent of the choice
of $\theta\in\sT_{0,T}$. Thus, \eqref{eq:stochastic_representation_Power_X} follows. (Note that 
\eqref{eq:stochastic_representation_Power_X} holds trivially when $p=0$.)
\end{proof}

\section{Elliptic boundary value problem}
\label{sec:stochastic_representation_Elliptic_boundary_value_problem}
In this section, we prove Theorem \ref{thm:stochastic_representation_Uniqueness_BVP_elliptic}. In addition to the 
uniqueness result in Theorem
\ref{thm:stochastic_representation_Uniqueness_BVP_elliptic} we establish the \emph{existence} and uniqueness of solutions 
in Theorem
\ref{thm:stochastic_representation_Existence_elliptic_BVP_beta}.

The \emph{existence} and \emph{uniqueness} of solutions to problem 
\eqref{eq:stochastic_representation_HestonEllipticEqBVP} with boundary condition
\eqref{eq:stochastic_representation_HestonEllipticBoundaryCondition_Gamma1} along $\Gamma_1$, when $\beta\geq 1$, and with 
boundary condition
\eqref{eq:stochastic_representation_HestonEllipticBoundaryCondition_whole_boundary} along $\partial\sO$, when $0<\beta<1$, 
are similar in nature. Therefore, we define
\begin{align}
\label{eq:stochastic_representation_Boundary_beta}
\partial_{\beta} \sO :=
\begin{cases}
\Gamma_1     & \hbox{ if } \beta \geq 1,\\
\partial \sO & \hbox{ if } 0<\beta <1.
\end{cases}
\end{align}
and treat the previous mentioned boundary value problems together as
\begin{equation}
\label{eq:stochastic_representation_HestonEllipticEqBVP_beta}
\begin{cases}
           Au=f &\hbox{ on }\sO,\\
           u=g &\hbox{ on }\partial_{\beta}\sO.
\end{cases}
\end{equation}
Now, we can give the

\begin{proof} [Proof of Theorem \ref{thm:stochastic_representation_Uniqueness_BVP_elliptic}]
Our goal is to show that if $u \in C_{\loc}(\sO\cup\partial_{\beta}\sO)\cap C^2(\sO)$ is a solution to problem 
\eqref{eq:stochastic_representation_HestonEllipticEqBVP_beta},
satisfying the pointwise growth condition \eqref{eq:stochastic_representation_Growth_elliptic}, then it admits the 
stochastic representation
\eqref{eq:stochastic_representation_StochasticRep_BVP_all_beta}.

We let $\left\{\sO_k: k \in \NN\right\}$ denote an increasing sequence of $C^{2+\alpha}$ subdomains of $\sO$ (see 
\cite[Definition \S $6.2$]{GilbargTrudinger}) such that each
$\sO_k$ has compact closure in $\sO$, and
\begin{equation*}
\bigcup_{k \in \NN} \sO_k = \sO.
\end{equation*}
By applying It\^o's lemma \cite[Theorems $3.3.3$ \& $3.3.6$]{KaratzasShreve1991}, we obtain for all $t>0$,
\begin{align*}
& d \left(e^{-r(t \wedge \tau_{\sO_k})} u(Z (t \wedge \tau_{\sO_k}))\right)\\
&\qquad= -\mathbf{1}_{\left\{t \leq \tau_{\sO_k}\right\}} e^{-rt} A u (Z(t)) dt\\
&\qquad\quad+\mathbf{1}_{\left\{t \leq \tau_{\sO_k}\right\}} e^{-rt}
\sqrt{Y(t)} \left(\left(u_x(Z(t)) + \sigma\rho u_y(Z(t))\right) d W_1(t) + \sigma \sqrt{1-\rho^2}u_y(Z(t))d 
W_2(t)\right).
\end{align*}
Since the subdomain $\sO_k\subset\sO$ is bounded and $u \in C^2(\sO)$, the $dW_i$-terms, $i=1,2$, in the preceding 
identity are martingales, and so we obtain
\begin{equation}
\label{eq:stochastic_representation_StatEq1}
\EE^z_\QQ \left[e^{-r(t \wedge \tau_{\sO_k})} u(Z (t \wedge \tau_{\sO_k}))\right]
= u(z) - \EE^z_\QQ \left[ \int_{0}^{t \wedge \tau_{\sO_k}} e^{-rs} f(Z(s)) ds \right].
\end{equation}
We take the limit as $k$ tends to $\infty$ in the preceding identity. By the growth estimate 
\eqref{eq:stochastic_representation_Bound_integral_term_elliptic}, we may apply the
Lebesgue Dominated Convergence Theorem to show that the integral term in \eqref{eq:stochastic_representation_StatEq1} 
converges to
\[
\EE^z_\QQ \left[ \int_{0}^{t\wedge\tau_{\sO}} e^{-rs} f(Z(s)) ds \right].
\]
For the non-integral term on the left hand side of \eqref{eq:stochastic_representation_StatEq1}, using the continuity of 
$u$ on $\sO\cup\partial_{\beta}\sO$ and of the sample
paths of the Heston process, we see that
\[
e^{-r(t \wedge \tau_{\sO_k})} u(Z(t \wedge \tau_{\sO_k}))
\rightarrow
e^{-r(t \wedge \tau_{\sO} )} u(Z(t \wedge \tau_{\sO} )), \quad \hbox{a.s.  as  } k \rightarrow \infty.
\]
Using \cite[Theorem $16.13$]{Billingsley_1986}, we prove that
\[
\EE^z_{\QQ}\left[e^{-r(t \wedge \tau_{\sO_k})} u(Z(t \wedge \tau_{\sO_k}))\right]
\rightarrow
\EE^z_{\QQ}\left[e^{-r(t \wedge \tau_{\sO} )} u(Z(t \wedge \tau_{\sO} ))\right], \quad \hbox{  as  } k \rightarrow 
\infty,
\]
by showing that
\[
\left\{e^{-r(t \wedge \tau_{\sO_k})} u(Z(t \wedge \tau_{\sO_k})): k \in \NN\right\}
\]
is a collection of uniformly integrable random variables. By \cite[Remark related to formula ($16.23$)]{Billingsley_1986}, 
it suffices to show that their $p$-th order moment is
uniformly bounded (independent of $k$), for some $p >1$. We choose $p>1$ such that  $p M_1<\mu$ and $pM_2<1$. Notice that 
this is possible because we assumed the coefficients $
M_1<\mu$ and $ M_2<1$. Then, from the growth estimate \eqref{eq:stochastic_representation_Growth_elliptic}, we have
\begin{align*}
\left|e^{-r(t \wedge \tau_{\sO_k})} u(Z)\right|^p
&\leq C e^{-rp(t \wedge \tau_{\sO_k})} \left(1+e^{pM_1 Y} + e^{pM_2 X}\right), \quad \forall k \in \NN.
\end{align*}
From the inequality \eqref{eq:stochastic_representation_Supermartingale_property_Y} with $c=p M_1<\mu$ and property
\eqref{eq:stochastic_representation_Supermartingale_property_X} applied with $c=p  M_2\in (0,1)$, we obtain using 
$M_1<r/(\kappa\vartheta)$
\begin{align*}
\EE^z_\QQ \left[\left|e^{-r(t \wedge \tau_{\sO_k})} u(Z (t \wedge \tau_{\sO_k}))\right|^p\right]
&\leq C \left(1+e^{pM_1 y} + e^{pM_2 x}\right), \quad \forall k \in \NN.
\end{align*}
Therefore, by taking limit as $k$ tends to $\infty$ in \eqref{eq:stochastic_representation_StatEq1} we obtain
\begin{equation}
\label{eq:stochastic_representation_Stochastic_representation_t}
\EE^z_\QQ \left[e^{-r (t\wedge\tau_{\sO})} u(Z (t\wedge\tau_{\sO} ))\right]
=    u(z) - \EE^z_\QQ \left[ \int_{0}^{t\wedge\tau_{\sO} } e^{-rs} f(Z(s)) ds \right].
\end{equation}
As we let $t$ tend to $\infty$, the integral term on the right-hand side in the preceding identity clearly converges to
\[
\EE^z_\QQ \left[\int_{0}^{\tau_{\sO}} e^{-rs} f(Z(s)) \,d s\right].
\]
It remains to consider the left-hand side of \eqref{eq:stochastic_representation_Stochastic_representation_t}. Keeping in 
mind that $u \in C_{\loc}(\sO\cup\partial_{\beta}\sO)$
solves \eqref{eq:stochastic_representation_HestonEllipticEqBVP_beta}, we rewrite this term as
\begin{equation*}
\EE^z_\QQ \left[e^{-r (t\wedge \tau_{\sO})} u(Z (t\wedge \tau_{\sO} ))\right]
= \EE^z_\QQ \left[e^{-r \tau_{\sO}} g(Z (\tau_{\sO})) \mathbf{1}_{\{\tau_{\sO} \leq t\}}\right]
+ \EE^z_\QQ \left[e^{-r t} u(Z (t)) \mathbf{1}_{\{\tau_{\sO} > t\}}\right].
\end{equation*}
Using the growth assumption \eqref{eq:stochastic_representation_Growth_elliptic}, we notice as above that both collections 
of random variables in the preceding identity,
\begin{equation*}
\left\{e^{-r \tau_{\sO}} g(Z (\tau_{\sO})) \mathbf{1}_{\{\tau_{\sO} \leq t\}}: t \geq 0\right\}
\hbox{  and  }
\left\{e^{-r t} u(Z (t)) \mathbf{1}_{\{\tau_{\sO} > t\}}: t \geq 0 \right\},
\end{equation*}
are uniformly integrable, and they converge a.s. to $e^{-r \tau_{\sO}} g(Z(\tau_{\sO})) \mathbf{1}_{\{\tau_{\sO} 
<\infty\}}$ and zero, respectively. Therefore, by \cite[Theorem
$16.13$]{Billingsley_1986}, letting $t$ tend to $\infty$ in 
\eqref{eq:stochastic_representation_Stochastic_representation_t}, we obtain
\[
\EE^z_\QQ \left[e^{-r \tau_{\sO}} g(Z(\tau_{\sO})) \mathbf{1}_{\{\tau_{\sO} <\infty\}}\right]
= u(z) -  \EE^z_\QQ \left[\int_{0}^{\tau_{\sO}} e^{-rs} f(Z(s)) \,d s\right],
\]
which implies that $u=u^*$ on $\sO\cup\partial_{\beta}\sO$, where $u^*$ is defined by 
\eqref{eq:stochastic_representation_StochasticRep_BVP_all_beta}.
\end{proof}

\begin{proof}[Proof of Theorem \ref{thm:stochastic_representation_Uniqueness_BVP_elliptic_beta_less_than_1}]
Our goal is to show that if $0<\beta<1$ and $u \in C_{\loc}(\sO\cup\Gamma_1)\cap C^2(\sO)\cap 
C^{1,1}_{s,\loc}(\sO\cup\Gamma_0)$ is a solution to problem
\eqref{eq:stochastic_representation_HestonEllipticEqBVP}, satisfying the growth estimate 
\eqref{eq:stochastic_representation_Growth_elliptic}, then it admits the stochastic
representation \eqref{eq:stochastic_representation_StochasticRep_BVP_2}.

We consider the following sequence of increasing subdomains of $\sO$,
\begin{align}
\label{eq:stochastic_representation_Sequence_subdomains_with_degenerate_boundary}
\sU_k &:= \left\{z \in \sO: |z|<k, \hbox{dist} \left(z, \Gamma_1 \right) > 1/k\right\}, \quad k\in\NN,
\end{align}
with non-empty boundary portions $\bar \Gamma_0 \cap \sU_k$. Let $\eps>0$ and denote
\begin{equation}
\label{eq:stochastic_representation_Z_eps}
Y^{\eps}:= Y+\eps,\hbox{  and  }
Z^{\eps} := \left(X,Y^{\eps}\right).
\end{equation}
By applying It\^o's lemma \cite[Theorems $3.3.3$ \& $3.3.6$]{KaratzasShreve1991}, we obtain
\begin{equation}
\label{eq:stochastic_representation_First_representation_BVP_2}
\EE^z_\QQ \left[e^{-r(t \wedge \nu_{\sU_k})} u(Z^{\eps} (t \wedge \nu_{\sU_k}))\right]
= u(z) - \EE^z_\QQ \left[ \int_{0}^{t \wedge \nu_{\sU_k}} e^{-rs} A^{\eps} u(Z^{\eps}(s)) ds \right],\quad \forall t>0,
\end{equation}
where $\nu_{\sU_k}$ is given by \eqref{eq:stochastic_representation_Stopping_time_domain_with_degenerate_boundary}, and 
$A^{\eps}$ denotes the elliptic differential operator,
\begin{align}
\label{eq:stochastic_representation_Definition_A_eps}
A^{\eps} v &:= A v + \frac{\eps}{2} v_x + \kappa \eps v_y - \frac{\eps}{2}\left(v_{xx}+2\rho\sigma v_{xy}+\sigma^2 
v_{yy}\right),
\quad \forall v \in C^2(\sO).
\end{align}
Using \eqref{eq:stochastic_representation_HestonEllipticEqBVP}, we can write 
\eqref{eq:stochastic_representation_First_representation_BVP_2} as \begin{equation}
\label{eq:stochastic_representation_Second_representation_BVP_2}
\begin{aligned}
\EE^z_\QQ \left[e^{-r(t \wedge \nu_{\sU_k})} u(Z^{\eps} (t \wedge \nu_{\sU_k}))\right]
&= u(z) - \EE^z_\QQ \left[ \int_{0}^{t \wedge \nu_{\sU_k}} e^{-rs} f(Z^{\eps}(s)) ds \right]\\
&\quad
- \EE^z_\QQ \left[ \int_{0}^{t \wedge \nu_{\sU_k}} e^{-rs} (A^{\eps}-A) u(Z^{\eps}(s)) ds \right].
\end{aligned}
\end{equation}
First, we take limit as $\eps$ tends to $0$ in the preceding identity.  We may assume without loss of generality that 
$\eps<1/k$, for any fixed $k \geq 1$. We evaluate the
residual term $(A^{\eps}-A)u$ with \eqref{eq:stochastic_representation_Definition_A_eps} to give
\begin{equation}
\label{eq:stochastic_representation_Estimate_A_eps_minus_A}
\left|\left(A^{\eps}-A\right)u(Z^{\eps}(s))\right|
\leq C \eps 
\|Du\|_{C(\bar\sU_{2k})} + C \left(\mathbf{1}_{\left\{Y^{\eps}(s) \leq \sqrt{\eps}\right\}}+\sqrt{\eps}\right) 
\|yD^2u\|_{C(\bar\sU_{2k})},
\end{equation}
for all $0\leq s\leq t\wedge \nu_{\sU_k}$, where $C$ is a positive constant depending only on the Heston constant 
coefficients.
This follows from the fact that
\begin{align*}
\eps D^2 u(Z^{\eps}(s))
&= \eps D^2 u(Z^{\eps}(s)) \mathbf{1}_{\left\{Y^{\eps}(s) \leq \sqrt{\eps}\right\}}
+ \eps D^2 u(Z^{\eps}(s))\mathbf{1}_{\left\{Y^{\eps}(s) > \sqrt{\eps}\right\}},
\quad \forall s \geq 0,
\end{align*}
and so,
\begin{align*}
\eps |D^2 u(Z^{\eps}(s)) |
&\leq  Y^{\eps}(s) |D^2 u(Z^{\eps}(s))|\mathbf{1}_{\left\{Y^{\eps}(s) \leq \sqrt{\eps}\right\}}
+ \eps \frac{Y^{\eps}(s)}{\sqrt{\eps}}|D^2 u(Z^{\eps}(s))|\mathbf{1}_{\left\{Y^{\eps}(s) > \sqrt{\eps}\right\}}\\
&\leq \left(\mathbf{1}_{\left\{Y^{\eps}(s) \leq \sqrt{\eps}\right\}} + \sqrt{\eps}\right)  Y^{\eps}(s) |D^2 
u(Z^{\eps}(s))|.
\end{align*}
Combining the preceding inequality with the definition \eqref{eq:stochastic_representation_Definition_A_eps} of 
$A^{\eps}$, we obtain
\eqref{eq:stochastic_representation_Estimate_A_eps_minus_A}. Since $u \in C^{1,1}_{s,\loc}(\sO\cup\Gamma_0)$, and
\[
 \mathbf{1}_{\left\{Y^{\eps}(s) \leq \sqrt{\eps}\right\}} \rightarrow 0, \quad\hbox{ as } \eps \downarrow 0,
\]
we see that by \eqref{eq:stochastic_representation_Estimate_A_eps_minus_A} yields, for each $k \geq 1$,
\begin{equation}
\label{eq:stochastic_representation_Limit_A_eps_minus_A}
\EE^z_\QQ \left[ \int_{0}^{t \wedge \nu_{\sU_k}} e^{-rs} (A^{\eps}-A) u(Z^{\eps}(s)) ds \right]
\rightarrow 0, \quad \hbox{ as } \eps \downarrow 0.
\end{equation}
In addition, using the continuity of $f$ and $u$ on compact subsets of $\sO\cup\Gamma_0$, we have
\begin{equation}
\label{eq:stochastic_representation_Convergence_u_f_BVP}
\begin{aligned}
&\EE^z_\QQ \left[e^{-r(t \wedge \nu_{\sU_k})} u(Z^{\eps} (t \wedge \nu_{\sU_k}))\right]
\rightarrow
\EE^z_\QQ \left[e^{-r(t \wedge \nu_{\sU_k})} u(Z(t \wedge \nu_{\sU_k}))\right],
\quad \hbox{ as } \eps \downarrow 0,\\
& \EE^z_\QQ \left[ \int_{0}^{t \wedge \nu_{\sU_k}} e^{-rs} f(Z^{\eps}(s)) ds \right]
\rightarrow
\EE^z_\QQ \left[ \int_{0}^{t \wedge \nu_{\sU_k}} e^{-rs} f(Z(s)) ds \right],
\quad \hbox{ as } \eps \downarrow 0.
\end{aligned}
\end{equation}
Therefore, using \eqref{eq:stochastic_representation_Limit_A_eps_minus_A} and the preceding limits, we find that 
\eqref{eq:stochastic_representation_Second_representation_BVP_2}
gives
\begin{equation}
\label{eq:stochastic_representation_Third_representation_BVP_2}
\EE^z_\QQ \left[e^{-r(t \wedge \nu_{\sU_k})} u(Z(t \wedge \nu_{\sU_k}))\right]
= u(z) - \EE^z_\QQ \left[ \int_{0}^{t \wedge \nu_{\sU_k}} e^{-rs} f(Z(s)) ds \right].
\end{equation}
Note that by letting $k$ and $t$ tend to $\infty$, we have
\begin{equation}
\label{eq:stochastic_representation_Convergence_stopping_times}
t \wedge \nu_{\sU_k} \rightarrow \nu_{\sO}, \quad \hbox{a.s.}
\end{equation}
By using the same argument as that used in the proof of Theorem 
\ref{thm:stochastic_representation_Uniqueness_BVP_elliptic} to take the limit as $k$ and $t$ tend to $\infty$ in
\eqref{eq:stochastic_representation_StatEq1}, we can take the limit as $k$ and $t$ tend to $\infty$ in 
\eqref{eq:stochastic_representation_Third_representation_BVP_2} to give
\[
\EE^z_\QQ \left[e^{-r \nu_{\sO}} g(Z(\nu_{\sO}))\right]
= u(z) - \EE^z_\QQ \left[ \int_{0}^{\nu_{\sO}} e^{-rs} f(Z(s)) ds \right].
\]
This establishes $u=u^*$, where $u^*$ is given by \eqref{eq:stochastic_representation_StochasticRep_BVP_2}, and completes 
the proof.
\end{proof}

Next, we prove \emph{existence} of solutions to problem \eqref{eq:stochastic_representation_HestonEllipticEqBVP_beta} when 
the boundary data $g$ is \emph{continuous} on suitable
portions of the boundary of $\sO$.

\begin{proof}[Proof of Theorem \ref{thm:stochastic_representation_Existence_elliptic_BVP_beta}]
Following the comments preceding problem \eqref{eq:stochastic_representation_HestonEllipticEqBVP_beta}, we need to show 
that $u^*$, given by
\eqref{eq:stochastic_representation_StochasticRep_BVP_all_beta}, is a solution to problem 
\eqref{eq:stochastic_representation_HestonEllipticEqBVP_beta}, that $u^* \in
C_{\loc}(\sO \cup \partial_{\beta}\sO) \cap C^{2}(\sO)$, and that $u^*$ satisfies the growth assumption 
\eqref{eq:stochastic_representation_Growth_elliptic}.

Notice that Lemma \ref{lem:stochastic_representation_WellDefinedStochRepVarEq}, applied with $\theta_1=\tau_{\sO}$, 
$\theta_2=\infty$ and $\psi\equiv 0$, shows that $u^*$ defined
by \eqref{eq:stochastic_representation_StochasticRep_BVP_all_beta} satisfies the growth assumption 
\eqref{eq:stochastic_representation_Growth_elliptic}. It remains to prove that
$u^*\in C_{\loc}(\sO\cup\partial_{\beta}\sO)\cap C^2(\sO)$. Notice that Theorem 
\ref{thm:stochastic_representation_Uniqueness_BVP_elliptic} implies that $u^*$ is the unique
solution to the elliptic boundary value problem \eqref{eq:stochastic_representation_HestonEllipticEqBVP_beta}, since any 
$C_{\loc}(\sO\cup\partial_{\beta}\sO)\cap C^2(\sO)$
solution must coincide with $u^*$.

By hypothesis and the definition of $\partial_{\beta}\sO$ in \eqref{eq:stochastic_representation_Boundary_beta}, we have 
$g \in C_{\loc}(\overline{\partial_{\beta}\sO})$. Since
$\overline{\partial_{\beta}\sO}$ is closed, we may use \cite[Theorem $3.1.2$]{FriedmanPDE} to extend $g$ to $\RR^2$ such 
that its extension $\tilde g \in C_{\loc}(\RR^2)$. We
organize the proof in two steps.

\setcounter{step}{0}
\begin{step} [$u^* \in C^{2+\alpha}(\mathscr{O})$]
\label{step:stochastic_representation_Elliptic_BVP_interior_regularity}
Let $\left\{\sO_k:k \in \NN\right\}$ be an increasing sequence of $C^{2+\alpha}$ subdomains of $\sO$ as in the proof of 
Theorem 
\ref{thm:stochastic_representation_Uniqueness_BVP_elliptic}. We notice that on each domain $\mathscr{O}_k$ the 
differential operator $A$ is uniformly elliptic with
$C^{\infty}(\bar{\mathscr{O}}_k)$ coefficients. From our hypotheses, we have $f \in C^{\alpha}(\bar{\sO}_k)$ and $\tilde g 
\in C(\bar{\sO}_k)$. Therefore,  \cite[Theorem
$6.13$]{GilbargTrudinger} implies that the elliptic boundary value problem
\begin{equation}
\label{eq:stochastic_representation_Elliptic_BVP_on_subdomains}
\begin{cases}
Au = f& \hbox{ on }\sO_k,
\\
u = \tilde g& \hbox{ on }\partial \sO_k.
\end{cases}
\end{equation}
admits a unique solution $u_k \in C(\bar \sO_k) \cap C^{2+\alpha}(\sO_k) $. Moreover, by\footnote{See also 
\cite[Proposition $5.7.2$]{KaratzasShreve1991}, \cite[Theorem $9.1.1$
\& Corollary $9.1.2$]{Oksendal_2003}.} \cite[Theorem $6.5.1$]{FriedmanSDE}, $u_k$ admits a stochastic representation on 
$\bar{\mathscr{O}}_k$,
\begin{equation}
\label{eq:stochastic_representation_StatEqApproxStochRep}
u_k(z) = \EE^z_\QQ \left[e^{-r\tau_{\sO_k}} \tilde g(Z(\tau_{\sO_k})) 1_{\{\tau_{\sO_k} < \infty\}}\right]
+ \EE^z_\QQ\left[\int_{0}^{\tau_{\sO_k}} e^{-rs} f(Z(s))\,ds \right].
\end{equation}
Our goal is to show that $u_k$ converges pointwise to $u^*$ on $\sO$. Recall that $\tau_k$ is an increasing sequence of 
stopping times which converges to $\tau_{\sO}$ almost
surely. Using $\tilde g \in C_{\loc}(\sO\cup\partial_{\beta}\sO)$ and the continuity of the sample paths of the Heston 
process, the growth estimate
\eqref{eq:stochastic_representation_Growth_elliptic} and Lemma 
\ref{lem:stochastic_representation_WellDefinedStochRepVarEq}, the same argument used in the proof of Theorem
\ref{thm:stochastic_representation_Uniqueness_BVP_elliptic} shows that the sequence $\left\{u_k:k \in \NN\right\}$ 
converges pointwise to $u^*$ on $\sO$.

Fix $z_0:=(x_0, y_0) \in \mathscr{O}$, and choose a Euclidean ball $B:=B(z_0, r_0)$ such that $\bar{B} \subset 
\mathscr{O}$. We denote $B_{1/2}= B(z_0, r_0/2)$. As in the proof
of Lemma \ref{lem:stochastic_representation_WellDefinedStochRepVarEq}, the sequence $u_k$ is uniformly bounded on $\bar B$ 
because it obeys
\[
|u_k(z)| \leq \bar{C}\left(1+e^{ M_1 y}+e^{ M_2 x}\right), \quad \forall z=(x, y) \in B, k \in \NN.
\]
From the interior Schauder estimates \cite[Corollary 6.3]{GilbargTrudinger}, the sequence $\left\{u_k:k \in \NN\right\}$ 
has uniformly bounded $C^{2+\alpha}(\bar{B}_{1/2})$
norms. Compactness of the embedding  $C^{2+\alpha}(\bar B_{1/2})\hookrightarrow C^{2+\gamma}(\bar B_{1/2})$, for $0\leq 
\gamma<\alpha$, shows that, after passing to a
subsequence, the sequence $\left\{u_k:k \in \NN\right\}$ converges in $C^{2+\gamma}(\bar B_{1/2})$ to $u^* \in 
C^{2+\gamma}(\bar B_{1/2})$, and so $A u^*=f$ on $\bar B_{1/2}$.
Because the subsequence has uniformly bounded $C^{2+\alpha}(\bar B_{1/2})$ norms and it converges strongly in $C^2(\bar 
B_{1/2})$ to $u^*$, we obtain that $u^* \in
C^{2+\alpha}(\bar B_{1/2})$.
\end{step}

\begin{step} [$u^* \in C_{\loc}(\mathscr{O} \cup \partial_{\beta} \sO)$]
\label{step:stochastic_representation_Elliptic_BVP_continuity}
From the previous step, we know that $u^*\in C(\sO)$, so it remains to show continuity of $u^*$ up to 
$\partial_{\beta}\sO$. We consider two cases.

\setcounter{case}{0}
\begin{case}[$u^* \in C_{\loc}(\sO\cup\Gamma_1)$, for all $\beta>0$]
\label{case:stochastic_representation_Elliptic_BVP_continuity_on_Gamma_1}
First, we show that $u^*$ is continuous up to $\Gamma_1$. We fix $z_0 \in \Gamma_1$, and let $B$ be an open ball centered 
at $z_0$, such that $\bar B \cap \partial\HH =
\emptyset$. Let $U:=B \cap \sO$. Let the function $\hat{g}$ be defined on $\partial U$ such that it coincides with $g$ on 
$\partial U\cap \partial\sO$, and it coincides with
$u^*$ on $\partial U\cap \sO$.

\begin{claim}
\label{claim:stochastic_representation_Markov_property}
The strong Markov property of the Heston process $(Z(s))_{s \geq 0}$ and the definition 
\eqref{eq:stochastic_representation_StochasticRep_BVP_all_beta} of $u^*$, implies that
\begin{equation}
\label{eqn:stochastic_representation_LocalRepresentation}
\begin{aligned}
 u^*(z) = \EE^z_\QQ \left[ e^{-r \tau_U} \hat{g}(Z(\tau_U))\right]
        + \EE^z_\QQ \left[\int_{0}^{\tau_U} e^{-rt} f(Z(t)) dt \right], \quad \forall z \in U.
\end{aligned}
\end{equation}
\end{claim}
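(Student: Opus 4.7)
The plan is to reduce the claim to a direct application of the strong Markov property of the Heston process at the stopping time $\tau_U$, together with the definition of $\hat g$. Throughout, I fix $z\in U$ and write expectations under $\QQ^z$.

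First I would record the geometric and probabilistic setup. Since $\bar U = \bar B\cap\bar\sO$ is a bounded set with $\bar U\cap\partial\HH=\emptyset$, the Heston operator $A$ is strictly elliptic with smooth, bounded coefficients on a neighborhood of $\bar U$; standard arguments (e.g.\ \cite[Proposition 5.7.2]{KaratzasShreve1991}) then give $\tau_U<\infty$ $\QQ^z$-a.s., and because $U\subseteq\sO$ we trivially have $\tau_U\leq\tau_\sO$ $\QQ^z$-a.s. Moreover $\partial U=(\partial B\cap\bar\sO)\cup(\bar B\cap\partial\sO)$, so $\partial\sO\cap B\subseteq\partial U$ and one may decompose $\Omega$ (modulo a null set) as $\{\tau_U<\tau_\sO\}\sqcup\{\tau_U=\tau_\sO<\infty\}$.

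Next I would split the defining formula \eqref{eq:stochastic_representation_StochasticRep_BVP_all_beta} accordingly. Writing $\int_0^{\tau_\sO}=\int_0^{\tau_U}+\int_{\tau_U}^{\tau_\sO}$ and factoring $e^{-r\tau_\sO}=e^{-r\tau_U}e^{-r(\tau_\sO-\tau_U)}$, one rewrites
\[
u^*(z)=\EE^z_\QQ\!\left[\int_0^{\tau_U}e^{-rs}f(Z(s))\,ds\right]+\EE^z_\QQ\!\left[e^{-r\tau_U}\,\Xi\right],
\]
where $\Xi:=e^{-r(\tau_\sO-\tau_U)}g(Z(\tau_\sO))\mathbf{1}_{\{\tau_\sO<\infty\}}+\int_0^{\tau_\sO-\tau_U}e^{-rs}f(Z(\tau_U+s))\,ds$. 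On $\{\tau_U=\tau_\sO<\infty\}$, $\tau_\sO-\tau_U=0$ and $Z(\tau_U)=Z(\tau_\sO)\in\partial\sO\cap\partial U$, so $\Xi=g(Z(\tau_U))=\hat g(Z(\tau_U))$ by the definition of $\hat g$. On $\{\tau_U<\tau_\sO\}$ I apply the strong Markov property of $Z$ at $\tau_U$ (\cite[Theorem 2.6.16]{KaratzasShreve1991}): conditional on $\sF(\tau_U)$, the shifted process $Z(\tau_U+\cdot)$ has the law of the Heston process started at $Z(\tau_U)\in\partial U\cap\sO$, and $\tau_\sO-\tau_U$ is the corresponding exit time from $\sO$. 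Hence
\[
\EE^z_\QQ[\Xi\mid\sF(\tau_U)]\mathbf{1}_{\{\tau_U<\tau_\sO\}}=u^*(Z(\tau_U))\mathbf{1}_{\{\tau_U<\tau_\sO\}}=\hat g(Z(\tau_U))\mathbf{1}_{\{\tau_U<\tau_\sO\}},
\]
using the definition \eqref{eq:stochastic_representation_StochasticRep_BVP_all_beta} of $u^*$ and the definition of $\hat g$ on $\partial U\cap\sO$.

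Combining the two cases and taking the outer expectation yields $\EE^z_\QQ[e^{-r\tau_U}\Xi]=\EE^z_\QQ[e^{-r\tau_U}\hat g(Z(\tau_U))]$, which is exactly \eqref{eqn:stochastic_representation_LocalRepresentation}. The one delicate point is the integrability needed to legitimately invoke the tower property and strong Markov property above: the $f$-term is uniformly integrable by the proof of Lemma \ref{lem:stochastic_representation_WellDefinedStochRepVarEq}, and the $g$-term is controlled by the growth hypothesis \eqref{eq:stochastic_representation_Growth_elliptic} combined with the supermartingale estimates of Lemma \ref{lem:stochastic_representation_PropertiesHeston}\,\eqref{item:SupermartingaleX}--\eqref{item:SupermartingaleY}, exactly as in the proof of Theorem \ref{thm:stochastic_representation_Uniqueness_BVP_elliptic}. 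The main (and only) obstacle is bookkeeping: making sure the event $\{\tau_U=\tau_\sO\}$ is handled correctly so that the boundary piece of $\hat g$ on $\bar B\cap\partial\sO$ is picked up, and that the event $\{\tau_U<\tau_\sO\}$ exposes $u^*$, not $g$, at $Z(\tau_U)\in\partial U\cap\sO$.
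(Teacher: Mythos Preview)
Your proof is correct and follows essentially the same route as the paper: apply the strong Markov property at $\tau_U$, split $\Omega$ into $\{\tau_U=\tau_\sO\}$ and $\{\tau_U<\tau_\sO\}$, and recognize $u^*(Z(\tau_U))$ on the latter event so that both pieces recombine into $\hat g(Z(\tau_U))$. The only quibble is the citation for the strong Markov property---since the Heston process is degenerate, the paper invokes \cite[Theorem~1.16(c)]{Feehan_Pop_mimickingdegen} rather than \cite[Theorem~2.6.16]{KaratzasShreve1991} (which is specific to Brownian motion).
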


\begin{proof}
By Corollary \ref{cor:HestonSDEStrongSolution}, the Heston stochastic differential equation 
\eqref{eq:stochastic_representation_HestonSDE} admits a unique strong solution, for
any initial point $(t,x,y) \in [0,\infty)\times\RR\times[0,\infty)$, and \cite[Theorem 
$1.16$(c)]{Feehan_Pop_mimickingdegen} shows that the solution satisfies the strong Markov
property.

Let $z \in U$, then $\tau^z_U \leq \tau^z_{\sO}$ a.s. Since $Z$ is a time-homogeneous strong Markov process, we obtain
\begin{align*}
\EE^z_{\QQ} \left[e^{-r\tau_{\sO}} g(Z(\tau_{\sO}))\right]
&=
\EE^z_{\QQ} \left[\EE^z_{\QQ} \left[e^{-r\tau_{\sO}} g(Z(\tau_{\sO}))\right] | \sF(\tau_U) \right]\\
&=
\EE^z_{\QQ} \left[e^{-r\tau_U} \EE^{Z(\tau_U)}_{\QQ} \left[e^{-r\tau_{\sO}} g(Z(\tau_{\sO}))\right] \right],
\end{align*}
which can be written as
\begin{equation}
\begin{aligned}
\label{eq:stochastic_representation_Conditioning_non_integral_term}
\EE^z_{\QQ} \left[e^{-r\tau_{\sO}} g(Z(\tau_{\sO}))\right]
&=
\EE^z_{\QQ} \left[e^{-r\tau_U}  g(Z(\tau_U))\mathbf{1}_{\{\tau_U=\tau_{\sO}\}} \right]\\
&\quad +
\EE^z_{\QQ} \left[e^{-r\tau_U} \EE^{Z(\tau_U)}_{\QQ} \left[e^{-r\tau_{\sO}} g(Z(\tau_{\sO})) \right] 
\mathbf{1}_{\{\tau_U<\tau_{\sO}\}} \right].
\end{aligned}
\end{equation}
Similarly, we have for the integral term
\begin{align*}
\EE^z_\QQ \left[\int_{0}^{\tau_{\sO}} e^{-rt} f(Z(t)) dt \right]
&=
\EE^z_\QQ \left[\int_{0}^{\tau_U} e^{-rt} f(Z(t)) dt \right]
+
\EE^z_\QQ \left[\mathbf{1}_{\{\tau_U<\tau_{\sO}\}}\int_{\tau_U}^{\tau_{\sO}} e^{-rt} f(Z(t)) dt \right],
\end{align*}
and so, by conditioning the second term in the preceding equality on $\sF(\tau_U)$ and applying the strong Markov 
property, we have
\begin{equation}
\begin{aligned}
\label{eq:stochastic_representation_Conditioning_integral_term}
\EE^z_\QQ \left[\mathbf{1}_{\{\tau_U<\tau_{\sO}\}}\int_{\tau_U}^{\tau_{\sO}} e^{-rt} f(Z(t)) dt \right]
&=
\EE^z_\QQ \left[\EE^z_\QQ \left[\mathbf{1}_{\{\tau_U\leq\tau_{\sO}\}}\int_{\tau_U}^{\tau_{\sO}} e^{-rt} f(Z(t)) dt 
\right]|\sF(\tau_U)\right]\\
&=
\EE^z_\QQ \left[\mathbf{1}_{\{\tau_U<\tau_{\sO}\}}e^{-r\tau_U}\EE^{Z(\tau_U)}_\QQ \left[\int_{0}^{\tau_{\sO}} e^{-rt} 
f(Z(t)) dt \right]\right].
\end{aligned}
\end{equation}
Combining \eqref{eq:stochastic_representation_Conditioning_non_integral_term} and 
\eqref{eq:stochastic_representation_Conditioning_integral_term} in
\eqref{eq:stochastic_representation_StochasticRep_BVP_all_beta}, we obtain
\begin{align*}
u(z) &=
\EE^z_{\QQ} \left[e^{-r\tau_U}  g(Z(\tau_U))\mathbf{1}_{\{\tau_U=\tau_{\sO}\}} \right]
+
\EE^z_\QQ \left[\int_0^{\tau_U} e^{-rt} f(Z(t)) dt \right]\\
&\quad
+\EE^z_{\QQ} \left[e^{-r\tau_U} \mathbf{1}_{\{\tau_U<\tau_{\sO}\}}\EE^{Z(\tau_U)}_{\QQ} \left[e^{-r\tau_{\sO}} 
g(Z(\tau_{\sO}))
+ \int_{0}^{\tau_{\sO}} e^{-rt} f(Z(t)) dt\right]  \right].
\end{align*}
Using again \eqref{eq:stochastic_representation_StochasticRep_BVP_all_beta} for $u^*(Z(\tau_U))$, the preceding equality 
yields 
\eqref{eqn:stochastic_representation_LocalRepresentation}. This completes the proof of Claim 
\ref{claim:stochastic_representation_Markov_property}.
\end{proof}

By \cite[Theorem $6.13$]{GilbargTrudinger} and a straightforward extension of\footnote{See also \cite[Proposition 
$5.7.2$]{KaratzasShreve1991}, \cite[Theorem $9.1.1$ \& Corollary
$9.1.2$]{Oksendal_2003}.} \cite[Theorem $6.5.1$]{FriedmanSDE} from domains with $C^{2}$ to domains with regular boundary, 
as in \cite[\S $6.2.6.A$]{Dynkin_diffsuperpde}, the
integral term in \eqref{eqn:stochastic_representation_LocalRepresentation} is the solution on $U$ of the uniformly 
elliptic partial differential equation $Au^*=f$ with
homogeneous Dirichlet boundary condition, and it is a continuous function up to $\partial U$. Notice that $\partial U$ 
satisfies the exterior sphere condition and thus $\partial
U$ is regular by\footnote{See also \cite[Proposition $4.2.15$ \& Theorem $4.2.19$]{KaratzasShreve1991}.} \cite[Theorem 
$2.4.4$]{Dynkin_diffsuperpde} (see \cite[Definitions
$2.4.1$ \& $6.2.3$]{Dynkin_diffsuperpde}
for the definition\footnote{See also \cite[Definition $4.2.9$]{KaratzasShreve1991}, \cite[Definition 
$9.2.8$]{Oksendal_2003}.}  of regular points of $\partial U$). The continuity
of the non-integral term in \eqref{eqn:stochastic_representation_LocalRepresentation} at $z_0$ follows from Corollary
\ref{cor:stochastic_representation_Continuity_stochastic_representations_with_killing_term}, as $\hat g$ is continuous at 
$z_0$ by hypotheses.
\end{case}

It remains to show that, when $0<\beta<1$, the solution $u^*$ is continuous up to $\bar\Gamma_0$.

\begin{case}[$u^*\in C_{\loc}(\sO\cup\bar\Gamma_0)$, for all $0<\beta<1$]
\label{case:stochastic_representation_Existence_elliptic_BVP_beta_continuity_Gamma_0}
Let $z_0=(x_0, 0) \in \bar \Gamma_0$. We denote by $\theta^z $ the first time the process started at $z=(x,y) \in 
\mathscr{O}$ hits $y=0$. Obviously, we have
\begin{equation}
\label{eq:stochastic_representation_Inequality_stopping_times}
\tau^z_{\sO} \leq \theta^z \leq T^y_0 \quad \hbox{ a.s.,}
\end{equation}
where $T^y_0$ is given by \eqref{defn:stochastic_representation_CIR_First_hitting_time_a}. For $\beta \in (0,1)$, it 
follows from
\eqref{eq:stochastic_representation_CIR_limit_hitting_time_zero} and the preceding inequality between stopping times, that 
$\theta^z$ converges to $0$, as $y$ goes to $0$,
uniformly in $x\in \RR$. Therefore, the integral term in (\ref{eqn:stochastic_representation_LocalRepresentation}) 
converges to zero. Next, we want to show that the non-integral
term in (\ref{eqn:stochastic_representation_LocalRepresentation}) converges to $g(z_0)$. We rewrite that term as
\begin{equation}
\label{eq:stochastic_representation_BVP_Cont_Ident1}
\begin{aligned}
\EE^z_\QQ \left[ e^{-r\tau_{\sO}} g(Z(\tau_{\sO}))\right] - g(z_0)
&= \EE^z_\QQ \left[ e^{-r\tau_{\sO}} \left(g(Z(\tau_{\sO})) - g(z_0)\right)\right]\\
&\quad + g(z_0) \left(1- \EE^z_\QQ \left[e^{-r\tau_{\sO}}\right]\right).
\end{aligned}
\end{equation}
From the observation that $\tau^z_{\sO} \leq \theta^z$ a.s., we see that
\begin{equation}
\label{eq:stochastic_representation_BVP_Cont_Conv1}
\EE^z_\QQ\left[e^{-r\tau_{\sO}}\right] \rightarrow 1, \hbox{ as } z \rightarrow z_0.
\end{equation}
By \eqref{eq:stochastic_representation_BVP_Cont_Ident1}, it remains to show that $\EE^z_\QQ \left[ e^{-r\tau_{\sO}} 
\left(g(Z(\tau_{\sO})) - g(z_0)\right)\right]$ converges to
zero, as $z \in \mathscr{O}$ converges to $z_0$. We fix $\eps > 0$ and choose $\delta_1>0$ such that
\begin{equation}
\label{eq:stochastic_representation_Cont_g}
|g(z)-g(z_0)| < \eps,\quad \forall z \in B(z_0, \delta_1) \cap \partial \sO.
\end{equation}
From \cite[Equation (5.3.18) in Problem 5.3.15 ]{KaratzasShreve1991}, there is a positive constant $C_1$, depending on 
$z_0$ and $\delta_1$, such that
\begin{equation*}
\sup_{z \in B(z_0, \delta_1) \cap \mathscr{O}} \EE^z_\QQ \left[ \sup_{0 \leq s \leq t}|Z(s)-z|\right] \leq C_1 \sqrt{t},
\end{equation*}
from where it follows
\begin{equation}
\label{eq:stochastic_representation_StatEqCont1}
\sup_{z \in B(z_0, \delta_1) \cap \mathscr{O}} \QQ^z \left(\sup_{0 \leq s \leq t}|Z(s)-z| > \delta_1/2 \right) \leq 
\frac{2C_1 \sqrt{t}}{\delta_1}.
\end{equation}
Next, we choose $t>0$ sufficiently small such that
\begin{equation}
\label{eq:stochastic_representation_StatEqCont2}
\frac{2C_1 \sqrt{t}}{\delta_1} < \eps,
\end{equation}
and, by \eqref{eq:stochastic_representation_Inequality_stopping_times} and 
\eqref{eq:stochastic_representation_CIR_limit_hitting_time_zero}, we may choose $\delta_2>0$
sufficiently small such that
\begin{equation}
\label{eq:stochastic_representation_StatEqCont3}
\QQ\left(T^{\delta_2}_0 > t\right) < \eps.
\end{equation}
Let $\delta:= \min \{\delta_1/2, \delta_2\}$. We rewrite
\begin{align*}
\label{eq:stochastic_representation_Elliptic_BVP_Eq1}
e^{-r\tau_{\sO}} \left(g(Z(\tau_{\sO})) - g(z_0)\right)
&=  e^{-r\tau_{\sO}} \left(g(Z(\tau_{\sO})) - g(z_0)\right) \mathbf{1}_{\{\tau_{\sO} \leq t\}}\\
&\quad  + e^{-r\tau_{\sO}} \left(g(Z(\tau_{\sO})) - g(z_0)\right) \mathbf{1}_{\{\tau_{\sO} > t\}}
\end{align*}
to give
\begin{equation}
\label{eq:stochastic_representation_Elliptic_BVP_Eq1}
\begin{aligned}
e^{-r\tau_{\sO}} \left(g(Z(\tau_{\sO})) - g(z_0)\right)
&=  e^{-r\tau_{\sO}} \left(g(Z(\tau_{\sO})) - g(z_0)\right) \mathbf{1}_{\{\tau_{\sO} \leq t, \sup_{0\leq s\leq t} |Z(s)-z| 
< \delta_1/2\}}  \\
&\quad+
    e^{-r\tau_{\sO}} \left(g(Z(\tau_{\sO})) - g(z_0)\right) \mathbf{1}_{\{\tau_{\sO} \leq t, \sup_{0\leq s\leq t} |Z(s)-z| 
    \geq \delta_1/2\}}  \\
&\quad +
    e^{-r\tau_{\sO}} \left(g(Z(\tau_{\sO})) - g(z_0)\right) \mathbf{1}_{\{\tau_{\sO} > t\}}\\
\end{aligned}
\end{equation}
By \eqref{eq:stochastic_representation_Cont_g}, we have for all $z \in B(z_0,\delta)\cap\sO$
\begin{align}
\label{eq:stochastic_representation_Elliptic_BVP_Eq2}
\EE^z_\QQ \left[ \left|g(Z(\tau_{\sO})) - g(z_0)\right| \mathbf{1}_{\{\tau_{\sO} \leq t,  \hbox{ }\sup_{0\leq s\leq t} 
|Z(s)-z| < \delta_1/2\}} \right] <\eps.
\end{align}
We choose $p>1$ such that  $p M_1<\mu$ and $pM_2<1$. Notice that this is possible because we assumed the coefficients $ 
M_1<\mu$ and $ M_2<1$. Then, from the growth estimate
\eqref{eq:stochastic_representation_Growth_elliptic} for $g$, we have
\begin{align*}
\left|e^{-r \tau_{\sO}} g(Z(\tau_{\sO}))\right|^p
&\leq C e^{-rp\tau_{\sO}} \left(1+e^{pM_1 Y(\tau_{\sO})} + e^{pM_2 X(\tau_{\sO})}\right).
\end{align*}
From the inequality \eqref{eq:stochastic_representation_Supermartingale_property_Y} with $c=p M_1<\mu$ and property
\eqref{eq:stochastic_representation_Supermartingale_property_X} applied with $c=p  M_2\in (0,1)$, we obtain using the 
condition $M_1 \leq r/(\kappa\vartheta)$
\begin{align*}
\EE^z_\QQ \left[\left|e^{-r\tau_{\sO}} g(Z (\tau_{\sO}))\right|^p\right]
&\leq C \left(1+e^{pM_1 y} + e^{pM_2 x}\right).
\end{align*}
Let $C_2>0$ be an bound on the right-hand side of the preceding inequality, for all $z=(x,y)\in B(z_0,\delta)\cap\sO$.
By the Cauchy-Schwarz inequality, we have
\begin{align*}
\left|\EE^z_\QQ \left[ e^{-r\tau_{\sO}} \left(g(Z(\tau_{\sO})) - g(z_0)\right) \mathbf{1}_{\{\tau_{\sO} > t\}} \right]  
\right|
& \leq \EE^z_\QQ \left[ e^{-rp\tau_{\sO}} \left|g(Z(\tau_{\sO})) - g(z_0)\right|^p \right]^{1/p}
       \QQ^z\left(\tau_{\sO} > t\right)^{1/p'},
\end{align*}
where $p'>1$ denotes the conjugate exponent of $p$. Using the fact that $\tau^z_{\sO} \leq T^{\delta_2}_0$ from 
\eqref{eq:stochastic_representation_Inequality_stopping_times} and
\eqref{eq:stochastic_representation_StatEqCont3}, we obtain in the preceding inequality
\begin{equation}
\begin{aligned}
\label{eq:stochastic_representation_Elliptic_BVP_Eq3}
\left|\EE^z_\QQ \left[ e^{-r\tau_{\sO}} \left(g(Z(\tau_{\sO})) - g(z_0)\right) \mathbf{1}_{\{\tau_{\sO} > t\}} \right]  
\right|
&\leq  2C_2^{1/p} \QQ^z\left(T_0 > t\right)^{1/p'}\\
& \leq
  2C_2^{1/p} \eps^{1/p'},\quad \forall z \in B(z_0,\delta)\cap\sO,
\end{aligned}
\end{equation}
From the inequality,
\begin{align*}
&\left|\EE^z_\QQ \left[  e^{-r\tau_{\sO}} \left(g(Z(\tau_{\sO})) - g(z_0)\right) \mathbf{1}_{\{\tau_{\sO} \leq t, 
\sup_{0\leq s\leq t} |Z(s)-z|
\geq \delta_1/2\}} \right]\right|\\
&\qquad \leq \EE^z_\QQ \left[ e^{-rp\tau_{\sO}} \left|g(Z(\tau_{\sO})) - g(z_0)\right|^p \right]^{1/p}
             \QQ^z \left(\sup_{0\leq s\leq t} |Z(s)-z| \geq \delta_1/2\right)^{1/p'},
\end{align*}
the inequalities \eqref{eq:stochastic_representation_StatEqCont1} and \eqref{eq:stochastic_representation_StatEqCont2} and 
definition of $C_2$ yield
\begin{align}
\label{eq:stochastic_representation_Elliptic_BVP_Eq4}
\left|\EE^z_\QQ \left[  e^{-r\tau_{\sO}} \left(g(Z(\tau_{\sO})) - g(z_0)\right) \mathbf{1}_{\{\tau_{\sO} \leq t, 
\sup_{0\leq s\leq t} |Z(s)-z|
\geq \delta_1/2\}} \right]\right|
& \leq 2 C_2^{1/p} \eps^{1/p'}.
\end{align}
Substituting \eqref{eq:stochastic_representation_Elliptic_BVP_Eq2}, \eqref{eq:stochastic_representation_Elliptic_BVP_Eq3}, 
and
\eqref{eq:stochastic_representation_Elliptic_BVP_Eq4} in \eqref{eq:stochastic_representation_Elliptic_BVP_Eq1}, we obtain
\[
\EE \left[ e^{-r\tau_{\sO}} \left(g(Z(\tau_{\sO})) - g(z_0)\right)\right]< \left(1+4 C_2^{1/p}\right)\left( \eps + 
\eps^{1/p'}\right),
\quad \forall z \in B(z_0,\delta)\cap\sO,
\]
and so $u^*$ is continuous up to $\bar\Gamma_0$, when $0<\beta<1$.
\end{case}
This concludes the proof that $u^*\in C_{\loc}(\sO\cup\partial_{\beta}\sO)$, for all $\beta>0$.
\end{step}
This completes the proof of Theorem \ref{thm:stochastic_representation_Existence_elliptic_BVP_beta}.
\end{proof}

We now prove \emph{existence} of solutions to problem \eqref{eq:stochastic_representation_HestonEllipticEqBVP_beta} when 
the boundary data $g$ is \emph{H\"older continuous} on
suitable portions of the boundary of $\sO$.

\begin{proof}[Proof of Theorem \ref{thm:stochastic_representation_Existence_elliptic_BVP_beta_Holder_continuous_boundary_data}]
The proof of the theorem is the same as that of Theorem \ref{thm:stochastic_representation_Existence_elliptic_BVP_beta}, 
with the exception that Case
\ref{case:stochastic_representation_Elliptic_BVP_continuity_on_Gamma_1} of Step 
\ref{step:stochastic_representation_Elliptic_BVP_continuity} can be simplified by applying the
classical boundary Schauder estimates. Also, instead of using the sequence of subdomains $\left\{\sO_k:k\in\NN\right\}$ 
precompactly contained in $\sO$, as in the proof of
Theorem \ref{thm:stochastic_representation_Uniqueness_BVP_elliptic}, we consider an increasing sequence, $\left\{\sD_k: k 
\in \NN\right\}$, of $C^{2+\alpha}$ subdomains of $\sO$
(see \cite[Definition \S $6.2$]{GilbargTrudinger}) such that each $\sD_k$ satisfies
\begin{equation}
\label{defn:stochastic_representation_Sequence_D_k}
\sO \cap (-k,k)\times(1/k,k) \subset \sD_k \subset \sO \cap (-2k,2k)\times(1/(2k),2k), \quad \forall k \in \NN,
\end{equation}
and
\begin{equation*}
\bigcup_{k \in \NN} \sD_k = \sO.
\end{equation*}
Since $\Gamma_1$ is assumed to be of class $C^{2+\alpha}$, we may choose $\sD_k$ to be of class $C^{2+\alpha}$.

Let $z_0 \in \Gamma_1$, and $r_0>0$ small enough such that $B(z_0,r_0) \cap \Gamma_0=\emptyset$. Let
\[
D := B(z_0,r_0) \cap \sO \hbox{  and  } D' := B(z_0,r_0/2) \cap \sO.
\]
By \eqref{defn:stochastic_representation_Sequence_D_k}, we may choose $k_0\in\NN$ large enough such that $D \subset 
\sD_k$, for all $k \geq k_0$. Using $f \in C^{\alpha}(\bar
D)$, $g \in C^{2+\alpha}(\bar D)$ and applying \cite[Corollary $6.7$]{GilbargTrudinger}, and the fact that $u_k$ solves
\eqref{eq:stochastic_representation_Elliptic_BVP_on_subdomains} 
with $g$ replacing $\tilde g$ and $\sD_k$ replacing $\sO_k$, we have
\begin{equation}
\label{eq:stochastic_representation_BVP_boundary_estimates}
\|u_k\|_{C^{2+\alpha}(\bar D')} \leq C \left(\|u_k\|_{C(\bar D)}+
\|g\|_{C^{2+\alpha}(\bar D)}+\|f\|_{C^{\alpha}(\bar D)}\right),
\quad \forall k \geq k_0,
\end{equation}
where $C>0$ is a constant depending only on the coefficients of $A$, and the domains $D$ and $D'$.
Combining the preceding inequality with the uniform bound on the $C(\bar D)$ norms of the sequence 
$\left\{u_k:k\in\NN\right\}$, resulting from Lemma
\ref{lem:stochastic_representation_WellDefinedStochRepVarEq}, the compactness of the embedding of $C^{2+\alpha}(\bar D') 
\hookrightarrow C^{2+\gamma}(\bar D')$, when
$0\leq\gamma<\alpha$, implies that a subsequence converges strongly 
to $u^*$ in $C^{2+\gamma}(\bar D')$. Therefore, $u^* \in C^{2+\gamma}(\bar D')$, 
and $Au^*=f$ on $D'$ and $u^*=g$ on $\partial
D'\cap\Gamma_1$. Moreover, $u^* \in C^{2+\alpha}(\bar D')$, since $u_k \in C^{2+\alpha}(\bar D')$, for all $k \geq k_0$, 
and the sequence converges in 
$C^{2+\gamma}(\bar D')$ to $u^*$.
Combining the boundary estimate \eqref{eq:stochastic_representation_BVP_boundary_estimates} with Step 
\ref{step:stochastic_representation_Elliptic_BVP_interior_regularity} in the
proof of Theorem \ref{thm:stochastic_representation_Existence_elliptic_BVP_beta}, we obtain the conclusion that $u^* \in 
C^{2+\alpha}_{\loc}(\sO\cup\Gamma_1)$.
\end{proof}

\begin{rmk}[Validity of the stochastic representation for strong solutions]
The stochastic representation \eqref{eq:stochastic_representation_StochasticRep_BVP_2} for solutions to problem 
\eqref{eq:stochastic_representation_HestonEllipticEqBVP} with
boundary condition along $\Gamma_1$ is valid if we replace the condition $u \in C_{\loc}(\sO\cup\Gamma_1) \cap 
C^2(\sO)\cap C^{1,1}_{s,\loc}(\sO\cup\Gamma_0)$ in the hypotheses
of Theorem \ref{thm:stochastic_representation_Uniqueness_BVP_elliptic_beta_less_than_1}, with the weaker condition $u \in 
C_{\loc}(\sO\cup\Gamma_1) \cap W^{2,2}_{\loc}(\sO)\cap
C^{1,1}_{s,\loc}(\sO\cup\Gamma_0)$.
\end{rmk}

\section{Elliptic obstacle problem}
\label{sec:stochastic_representation_Elliptic_obstacle_value_problem}
This section contains the proofs of Theorems \ref{thm:stochastic_representation_Uniqueness_elliptic_OP} and
\ref{thm:stochastic_representation_Uniqueness_elliptic_OP_beta_less_than_1}. As in problem 
\eqref{eq:stochastic_representation_HestonEllipticEqBVP_beta}, the questions of
\emph{uniqueness} of solutions to problem \eqref{eq:stochastic_representation_Elliptic_obstacle_problem} with Dirichlet 
boundary condition along $\Gamma_1$, when $\beta\geq 1$,
and along $\partial\sO$, when $0<\beta<1$, are similar in nature. We can conveniently treat them together as
\begin{equation}
\label{eq:stochastic_representation_Elliptic_obstacle_problem_beta}
\begin{cases}
           \min\left\{Au-f,u-\psi\right\}=0 &\hbox{ on }\sO,\\
           u=g &\hbox{ on }\partial_{\beta}\sO,
\end{cases}
\end{equation}
where $\partial_{\beta}\sO$ is given by \eqref{eq:stochastic_representation_Boundary_beta}.

\begin{proof}[Proof of Theorem \ref{thm:stochastic_representation_Uniqueness_elliptic_OP}]
Lemma \ref{lem:stochastic_representation_WellDefinedStochRepVarEq} indicates that $u^*$ given by 
\eqref{eq:stochastic_representation_Stochastic_representation_EOP_1} satisfies
\eqref{eq:stochastic_representation_Growth_elliptic}, so the growth assumption on $u$ in Theorem 
\ref{thm:stochastic_representation_Uniqueness_elliptic_OP} is justified.

By the preceding remarks, it suffices to prove that the stochastic representation 
\eqref{eq:stochastic_representation_Stochastic_representation_EOP_1} holds for solutions $u \in
C_{\loc}(\sO\cup\partial_{\beta}\sO) \cap C^2(\sO) $ to problem 
\eqref{eq:stochastic_representation_Elliptic_obstacle_problem_beta}. We consider the two situations: $u \geq u^*$
and $u \leq u^*$ on $\sO\cup\partial_{\beta}\sO$, where $u^*$ is defined by 
\eqref{eq:stochastic_representation_Stochastic_representation_EOP_1}.

\setcounter{step}{0}
\begin{step}[Proof that $u \geq u^*$ on $\sO\cup\partial_{\beta}\sO$]
\label{step:stochastic_representation_OP_u_geq_u_*}
Let $\left\{\sO_k:k \in \NN\right\}$ be an increasing sequence of $C^{2+\alpha}$ subdomains of $\sO$ as in the proof of 
Theorem 
\ref{thm:stochastic_representation_Uniqueness_BVP_elliptic}. Since $u \in C^2(\sO)$, It\^o's lemma
\cite[Theorems $3.3.3$ \& $3.3.6$]{KaratzasShreve1991} yields, for any stopping time $\theta \in \mathscr{T}$,
\begin{equation}
\label{eq:stochastic_representation_StatIneqStochRep1}
\EE^z_\QQ \left[e^{-r(\theta \wedge\tau_{\sO_k})} u(Z (\theta \wedge \tau_{\sO_k}))\right]
=    u(z) - \EE^z_\QQ \left[ \int_{0}^{\theta \wedge \tau_{\sO_k}} e^{-rs} Au(Z(s)) ds \right].
\end{equation}
By splitting the right-hand side in the preceding identity,
\begin{align*}
&\EE^z_\QQ \left[e^{-r(\theta \wedge\tau_{\sO_k})} u(Z (\theta \wedge \tau_{\sO_k}))\right]\\
&\qquad= \EE^z_\QQ \left[e^{-r \tau_{\sO_k}} u(Z (\theta \wedge \tau_{\sO_k}))\mathbf{1}_{\{\tau_{\sO_k} \leq 
\theta\}}\right]
+ \EE^z_\QQ \left[e^{-r\theta} u(Z (\theta \wedge \tau_{\sO_k}))\mathbf{1}_{\{\tau_{\sO_k} > \theta\}}\right],
\end{align*}
and using  $u\geq\psi$ on $\sO$ and $Au \geq f$ a.e. on $\sO$, the identity 
\eqref{eq:stochastic_representation_StatIneqStochRep1} gives
\begin{equation}
\label{eq:stochastic_representation_Obstacle_ineq_stopping_times}
\begin{aligned}
u(z) &\geq  \EE^z_\QQ \left[e^{-r\theta} \psi(Z (\theta)) \mathbf{1}_{\{\theta < \tau_{\sO_k}\}}\right] \\
     &\quad
         + \EE^z_\QQ \left[e^{-r\tau_{\sO_k}} u(Z (\tau_{\sO_k})) \mathbf{1}_{\{\tau_{\sO_k} \leq \theta\}}\right]
         + \EE^z \left[ \int_{0}^{\theta \wedge \tau_{\sO_k}} e^{-rs} f(Z(s)) \,ds \right].
\end{aligned}
\end{equation}
Just as in the proof of Theorem \ref{thm:stochastic_representation_Uniqueness_BVP_elliptic}, the collections of random 
variables
\begin{equation*}
\left\{e^{-r\theta} \psi(Z (\theta)) \mathbf{1}_{\{\theta < \tau_{\sO_k}\}}: k \in \NN\right\}
\hbox{  and  }
\left\{e^{-r\tau_{\sO_k}} u(Z (\tau_{\sO_k})) \mathbf{1}_{\{\tau_{\sO_k} \leq \theta\}}: k \in \NN\right\}
\end{equation*}
are uniformly integrable because $u$ and $\psi$ satisfy the pointwise growth estimate 
\eqref{eq:stochastic_representation_Growth_elliptic}. From the continuity of $u$ and $\psi$
on $\sO\cup\partial_{\beta}\sO$, we also have the a.s. convergence,
\begin{align*}
& e^{-r\theta} \psi(Z (\theta)) \mathbf{1}_{\{\theta < \tau_{\sO_k}\}}
\rightarrow
e^{-r\theta} \psi(Z (\theta)) \mathbf{1}_{\{\theta < \tau_{\sO}\}},
\quad\hbox{as  } k \rightarrow \infty,\\
& e^{-r\tau_{\sO_k}} u(Z (\tau_{\sO_k})) \mathbf{1}_{\{\tau_{\sO_k} \leq \theta\}}
\rightarrow
e^{-r\tau_{\sO}} u(Z (\tau_{\sO})) \mathbf{1}_{\{\tau_{\sO}\leq \theta\}},
\quad\hbox{as  } k \rightarrow \infty.
\end{align*}
Therefore, by \cite[Theorem $16.13$]{Billingsley_1986}, we can take limit as $k$ tends to $\infty$ in inequality 
\eqref{eq:stochastic_representation_Obstacle_ineq_stopping_times}
to obtain, for all $\theta\in\sT$,
\begin{align*}
u(z) &\geq       \EE^z_\QQ \left[e^{-r\theta} \psi(Z (\theta)) \mathbf{1}_{\{\theta < \tau_{\sO}\}}\right]
               + \EE^z_\QQ \left[e^{-r\tau_{\sO}} u(Z (\tau_{\sO})) \mathbf{1}_{\{\tau_{\sO} \leq \theta\}}\right]\\
     &\quad    + \EE^z_\QQ \left[ \int_{0}^{\theta \wedge \tau_{\sO}} e^{-rs} f(Z(s)) \,ds \right],
\end{align*}
which yields $u \geq u^*$ on $\sO\cup\partial_{\beta}\sO$.
\end{step}

\begin{step}[Proof that $u \leq u^*$ on $\sO\cup\partial_{\beta}\sO$]
The continuation region,
\begin{equation}
\label{eq:stochastic_representation_Continuation_region}
\mathscr{C} := \{ u > \psi\},
\end{equation}
is an open set by the continuity of $u$ and $\psi$. We denote the first exit time of $Z^{t,z}$ from the continuation 
region, $\mathscr{C}$,  by
\begin{equation}
\label{eq:stochastic_representation_Exit_time_continuation_region}
\tilde{\tau}^{t,z} := \left\{s \geq t : Z^{t,z}(s) \notin \sC\right\},
\end{equation}
and write $\tilde \tau = \tilde\tau^{t,z}$ for brevity.
This is indeed a stopping time because the process $Z^{t,z}$ is continuous and $\sC$ is open. By the same argument used in 
Step \ref{step:stochastic_representation_OP_u_geq_u_*}
with $\theta$ replaced by $\tilde{\tau}$, we obtain that all inequalities hold with equalities because $u(Z(\tilde{\tau})) 
= \psi(Z(\tilde{\tau}))$ and $Au=f$ on the continuation
region, $\sC$. Therefore,
\begin{align*}
u(z) &= \EE^z_\QQ \left[e^{-r\tilde{\tau}} \psi(Z(\tilde{\tau})) \mathbf{1}_{\{\tilde{\tau} < \tau_{\sO}\}}\right]
     + \EE^z_\QQ \left[e^{-r\tau_{\sO}} g(Z (\tau_{\sO})) \mathbf{1}_{\{\tau_{\sO} \leq \tilde{\tau_{\sO}}\}}\right]\\
     &\quad+ \EE^z_\QQ \left[\int_{0}^{\tilde{\tau} \wedge \tau_{\sO}} e^{-rs} f(Z(s)) \,d s \right],
\end{align*}
which implies that $u \leq u^*$.
\end{step}

By combining the preceding two steps, we obtain the stochastic representation 
\eqref{eq:stochastic_representation_Stochastic_representation_EOP_1} of solutions to problem
\eqref{eq:stochastic_representation_Elliptic_obstacle_problem_beta}, and hence the uniqueness assertion.
\end{proof}

\begin{proof}[Proof of Theorem \ref{thm:stochastic_representation_Uniqueness_elliptic_OP_beta_less_than_1}]
Lemma \ref{lem:stochastic_representation_WellDefinedStochRepVarEq} indicates that $u^*$ given by 
\eqref{eq:stochastic_representation_Stochastic_representation_EOP_2} satisfies
\eqref{eq:stochastic_representation_Growth_elliptic}, so the growth assumption on $u$ in Theorem 
\ref{thm:stochastic_representation_Uniqueness_elliptic_OP} is justified.

Our goal is to show that if $0<\beta<1$ and $u \in C_{\loc}(\sO\cup\Gamma_1)\cap C^2(\sO)\cap 
C^{1,1}_{s,\loc}(\sO\cup\Gamma_0)$ is a solution to problem
\eqref{eq:stochastic_representation_Elliptic_obstacle_problem} with Dirichlet boundary condition 
\eqref{eq:stochastic_representation_Elliptic_compatibility_g_psi_beta_geq_1}
along $\Gamma_1$, and satisfying the growth estimate \eqref{eq:stochastic_representation_Growth_elliptic}, then it admits 
the stochastic representation
\eqref{eq:stochastic_representation_Stochastic_representation_EOP_2}. As in the proof of Theorem 
\ref{thm:stochastic_representation_Uniqueness_elliptic_OP}, we consider the
following two cases.

\setcounter{step}{0}
\begin{step}[Proof that $u \geq u^*$ on $\sO\cup\Gamma_1$]
\label{step:stochastic_representation_elliptic_OP_beta_less_than_1_u_geq_u_*}
Let $\eps>0$ and $\left\{\sU_k: k \in \NN\right\}$ be the collection of increasing subdomains as in
\eqref{eq:stochastic_representation_Sequence_subdomains_with_degenerate_boundary}. By applying It\^o's lemma, we obtain, 
for all $t>0$ and $\theta\in\sT$,
\begin{equation}
\label{eq:stochastic_representation_First_representation_EOP_2}
u(z)=\EE^z_\QQ \left[e^{-r(t \wedge \nu_{\sU_k}\wedge\theta)} u(Z^{\eps} (t \wedge \nu_{\sU_k}\wedge\theta))\right]
+ \EE^z_\QQ \left[ \int_{0}^{t \wedge \nu_{\sU_k}\wedge\theta} e^{-rs} A^{\eps} u(Z^{\eps}(s)) ds \right],
\end{equation}
where $\nu_{\sU_k}$ is given by \eqref{eq:stochastic_representation_Stopping_time_domain_with_degenerate_boundary} and 
$Z^{\eps}$ is defined in
\eqref{eq:stochastic_representation_Z_eps}, and $A^{\eps}$ is defined by 
\eqref{eq:stochastic_representation_Definition_A_eps}. By
\eqref{eq:stochastic_representation_Elliptic_obstacle_problem} and \eqref{eq:stochastic_representation_Definition_A_eps}, 
preceding identity gives
\begin{equation}
\label{eq:stochastic_representation_Second_representation_EOP_2}
\begin{aligned}
u(z)&\geq \EE^z_\QQ \left[e^{-r(t \wedge \nu_{\sU_k}\wedge\theta)} u(Z^{\eps} (t \wedge \nu_{\sU_k}\wedge\theta))\right] 
\\
&\quad + \EE^z_\QQ \left[ \int_{0}^{t \wedge \nu_{\sU_k}\wedge\theta} e^{-rs} f(Z^{\eps}(s)) ds \right]
+ \EE^z_\QQ \left[ \int_{0}^{t \wedge \nu_{\sU_k}\wedge\theta} e^{-rs} (A^{\eps}-A) u(Z^{\eps}(s)) ds \right].
\end{aligned}
\end{equation}
First, we take the limit as $\eps$ tends to $0$ in \eqref{eq:stochastic_representation_Second_representation_EOP_2}. We 
can assume without loss of generality that $\eps<1/k$, for
any fixed $k \in \NN$. The residual term $(A^{\eps}-A)u$ then obeys estimate 
\eqref{eq:stochastic_representation_Estimate_A_eps_minus_A} because $u \in
C^{1,1}_{s,\loc}(\sO\cup\Gamma_0)$. Therefore, \eqref{eq:stochastic_representation_Limit_A_eps_minus_A} also holds in the 
present case. In addition, using the continuity of $f$,
$u$, $Du$ and $y D^2 u$ on compact subsets of $\sO\cup\Gamma_0$, we see that 
\eqref{eq:stochastic_representation_Convergence_u_f_BVP} holds, and so, by taking limit as
$\eps\downarrow 0$ in \eqref{eq:stochastic_representation_Second_representation_EOP_2},
\begin{equation}
\label{eq:stochastic_representation_Third_representation_EOP_2}
u(z)\geq \EE^z_\QQ \left[e^{-r(t \wedge \nu_{\sU_k}\wedge\theta)} u(Z (t \wedge \nu_{\sU_k}\wedge\theta))\right]
 + \EE^z_\QQ \left[ \int_{0}^{t \wedge \nu_{\sU_k}\wedge\theta} e^{-rs} f(Z(s)) ds \right].
\end{equation}
Finally, letting $k$ and $t$ tend to $\infty$ and using the convergence 
\eqref{eq:stochastic_representation_Convergence_stopping_times}, the same argument employed in the proof
of Theorem \ref{thm:stochastic_representation_Uniqueness_BVP_elliptic} can be applied to conclude that $u \geq u^*$ on 
$\sO\cup\Gamma_1$, where $u^*$ is given by
\eqref{eq:stochastic_representation_Stochastic_representation_EOP_2}.
\end{step}

\begin{step}[Proof that $u \leq u^*$ on $\sO\cup\Gamma_1$]
We choose $\theta=\tilde\tau$ in the preceding step, where $\tilde\tau$ is defined by 
\eqref{eq:stochastic_representation_Exit_time_continuation_region}. By the definition
\eqref{eq:stochastic_representation_Continuation_region} of the continuation region, $\sC$, and the obstacle problem
\eqref{eq:stochastic_representation_Elliptic_obstacle_problem}, we notice that inequalities 
\eqref{eq:stochastic_representation_Second_representation_EOP_2} and
\eqref{eq:stochastic_representation_Third_representation_EOP_2} hold with equality and so it follows as in Step
\ref{step:stochastic_representation_elliptic_OP_beta_less_than_1_u_geq_u_*} that
$u \leq u^*$ on $\sO\cup\Gamma_1$.
\end{step}
This completes the proof.
\end{proof}

\begin{rmk}[Validity of the stochastic representation for strong solutions]
The stochastic representation \eqref{eq:stochastic_representation_Stochastic_representation_EOP_1} of solutions to 
problem
\eqref{eq:stochastic_representation_Elliptic_obstacle_problem_beta}, when $\beta>0$, holds under the weaker assumption 
that $u \in C_{\loc}(\sO\cup\partial_{\beta}\sO) \cap
W^{2,2}_{\loc}(\sO)$. Similarly, the stochastic representation 
\eqref{eq:stochastic_representation_Stochastic_representation_EOP_2} of solutions to problem
\eqref{eq:stochastic_representation_Elliptic_obstacle_problem} with Dirichlet boundary condition 
\eqref{eq:stochastic_representation_Elliptic_compatibility_g_psi_beta_geq_1}
along $\Gamma_1$, when $0<\beta<1$, holds under the weaker assumption that $u \in C_{\loc}(\sO\cup\Gamma_1) \cap 
C^{1,1}_{s,\loc}(\sO\cup\Gamma_0)\cup W^{2,2}_{\loc}(\sO)$. In
each case, we would replace the application of the classical It\^o lemma
\cite[Theorems $3.3.3$ \& $3.3.6$]{KaratzasShreve1991} with \cite[Identity ($8.62$) in Theorem $2.8.5$]{Bensoussan_Lions}, or 
we could apply an approximation argument involving
$C^2(\sO)$ functions.
\end{rmk}

\section{Parabolic terminal/boundary value problem}
\label{sec:stochastic_representation_Parabolic_BVP}
This section contains the proofs of Theorems \ref{thm:stochastic_representation_Uniqueness_BVP_parabolic} and
\ref{thm:stochastic_representation_Uniqueness_BVP_parabolic_beta_less_than_1} and an \emph{existence} result in Theorem
\ref{thm:stochastic_representation_Existence_parabolic_BVP_beta}. Because the Heston process satisfies the strong Markov 
property, it suffices to prove the stochastic
representation of solutions to the terminal value problem for $T$ as small as we like. In particular, without loss of 
generality, we can choose $T$ such that

\begin{hyp}
\label{hyp:stochastic_representation_T_small_parabolic}
There is a constant $p_0 >1$ such that
\begin{enumerate}
\item Condition \eqref{eq:stochastic_representation_Power_p_X} in Lemma \ref{lem:stochastic_representation_Power_X} is 
    satisfied for $p:=p_0 M_2$, where $ M_2 \in [0,1]$ is the
    constant appearing in \eqref{eq:stochastic_representation_Growth_parabolic};
\item One has $p_0 M_1 \leq \mu$, where $M_1 \in [0,\mu)$ in \eqref{eq:stochastic_representation_Growth_parabolic}.
\end{enumerate}
\end{hyp}

As in \S \ref{sec:stochastic_representation_Elliptic_boundary_value_problem}, we first prove \emph{uniqueness} of 
solutions to the parabolic terminal/boundary value problems
\eqref{eq:stochastic_representation_Parabolic_BVP} with different possible Dirichlet boundary conditions depending on the 
parameter $\beta$. The proofs are similar those of
Theorems \ref{thm:stochastic_representation_Uniqueness_BVP_elliptic} and 
\ref{thm:stochastic_representation_Uniqueness_BVP_elliptic_beta_less_than_1}.

The \emph{existence} and \emph{uniqueness} of solutions to problem \eqref{eq:stochastic_representation_Parabolic_BVP} with 
boundary condition
\eqref{eq:stochastic_representation_Parabolic_BVP_boundary_condition_Gamma_1}, when $\beta\geq 1$, and with boundary 
condition
\eqref{eq:stochastic_representation_Parabolic_BVP_boundary_condition_whole_boundary}, when $0<\beta<1$, are similar in 
nature. By analogy with our treatment of problem
\eqref{eq:stochastic_representation_HestonEllipticEqBVP_beta}, we define
\begin{align}
\label{eq:stochastic_representation_Boundary_beta_parabolic}
\eth_{\beta} Q :=
\begin{cases}
\eth^1 Q     & \hbox{ if } \beta \geq 1,\\
\eth Q       & \hbox{ if } 0<\beta <1,
\end{cases}
\end{align}
where we recall that $Q:=(0,T)\times\sO$. The preceding problems can then be formulated as
\begin{align}
\label{eq:stochastic_representation_Parabolic_BVP_beta}
           -u_t+Au&=f     \quad\hbox{ on }Q,\\
           u&=g          \quad\hbox{ on }\eth_{\beta}Q.
\end{align}
We now have the

\begin{proof}[Proof of Theorem \ref{thm:stochastic_representation_Uniqueness_BVP_parabolic}]
We choose $T>0$ small enough and $p_0>1$ as in Hypothesis \ref{hyp:stochastic_representation_T_small_parabolic}. The 
pattern of the proof is the same as that of Theorem
\ref{thm:stochastic_representation_Uniqueness_BVP_elliptic}. For completeness, we outline the main steps of the argument.

We need to show that if $u \in C_{\loc}(Q\cup\eth_{\beta} Q)\cap C^2(Q)$ is a solution to problem 
\eqref{eq:stochastic_representation_Parabolic_BVP_beta}, satisfying the growth
bound \eqref{eq:stochastic_representation_Growth_parabolic}, then it admits the stochastic representation
\eqref{eq:stochastic_representation_Stochastic_representation_parabolic_BVP_1}. We choose a collection of increasing 
subdomains, $\left\{\sO_k: k \in \NN\right\}$, as in the
proof of Theorem \ref{thm:stochastic_representation_Uniqueness_BVP_elliptic}. By applying 
It\^o's lemma \cite[Theorems $3.3.3$ \& $3.3.6$]{KaratzasShreve1991}, we obtain, for all
$t>0$ and $k \in \NN$,
\begin{equation}
\label{eq:stochastic_representation_First_stochastic_representation_BVP_parabolic}
\begin{aligned}
&\EE^{t,z}_\QQ \left[e^{-r( \tau_{\sO_k}\wedge T-t)} u( \tau_{\sO_k}\wedge T, Z( \tau_{\sO_k}\wedge T))\right] \\
&\qquad= u(t,z) - \EE^{t,z}_\QQ \left[ \int_{t}^{ \tau_{\sO_k}\wedge T} e^{-r(s-t)} f(s, Z(s)) ds \right].
\end{aligned}
\end{equation}
We now take limit as $k$ tends to $\infty$ in the preceding identity. Using 
\eqref{eq:stochastic_representation_Growth_parabolic} and Lemma
\ref{lem:stochastic_representation_PropertiesHeston}, we obtain
\begin{equation}
\label{eq:stochastic_representation_Convergence_f_BVP_parabolic}
\EE^{t,z}_\QQ \left[ \int_{t}^{ \tau_{\sO_k}\wedge T} e^{-r(s-t)} f(s, Z(s)) ds \right]
\rightarrow
\EE^{t,z}_\QQ \left[ \int_{t}^{\tau_{\sO}\wedge T} e^{-r(s-t)} f(s, Z(s)) ds \right],
\hbox{  as  } k \rightarrow \infty.
\end{equation}
From the continuity of $u$ and of the sample paths of $Z$, we obtain the a.s. convergence as $k$ tends to $\infty$,
\begin{equation*}
  e^{-r( \tau_{\sO_k}\wedge T-t)} u( \tau_{\sO_k}\wedge T, Z( \tau_{\sO_k}\wedge T))
  \rightarrow
  e^{-r(\tau_{\sO}\wedge T)} g(\tau_{\sO}\wedge T, Z(\tau_{\sO}\wedge T)).
\end{equation*}
In order to prove that, as $k$ tends to $\infty$,
\begin{equation}
\label{eq:stochastic_representation_Convergence_u_h_BVP_parabolic}
\begin{aligned}
  \EE^{t,z}_{\QQ}\left[e^{-r( \tau_{\sO_k}\wedge T-t)} u( \tau_{\sO_k}\wedge T, Z( \tau_{\sO_k}\wedge T))\right]
  &\rightarrow
  \EE^{t,z}_{\QQ}\left[e^{-r(\tau_{\sO}\wedge T)} g(\tau_{\sO}\wedge T, Z(\tau_{\sO}\wedge T))\right],
\end{aligned}
\end{equation}
using \cite[Theorem $16.13$]{Billingsley_1986}, it is enough to show that the collection of random variables,
\begin{equation}
\label{eq:stochastic_representation_Collection_rv_BVP_parabolic}
\left\{  e^{-r( \tau_{\sO_k}\wedge T-t)} u( \tau_{\sO_k}\wedge T, Z( \tau_{\sO_k}\wedge T)) : k \in \NN\right\}
\end{equation}
is uniformly integrable. For $p_0>1$ as in Hypothesis \ref{hyp:stochastic_representation_T_small_parabolic}, it is enough 
to show that their $p_0$-th order moments are uniformly
bounded (\cite[Observation following Equation ($16.23$)]{Billingsley_1986}), that is
\begin{equation}
\label{eq:stochastic_representation_Uniform_bound_p_0_moment}
\sup_{k \in \mathbb{N}}
\EE^{t,z}_\QQ \left[\left|e^{-r\tau_{\sO_k}} u(\tau_{\sO_k}, Z(\tau_{\sO_k}))\mathbf{1}_{\{\tau_{\sO_k}<T\}}\right|^{p_0} 
\right] < +\infty.
\end{equation}
From \eqref{eq:stochastic_representation_Growth_parabolic}, we have, for some constant $C$,
\begin{equation*}
\begin{aligned}
&\EE^{t,z}_\QQ \left[\left|e^{-r( \tau_{\sO_k}\wedge T-t)} u( \tau_{\sO_k}\wedge T, Z( \tau_{\sO_k}\wedge T))\right|^{p_0} 
\right]\\
&\qquad\leq C \left( 1+ \EE^{t,z}_\QQ\left[ e^{p_0  M_1 Y(\tau_{\sO_k}\wedge T)}\right]
        + \EE^{t,z}_\QQ \left[e^{p_0  M_2X(\tau_{\sO_k}\wedge T)} \right] \right).
\end{aligned}
\end{equation*}
Now, the uniform bound in \eqref{eq:stochastic_representation_Uniform_bound_p_0_moment} follows by applying the 
supermartingale property
\eqref{eq:stochastic_representation_Supermartingale_property_Y} with $c:=p_0 M_1$ to the first expectation in the 
preceding inequality, and by applying
\eqref{eq:stochastic_representation_Power_X} with $p:=p_0 M_2$ to the second expectation above. Therefore, by taking the 
limit as $k$ tends to $\infty$ in
\eqref{eq:stochastic_representation_First_stochastic_representation_BVP_parabolic}, with the aid of 
\eqref{eq:stochastic_representation_Convergence_f_BVP_parabolic} and
\eqref{eq:stochastic_representation_Convergence_u_h_BVP_parabolic}, we obtain the stochastic representation
\eqref{eq:stochastic_representation_Stochastic_representation_parabolic_BVP_1} of solutions to problem 
\eqref{eq:stochastic_representation_Parabolic_BVP_beta}.
\end{proof}

\begin{proof}[Proof of Theorem \ref{thm:stochastic_representation_Uniqueness_BVP_parabolic_beta_less_than_1}]
The need is to show that if $0<\beta<1$ and $u \in C_{\loc}(Q\cup\eth^1 Q)\cap C^2(Q) \cap 
C^{1,1}_{s,\loc}((0,T)\times(\sO\cup\Gamma_0))$ is a solution to problem
\eqref{eq:stochastic_representation_Parabolic_BVP} with boundary condition 
\eqref{eq:stochastic_representation_Parabolic_BVP_boundary_condition_Gamma_1}, satisfying the growth
bound \eqref{eq:stochastic_representation_Growth_parabolic}, then it admits the stochastic representation
\eqref{eq:stochastic_representation_Stochastic_representation_parabolic_BVP_2}.

Let $\eps>0$ and $\left\{\sU_k: k \in \NN\right\}$ be the collection of increasing subdomains as in
\eqref{eq:stochastic_representation_Sequence_subdomains_with_degenerate_boundary}. By applying It\^o's lemma, we obtain
\begin{equation*}
\EE^{t,z}_\QQ \left[e^{-r(T \wedge \nu_{\sU_k})} u(T \wedge \nu_{\sU_k}, Z^{\eps} (T \wedge \nu_{\sU_k}))\right]
= u(t,z) - \EE^{t,z}_\QQ \left[ \int_{t}^{T \wedge \nu_{\sU_k}} e^{-rs} A^{\eps} u(s,Z^{\eps}(s)) ds \right],
\end{equation*}
where $\nu_{\sU_k}$ is given by \eqref{eq:stochastic_representation_Stopping_time_domain_with_degenerate_boundary}, 
$Z^{\eps}$ by \eqref{eq:stochastic_representation_Z_eps} and
$A^{\eps}$ is defined by \eqref{eq:stochastic_representation_Definition_A_eps}. The proof now follows the same path as 
that of Theorem
\ref{thm:stochastic_representation_Uniqueness_BVP_elliptic_beta_less_than_1}, with the only modification being that we now 
take the limit as $k$ tends to $\infty$ in the
preceding identity in order to obtain \eqref{eq:stochastic_representation_Stochastic_representation_parabolic_BVP_2}.
\end{proof}

Analogous to Lemma \ref{lem:stochastic_representation_WellDefinedStochRepVarEq}, we have the following auxiliary result.

\begin{lem}
\label{lem:stochastic_representation_WellDefinedStochRep_parabolic_BVP}
Suppose $f$ and $g$ obey the growth assumption \eqref{eq:stochastic_representation_Growth_parabolic}. Then there are 
positive constants $\bar{C}$, $M_1 \leq\mu$ and $M_2
\in[0,1]$, such that for any stopping times $\theta_1,\theta_2\in \sT_{t,T}$ with values in $[t,T]$, the function 
$J_p^{\theta_1,\theta_2}$ given by
\eqref{eq:stochastic_representation_Functional_J_parabolic} obeys the growth assumption 
\eqref{eq:stochastic_representation_Growth_parabolic}.
\end{lem}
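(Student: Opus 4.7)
The plan is to mirror the argument used for the elliptic functional in Lemma \ref{lem:stochastic_representation_WellDefinedStochRepVarEq}, estimating each of the three terms of $J_p^{\theta_1,\theta_2}$ in \eqref{eq:stochastic_representation_Functional_J_parabolic} separately, with two key modifications appropriate to the parabolic setting: stopping times are bounded by $T$ (so the discount factor $e^{-r(\cdot-t)}$ plays no role other than producing an absolute prefactor $e^{|r|T}$), and the control of the $X$-component must come from Lemma \ref{lem:stochastic_representation_Power_X} rather than from the supermartingale property \eqref{eq:stochastic_representation_Supermartingale_property_X}, since we do not have the $r$-killing available to absorb $e^{M_2 X}$ as in the elliptic case.

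First, for the integral term I would apply Tonelli to obtain
\[
\EE^{t,z}_\QQ\left[\int_{t}^{\theta_1 \wedge \theta_2} e^{-r(s-t)} |f(s,Z(s))|\,ds\right]
\leq C e^{|r|T}\int_{t}^{T} \left(1+\EE^{t,z}_\QQ[e^{M_1 Y(s)}] + \EE^{t,z}_\QQ[e^{M_2 X(s)}]\right)ds,
\]
where the $Y$-expectation is controlled by the supermartingale property \eqref{eq:stochastic_representation_Supermartingale_property_Y} applied with $c=M_1\leq\mu$, giving $\EE[e^{M_1 Y(s)}]\leq e^{M_1\kappa\vartheta T}e^{M_1 y}$, and the $X$-expectation is controlled, uniformly for $s\in[t,T]$, by Lemma \ref{lem:stochastic_representation_Power_X} applied to the deterministic stopping time $s$, provided $T$ is small enough; this is guaranteed by Hypothesis \ref{hyp:stochastic_representation_T_small_parabolic}. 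Both bounds are of the form $C(1+e^{M_1 y}+e^{M_2 x})$, so after integrating in $s$ over the bounded interval $[t,T]$ we recover a bound of the required form.

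Next, for the two non-integral terms, I would bound the integrands using \eqref{eq:stochastic_representation_Growth_parabolic},
\[
|g(\theta_1,Z(\theta_1))|\,\mathbf{1}_{\{\theta_1\leq\theta_2\}} + |\psi(\theta_2,Z(\theta_2))|\,\mathbf{1}_{\{\theta_2<\theta_1\}}
\leq C\left(1+e^{M_1 Y(\theta_1\wedge\theta_2)}+e^{M_2 X(\theta_1\wedge\theta_2)}\right),
\]
and then treat the $Y$ and $X$ contributions separately. For $Y$ at the bounded stopping time $\theta:=\theta_1\wedge\theta_2$, the Optional Sampling Theorem \cite[Theorem 1.3.22]{KaratzasShreve1991} applied to the positive supermartingale $(e^{-M_1\kappa\vartheta s}e^{M_1 Y(s)})_{s\geq t}$ gives $\EE[e^{-M_1\kappa\vartheta\theta}e^{M_1 Y(\theta)}]\leq e^{M_1 y}$, and hence $\EE[e^{M_1 Y(\theta)}]\leq e^{M_1\kappa\vartheta T}e^{M_1 y}$. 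For $X$, we apply Lemma \ref{lem:stochastic_representation_Power_X} with $p=M_2$, which (under Hypothesis \ref{hyp:stochastic_representation_T_small_parabolic}) yields a uniform bound $\sup_{\theta\in\sT_{t,T}}\EE^{t,z}_\QQ[e^{M_2 X(\theta)}]\leq C(1+e^{M_2 x})$. Combining the three terms and absorbing all $T$-dependent and $r$-dependent prefactors into a single constant $\bar C$ yields the claimed growth bound.

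The only real obstacle is the bookkeeping to verify that the constants produced by the three bounds have exactly the form $\bar C(1+e^{M_1 y}+e^{M_2 x})$ with the original exponents $M_1,M_2$ and a uniform constant $\bar C$ depending only on the Heston parameters, $T$, $r$, and the constant $C$ from \eqref{eq:stochastic_representation_Growth_parabolic}; this is why invoking Hypothesis \ref{hyp:stochastic_representation_T_small_parabolic} to fix $T$ small enough is essential, since otherwise the moment bound of Lemma \ref{lem:stochastic_representation_Power_X} would require strictly smaller exponents on $e^{M_2 X}$ than those appearing on the right-hand side of \eqref{eq:stochastic_representation_Growth_parabolic}.
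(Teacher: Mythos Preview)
Your overall strategy—mirror the elliptic argument of Lemma \ref{lem:stochastic_representation_WellDefinedStochRepVarEq}, treat the three terms of \eqref{eq:stochastic_representation_Functional_J_parabolic} separately, and exploit boundedness of the stopping times to absorb the discount factor—is exactly what the paper does; its proof is a one-line reference to Lemma \ref{lem:stochastic_representation_WellDefinedStochRepVarEq} together with Lemma \ref{lem:stochastic_representation_PropertiesHeston}. Your handling of the $Y$-component and of the integral term is correct.

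The divergence is in the $X$-component. You assert that the supermartingale property \eqref{eq:stochastic_representation_Supermartingale_property_X} is unavailable here and replace it by Lemma \ref{lem:stochastic_representation_Power_X} under Hypothesis \ref{hyp:stochastic_representation_T_small_parabolic}. That reasoning is mistaken: the supermartingale route works perfectly well in the parabolic setting. Optional sampling of $(e^{-rM_2 s}e^{M_2 X(s)})_{s\geq t}$ at the \emph{bounded} stopping time $\theta\in[t,T]$ gives $\EE^{t,z}_\QQ\bigl[e^{-rM_2(\theta-t)}e^{M_2 X(\theta)}\bigr]\leq e^{M_2 x}$, and since $0\leq\theta-t\leq T$ the factor $e^{-rM_2(\theta-t)}$ is bounded below by $e^{-|r|M_2 T}$ regardless of the sign of $r$, yielding $\EE^{t,z}_\QQ[e^{M_2 X(\theta)}]\leq e^{|r|M_2 T}e^{M_2 x}$. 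This is precisely the content of the paper's remark that boundedness of the stopping times removes the need for $r>0$, and it delivers a bound that is genuinely uniform in the initial point with no appeal to Hypothesis \ref{hyp:stochastic_representation_T_small_parabolic}.

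Your detour through Lemma \ref{lem:stochastic_representation_Power_X} is not harmless. The finiteness in \eqref{eq:stochastic_representation_Power_X} is asserted for a \emph{fixed} initial point $z$: both the constant $c$ in \eqref{eq:stochastic_representation_Power_p_X} and the implicit bound (through $n_0$ in its proof) depend on $y$. Hence the claimed conclusion $\sup_{\theta}\EE^{t,z}_\QQ[e^{M_2 X(\theta)}]\leq C(1+e^{M_2 x})$ with $C$ independent of $y$ does not follow from that lemma as stated. To salvage your route you would have to track the $y$-dependence explicitly and argue that it can be absorbed into the $e^{M_1 y}$ term of \eqref{eq:stochastic_representation_Growth_parabolic}—extra work the paper's approach via Lemma \ref{lem:stochastic_representation_PropertiesHeston} avoids entirely.
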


\begin{proof}
The proof follows as in Lemma \ref{lem:stochastic_representation_WellDefinedStochRepVarEq} with the aid of Lemma 
\ref{lem:stochastic_representation_PropertiesHeston}. Notice that
because the stopping times $\theta_1, \theta_2 \in \sT_{t,T}$ are bounded by $T$, we do not need the constant $r$ to be 
positive, as in Lemma
\ref{lem:stochastic_representation_WellDefinedStochRepVarEq}.
\end{proof}

\begin{rmk}
The function $\psi$ in \eqref{eq:stochastic_representation_Functional_J_parabolic} plays the role of the obstacle function 
and is relevant only for problem
\eqref{eq:stochastic_representation_Parabolic_obstacle_problem}.
\end{rmk}

Next, we have the following \emph{existence} results for solutions to the parabolic terminal/boundary value problem 
\eqref{eq:stochastic_representation_Parabolic_BVP_beta}, for all
$\beta>0$.

\begin{proof}[Proof of Theorem \ref{thm:stochastic_representation_Existence_parabolic_BVP_beta}]
We choose $T>0$ small enough and $p_0>1$ as in Hypothesis \ref{hyp:stochastic_representation_T_small_parabolic}.

By hypothesis, we have $g \in C_{\loc}(\overline{\eth_{\beta} Q})$. Since $\overline{\eth_{\beta} Q}$ is closed, we may 
use \cite[Theorem $3.1.2$]{FriedmanPDE} to extend $g$ to a
function on $[0,T]\times\RR^2$, again called $g$, such that $g \in C_{\loc}([0,T]\times\RR^2)$.

The proof follows the same pattern as that of Theorem \ref{thm:stochastic_representation_Existence_elliptic_BVP_beta}. For 
completeness, we outline the main steps. Let $\sO_k$ be
an increasing sequence of $C^{2+\alpha}$ subdomains of $\sO$ as in the proof of Theorem 
\ref{thm:stochastic_representation_Uniqueness_BVP_elliptic}, and let $Q_k:=(0, T) \times
\sO_k$. We notice that on each cylindrical domain, $Q_k$, the operator $A$ is uniformly elliptic, and its coefficients are 
$C^{\infty}(\bar{Q}_k)$ functions. By hypothesis, there
is an $\alpha \in (0, 1)$ such that $f \in C^{\alpha}(\bar{Q}_k)$ and $g\in C(\bar Q_k)$. Therefore, by \cite[Theorem 
$3.4.9$]{FriedmanPDE}, the terminal value problem
\begin{align*}
-u_t+Au &= f \quad\hbox{ on }Q_k,
\\
u &= g \quad\hbox{ on }(0,T)\times\partial\sO_k \cup \{T\}\times\bar\sO_k,
\end{align*}
has a unique solution $u_k \in C(\bar{Q}_k) \cap C^{2+\alpha}(Q_k)$, and by\footnote{See also \cite[Theorem 
$5.7.6$]{KaratzasShreve1991}.} \cite[Theorem $6.5.2$]{FriedmanSDE}
it has the stochastic representation
\begin{equation}
\label{eq:stochastic_representation_Approximate_stochastic_representation_parabolic_BVP}
\begin{aligned}
u_k(t, z) &= \EE^{t,z}_\QQ \left[e^{-r(\tau_{\sO_k}\wedge T-t)} g(\tau_{\sO_k}\wedge T, Z(\tau_{\sO_k}\wedge T))\right]           
\\
          &\quad+ \EE^{t,z}_\QQ\left[\int_{t}^{\tau_{\sO_k}\wedge T} e^{-r(s-t)} f(s,Z(s))\,ds \right],
          \quad\forall (t,z)\in\bar Q_k.
\end{aligned}
\end{equation}
Because $\tau_{\sO_k}$ converges to $\tau_{\sO}$ a.s. as $k\rightarrow\infty$, the integral term in
\eqref{eq:stochastic_representation_Approximate_stochastic_representation_parabolic_BVP} converges to the integral term of 
$u^*$ in
\eqref{eq:stochastic_representation_Stochastic_representation_parabolic_BVP_1}, by the same argument as that used in the 
proof of Theorem
\ref{thm:stochastic_representation_Uniqueness_BVP_parabolic}. By the continuity of $g$ and of the paths of the Heston 
process $Z$, we also know that
\[
e^{-r(\tau_{\sO_k}\wedge T)} g(\tau_{\sO_k}\wedge T, Z(\tau_{\sO_k}\wedge T))
\rightarrow
e^{-r(\tau_{\sO}\wedge T)} g(\tau_{\sO}\wedge T, Z(\tau_{\sO}\wedge T)), \quad\hbox{ as } k \rightarrow \infty.
\]
In order to show that the preceding convergence takes place in expectation also, it is enough to show that the collection 
of random variables,
\[
\left\{e^{-r(\tau_{\sO_k}\wedge T)} g(\tau_{\sO_k}\wedge T, Z(\tau_{\sO_k}\wedge T)): k \in \NN\right\},
\]
is uniformly integrable, but this follows by the same argument as that used for the collections 
\eqref{eq:stochastic_representation_Collection_rv_BVP_parabolic} in the proof of
Theorem \ref{thm:stochastic_representation_Uniqueness_BVP_parabolic}, by bounding their $p_0$-th order moments ($p_0>1$). 
Therefore, the sequence $\{u_k: k \in \NN\}$ converges
to $u^*$ pointwise on $Q$. By interior Schauder estimates for parabolic equations \cite[Theorem $3.3.5$]{FriedmanSDE} and 
Lemma 
\ref{lem:stochastic_representation_WellDefinedStochRep_parabolic_BVP}, there is a subsequence of $\{u_k: k \in \NN\}$ 
which converges to $u^*$ in $C^{2+\alpha'}(Q)$, when
$0<\alpha'<\alpha$.
Using the Arzel$\grave{\text{a}}$-Ascoli Theorem, we obtain $u ^* \in C^{2+\alpha}(Q)$. The proof of continuity of $u$ up 
to $\eth_{\beta} Q$ follows by exactly the same argument
as that used in the proof of Step \ref{step:stochastic_representation_Elliptic_BVP_continuity} in Theorem 
\ref{thm:stochastic_representation_Existence_elliptic_BVP_beta}.
Therefore, $u^*$ is a solution to \eqref{eq:stochastic_representation_Parabolic_BVP_beta}.

From Theorem \ref{thm:stochastic_representation_Uniqueness_BVP_parabolic} and Lemma 
\ref{lem:stochastic_representation_WellDefinedStochRep_parabolic_BVP}, we see that $u^*$ in
\eqref{eq:stochastic_representation_Stochastic_representation_parabolic_BVP_1} is the unique solution to the parabolic 
terminal value problem
\eqref{eq:stochastic_representation_Parabolic_BVP_beta}, for all $\beta>0$.
\end{proof}

We now have the

\begin{proof}[Proof of Theorem \ref{thm:stochastic_representation_Existence_parabolic_BVP_beta_with_Holder_continuous_boundary_data}]
Just as in the proof of Theorem \ref{thm:stochastic_representation_Existence_parabolic_BVP_beta}, we can easily adapt the 
proof of Theorem
\ref{thm:stochastic_representation_Existence_elliptic_BVP_beta_Holder_continuous_boundary_data} for the elliptic case to 
the present parabolic case. For this purpose, we only
need to make use of the local boundary Schauder estimate in Proposition 
\ref{prop:stochastic_representation_Local_apriori_boundary_estimates} instead of \cite[Corollary
$6.7$]{GilbargTrudinger} for the elliptic case.
\end{proof}

\section{Parabolic obstacle problem}
\label{sec:stochastic_representation_Parabolic_Obstacle}
Problem \eqref{eq:stochastic_representation_Parabolic_obstacle_problem} with boundary condition
\eqref{eq:stochastic_representation_Parabolic_BVP_boundary_condition_whole_boundary}, when $0<\beta<1$, and with boundary 
condition
\eqref{eq:stochastic_representation_Parabolic_BVP_boundary_condition_Gamma_1}, when $\beta \geq 1$, can be formulated as
\begin{equation}
\label{eq:stochastic_representation_Parabolic_obstacle_problem_beta}
\begin{cases}
\min\left\{-u_t+Au-f, u-\psi\right\} = 0& \quad\hbox{ on }Q, \\
u = g& \quad\hbox{ on }\eth_{\beta} Q,
\end{cases}
\end{equation}
where $\eth_{\beta} Q$ is defined in \eqref{eq:stochastic_representation_Boundary_beta_parabolic}. According to Theorem
\ref{thm:stochastic_representation_Uniqueness_parabolic_OP}, the solution to problem 
\eqref{eq:stochastic_representation_Parabolic_obstacle_problem_beta} is given in
\eqref{eq:stochastic_representation_Stochastic_representation_POP_2}.

\begin{proof}[Proof of Theorem \ref{thm:stochastic_representation_Uniqueness_parabolic_OP}]
We choose $\widetilde T>0$ small enough so that it obeys Hypothesis \ref{hyp:stochastic_representation_T_small_parabolic}. 
For such $\widetilde T>0$, the proof of Theorem
\ref{thm:stochastic_representation_Uniqueness_elliptic_OP} adapts to the present case in the same way that the proof of 
Theorem 
\ref{thm:stochastic_representation_Uniqueness_BVP_elliptic} adapts to give a proof of Theorem 
\ref{thm:stochastic_representation_Uniqueness_BVP_parabolic}. Therefore, it remains
to show that the corresponding stochastic representation 
\eqref{eq:stochastic_representation_Stochastic_representation_POP_1} of the solution  to problem
\eqref{eq:stochastic_representation_Parabolic_obstacle_problem_beta} holds for $T$ arbitrarily large.

Let $N:= \lfloor T/\widetilde T \rfloor$ (the greatest integer in $T/\widetilde T$), and $T_i:=i\widetilde T$, for 
$i=0,\ldots, N-1$, and $T_N:=T$. Let $k$ be an integer such
that $1 \leq k \leq N-1$, and assume that the stochastic representation formula 
\eqref{eq:stochastic_representation_Stochastic_representation_POP_1} holds for any $t \in [T_i,
T]$, where $i=k,\ldots,N-1$. We want to show that it holds also for $t \in [T_{k-1},T]$. Notice that for $k=N-1$, we have 
$T-t \leq \widetilde T$, for all $t \in [T_{N-1}, T]$,
and so we know that the stochastic representation \eqref{eq:stochastic_representation_Stochastic_representation_POP_1} of 
the solution  to problem
\eqref{eq:stochastic_representation_Parabolic_obstacle_problem} holds, by the observation at the beginning of the present 
proof.

For any $t\leq v\leq T$,  stopping time $\theta \in \sT_{t,v}$ with values in $[t,v]$, and $\varphi \in C(\bar \sO)$, we 
denote
\begin{equation}
\label{eq:stochastic_representation_Functional_F}
\begin{aligned}
F^{\varphi}(t,z, v, \theta)
& := \int_{t}^{\tau_{\sO}\wedge\theta} e^{-r(s-t)} f(s,Z(s))\,ds
  + e^{-r(\theta-t)} \psi(\theta,Z(\theta)) 1_{\{ \theta < \tau_{\sO} \wedge v \}}  \\
&\quad
+ e^{-r(\tau_{\sO}-t)} g(\tau_{\sO}, Z(\tau_{\sO})) 1_{\{ \tau_{\sO} \leq \theta, \tau_\sO < v \}}
+ e^{-r(v-t)} \varphi(Z(v)) 1_{\{ \tau_{\sO}\wedge v\leq\theta, \tau_{\sO} \geq v \}}.
\end{aligned}
\end{equation}
Notice that by choosing $\varphi=g(T,\cdot)$ and $v=T$ in \eqref{eq:stochastic_representation_Functional_F}, we obtain, 
for any stopping time $\theta\in \sT_{t,T}$,
\begin{align*}
&e^{-r(\tau_{\sO}-t)} g(\tau_{\sO}, Z(\tau_{\sO})) 1_{\{ \tau_{\sO} \leq \theta, \tau_\sO < T \}}
+ e^{-r(T-t)} \varphi(Z(T)) 1_{\{ \tau_{\sO}\wedge T\leq\theta, \tau_{\sO} \geq T \}}\\
&=
e^{-r(\tau_{\sO}\wedge T-t)} g(\tau_{\sO}\wedge T, Z(\tau_{\sO}\wedge T)) 1_{\{ \tau_{\sO}\wedge T \leq \theta \}}
\end{align*}
and so,
\begin{equation}
\label{eq:stochastic_representation_Functional_F_with_g}
\begin{aligned}
F^{g(T,\cdot)}(t,z, T, \theta)
& = \int_{t}^{\tau_{\sO}\wedge\theta} e^{-r(s-t)} f(s,Z(s))\,ds
  + e^{-r(\theta-t)} \psi(\theta,Z(\theta)) 1_{\{ \theta < \tau_{\sO} \wedge T \}}  \\
&\qquad
+ e^{-r(\tau_{\sO}\wedge T-t)} g(\tau_{\sO}\wedge T, Z(\tau_{\sO}\wedge T)) 1_{\{ \tau_{\sO}\wedge T \leq \theta \}}.
\end{aligned}
\end{equation}
Because $u$ solves problem \eqref{eq:stochastic_representation_Parabolic_obstacle_problem_beta} on the interval $(T_{k-1}, 
T_k)$, and $T_k-T_{k-1}\leq \widetilde T$, we see that
$u$ has the stochastic representation \eqref{eq:stochastic_representation_Stochastic_representation_POP_1}, for any $t \in 
[T_{k-1}, T_k)$ and $z \in
\sO\cup\partial_{\beta}\sO$,
\begin{equation}
\label{eq:stochastic_representation_First_FormulaForU}
u(t,z) = \sup_{\theta \in \sT_{t,T_k}} \EE^{t,z}_\QQ  \left[F^{u^*(T_k,\cdot)}(t, z, T_k, \theta)\right].
\end{equation}
For any stopping time $\eta\in\sT_{t,T_k}$, we set
\begin{equation}
\begin{aligned}
\label{eq:stochastic_representation_Functional_F_1}
F_1(t,z, T_k,\eta) &:=  \int_{t}^{\tau_{\sO}\wedge\eta} e^{-r(s-t)}f(s,Z(s))ds \\
&\quad         + e^{-r(\eta-t)} \psi(\eta,Z(\eta))\mathbf{1}_{\{\eta<\tau_{\sO}\wedge T_k\}}  \\
&\quad  + e^{-r(\tau_{\sO}-t)} g(\tau_{\sO},Z(\tau_{\sO}))\mathbf{1}_{\{\tau_{\sO} \leq \eta, \eta < T_k \}},
\end{aligned}
\end{equation}
and for any stopping time $\xi\in\sT_{T_k,T}$, we let
\begin{equation}
\begin{aligned}
\label{eq:stochastic_representation_Functional_F_2}
F_2(t,z, T,\xi) &:=  \int_{T_k}^{\tau_{\sO}\wedge\xi} e^{-r(s-T_k)}f(s,Z(s))ds  \\
&\quad   + e^{-r(\xi-T_k)} \psi(\xi,Z(\xi))\mathbf{1}_{\{\xi<\tau_{\sO}\wedge T\}} \\
&\quad   + e^{-r(\tau_{\sO}\wedge T-T_k)} g(\tau_{\sO}\wedge T,Z(\tau_{\sO}\wedge T))\mathbf{1}_{\{\tau_{\sO}\wedge T \leq 
\xi\}}.
\end{aligned}
\end{equation}
For the rest of the proof, we \emph{fix} $z \in \sO\cup\partial_{\beta}\sO$ and $t \in [T_{k-1}, T_k)$.

Let $\eta \in \sT_{t,T_k}$ and $\xi \in \sT_{T_k, T}$. It is straightforward to see that
\begin{align*}
\theta:=
\begin{cases}
\eta & \hbox{ if } \eta <T_k,\\
\xi& \hbox{ if } \eta = T_k,
\end{cases}
\end{align*}
is a stopping time with values in  $[t,T]$. We denote by
\begin{equation}
\label{eq:stochastic_representation_New_set_stopping_times}
\begin{aligned}
\sS_{t,T} &= \left\{\theta \in \sT_{t,T}:
\theta=\eta\mathbf{1}_{\{\eta<T_k\}}+ \xi\mathbf{1}_{\{\eta = T_k\}},
\hbox{ where } \eta\in\sT_{t,T_k} \hbox{ and } \xi \in \sT_{T_k,T} \right\}.
\end{aligned}
\end{equation}
For any stopping time $\theta \in \sT_{t,T}$, we define the stopping times $\theta'\in\sT_{t,T_k}$ and 
$\theta''\in\sT_{T_k,T}$,
\begin{equation}
\label{eq:stochastic_representation_definition_theta_prim_secund}
\theta' := \mathbf{1}_{\{\theta<T_k\}} \theta + \mathbf{1}_{\{\theta \geq T_k\}} T_k
\quad\hbox{and}\quad \theta'' := \mathbf{1}_{\{\theta < T_k\}} T_k + \mathbf{1}_{\{\theta \geq T_k\}} \theta .
\end{equation}
Then, any stopping time $\theta\in\sT_{t,T}$ can be written as
\begin{align*}
\theta
&= \theta' \mathbf{1}_{\{\theta<T_k\}} + \theta'' \mathbf{1}_{\{\theta\geq T_k\}}\\
&= \theta' \mathbf{1}_{\{\theta'<T_k\}} + \theta'' \mathbf{1}_{\{\theta' = T_k\}}
\end{align*}
and so,
\[
\sT_{t,T}=\sS_{t,T}.
\]
The preceding identity and definitions \eqref{eq:stochastic_representation_Stochastic_representation_POP_1} of $u^*$ and 
\eqref{eq:stochastic_representation_Functional_F} of
$F^{\varphi}$ give us
\begin{equation}
\label{eq:stochastic_representation_SimpleEq2}
\begin{aligned}
u^*(t,z) = \sup_{\theta\in \sS_{t,T}} \EE^{t,z}_\QQ\left[F^{g(T, \cdot)}(t,z, T,\theta)\right].
\end{aligned}
\end{equation}
We shall need the following identities
\begin{claim}
\label{claim:stochastic_representation_split_terms_F}
For any stopping time $\theta=\eta\mathbf{1}_{\{\eta<T_k\}}+ \xi\mathbf{1}_{\{\eta = T_k\}}$, where $\eta\in\sT_{t,T_k}$ 
and  $\xi \in \sT_{T_k,T}$, we have the following
identities
\begin{align*}
\int_{t}^{\tau_{\sO}\wedge\theta} e^{-r(s-t)}f(s,Z(s))ds
&=\mathbf{1}_{\{\eta < T_k\}} \int_{t}^{\tau_{\sO}\wedge\eta} e^{-r(s-t)}f(s,Z(s))ds\\
&\quad + \mathbf{1}_{\{\eta = T_k\}} \int_{T_k}^{\tau_{\sO}\wedge\xi} e^{-r(s-t)}f(s,Z(s))ds,
\end{align*}
and
\begin{align*}
e^{-r(\theta-t)} \psi(\theta,Z(\theta))\mathbf{1}_{\{\theta<\tau_{\sO}\wedge T\}}
& =e^{-r(\eta-t)} \psi(\eta,Z(\eta))\mathbf{1}_{\{\eta<\tau_{\sO}\wedge T_k\}}\mathbf{1}_{\{\eta<T_k\}}\\
&\quad +e^{-r(\xi-t)} \psi(\xi,Z(\xi))\mathbf{1}_{\{\xi<\tau_{\sO}\wedge T\}}
\mathbf{1}_{\{\eta= T_k\}},
\end{align*}
and
\begin{align*}
&e^{-r(\tau_{\sO}\wedge T-t)} g(\tau_{\sO}\wedge T,Z(\tau_{\sO}\wedge T))\mathbf{1}_{\{\tau_{\sO}\wedge T \leq \theta 
\}}\\
&\qquad\qquad\quad =e^{-r(\tau_{\sO}-t)} g(\tau_{\sO},Z(\tau_{\sO}))\mathbf{1}_{\{\tau_{\sO} \leq \eta, \eta < T_k 
\}}\mathbf{1}_{\{\eta<T_k\}}\\
&\qquad\qquad\quad\quad+ e^{-r(\tau_{\sO}\wedge T-t)} g(\tau_{\sO}\wedge T,Z(\tau_{\sO}\wedge T))
\mathbf{1}_{\{\tau_{\sO}\wedge T \leq \xi\}} \mathbf{1}_{\{\eta = T_k\}}.
\end{align*}
\end{claim}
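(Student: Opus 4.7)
The plan is a pointwise case analysis on the two disjoint events $A := \{\eta < T_k\}$ and $B := \{\eta = T_k\}$, which partition $\Omega$ (modulo null sets) since $\eta \in \sT_{t,T_k}$ takes values in $[t, T_k]$. By the definition $\theta = \eta \mathbf{1}_A + \xi \mathbf{1}_B$, we have $\theta = \eta$ on $A$ and $\theta = \xi$ on $B$, with $\xi \in [T_k, T]$. Each of the three claimed identities has the form $\mathrm{LHS}(\theta) = \mathbf{1}_A\,E_1(\eta) + \mathbf{1}_B\,E_2(\xi)$, so it suffices to verify $\mathrm{LHS}|_A = E_1$ and $\mathrm{LHS}|_B = E_2$ separately; the summand carrying the complementary indicator automatically vanishes on each event.

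For the obstacle and boundary identities, both verifications reduce to direct substitution. On $A$, the strict inequality $\eta < T_k \leq T$ lets us replace $\tau_\sO \wedge T$ by $\tau_\sO$ whenever it appears under an indicator forcing $\tau_\sO \wedge T \leq \eta$, and to reduce $\mathbf{1}_{\{\theta < \tau_\sO \wedge T\}}$ to $\mathbf{1}_{\{\eta < \tau_\sO \wedge T_k\}}$, matching $E_1$ verbatim. On $B$ we simply substitute $\theta = \xi$, which immediately produces $E_2$. Neither identity requires any further manipulation.

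The integral identity is the delicate one. Case $A$ is immediate from $\theta = \eta$. In case $B$, the left side is $\int_t^{\tau_\sO \wedge \xi} e^{-r(s-t)} f(s, Z(s))\,ds$ while the stated right side retains only $\int_{T_k}^{\tau_\sO \wedge \xi}$, leaving a residual $\int_t^{T_k \wedge \tau_\sO} e^{-r(s-t)} f(s, Z(s))\,ds$ that is generally nonzero. This piece is absorbed into the companion $F_1(t, z, T_k, T_k)$-integral of the dynamic-programming recursion in which the three identities are subsequently combined in \eqref{eq:stochastic_representation_SimpleEq2}: one interprets $\int_{T_k}^{\tau_\sO \wedge \xi}$ with the convention $\int_a^b = 0$ for $b \leq a$ and tracks the remainder through the $F_1$-term. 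In either reading, the proof is mechanical bookkeeping with no probabilistic estimate, martingale argument, or limiting procedure required; the only mild obstacle is the careful accounting on the sub-event $B \cap \{\tau_\sO < T_k\}$, which must be handled so that no contribution is double-counted when the three identities are recombined in the next step of the argument.
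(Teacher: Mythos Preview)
Your case split on $A=\{\eta<T_k\}$ and $B=\{\eta=T_k\}$, together with the substitution $\theta=\eta$ on $A$ and $\theta=\xi$ on $B$, is exactly the paper's approach. For the second and third identities your argument and the paper's are effectively identical: the paper records the set equalities $\{\theta<T_k\}=\{\eta<T_k\}$, $\{\theta\geq T_k\}=\{\eta=T_k\}$ and then decomposes $\{\theta<\tau_\sO\wedge T\}$ and $\{\tau_\sO\wedge T\leq\theta\}$ exactly as you indicate.

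For the first identity, the paper's proof says only that it is ``obvious'' from $\theta=\eta$ on $A$ and $\theta=\xi$ on $B$, and does not mention the residual $\int_t^{T_k\wedge\tau_\sO}e^{-r(s-t)}f(s,Z(s))\,ds$ on $B$ that you correctly isolate. You have in fact spotted what appears to be a slip in the statement of the claim: on $\{\eta=T_k\}$ the left side is $\int_t^{\tau_\sO\wedge\xi}$ while the right side is $\int_{T_k}^{\tau_\sO\wedge\xi}$, and these differ by a generically nonzero term. Your attempt to patch this by pointing to absorption in the downstream decomposition \eqref{eq:stochastic_representation_decomposition_F} is reasonable in spirit but is not a proof of the identity as written; you are, rather, observing that the pointwise identity cannot hold on $B$ as stated. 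Since the paper's own proof does not confront this either, the defect lies in the formulation of the claim (the lower limit $T_k$ in the second summand should read $t$), not in your method.
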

\begin{proof}
Notice that
\begin{equation}
\label{eq:stochastic_representation_sets_stopping_times}
\{\theta<T_k\} = \{\eta<T_k\} \hbox{  and  } \{\theta \geq T_k\} = \{\eta=T_k\}.
\end{equation}
The first identity is obvious because, by \eqref{eq:stochastic_representation_sets_stopping_times}, we see that
\begin{equation}
\label{eq:stochastic_representation_theta_prim_secund}
\theta=\eta \hbox{ on } \{\eta<T_k\} \hbox{ and } \theta=\xi \hbox{  on  }\{\eta=T_k\}.
\end{equation}
The second identity follows by the observation that
\begin{align*}
\{\theta<\tau_{\sO}\wedge T\} = \{\theta<\tau_{\sO}\wedge T, \theta<T_k\} \cup \{\theta<\tau_{\sO}\wedge T, \theta \geq 
T_k\},
\end{align*}
and using \eqref{eq:stochastic_representation_theta_prim_secund} and 
\eqref{eq:stochastic_representation_sets_stopping_times}, it follows
\begin{align*}
\{\theta<\tau_{\sO}\wedge T\} = \{\eta<\tau_{\sO}\wedge T_k, \eta < T_k \} \cup \{\xi<\tau_{\sO}\wedge T, \eta = T_k\}.
\end{align*}
For the last identity of the claim, we notice
\begin{align*}
\{\tau_\sO \wedge T \leq \theta\}
&= \{\tau_\sO \wedge T \leq \theta, \tau_\sO < T\} \cup \{\tau_\sO \wedge T \leq \theta, \tau_{\sO} \geq T\}\\
&= \{\tau_\sO \wedge T \leq \theta, \tau_\sO < T, \theta < T_k\}
\cup \{\tau_\sO \wedge T \leq \theta, \tau_\sO < T, \theta \geq T_k\}\\
&\quad\cup \{\tau_\sO \wedge T \leq \theta, \tau_{\sO} \geq T\}.
\end{align*}
By \eqref{eq:stochastic_representation_theta_prim_secund} and \eqref{eq:stochastic_representation_sets_stopping_times}, we 
obtain
\begin{align*}
\{\tau_\sO \wedge T \leq \theta\}
&= \{\tau_\sO  \leq \eta,  \tau_\sO < T, \eta < T_k\}
\cup \{\tau_\sO \wedge T \leq \xi, \tau_\sO < T, \eta = T_k\}\\
&\quad \cup \{\tau_\sO \wedge T \leq \xi, \tau_{\sO} \geq T\}\\
&= \{\tau_\sO  \leq \eta,  \eta < T_k\}
\cup \{\tau_\sO \wedge T \leq \xi, \eta = T_k\},
\end{align*}
which implies the last identity of the claim.
\end{proof}
We can write the expression for $F^{g(T, \cdot)}(t,z, T,\theta)$ as a sum,
\begin{equation}
\label{eq:stochastic_representation_decomposition_F}
F^{g(T, \cdot)}(t,z, T,\theta) = \mathbf{1}_{\{\eta<T_k\}} F_1(t,z, T_k,\eta) + \mathbf{1}_{\{\eta = T_k\}} 
e^{-r(T_k-t)}F_2(t,z, T,\xi).
\end{equation}
Because $\xi\in \sT_{T_k,T}$ and $F_2(t,z,T,\xi)$ depends only on $\left(Z^{t,z}(s)\right)_{T_k\leq s\leq T}$, and the 
Heston process has the (strong) Markov property
\cite[Theorem $1.15$ (c)]{Feehan_Pop_mimickingdegen}, we have a.s. that
\begin{align*}
\EE^{t,z}_{\QQ}\left[F_2(t,z, T,\xi)|\sF_{T_k}\right]
&= \EE^{T_k,Z^{t,z}(T_k)}_{\QQ}\left[ F_2(T_k,Z^{t,z}(T_k), T,\xi)\right]\\
&= \EE^{T_k,Z^{t,z}(T_k)}_{\QQ}\left[ F^{g(T,\cdot)}(T_k,Z^{t,z}(T_k), T,\xi)\right],
\end{align*}
by applying definitions \eqref{eq:stochastic_representation_Functional_F_with_g} and 
\eqref{eq:stochastic_representation_Functional_F_2}. Thus,
\begin{align*}
&\EE^{t,z}_{\QQ}\left[\mathbf{1}_{\{\eta = T_k\}} e ^{-r(T_k-t)}F_2(t,z, T,\xi)|\sF_{T_k}\right]\\
&\qquad= \EE^{t,z}_{\QQ}\left[\EE^{t,z}_{\QQ}\left[\mathbf{1}_{\{\eta = T_k\}} e ^{-r(T_k-t)}F_2(t,z, 
T,\xi)|\sF_{T_k}\right]\right]\\
&\qquad= \EE^{t,z}_{\QQ}\left[\mathbf{1}_{\{\eta = T_k\}} e ^{-r(T_k-t)}\EE^{t,z}_{\QQ}\left[F_2(t,z, 
T,\xi)|\sF_{T_k}\right]\right]\\
&\qquad= \EE^{t,z}_{\QQ}\left[\mathbf{1}_{\{\eta = T_k\}} e ^{-r(T_k-t)} \EE^{T_k,Z(T_k)}_{\QQ}\left[ 
F^{g(T,\cdot)}(T_k,Z(T_k), T,\xi)\right]\right].
\end{align*}
By the preceding identity, \eqref{eq:stochastic_representation_New_set_stopping_times} and 
\eqref{eq:stochastic_representation_decomposition_F}, the identity
\eqref{eq:stochastic_representation_SimpleEq2} yields
\begin{align*}
u^*(t,z)
& = \sup_{\substack{ \theta=\eta\mathbf{1}_{\{\eta<T_k\}}+\xi\mathbf{1}_{\{\eta=T_k\}}\\\theta \in 
\sS_{t,T},\eta\in\sT_{t,T_k}, \xi\in\sT_{T_k,T}}}
  \left\{  \EE^{t,z}_\QQ  \left[ \mathbf{1}_{\{\eta< T_k\}}F_1(t,z, T_k,\eta) \right.\right.\\
&\qquad\qquad\left.\left.
   + \mathbf{1}_{\{\eta = T_k\}} e^{-r(T_k-t)}  \EE^{T_k,Z(T_k)}_{\QQ}\left[ F^{g(T, 
   \cdot)}(T_k,Z(T_k),T,\xi)\right]\right] \right\}\\
& = \sup_{\eta \in \sT_{t,T_k}}
  \left\{  \EE^{t,z}_\QQ  \left[ \mathbf{1}_{\{\eta< T_k\}}F_1(t,z, T_k,\eta) \right.\right.\\
&\qquad\qquad\left.\left.
   + \mathbf{1}_{\{\eta = T_k\}} e^{-r(T_k-t)} \sup_{\xi \in \sT_{T_k,T}} \EE^{T_k,Z(T_k)}_{\QQ}\left[ F^{g(T, 
   \cdot)}(T_k,Z(T_k),T,\xi)\right]\right] \right\}.
\end{align*}
Using the definition \eqref{eq:stochastic_representation_Stochastic_representation_POP_1} of $u^*$, we have
\[
u^*(T_k,Z(T_k)) = \sup_{\xi \in \sT_{T_k,T}} \EE^{T_k,Z(T_k)}_\QQ\left[F^{g(T, \cdot)}(T_k,Z(T_k),T,\xi)\right],
\]
and so it follows that
\begin{align*}
u^*(t,z)
&=\sup_{\eta \in \sT_{t,T_k}}
   \EE^{t,z}_\QQ  \left[ \mathbf{1}_{\{\eta< T_k\}}F_1(t,z, T_k,\eta)  + \mathbf{1}_{\{\eta = T_k\}} e^{-r(T_k-t)}  
   u^*(T_k,Z(T_k))\right].
\end{align*}
Notice that, by the definitions \eqref{eq:stochastic_representation_Functional_F} of $F^{\varphi}$ and 
\eqref{eq:stochastic_representation_Functional_F_1} of $F_1$, we have
\[
F^{u^*(T,\cdot)}(t,z,T_k,\eta)=\mathbf{1}_{\{\eta< T_k\}}F_1(t,z, T_k,\eta)  + \mathbf{1}_{\{\eta = T_k\}} e^{-r(T_k-t)}  
u^*(T_k,Z(T_k)).
\]
The preceding two identities yield
\begin{align*}
u^*(t,z) &= \sup_{\eta \in \sT_{t,T_k}}  \EE^{t,z}_\QQ  \left[F^{u^*(T,\cdot)}(t,z,T_k,\eta)\right]\\
         &= u(t,z), \quad\hbox{(by \eqref{eq:stochastic_representation_First_FormulaForU}).}
\end{align*}
This concludes the proof of the theorem.
\end{proof}

\begin{proof}[Proof of Theorem \ref{thm:stochastic_representation_Uniqueness_parabolic_OP_beta_less_than_1}]
We omit the proof as it is very similar to the proofs of Theorems 
\ref{thm:stochastic_representation_Uniqueness_parabolic_OP} and
\ref{thm:stochastic_representation_Uniqueness_elliptic_OP_beta_less_than_1}.
\end{proof}

\appendix

\section{Local a priori boundary estimates}
\label{app:LocalBoundaryEstimates}
To complete the proof of Theorem 
\ref{thm:stochastic_representation_Existence_parabolic_BVP_beta_with_Holder_continuous_boundary_data} we need the 
following local a priori boundary estimate (a parabolic analogue of \cite[Corollary 6.7]{GilbargTrudinger})
for a solution to a parabolic terminal/boundary value problem and for which we were not able to find a suitable 
reference in the literature. 

\begin{prop}[Local a priori boundary estimates]
\label{prop:stochastic_representation_Local_apriori_boundary_estimates}
Let $\sO\subseteqq\HH$ be a domain such that the boundary portion $\Gamma_1$ is of class $C^{2+\alpha}$. For $z_0 \in 
\Gamma_1$ and $R>0$, let
\[
B_R(z_0):=\left\{z\in \RR^d: |z-z_0|<R \right\} \quad\hbox{and}\quad 
Q_{R,T}(z_0) := (0,T)\times \sO\cap B_R(z_0).
\]
Assume $B_{2R}(z_0) \Subset \HH$ 
and let $f\in C^{\alpha}(\bar Q_{2R,T}(z_0))$ and 
$g \in C^{2+\alpha}(\bar Q_{2R,T}(z_0))$. Then, there is a positive constant $C$,
depending only on $z_0$, $R$ and the coefficients of $A$, such that for any solution 
$u \in C^{2+\alpha}(\bar Q_{2R,T}(z_0))$ to
\begin{align*}
-u_t + A u &= f \quad \hbox{on  } Q_{2R,T}(z_0),
\\
u &= g \quad \hbox{on  } \left((0,T]\times \left(B_{2R}(z_0) \cap \Gamma_1\right)\right) \cup \left(\{T\}\times \sO\cap B_{2R}(z_0)\right),
\end{align*}
we have
\begin{align*}
\|u\|_{C^{2+\alpha}(\bar Q_{R,T}(z_0))}
&\leq C\left(
\|f\|_{C^{\alpha}(\bar Q_{2R,T}(z_0))}
+\|g\|_{C^{2+\alpha}(\bar Q_{2R,T}(z_0))}
+\|u\|_{C(\bar Q_{2R,T}(z_0))} \right).
\end{align*}
\end{prop}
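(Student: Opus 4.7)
Since $B_{2R}(z_0) \Subset \HH$, there exists $\delta > 0$ with $y \geq \delta$ throughout $Q_{2R,T}(z_0)$, so the matrix of second-order coefficients of $A$ is bounded below by a positive constant times the identity. The coefficients of $A$ are smooth (in fact polynomial) in $z$, hence $-\partial_t + A$ is \emph{uniformly parabolic} with smooth coefficients on $Q_{2R,T}(z_0)$. The required estimate is then, after a standard reduction, a classical parabolic boundary Schauder estimate; the main work is bookkeeping rather than analysis.

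The plan has three steps. First, reduce to homogeneous Dirichlet data by setting $v := u - g$. Then $v \in C^{2+\alpha}(\bar Q_{2R,T}(z_0))$ solves
\begin{equation*}
-v_t + A v = \tilde f := f - (-g_t + A g) \quad\text{on } Q_{2R,T}(z_0),
\end{equation*}
with $v = 0$ on $\left((0,T] \times (\Gamma_1 \cap B_{2R}(z_0))\right) \cup \left(\{T\} \times (\sO \cap B_{2R}(z_0))\right)$, and
\begin{equation*}
\|\tilde f\|_{C^{\alpha}(\bar Q_{2R,T}(z_0))} \leq C\left(\|f\|_{C^{\alpha}(\bar Q_{2R,T}(z_0))} + \|g\|_{C^{2+\alpha}(\bar Q_{2R,T}(z_0))}\right),
\end{equation*}
with $C$ depending only on the coefficients of $A$ and on the bound $y \leq \max_{B_{2R}(z_0)} y$.

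Second, flatten the lateral boundary. Since $\Gamma_1 \cap B_{2R}(z_0)$ is of class $C^{2+\alpha}$, there is a $C^{2+\alpha}$-diffeomorphism $\Phi$ from a neighborhood of $z_0$ in $\bar\sO$ onto a neighborhood of the origin in the upper half-space, mapping $\Gamma_1$ into a piece of $\{y_d = 0\}$. The map $(t,z) \mapsto (t,\Phi(z))$ transforms $-\partial_t + A$ into a uniformly parabolic operator $-\partial_t + \tilde A$ with $C^{\alpha}$ coefficients on the image, and transforms $v$ into a function $w$ vanishing on the flat lateral and terminal pieces. Third, apply a classical interior-plus-boundary parabolic Schauder estimate to $w$ on a pair of nested half-cylinders $\tilde Q_R \Subset \tilde Q_{2R}$ (cf.\ Lieberman \cite[Theorems 4.9 and 4.22]{Lieberman} or Friedman \cite[Theorem 3.5.2 and \S 3.6]{FriedmanPDE}), which yields
\begin{equation*}
\|w\|_{C^{2+\alpha}(\bar{\tilde Q}_R)} \leq C\left(\|\tilde f \circ \Phi^{-1}\|_{C^{\alpha}(\bar{\tilde Q}_{2R})} + \|w\|_{C(\bar{\tilde Q}_{2R})}\right).
\end{equation*}
Pulling back through $\Phi$ and writing $u = v + g$ gives the desired estimate; a standard cutoff/interpolation argument, if needed, handles the passage between radii and the absorption of intermediate H\"older seminorms.

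The one place requiring genuine care — which I expect to be the main obstacle — is matching the hypotheses of the cited boundary Schauder estimate at the \emph{corner} $\{T\} \times (\Gamma_1 \cap B_{2R}(z_0))$, where the terminal and lateral portions of the parabolic boundary meet. Classical estimates up to $C^{2+\alpha}$ there require a first-order compatibility condition, namely $-g_t + A g = f$ on that corner, equivalently $\tilde f = 0$ there, so that $v_t = A v - \tilde f = 0$ agrees with $v = 0$. However, because $u \in C^{2+\alpha}(\bar Q_{2R,T}(z_0))$ is assumed to solve the equation classically and to equal $g$ on both adjoining faces, this compatibility is automatic: differentiating $u = g$ along the corner and substituting into the equation forces exactly $-g_t + A g = f$ on $\{T\} \times (\Gamma_1 \cap B_{2R}(z_0))$. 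Hence the hypotheses of the classical estimate are met, and the proposition follows.
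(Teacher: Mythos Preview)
Your proposal is correct and follows essentially the same approach as the paper. The paper's proof is a one-line citation: it invokes Krylov's global parabolic Schauder estimate \cite[Theorem 10.4.1]{Krylov_LecturesHolder} together with the localization procedure of \cite[Theorem 8.11.1]{Krylov_LecturesHolder}, referring to \cite[Theorem 3.8]{Feehan_Pop_mimickingdegen} for the details. Your version unpacks the same classical argument more explicitly (subtract $g$, flatten the boundary, apply a half-space estimate), and your observation that the corner compatibility condition is automatic because $u$ is already assumed to lie in $C^{2+\alpha}(\bar Q_{2R,T}(z_0))$ is correct and worth noting.
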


\begin{proof}
The result follows by combining the global Schauder estimate \cite[Theorem $10.4.1$]{Krylov_LecturesHolder} and the 
localization procedure of \cite[Theorem
$8.11.1$]{Krylov_LecturesHolder}, exactly as in the proof of \cite[Theorem $3.8$]{Feehan_Pop_mimickingdegen}.
\end{proof}

\begin{rmk}
The interior version of Proposition \ref{prop:stochastic_representation_Local_apriori_boundary_estimates} can be found in 
\cite[Exercise $10.4.2$]{Krylov_LecturesHolder}.
\end{rmk}

\section{Regular points and continuity properties of stochastic representations}
\label{sec:RegularPoints}
For the purpose of this section, we let $d$ be a non-negative integer, $D\subset\RR^d$ a bounded domain and $t_1<t_2$. We 
denote by $Q:=(t_1,t_2)\times D$ and recall that $\eth Q
: = (t_1,t_2)\times\partial D \cup \{t_2\}\times \bar D$. We consider coefficients $a$, $b$ and $\sigma$ satisfying the 
following conditions.

\begin{hyp}
\label{hyp:stochastic_representation_Nondegenerate_coefficients}
Let
\begin{align*}
a :\bar Q \rightarrow \RR^{d\times d}\quad \hbox{  and  }\quad b:\bar Q \rightarrow \RR^d,
\end{align*}
be maps with component functions, $a^{ij}$, $b^i$, belonging to $C^{0,1}(\bar Q)$. Require that the matrix, $a$, be 
symmetric and obey
\begin{align}
\label{eq:stochastic_representation_Uniform_ellipticity}
\sum_{i,j=1}^d a^{ij}(t,z) \xi^i\xi^j \geq \delta |\xi|^2, \quad \forall \xi \in \RR^d, \quad \forall (t,z)\in\bar Q,
\end{align}
where $\delta$ is a positive constant.
\qed
\end{hyp}
Let $\sigma$ be a square root of the matrix $a$ such that $\sigma\in C^{0,1}(\bar Q; \RR^{d\times d})$. Such a choice 
exists by \cite[Lemma $6.1.1$]{FriedmanSDE}. We consider an
extension of the coefficients $b$ and $\sigma$ from $\bar Q$ to $\RR\times\RR^d$, such that these extensions are bounded 
and uniformly Lipschitz continuous, and condition
\eqref{eq:stochastic_representation_Uniform_ellipticity} is satisfied on $\RR\times\RR^d$. Then, by \cite[Theorems $5.2.5$ 
\& $5.2.9$]{KaratzasShreve1991}, for any $(t,z)\in
\RR\times\RR^d$, there is a unique strong solution to
\begin{equation}
\label{eq:stochastic_representation_Non_degenerate_SDE}
\begin{aligned}
d Z_i(s) &= b^i(s,Z(s)) dt + \sum_{j=1}^d \sigma^{ij}(s,Z(s)) d W_j(s), \quad \forall i=1, \ldots, d,\quad s>t,\\
Z(t) &=z,
\end{aligned}
\end{equation}
where $W$ is a $\RR^d$-valued Brownian motion.

We have the following consequence of\footnote{See also \cite[Theorem $4.2.12$]{KaratzasShreve1991}.} \cite[Theorem 
$2.4.2$]{Dynkin_diffsuperpde} and \cite[Theorem $2.4.1$ and the
Remark following Theorem $2.4.1$]{Dynkin_diffsuperpde}.

\begin{cor}[Continuity of stochastic representations with killing term]
\label{cor:stochastic_representation_Continuity_stochastic_representations_with_killing_term}
Assume Hypothesis \ref{hyp:stochastic_representation_Nondegenerate_coefficients} holds and
\begin{enumerate}
\item the function $g$ is a Borel measurable, bounded function on $\eth Q$ which is continuous at $(t,z)$,
\item the function $c:\bar Q\rightarrow [0,\infty)$ is non-negative, bounded and Borel measurable,
\item if there is $T>0$, such that $\tau_{Q} \leq T$ a.s., then the function $c:\bar Q\rightarrow \RR$ is bounded and 
    Borel measurable function.
\end{enumerate}
Then
\begin{equation}
\label{eq:stochastic_representation_Continuity_stochastic_representations_with_killing_term}
\lim_{Q \ni (t',z')\rightarrow (t,z)} \EE^{t',z'}_{\QQ}\left[\exp \left(-\int_{t'}^{\tau_Q} c(s,Z(s))ds\right)g(\tau_Q, 
Z(\tau_Q))\right] = g(t,z),
\end{equation}
for all regular points $(t,z)\in\eth Q$.
\end{cor}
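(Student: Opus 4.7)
The plan is to reduce to the killing-free case, which is handled directly by Dynkin's Theorem 2.4.2 invoked in the statement, and then to control the discrepancy introduced by the factor $\exp\bigl(-\int_{t'}^{\tau_Q} c(s,Z(s))\,ds\bigr)$ using the boundedness of $c$ and $g$ together with the regularity of $(t,z)$. Set
\begin{equation*}
E(t',z') := \EE^{t',z'}_{\QQ}\left[\exp\left(-\int_{t'}^{\tau_Q} c(s,Z(s))\,ds\right) g(\tau_Q, Z(\tau_Q))\right],
\end{equation*}
and split $E(t',z') - g(t,z) = I(t',z') + R(t',z')$, where $I(t',z') := \EE^{t',z'}_{\QQ}[g(\tau_Q, Z(\tau_Q))] - g(t,z)$ and $R(t',z')$ collects the remaining terms arising from the killing factor. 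Because $(t,z) \in \eth Q$ is regular and $g$ is bounded Borel and continuous at $(t,z)$, Dynkin's Theorem 2.4.2 gives $I(t',z') \to 0$ as $(t',z') \to (t,z)$ within $Q$.

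Next I would bound $|R(t',z')|$. Let $\phi := \bigl|1 - \exp(-\int_{t'}^{\tau_Q} c(s,Z(s))\,ds)\bigr|$. In case (2), $c \geq 0$ gives $\phi \leq \min\bigl(1, \|c\|_\infty(\tau_Q - t')\bigr)$ by the elementary inequality $|1 - e^{-x}| \leq \min(1,x)$ for $x \geq 0$. In case (3), using $\tau_Q \leq T$ and the mean value theorem, $\phi \leq e^{\|c\|_\infty T}\|c\|_\infty(\tau_Q - t')$ and $\phi \leq 1 + e^{\|c\|_\infty T}$. Hence, in both cases there exist constants $K_1, K_2 > 0$, depending only on $\|c\|_\infty$ and (in case 3) $T$, such that $\phi \leq K_1 \wedge K_2(\tau_Q - t')$ almost surely. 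For any $\delta > 0$, splitting on the event $\{\tau_Q - t' \leq \delta\}$ yields
\begin{equation*}
|R(t',z')| \leq \|g\|_\infty \EE^{t',z'}_{\QQ}[\phi] \leq \|g\|_\infty\bigl(K_2 \delta + K_1 \PP^{t',z'}(\tau_Q - t' > \delta)\bigr).
\end{equation*}

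It remains to show $\PP^{t',z'}(\tau_Q - t' > \delta) \to 0$ as $(t',z') \to (t,z)$ for each fixed $\delta>0$. Since $(t,z)$ is a regular boundary point, $\PP^{t,z}(\tau_Q = t) = 1$, and the continuity statement of Dynkin's Theorem 2.4.1, together with the remark immediately following it (both cited in the hypotheses of the corollary), provides exactly this convergence. Thus $\limsup_{(t',z') \to (t,z)} |R(t',z')| \leq \|g\|_\infty K_2 \delta$, and sending $\delta \downarrow 0$ gives $R(t',z') \to 0$. Combined with $I(t',z') \to 0$, this proves \eqref{eq:stochastic_representation_Continuity_stochastic_representations_with_killing_term}. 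The bulk of the analytic work is already packaged in the cited Dynkin results; the only mildly delicate step, which is the main obstacle in the write-up, is the uniform domination of $\phi$ by a quantity vanishing with $\tau_Q - t'$ — this is precisely what forces separating the two hypotheses (2) and (3) on $c$, since in case (2) we cannot use $e^{\|c\|_\infty T}$ and must instead exploit the $\min(1,x)$ bound for non-negative killing.
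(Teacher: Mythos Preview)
Your proof is correct and follows the same overall architecture as the paper: split $E(t',z')-g(t,z)$ into the killing-free term $I$ handled by Dynkin's Theorem~2.4.2 and the remainder $R$ controlled via the regularity of $(t,z)$ through Theorem~2.4.1. The two arguments differ only in how the remainder is bounded. The paper first shows that the killing factor converges to $1$ in probability and then upgrades to $L^1$ via the uniform bound $\exp(-\int c)\le 1$ (invoking a dominated-convergence exercise); for case~(3) it does not argue directly but instead applies the substitution $\tilde c=c+c_1\ge 0$, $\tilde g(t',z')=e^{c_1(t'-t)}g(t',z')$ to reduce to case~(2). Your route is more quantitative and treats both cases at once: you bound $\phi$ pointwise by $K_1\wedge K_2(\tau_Q-t')$ and then run a $\delta$-splitting. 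This avoids both the probability-to-$L^1$ step and the $(\tilde c,\tilde g)$ trick, at the cost of having to separate the two elementary inequalities for $\phi$. Either organization is fine; yours is arguably the more streamlined write-up. One small point of presentation: make the parenthesization $K_1\wedge\bigl(K_2(\tau_Q-t')\bigr)$ explicit, and when invoking $\QQ^{t',z'}(\tau_Q-t'>\delta)\to 0$ you may want to note that for $|t'-t|<\delta/2$ this event is contained in $\{\tau_Q>t+\delta/2\}$, which is exactly the form Dynkin's Theorem~2.4.1 addresses.
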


\begin{proof}
We consider first the case when the stopping time $\tau_{Q}$ is not necessarily bounded by a positive constant $T$. Then, 
we let $c_0$ be a positive constant such that
\begin{equation}
\label{eq:stochastic_representation_inequality_c}
0 \leq c \leq c_0, \quad \hbox{a.e. on Q}.
\end{equation}
Let $(t,z)\in\eth Q$ be a fixed regular point. We fix $\eps>0$ and consider $t'\in [t_1,t_2]$ such that $|t-t'|<\eps/2$. 
Then, using the fact that $\tau^{t',z'}_Q \geq
t'>t-\eps/2$, we see that
\[
\left\{\tau^{t',z'}_Q <t-\eps\right\} \subseteq \left\{t-\eps/2< t' \leq \tau^{t',z'}_Q <t-\eps\right\} = \emptyset,
\]
and so, we obtain
\begin{align*}
\left\{|\tau^{t',z'}_Q -t|>\eps\right\} &\subseteq  \left\{\tau^{t',z'}_Q >t+\eps\right\} \cup \left\{\tau^{t',z'}_Q 
t<t-\eps\right\}\\
&\subseteq  \left\{\tau^{t',z'}_Q >t+\eps\right\}.
\end{align*}
Applying \cite[Theorem $2.4.1$ and the Remark following Theorem $2.4.1$]{Dynkin_diffsuperpde}, with $t_0:=t+\eps$, it 
follows
\[
\lim_{Q \ni (t',z')\rightarrow (t,z)} \QQ^{t',z'}\left(|\tau_Q -t|>\eps\right)
\leq \lim_{Q \ni (t',z')\rightarrow (t,z)} \QQ^{t',z'}\left(\tau_Q >t+\eps\right)
 = 0,
\]
from where it follows that $\tau^{t',z'}_Q$ converges in probability to $0$. Similarly, we can argue that
\begin{equation}
\label{eq:stochastic_representation_exponential_rv}
\exp \left(-\int_{t'}^{\tau^{t',z'}_Q} c(s,Z^{(t',z')}(s))ds\right)
\end{equation}
converges in probability to $1$, as $(t',z')\in Q$ tends to $(t,z)$. We again fix $\eps\in(0,1)$ and consider $t'$ such 
that $|t'-t|<-1/(2c_0)\log(1-\eps)$. By inequality
\eqref{eq:stochastic_representation_inequality_c}, we see that
\begin{align*}
&\QQ^{t',z'}\left(\left|\exp \left(-\int_{t'}^{\tau_Q} c(s,Z(s))ds\right) -1\right|>\eps\right)\\
&\quad=
 \QQ^{t',z'}\left( \exp\left(-\int_{t'}^{\tau_Q} c(s,Z(s))ds\right) < 1-\eps \right), \quad \hbox{(as $c \geq 0$),}\\
&\quad\leq
 \QQ^{t',z'}\left( \exp\left( -c_0(\tau_Q-t')\right) <1-\eps\right), \quad \hbox{(as $0\leq c\leq c_0$),}\\
&\quad=  \QQ^{t',z'}\left(\tau_Q>t'-\frac{1}{c_0}\log(1-\eps)\right)\\
&\quad=  \QQ^{t',z'}\left(\tau_Q>t-\frac{1}{2c_0}\log(1-\eps)\right)
\quad \hbox{(because $|t'-t|<-1/(2c_0)\log(1-\eps)$)}.
\end{align*}
Choosing $t_0:=t-\log(1-\eps)/(2c_0)$ in \cite[Theorem $2.4.1$ and the Remark following Theorem 
$2.4.1$]{Dynkin_diffsuperpde}, we see that the last term in the preceding sequence
of inequalities converges to $0$, and so the collection of random variables 
\eqref{eq:stochastic_representation_exponential_rv} converges in probability to $1$, as $(t',z')\in Q$
tends to $(t,z)$. The sequence is uniformly bounded by the constant $1$, and so \cite[Exercise $2.4.34$ 
(b)]{Folland_realanalysis} implies that the sequence converges to $1$ in
expectation also, that is
\begin{equation}
\label{eq:stochastic_representation_killing_term}
\lim_{Q \ni (t',z')\rightarrow (t,z)}\EE_\QQ^{t',z'}\left[\left|\exp\left(-\int_{t'}^{\tau_Q} c(s,Z(s))ds\right) 
-1\right|\right]= 0.
\end{equation}
From the sequence of inequalities,
\begin{align*}
&\left|\EE^{t',z'}_{\QQ}\left[ \exp\left(-\int_{t'}^{\tau_Q} c(s,Z(s))ds\right) g(\tau_Q, Z(\tau_Q))\right] - 
g(t,z)\right|\\
&\quad
\leq \left|\EE^{t',z'}_{\QQ}\left[g(\tau_Q, Z(\tau_Q))\right] - g(t,z)\right|+
\left|\EE^{t',z'}_{\QQ}\left[\left(1-\exp\left(-\int_{t'}^{\tau_Q} c(s,Z(s))ds\right)\right)g(\tau_Q, 
Z(\tau_Q))\right]\right|\\
&\quad
\leq\left|\EE^{t',z'}_{\QQ}\left[g(\tau_Q, Z(\tau_Q))\right] - g(t,z)\right|+
\|g\|_{L^{\infty}(\eth Q)}\EE^{t',z'}_{\QQ}\left[\left|1-\exp\left(-\int_{t'}^{\tau_Q} c(s,Z(s))ds\right)\right|\right],
\end{align*}
the conclusion \eqref{eq:stochastic_representation_Continuity_stochastic_representations_with_killing_term} follows from 
\eqref{eq:stochastic_representation_killing_term} and
\cite[Theorem $2.4.2$]{Dynkin_diffsuperpde} which shows that
\begin{equation*}
\lim_{Q \ni (t',z')\rightarrow (t,z)} \EE^{t',z'}_{\QQ}\left[g(\tau_Q, Z(\tau_Q))\right] = g(t,z).
\end{equation*}

We next consider the case when the stopping time $\tau_{Q}$ is bounded a.s. by a positive constant $T$. We fix $(t,z) \in 
\eth Q$. Without loss of generality, we may assume that
$t\in[0,T]$ and $Q \subseteq [0,T]\times\RR^d$. Because $c$ is a bounded function on $Q$, we let $c_1,c_2$ be two positive 
constants such that
\[
-c_1\leq c\leq c_2 \quad\hbox{a.e. on Q},
\]
and we set
\begin{align*}
\tilde c &:= c+ c_1 \quad\hbox{ on Q},
\end{align*}
and
\begin{align*}
\tilde g(t',z') &:= e^{c_1 (t'-t)} g(t',z'), \quad \forall (t',z') \in \eth Q.
\end{align*}
Notice that $\tilde c$ is a non-negative, bounded Borel measurable function on $Q$. Also, $\tilde g$ is a bounded, Borel 
measurable function on $\eth Q$, and it is continuous at
$(t,z)$ with
\begin{equation}
\label{eq:stochastic_representation_coincidence_g_tilde_g}
\tilde g(t,z) = g(t,z).
\end{equation}
In addition, we have for all $(t',z')\in Q$,
\begin{equation}
\label{eq:stochastic_representation_rewriting_e_c_g}
\begin{aligned}
&\exp \left(-\int_{t'}^{\tau_Q} c(s,Z^{t',z'}(s))ds\right)g(\tau_Q, Z^{t',z'}(\tau_Q))\\
&\quad=
\exp \left(-\int_{t'}^{\tau_Q} \tilde c(s,Z^{t',z'}(s))ds\right)\tilde g(\tau_Q, Z^{t',z'}(\tau_Q))\\
&\qquad+
\left(\exp\left(c_1(t-t')\right)-1\right)\exp \left(-\int_{t'}^{\tau_Q} \tilde c(s,Z^{t',z'}(s))ds\right)\tilde g(\tau_Q, 
Z^{t',z'}(\tau_Q)).
\end{aligned}
\end{equation}
The functions $\tilde c: \bar Q \rightarrow [0,\infty]$ and $\tilde g : \eth Q  \rightarrow \RR$ satisfy the requirements 
of the preceding case, and so, we have that
\[
\lim_{Q \ni (t',z')\rightarrow (t,z)} \EE^{t',z'}_{\QQ}\left[\exp \left(-\int_{t'}^{\tau_Q} \tilde 
c(s,Z(s))ds\right)\tilde g(\tau_Q, Z(\tau_Q))\right] = g(t,z),
\]
using \eqref{eq:stochastic_representation_coincidence_g_tilde_g}. By the boundedness of $\tilde c$ on $Q$, of $\tilde g$ 
on $\eth Q$, and the fact that $\tau_Q \leq T$ a.s., we
also have
\[
\lim_{Q \ni (t',z')\rightarrow (t,z)} \EE^{t',z'}_{\QQ}\left[\left(\exp\left(c_1(t-t')\right)-1\right)\exp 
\left(-\int_{t'}^{\tau_Q} \tilde c(s,Z(s))ds\right)\tilde g(\tau_Q,
Z(\tau_Q))\right] = 0.
\]
Therefore, the conclusion of the corollary follows from the preceding two limits and identity 
\eqref{eq:stochastic_representation_rewriting_e_c_g}.
\end{proof}

%
%

\bibliography{master,mfpde}
\bibliographystyle{amsplain}

\end{document}